\theoremstyle{definition}
\newtheorem{definition}{Definition}[section]
\newtheorem{openproblem}[definition]{Open problem}
\newtheorem{remark}[definition]{Remark}
\theoremstyle{plain}
\newtheorem{theorem}[definition]{Theorem}
\newtheorem{lemma}[definition]{Lemma}
\newtheorem{proposition}[definition]{Proposition}
\newtheorem{corollary}[definition]{Corollary}
\numberwithin{equation}{section}
\def\N{{\mathbb N}}
\begin{document}
\title{Transferring Davey`s Theorem on Annihilators in Bounded\\ Distributive Lattices to Modular Congruence Lattices and Rings}
\author{Claudia MURE\c SAN\thanks{Dedicated to the memory of my dear grandmother, Floar\u a--Marioara Mure\c san}\\ \footnotesize University of Bucharest\\ \footnotesize Faculty of Mathematics and Computer Science\\ \footnotesize Academiei 14, RO 010014, Bucharest, Romania\\ \footnotesize E--mails: cmuresan@fmi.unibuc.ro, c.muresan@yahoo.com}
\date{\today }
\maketitle

\begin{abstract} Congruence lattices of semiprime algebras from semi--degenerate congruence--modular varieties fulfill the equivalences from B. A. Davey`s well--known characterization theorem for $m$--Stone bounded distributive lattices; moreover, changing the cardinalities in those equivalent conditions does not change their validity. I prove this by transferring Davey`s Theorem from bounded distributive lattices to such congruence lattices through a certain lattice morphism and using the fact that the codomain of that morphism is a frame. Furthermore, these equivalent conditions are preserved by finite direct products of such algebras, and similar equivalences are fulfilled by the elements of semiprime commutative unitary rings and, dualized, by the elements of complete residuated lattices.\\ {\em 2010 Mathematics Subject Classification:} primary: 08A30; secondary: 08B10, 03G10.\\ {\em Keywords:} congruence, (semi--degenerate, congruence--modular, congruence--distributive) variety, commutator, annihilator, (Stone, semiprime) algebra.\end{abstract}

\section{Introduction}
\label{introduction}

In \cite{eu,eu7}, I have transferred \cite[Theorem $1$]{dav} from bounded distributive lattices to residuated lattices, by using the reticulation of a residuated lattice. In \cite{retic}, G. Georgescu and I have constructed the reticulation for algebras from congruence--modular varieties. I have noticed that the kind of transfer from \cite{eu,eu7} can be made in this general context, but referring to the lattices of congruences of the algebras from such varieties instead of their elements and enforcing some restrictions: the varieties must be semi--degenerate and the algebras in question must be semiprime. It turns out that the transfer of these properties doesn`t even necessitate the reticulation, but only part of its construction from \cite{retic}, and it produces further equivalences, because changing the cardinality in those conditions forms other properties which are equivalent to those conditions. The present work contains two main results, the first of which is Theorem \ref{mydavey} below, stating the equivalences fulfilled by the congruence lattices of such algebras, which I am proving along with some related results, such as the fact that, if the congruence lattice of such an algebra satisfies the equivalent conditions from Theorem \ref{mydavey}, then the reticulation of that algebra satisfies those conditions, as well, and the fact that those conditions are preserved by finite direct products of such algebras. While the structure of the proof of Theorem \ref{mydavey} resembles the one I have made for residuated lattices in \cite{eu,eu7}, the auxiliary results for that proof do not hold in the present context, which, in turn, produces new auxiliary results, so all the following results are new and original, excepting only the ones cited from other works or mentioned as being either well known or immediate from well--known properties.

The natural question that arises is whether other algebras fulfill analogues of Theorem \ref{mydavey} or \cite[Theorem $1$]{dav} for elements instead of congruences, as is the case for residuated lattices. I prove that semiprime commutative unitary rings do, in the second main result of the present paper: Theorem \ref{myringsdavey}. It also turns out that complete residuated lattices fulfill the dual of Theorem \ref{mydavey} expressed for elements, because, for such residuated lattices, changing the cardinalities in the equivalent conditions from the analogue of Davey`s Theorem for residuated lattices from \cite{eu,eu7} produces other properties equivalent to those conditions. Of course, both residuated lattices, which are congruence--distributive, thus semiprime, and form a semi--degenerate variety, and semiprime commutative unitary rings, which are semiprime algebras from the semi--degenerate congruence--modular variety of commutative unitary rings, fulfill those e\-qui\-va\-len\-ces for congruences, too.

Section \ref{resultscg} contains Theorem \ref{mydavey} and the related results on congruence lattices. Section \ref{resultselem} contains a brief presentation of the situation in residuated lattices, the analogue of Theorem \ref{mydavey} for the elements of semiprime commutative unitary rings instead of the congruences from congruence lattices of semiprime algebras from semi--degenerate congruence--modular varieties, and some related results. Section \ref{conclusions} contains a brief layout of some directions for future research.

\section{Transferring Davey`s Theorem to Congruence Lattices}
\label{resultscg}

For any set $S$, $|S|$ shall denote the cardinality of $S$ and by $|S|<\infty $ we shall specify the fact that $S$ is finite. And, throughout this paper, $m$ shall be an infinite cardinal, arbitrary but fixed. For brevity, instead of treating separate cases, we shall often use the fact that, in any bounded poset, $\bigvee \emptyset $ is the minimum and $\bigwedge \emptyset $ is the maximum of that poset. Throughout this paper, we shall designate any algebra by its underlying set, unless there is danger of confusion.

Let $(L,\vee ,\wedge ,0,1)$ be an arbitrary bounded lattice. $({\rm Id}(L),\vee ,\cap ,\{0\},L)$ shall be the complete lattice of the ideals of $L$, which is well known to be distributive exactly when $L$ is distributive. Let $a\in L$ and $U,V\subseteq L$. We shall denote by $(U]$ the ideal of $L$ generated by $U$, by $(a]=(\{a\}]$ the principal ideal of $L$ generated by $a$, by ${\rm Ann}(U)$ the {\em annihilator of $U$} and by ${\rm Ann}(a)$ the {\em annihilator of $a$} in L: ${\rm Ann}(U)=\{x\in L\ |\ (\forall \, u\in U)\, (x\wedge u=0)\}$ and ${\rm Ann}(a)={\rm Ann}(\{a\})=\{x\in L\ |\ x\wedge a=0\}$. Whenever specifying $L$ is necessary, we shall denote $(U]_L$, $(a]_L$, ${\rm Ann}_L(U)$ and ${\rm Ann}_L(a)$ instead of $(U]$, $(a]$, ${\rm Ann}(U)$ and ${\rm Ann}(a)$, respectively. Obviously, ${\rm Ann}(a)={\rm Ann}((a])$, $U\subseteq {\rm Ann}({\rm Ann}(U))$ and $\displaystyle {\rm Ann}(U)=\bigcap _{u\in U}{\rm Ann}(u)$, hence $\displaystyle {\rm Ann}(U)\cap {\rm Ann}(V)=\bigcap _{u\in U}{\rm Ann}(u)\cap \bigcap _{v\in V}{\rm Ann}(v)=\bigcap _{x\in U\cup V}{\rm Ann}(x)={\rm Ann}(U\cup V)$. Clearly, if $U\subseteq V$, then ${\rm Ann}(V)\subseteq {\rm Ann}(U)$, hence ${\rm Ann}({\rm Ann}(U))\subseteq {\rm Ann}({\rm Ann}(V))$. Obviously, for any $x\in L$, ${\rm Ann}((x])={\rm Ann}(x)$. It is straightforward that, if $L$ is distributive, then ${\rm Ann}(U)$ is an ideal of $L$. Recall that $L$ is called a {\em compact lattice} iff all its elements are compact. Notice that, in a compact lattice $L$, the join of any non--empty family $U\subseteq L$ equals the join of a finite non--empty subfamily of $U$.

We shall denote by ${\cal B}(L)$ the set of the complemented elements of the bounded lattice $L$. If $L$ is distributive, then ${\cal B}(L)$ is the Boolean center of $L$, which is a Boolean sublattice of $L$. We shall call ${\cal B}(L)$ the {\em Boolean center of $L$} regardless of whether $L$ is distributive. We shall call $L$ a {\em Stone lattice} iff, for all $a\in L$, there exists an $e\in {\cal B}(L)$ such that ${\rm Ann}(a)=(e]$. We shall call $L$ a {\em strongly Stone lattice} iff, for all $U\subseteq L$, there exists an $e\in {\cal B}(L)$ such that ${\rm Ann}(U)=(e]$. Trivially, if $L$ is strongly Stone, then $L$ is Stone. Since $\displaystyle {\rm Ann}(U)=\bigcap _{u\in U}{\rm Ann}(u)$ for any $U\subseteq L$ and $\displaystyle \bigcap _{i\in I}(a_i]=(\bigvee _{i\in I}a_i]$ for any non--empty family $(a
_i)_{i\in I}\subseteq L$ having a join, it follows that the converse holds if ${\cal B}(L)$ is closed w.r.t. arbitrary joins.

Now let $M$ be a bounded lattice and $f:L\rightarrow M$ be a surjective lattice morphism. Then it is straightforward that the map $I\mapsto f(I)$ is a surjective lattice morphism from ${\rm Id}(L)$ to ${\rm Id}(M)$, which fulfills $f((a])=(f(a)]$ for all $a\in L$. Moreover, this lattice morphism preserves arbitrary joins. Indeed, if $(J_i)_{i\in I}$ is a family of ideals of $L$, then $\displaystyle f(\bigvee _{i\in I}J_i)\supseteq f(\bigcup _{i\in I}J_i)=\bigcup _{i\in I}f(J_i)$, thus $\displaystyle f(\bigvee _{i\in I}J_i)\supseteq \bigvee _{i\in I}f(J_i)$. Now, if $\displaystyle x\in \bigvee _{i\in I}J_i$, then $x\leq x_1\vee \ldots \vee x_n$ for some $n\in \N $, $i_1,\ldots ,i_n\in I$ and $x_1\in J_{i_1},\ldots ,x_n\in J_{i_n}$. Then $\displaystyle f(x)\leq f(x_1)\vee \ldots \vee f(x_n)\in f(J_{i_1})\vee \ldots \vee f(J_{i_n})\subseteq \bigvee _{i\in I}f(J_i)$, thus $\displaystyle f(x)\in \bigvee _{i\in I}f(J_i)$, hence $\displaystyle f(\bigvee _{i\in I}J_i)\subseteq \bigvee _{i\in I}f(J_i)$. Therefore $\displaystyle f(\bigvee _{i\in I}J_i)=\bigvee _{i\in I}f(J_i)$.

For the arbitrary bounded lattice $L$, let us denote by: ${\cal A}nn(L)=\{{\rm Ann}_L(U)\ |\ U\subseteq L\}$, ${\rm PAnn}(L)=\{{\rm Ann}_L(a)\ |\ a\in L\}$, ${\rm P2Ann}(L)=\{{\rm Ann}_L({\rm Ann})_L(a))\ |\ a\in L\}$ and ${\rm 2Ann}(L)=\{{\rm Ann}_L({\rm Ann}_L(U))\ |\ U\subseteq L\}$. Regarding the conditions below on subsets of $L$, note that they are trivially fulfilled by $\emptyset $, because ${\rm Ann}_L(\emptyset )=L=(1]_L$ and ${\rm Ann}_L({\rm Ann}_L(\emptyset ))={\rm Ann}_L(L)=\{0\}=(0]_L$ and, clearly, $0,1\in {\cal B}(L)$. Here, I am specifying $L$ through indexes in the notations, for clarity. Let us consider the following conditions on $L$, where $\kappa $ is an arbitrary nonzero cardinality; note that $(1)_{m,L},(2)_{m,L},(3)_{m,L},(4)_{m,L},(5)_{m,L}$ express the conditions from \cite[Theorem $1$]{dav} in a way that makes sense without $L$ being assumed distributive:

\begin{tabular}{ll}
$(1)_{\kappa ,L}$ & for each $U\subseteq L$ with $|U|\leq \kappa $, there exists an $e\in {\cal B}(L)$ such that ${\rm Ann}_L(U)=(e]_L$;\\ 
$(1)_{<\infty ,L}$ & for each finite $U\subseteq L$, there exists an $e\in {\cal B}(L)$ such that ${\rm Ann}_L(U)=(e]_L$;\\ 
$(1)_L$ & $L$ is a strongly Stone lattice;\end{tabular}

\begin{tabular}{ll}
$(2)_{\kappa ,L}$ & $L$ is a Stone lattice and ${\cal B}(L)$ is a $\kappa $--complete Boolean sublattice of $L$;\\ 
$(2)_{<\infty ,L}$ & $L$ is a Stone lattice and ${\cal B}(L)$ is a Boolean sublattice of $L$;\\ 
$(2)_L$ & $L$ is a Stone lattice and ${\cal B}(L)$ is a complete Boolean sublattice of $L$;\end{tabular}

\begin{tabular}{ll}
$(3)_{\kappa ,L}$ & ${\rm P2Ann}(L)$ is a $\kappa $--complete Boolean sublattice of ${\rm Id}(L)$ such that\\ 
& $a\mapsto {\rm Ann}_L({\rm Ann}_L(a))$ is a lattice morphism from $L$ to ${\rm P2Ann}(L)$;\\ 
$(3)_{<\infty ,L}$ & ${\rm P2Ann}(L)$ is a Boolean sublattice of ${\rm Id}(L)$ such that\\ 
& $a\mapsto {\rm Ann}_L({\rm Ann}_L(a))$ is a lattice morphism from $L$ to ${\rm P2Ann}(L)$;\\ 
$(3)_L$ & ${\rm P2Ann}(L)$ is a complete Boolean sublattice of ${\rm Id}(L)$ such that\\ 
& $a\mapsto {\rm Ann}_L({\rm Ann}_L(a))$ is a lattice morphism from $L$ to ${\rm P2Ann}(L)$;\end{tabular}

\begin{tabular}{ll}
$(4)_{\kappa ,L}$ & for all $a,b\in L$, ${\rm Ann}_L(a\wedge b)=({\rm Ann}_L(a)\cup {\rm Ann}_L(b)]_L$, and, for each $U\subseteq L$ with $|U|\leq \kappa $,\\ 
& there exists an $x\in L$ such that ${\rm Ann}_L({\rm Ann}_L(U))={\rm Ann}_L(x)$;\\ 
$(4)_{<\infty ,L}$ & for all $a,b\in L$, ${\rm Ann}_L(a\wedge b)=({\rm Ann}_L(a)\cup {\rm Ann}_L(b)]_L$, and, for each finite $U\subseteq L$,\\ 
& there exists an $x\in L$ such that ${\rm Ann}_L({\rm Ann}_L(U))={\rm Ann}_L(x)$;\\ 
$(4)_L$ & for all $a,b\in L$, ${\rm Ann}_L(a\wedge b)=({\rm Ann}_L(a)\cup {\rm Ann}_L(b)]_L$, and, for each $U\subseteq L$,\\ 
& there exists an $x\in L$ such that ${\rm Ann}_L({\rm Ann}_L(U))={\rm Ann}_L(x)$;\\ 
$(iv)_L$ & for all $a,b\in L$, ${\rm Ann}_L(a\wedge b)=({\rm Ann}_L(a)\cup {\rm Ann}_L(b)]_L$;\end{tabular}

\begin{tabular}{ll}
$(5)_{\kappa ,L}$ & for each $U\subseteq L$ with $|U|\leq \kappa $, $({\rm Ann}_L(U)\cup {\rm Ann}_L({\rm Ann}_L(U))]_L=L$;\\ 
$(5)_{<\infty ,L}$ & for each finite $U\subseteq L$, $({\rm Ann}_L(U)\cup {\rm Ann}_L({\rm Ann}_L(U))]_L=L$;\\ 
$(5)_L$ & for each $U\subseteq L$, $({\rm Ann}_L(U)\cup {\rm Ann}_L({\rm Ann}_L(U))]_L=L$.\end{tabular}

\begin{theorem}{\rm \cite[Theorem $1$]{dav}} If $L$ is a bounded distributive lattice, then the conditions $(1)_{m,L}$, $(2)_{m,L}$, $(3)_{m,L}$, $(4)_{m,L}$ and $(5)_{m,L}$ are equivalent.\label{davey}\end{theorem}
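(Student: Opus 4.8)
The plan is to pass from $L$ to its ideal lattice ${\rm Id}(L)$, which is distributive, indeed a frame, precisely because $L$ is, and to read annihilators there as pseudocomplements: writing $J^{*}={\rm Ann}_L(J)$ for the pseudocomplement of an ideal $J$, one has ${\rm Ann}_L(U)=(U]_L^{*}$ and ${\rm Ann}_L({\rm Ann}_L(U))=(U]_L^{**}$ for every $U\subseteq L$. Two structural facts then organize everything. First, the regular ideals (the fixed points of $J\mapsto J^{**}$), which coincide with the annihilator ideals since $J^{*}=J^{***}$, form a complete Boolean algebra $\mathcal{R}$ with meet $\cap$, join $J\sqcup K=(J\vee K)^{**}$, bounds $(0]_L$ and $L$, and complementation $J\mapsto J^{*}$ (Glivenko). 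Second, an ideal is complemented in ${\rm Id}(L)$ if and only if it equals $(e]_L$ for some $e\in{\cal B}(L)$, its complement being $(e']_L$; thus the phrase ``${\rm Ann}_L(U)=(e]_L$ with $e\in{\cal B}(L)$'' means exactly ``${\rm Ann}_L(U)$ is complemented in ${\rm Id}(L)$''.

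I would introduce a single hub condition $(\star)_{m,L}$: for every $U\subseteq L$ with $|U|\leq m$, the ideal ${\rm Ann}_L(U)$ is complemented in ${\rm Id}(L)$. Since $(U]_L^{*}\cap(U]_L^{**}=(0]_L$ always holds, $(\star)_{m,L}$ is equivalent to $({\rm Ann}_L(U)\cup{\rm Ann}_L({\rm Ann}_L(U))]_L=L$, so $(5)_{m,L}$ is literally $(\star)_{m,L}$ rewritten; and $(1)_{m,L}\Leftrightarrow(\star)_{m,L}$ is immediate from the correspondence between complemented ideals and ${\cal B}(L)$, since $(1)$ only records that a complemented ideal has the shape $(e]_L$. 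These two equivalences are the easy anchors of the argument.

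Next I would establish $(2)_{m,L}\Leftrightarrow(\star)_{m,L}$. The direction $(\star)_{m,L}\Rightarrow(2)_{m,L}$ is easy: the Stone property is the singleton case of $(\star)$, and $m$-completeness of ${\cal B}(L)$ follows by applying $(\star)$ to $\leq m$-families of complemented elements via ${\rm Ann}_L(\{e_i'\})=\bigcap_i{\rm Ann}_L(e_i')=\bigcap_i(e_i]_L$, which is then complemented, hence $(\bigwedge_i e_i]_L$, and dually for joins. Conversely, assuming $L$ Stone with an $m$-complete center, for $|U|\leq m$ I would write ${\rm Ann}_L(U)=\bigcap_{u\in U}{\rm Ann}_L(u)=\bigcap_{u\in U}(e_u]_L$ with $e_u\in{\cal B}(L)$ and identify this with $(e]_L$, where $e=\bigwedge_u e_u$ in ${\cal B}(L)$, its complement being $(\bigvee_u e_u']_L$. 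Conditions $(3)_{m,L}$ and $(4)_{m,L}$ then feed into the hub through the frame De Morgan laws: the clause $(iv)_L$ reads as $((a]_L\cap(b]_L)^{*}=(a]_L^{*}\vee(b]_L^{*}$, the nontrivial (normality) half of De Morgan, the reverse inclusion being automatic; the remaining clause of $(4)$ and the Boolean-sublattice-plus-lattice-morphism demands of $(3)$ say, once rephrased through $\mathcal{R}$ and the map $a\mapsto(a]_L^{**}$ (which preserves finite meets automatically, since $J\mapsto J^{**}$ is a nucleus), that $\leq m$-ary double annihilators are suitably principal and that the complement of a regular ideal taken inside $\mathcal{R}$ is again its complement inside ${\rm Id}(L)$, i.e.\ $(\star)_{m,L}$ once more.

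The hard part will be the reconciliation of the various $m$-ary Boolean operations with those of the ambient frame. A $\leq m$-family in ${\cal B}(L)$, or in $\mathcal{R}$, or in ${\rm P2Ann}(L)$, has meets and joins computed internally (by complementation, by Glivenko), and the real content is that these agree with the genuine meets and joins in $L$ and in ${\rm Id}(L)$, so that ``complemented in $\mathcal{R}$'' upgrades to ``complemented in ${\rm Id}(L)$''. Concretely, in the delicate direction $(2)_{m,L}\Rightarrow(\star)_{m,L}$ one must prove $\bigcap_{u}(e_u]_L=(\bigwedge_u e_u]_L$, that is, that the center-meet is the $L$-meet; the only tool that forces this is the infinitary, here $m$-ary, distributive law available in the frame ${\rm Id}(L)$, applied to $(U]_L^{*}$ against $\bigvee_u(e_u']_L$ and combined with the completeness hypothesis. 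This is exactly the step that consumes $m$-completeness, and I expect it to be the crux on which all five equivalences ultimately turn.
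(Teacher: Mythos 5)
First, a point of orientation: the paper does not prove this statement at all --- Theorem \ref{davey} is imported verbatim from \cite[Theorem $1$]{dav} and used as a black box, so there is no in-paper proof to compare yours against. Judged on its own terms, your architecture is the right one and matches the classical treatment: reading ${\rm Ann}_L(U)$ as the pseudocomplement $(U]_L^{*}$ in the frame ${\rm Id}(L)$, invoking Glivenko for the regular ideals, and using the correspondence between complemented ideals and principal ideals generated by elements of ${\cal B}(L)$ does make $(1)_{m,L}\Leftrightarrow(\star)_{m,L}\Leftrightarrow(5)_{m,L}$ essentially definitional (for the last step one needs the small observation that in a distributive ${\rm Id}(L)$ a complement of an ideal $J$, if it exists, must equal $J^{*}$, which you implicitly use and which is fine).

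There are, however, two genuine gaps. (a) In $(\star)_{m,L}\Rightarrow(2)_{m,L}$ you dispose of $m$-ary joins in ${\cal B}(L)$ with ``and dually for joins,'' but the situation is not self-dual: $(\star)$ hands you \emph{infima} directly, because $\bigcap_i(e_i]_L$ is literally the set of common lower bounds, whereas for suprema the complementation trick only shows that $f'=(\bigwedge_i e_i')'$ satisfies $f'\leq y^{**}$ for every upper bound $y$ of $(e_i)_i$ (via $(U]_L\subseteq (y]_L\Rightarrow (U]_L^{**}\subseteq (y]_L^{**}$), not $f'\leq y$; closing this gap --- or deciding that $(2)$ only asserts internal $m$-completeness of the Boolean algebra ${\cal B}(L)$, in which case the converse direction instead needs the argument $x\leq x^{**}\in{\cal B}(L)$ to convert arbitrary lower bounds into central ones --- is the actual content of this equivalence, and the tool you nominate for it, the frame law of ${\rm Id}(L)$, does not deliver it: what is needed there is a join distributing over an \emph{infinitary meet} $\bigcap_u(e_u]_L$, i.e.\ the co-frame law, which fails in ${\rm Id}(L)$ in general. (b) The equivalences involving $(3)_{m,L}$ and $(4)_{m,L}$ are only gestured at. In particular, $(4)_{m,L}\Rightarrow(5)_{m,L}$ is not a formal De Morgan manipulation: from the second clause one must pick $x$ with ${\rm Ann}_L({\rm Ann}_L(U))={\rm Ann}_L(x)$ and $y$ with ${\rm Ann}_L({\rm Ann}_L(x))={\rm Ann}_L(y)$, observe that then $(y]_L\subseteq (y]_L^{**}={\rm Ann}_L(x)$ forces $x\wedge y=0$, and only then apply the normality clause to the pair $(x,y)$ to get $L={\rm Ann}_L(x\wedge y)={\rm Ann}_L(x)\vee{\rm Ann}_L(y)=(U]_L^{**}\vee (U]_L^{*}$; without this witness-chasing the implication does not follow, since $J\vee\bigcap_b K_b=\bigcap_b(J\vee K_b)$ is exactly the infinitary distributivity that ${\rm Id}(L)$ lacks. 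So the skeleton is sound, but the two steps you yourself flag as the crux are precisely the ones where the proposal, as written, does not yet constitute a proof.
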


\begin{remark}{\rm \cite{eu,eu7}} Notice that, if we denote by $L^{\prime }$ the dual of the bounded lattice $L$, then the duals of conditions $(1)_{\kappa ,L}$ through $(5)_L$ above are simply conditions $(1)_{\kappa ,L^{\textstyle \prime }}$ through $(5)_{L^{\textstyle \prime }}$, respectively, that, with respect to $L$, can be expressed through co--anihilators and generated filters (see also \cite{eu,eu7}). In the case when $L$ is a bounded distributive lattice, so is $L^{\prime}$, thus conditions $(1)_{m,L^{\textstyle \prime }}$, $(2)_{m,L^{\textstyle \prime }}$, $(3)_{m,L^{\textstyle \prime }}$, $(4)_{m,L^{\textstyle \prime }}$ and $(5)_{m,L^{\textstyle \prime }}$ are equivalent, as well. And, of course, all the following properties on $L$ in this paper hold for $L^{\prime }$, too.

Let us also note that, for any $i\in \overline{1,5}$, we have the following: for any nonzero cardinalities $\kappa ,\mu $ such that $\kappa \leq \mu $, $(i)_{\mu ,L}$ implies $(i)_{\kappa ,L}$, and thus $(i)_{\mu ,L}$ is equivalent to $(i)_{\nu ,L}$ being valid for all nonzero cardinalities $\nu \leq \mu $; $(i)_L$ is equivalent to $(i)_{\kappa ,L}$ being valid for all nonzero cardinalities $\kappa $, as well as to $(i)_{\kappa ,L}$ being valid for all nonzero cardinalities $\kappa $ greater than a cardinality $\mu $; $(i)_{<\infty ,L}$ is equivalent to $(i)_{\kappa ,L}$ being valid for all finite nonzero cardinalities $\kappa $, as well as to $(i)_{\kappa ,L}$ being valid for all finite nonzero cardinalities $\kappa $ greater than a finite cardinality $\mu $. By the above and Theorem \ref{davey}, we get that, if $L$ is a bounded distributive lattice, then conditions $(1)_L$, $(2)_L$, $(3)_L$, $(4)_L$ and $(5)_L$ are equivalent.

Since, in any bounded lattice $L$, $\displaystyle {\rm Ann}_L(U)=\bigcap _{u\in U}{\rm Ann}_L(u)$ for each $U\subseteq L$ and, for any family $(x_i)_{i\in I}\subseteq L$ having a meet, $\displaystyle \bigcap _{i\in I}(x_i]_L=(\bigwedge _{i\in I}x_i]_L$, it follows that, for any bounded lattice $L$, $(2)_L$ implies $(1)_L$. The converse holds if ${\cal B}(L)$ is a complete Boolean sublattice of $L$, case in which, therefore, $L$ is Stone iff $L$ is strongly Stone, that is: for any finite nonzero cardinality $\kappa $, $(1)_L$ is equivalent to $(1)_{\kappa ,L}$ and thus also to $(1)_{<\infty ,L}$. Also, for any family $(U_i)_{i\in I}$ of subsets of $L$: $\displaystyle {\rm Ann}_L(\bigcup _{i\in I}U_i)=\bigcap _{u\in \bigcup _{i\in I}U_i}{\rm Ann}_L(u)=\bigcap _{i\in I}\bigcap _{u\in U_i}{\rm Ann}_L(u)=\bigcap _{i\in I}{\rm Ann}_L(U_i)$.

Now assume that $L$ is distributive. Then, for all $n\in \N ^*$ and all $u_1,\ldots ,u_n\in L$, $\displaystyle \bigcap _{i=1}^n{\rm Ann}_L(u_i)={\rm Ann}_L(\{u_1,\ldots ,$\linebreak $\displaystyle u_n\})={\rm Ann}_L(\bigvee _{i=1}^nu_i)$. Indeed, for all $k\in \overline{1,n}$, $\displaystyle u_k\leq \bigvee _{i=1}^nu_i$, hence $\displaystyle {\rm Ann}_L(\bigvee _{i=1}^nu_i)\subseteq {\rm Ann}_L(u_k)$, therefore $\displaystyle {\rm Ann}_L(\bigvee _{i=1}^nu_i)\subseteq \bigcap _{i=1}^n{\rm Ann}_L(u_i)$. If $\displaystyle x\in \bigcap _{i=1}^n{\rm Ann}_L(u_i)$, then $\displaystyle x\wedge (\bigvee _{i=1}^nu_i)=\bigvee _{i=1}^n(x\wedge u_i)=0$, thus $\displaystyle x\in {\rm Ann}_L(\bigvee _{i=1}^nu_i)$, so the converse inclusion holds, as well. Therefore ${\rm FAnn}(L)={\rm PAnn}(L)$, from which it is easy to see that, for any $i\in \overline{1,5}$ and any finite nonzero cardinality $\kappa $, $(i)_{\kappa ,L}$ is equivalent to $(i)_{<\infty ,L}$. It is immediate that, for any finite nonzero cardinality $\kappa $, $(1)_{\kappa ,L}$, $(2)_{\kappa ,L}$, $(3)_{\kappa ,L}$, $(4)_{\kappa ,L}$ and $(5)_{\kappa ,L}$ are equivalent.

If, moreover, $L$ is a frame, then, for any $\emptyset \neq U\subseteq L$, $\displaystyle \bigcap _{u\in U}{\rm Ann}_L(u)={\rm Ann}_L(U)={\rm Ann}_L(\bigvee _{u\in U}u)$. Indeed, for all $a\in U$, $\displaystyle a\leq \bigvee _{u\in U}u$, hence $\displaystyle {\rm Ann}_L(\bigvee _{u\in U}u)\subseteq {\rm Ann}_L(a)$, therefore $\displaystyle {\rm Ann}_L(\bigvee _{u\in U}u)\subseteq \bigcap _{u\in U}{\rm Ann}_L(u)$. If $\displaystyle x\in \bigcap _{u\in U}{\rm Ann}_L(u)$, then $\displaystyle x\wedge (\bigvee _{u\in U}u)=\bigvee _{u\in U}(x\wedge u)=0$, thus $\displaystyle x\in {\rm Ann}_L(\bigvee _{u\in U}u)$, so the converse inclusion holds, as well. Therefore, in a frame $L$, ${\cal A}nn(L)={\rm PAnn}(L)$, from which it is easy to see that, for any $i\in \overline{1,5}$ and any finite nonzero cardinality $\kappa $, $(i)_{\kappa ,L}$ is equivalent to $(i)_L$, and it also follows that ${\rm 2Ann}(L)={\rm P2Ann}(L)\subseteq {\cal A}nn(L)={\rm PAnn}(L)$, so that the second part of condition $(4)_{\kappa ,L}$ is fulfilled for any nonzero cardinality $\kappa $, and thus $(4)_{\kappa ,L}$, $(4)_{<\infty ,L}$, $(4)_L$ and $(iv)_L$ are equivalent.\label{dualdavey}\end{remark}

\begin{corollary}If $L$ is a bounded distributive lattice, then:\begin{itemize}
\item conditions $(1)_L$, $(2)_L$, $(3)_L$, $(4)_L$ and $(5)_L$ are equivalent;
\item for any nonzero cardinality $\kappa $, $(1)_{\kappa ,L}$, $(2)_{\kappa ,L}$, $(3)_{\kappa ,L}$, $(4)_{\kappa ,L}$ and $(5)_{\kappa ,L}$ are equivalent;
\item for any finite nonzero cardinality $\kappa $, $(1)_{\kappa ,L}$, $(2)_{\kappa ,L}$, $(3)_{\kappa ,L}$, $(4)_{\kappa ,L}$, $(5)_{\kappa ,L}$, $(1)_{<\infty ,L}$, $(2)_{<\infty ,L}$, $(3)_{<\infty ,L}$, $(4)_{<\infty ,L}$ and $(5)_{<\infty ,L}$ are equivalent;
\item if, moreover, $L$ is a frame, then, for any nonzero cardinality $\kappa $ and any $h,i,j\in \overline{1,5}$, $(iv)_L$, $(h)_{\kappa ,L}$, $(i)_{<\infty ,L}$ and $(j)_L$ are equivalent; in particular, $(1)_L$ is equivalent to $(2)_{<\infty ,L}$, so that: $L$ is a Stone lattice iff $L$ is a strongly Stone lattice.\end{itemize}\label{moredavey}\end{corollary}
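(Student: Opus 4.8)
The plan is to assemble the four bullet points from Theorem \ref{davey} together with the monotonicity and limiting relationships already recorded in Remark \ref{dualdavey}, rather than reprove any annihilator identity from scratch; all the needed ingredients are on hand, so the task is really one of organizing the implications at each ``cardinality level''.

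First I would dispatch the second bullet. For infinite $\kappa $ the equivalence of $(1)_{\kappa ,L},\ldots ,(5)_{\kappa ,L}$ is exactly Theorem \ref{davey} applied with $m=\kappa $, since $m$ was fixed but arbitrary among infinite cardinals. For finite nonzero $\kappa $ the same cross-equivalence is the one flagged as immediate at the end of the distributive discussion in Remark \ref{dualdavey}. The first bullet, the equivalence of $(1)_L,\ldots ,(5)_L$, is already asserted in Remark \ref{dualdavey}; I would record it in one line for completeness by using that $(i)_L$ is equivalent to $(i)_{\kappa ,L}$ holding for all $\kappa $ above a fixed cardinal, then running $(i)_L\Rightarrow (i)_{\kappa ,L}\Rightarrow (j)_{\kappa ,L}\Rightarrow (j)_L$ through Davey's equivalence at each infinite $\kappa $. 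The third bullet then follows by combining the finite-$\kappa $ cross-equivalences just obtained with the equivalence $(i)_{\kappa ,L}\Leftrightarrow (i)_{<\infty ,L}$ for finite $\kappa $, established in the distributive part of Remark \ref{dualdavey}.

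The fourth bullet, under the extra frame hypothesis, is where the bookkeeping matters most, since it must link conditions across all cardinalities simultaneously. The device I would use is the chain $(4)_{\kappa ,L}\Leftrightarrow (4)_{<\infty ,L}\Leftrightarrow (4)_L\Leftrightarrow (iv)_L$, valid for every nonzero $\kappa $ in a frame and already proven in Remark \ref{dualdavey}; this single condition $(4)$ is the bridge that connects the $\kappa $-level to the $L$-level even when $\kappa $ is infinite. Concretely, for any nonzero $\kappa $ and any $h,j$ I would chain $(h)_{\kappa ,L}\Leftrightarrow (4)_{\kappa ,L}$ (second bullet) $\Leftrightarrow (4)_L$ (frame chain) $\Leftrightarrow (j)_L$ (first bullet), then attach $(iv)_L$ through the same frame chain and attach each $(i)_{<\infty ,L}$ via its finite-$\kappa $ equivalence. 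The displayed ``in particular'' is read off by taking the two ends $(1)_L$ and $(2)_{<\infty ,L}$: for distributive $L$ the center ${\cal B}(L)$ is automatically a Boolean sublattice, so $(2)_{<\infty ,L}$ collapses to ``$L$ is Stone'' and the equivalence becomes ``$L$ strongly Stone iff $L$ Stone''.

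I expect the only genuine obstacle to be the infinite-$\kappa $ case of the frame bullet: the verbatim statement in Remark \ref{dualdavey} equates $(i)_{\kappa ,L}$ with $(i)_L$ only for \emph{finite} $\kappa $, so I cannot quote it directly for infinite $\kappa $ and must instead route through the $(4)$-chain, which the Remark does assert for all nonzero $\kappa $, in order to close the loop. Everything else reduces to transitivity of implications already in hand.
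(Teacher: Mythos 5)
Your proposal is correct and follows essentially the same route as the paper, which derives this corollary directly from the ingredients assembled in Remark \ref{dualdavey} together with Theorem \ref{davey}. The only point worth noting is that your detour through the $(4)$-chain for the infinite-$\kappa$ frame case, while valid, is not strictly needed: the monotonicity recorded in Remark \ref{dualdavey} gives $(i)_L\Rightarrow (i)_{\kappa ,L}\Rightarrow (i)_{1,L}$ for any nonzero $\kappa $, and the Remark's equivalence $(i)_{1,L}\Leftrightarrow (i)_L$ for finite cardinalities in a frame already closes the loop.
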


Throughout this paper, all algebras shall be non--empty, ${\cal C}$ shall be a semi--degenerate congruence--modular equational class of algebras of the same type and $A$ shall be an arbitrary algebra from ${\cal C}$. $({\rm Con}(A),\vee ,\cap ,\Delta _A,\nabla _A)$ shall be the complete modular lattice of the congruences of $A$, with $\Delta _A=\{(a,a)\ |\ a\in A\}$ and $\nabla _A=A^2$, so that the set of the {\em proper congruences} of $A$ is ${\rm Con}(A)\setminus \{\nabla _A\}$. $[\cdot ,\cdot ]_A:({\rm Con}(A))^2\rightarrow {\rm Con}(A)$ shall be the commutator of $A$. Recall that $[\cdot ,\cdot ]_A$ is commutative, smaller than the intersection, increasing in both arguments and distributive in both arguments with respect to arbitrary joins \cite{fremck}. Following \cite{fremck}, if $\phi $ is a proper congruence of $A$, then we call $\phi $ a {\em prime congruence} iff, for all $\theta ,\zeta \in {\rm Con}(A)$, $[\theta ,\zeta ]_A\subseteq \phi $ implies $\theta \subseteq \phi $ or $\zeta \subseteq \phi $. We shall denote by ${\rm Spec}(A)$ the set of the prime congruences of $A$.

Since ${\cal C}$ is semi--degenerate, it follows that ${\cal C}$ has no skew congruences \cite[Theorem 8.5, p. 85]{fremck} and, for any member $M$ of ${\cal C}$, $\nabla _M$ is a compact congruence of $M$ \cite{koll} and each proper congruence of $M$ is included in a prime congruence \cite[Theorem $5.3$]{agl}, thus, if $M$ is {\em non--trivial}, that is $|M|>1$, then ${\rm Spec}(M)$ is non--empty. Recall that the compact congruences of an algebra are exactly its finitely generated congruences.

Following \cite{retic}, for all $\theta \in {\rm Con}(A)$, we shall denote by $\rho _A(\theta )$ the {\em radical of $\theta $}: $\rho _A(\theta )=\bigcap \{\phi \in {\rm Spec}(A)\ |\ \theta \subseteq \phi \}$, and by $\equiv _A$ the binary relation on ${\rm Con}(A)$ defined by: for any $\theta ,\zeta \in {\rm Con}(A)$, $\theta \equiv _A\zeta $ iff $\rho _A(\theta )=\rho _A(\zeta )$. We have proven, in \cite{retic}, that $\equiv _A$ is an equivalence on ${\rm Con}(A)$, and we have denoted by $\lambda _A:{\rm Con}(A)\rightarrow {\rm Con}(A)/_{\textstyle \equiv _A}$ the canonical surjection. Moreover, we have proven that $\equiv _A$ is a congruence of the lattice ${\rm Con}(A)$, thus $({\rm Con}(A)/_{\textstyle \equiv _A},\vee ,\wedge ,\lambda _A(\Delta _A),\lambda _A(\nabla _A))$ is a bounded lattice and $\lambda _A$ is a surjective lattice morphism, where $\lambda _A(\theta )\vee \lambda _A(\zeta )=\lambda _A(\theta \vee \zeta )$ and $\lambda _A(\theta )\wedge \lambda _A(\zeta )=\lambda _A(\theta \cap \zeta )$ for all $\theta ,\zeta \in {\rm Con}(A)$. We have also proven that $\lambda _A(\theta \cap \zeta )=\lambda _A([\theta ,\zeta ]_A)$ for all $\theta ,\zeta \in {\rm Con}(A)$, from which, by using the distributivity of the commutator with respect to the join, it immediately follows that the bounded lattice ${\rm Con}(A)/_{\textstyle \equiv _A}$ is distributive. Moreover, since ${\rm Con}(A)$ is a complete lattice and $[\cdot ,\cdot ]_A$ is distributive w.r.t. arbitrary joins, it follows that ${\rm Con}(A)/_{\textstyle \equiv _A}$ is a complete lattice in which the meet is distributive w.r.t. arbitrary joins, that is ${\rm Con}(A)/_{\textstyle \equiv _A}$ is a frame (see also \cite{retic}). Hence, from Corollary \ref{moredavey}, we obtain:

\begin{corollary} For any nonzero cardinality $\kappa $ and any $h,i,j\in \overline{1,5}$, $(h)_{\kappa ,{\rm Con}(A)/_{\textstyle \equiv _A}}$, $(i)_{<\infty ,{\rm Con}(A)/_{\textstyle \equiv _A}}$ and $(j)_{{\rm Con}(A)/_{\textstyle \equiv _A}}$ are equivalent.\label{cordav}\end{corollary}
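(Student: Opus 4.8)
The plan is to obtain this corollary as a direct specialization of the final bullet of Corollary \ref{moredavey} to the particular lattice $L={\rm Con}(A)/_{\textstyle \equiv _A}$. The content of that bullet is precisely the desired equivalence, \emph{provided} that $L$ is a frame. Hence the only thing I need is to confirm that ${\rm Con}(A)/_{\textstyle \equiv _A}$ satisfies the frame hypothesis, and this has already been secured in the paragraph immediately preceding the statement, so essentially no new work is required.

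To record that verification explicitly: by the results recalled from \cite{retic}, $\equiv _A$ is a lattice congruence on the complete lattice ${\rm Con}(A)$, so ${\rm Con}(A)/_{\textstyle \equiv _A}$ is a bounded lattice and $\lambda _A$ is a surjective lattice morphism with $\lambda _A(\theta )\wedge \lambda _A(\zeta )=\lambda _A(\theta \cap \zeta )$ and $\lambda _A(\theta )\vee \lambda _A(\zeta )=\lambda _A(\theta \vee \zeta )$. The identity $\lambda _A(\theta \cap \zeta )=\lambda _A([\theta ,\zeta ]_A)$, combined with the distributivity of the commutator with respect to arbitrary joins, yields both the distributivity of the quotient lattice and the distributivity of its meet over arbitrary joins; and since ${\rm Con}(A)$ is complete, so is the quotient. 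Therefore $L={\rm Con}(A)/_{\textstyle \equiv _A}$ is a complete distributive lattice whose meet distributes over arbitrary joins, that is, a frame.

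With the frame property in hand, I would invoke the fourth bullet of Corollary \ref{moredavey} with $L:={\rm Con}(A)/_{\textstyle \equiv _A}$. That bullet asserts that, for any nonzero cardinality $\kappa $ and any $h,i,j\in \overline{1,5}$, the conditions $(iv)_L$, $(h)_{\kappa ,L}$, $(i)_{<\infty ,L}$ and $(j)_L$ are equivalent. Dropping the $(iv)_L$ term, which does not occur in the present statement, leaves exactly the asserted equivalence of $(h)_{\kappa ,{\rm Con}(A)/_{\textstyle \equiv _A}}$, $(i)_{<\infty ,{\rm Con}(A)/_{\textstyle \equiv _A}}$ and $(j)_{{\rm Con}(A)/_{\textstyle \equiv _A}}$, which completes the argument.

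I do not expect any genuine obstacle here: the corollary is a pure instantiation of Corollary \ref{moredavey}, and the single nontrivial ingredient---that the quotient is a frame---rests on the commutator being distributive over arbitrary joins, which has already been established. The substance of the work lies upstream, in Davey's Theorem and in the frame structure of ${\rm Con}(A)/_{\textstyle \equiv _A}$, rather than in this deduction.
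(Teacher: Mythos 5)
Your proof is correct and is exactly the paper's argument: the paragraph preceding the corollary establishes that ${\rm Con}(A)/_{\textstyle \equiv _A}$ is a (bounded distributive) frame, and the corollary is then a direct instantiation of the fourth bullet of Corollary \ref{moredavey}. No difference in approach.
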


We have proven in \cite{retic} that:\begin{itemize}
\item for any $\theta \in {\rm Con}(A)$, $\lambda _A(\theta )=\lambda _A(\nabla
_A)$ iff $\theta =\nabla
_A$; indeed, $\lambda _A(\theta )=\lambda _A(\nabla
_A)$ iff $\rho _A(\theta )=\rho _A(\nabla
_A)=\bigcap \emptyset =\nabla _A$ iff no prime congruence of $A$ includes $\theta $ iff $\theta =\nabla _A$;
\item ${\cal B}({\rm Con}(A))$ is a Boolean sublattice of ${\rm Con}(A)$, in which the commutator coincides to the intersection.\end{itemize}

We call $A$ a {\em semiprime algebra} iff $\rho _A(\Delta _A)=\Delta _A$. If ${\cal C}$ is congruence--distributive, then $\rho _A(\theta )=\theta $ for all $\theta \in {\rm Con}(A)$, thus $A$ is semiprime (see, also, {\rm \cite{retic}}).

Throughout the rest of this paper, the algebra $A$ shall be semiprime. Then, as we have proven in \cite{retic}:\begin{itemize}
\item $(\circ )\quad $ ${\cal B}({\rm Con}(A)/_{\textstyle \equiv _A})={\cal B}({\rm Con}(A))/_{\textstyle \equiv _A}$;
\item $(1^{\circ })\quad $ $\lambda _A\mid _{{\cal B}({\rm Con}(A))}:{\cal B}({\rm Con}(A))\rightarrow {\cal B}({\rm Con}(A)/_{\textstyle \equiv _A})={\cal B}({\rm Con}(A))/_{\textstyle \equiv _A}$ is a Boolean isomorphism;
\item $(2^{\circ })\quad $ for all $\theta \in {\rm Con}(A)$: $\lambda _A(\theta )\in {\cal B}({\rm Con}(A)/_{\textstyle \equiv _A})$ iff $\theta \in {\cal B}({\rm Con}(A))$;
\item $(3^{\circ })\quad $ for all $\theta \in {\rm Con}(A)$, $\lambda _A(\theta )=\lambda _A(\Delta _A)$ iff $\theta =\Delta _A$; indeed, $\lambda _A(\theta )=\lambda _A(\Delta _A)$ implies $\theta \subseteq \rho _A(\theta )=\rho _A(\Delta _A)=\Delta _A$, thus $\theta =\Delta _A$; the converse implication is trivial;
\item $(4^{\circ })\quad $ for all $\theta ,\zeta \in {\rm Con}(A)$, $[\theta ,\zeta ]_A=\Delta _A$ iff $\theta \cap \zeta =\Delta _A$; indeed, by $(3^{\circ })$: $[\theta ,\zeta ]_A=\Delta _A$ iff $\lambda _A([\theta ,\zeta ]_A)=\lambda _A(\Delta _A)$ iff $\lambda _A(\theta )\wedge \lambda _A(\zeta )=\lambda _A(\Delta _A)$ iff $\lambda _A(\theta \cap \zeta )=\lambda _A(\Delta _A)$ iff $\theta \cap \zeta =\Delta _A$.\end{itemize}

I shall make repeated use of the surjectivity of $\lambda _A:{\rm Con}(A)\rightarrow {\rm Con}(A)/_{\textstyle \equiv _A}$, without mentioning it; the same goes for the remarks from this paper and the results I am recalling from \cite{retic}, excepting $(1^{\circ })$, $(2^{\circ })$, $(3^{\circ })$ and $(4^{\circ })$.

By Corollary \ref{moredavey}, in the particular case when ${\rm Con}(A)$ is distributive, conditions $(1)_{{\rm Con}(A)},\ldots ,(5)_{{\rm Con}(A)}$ are equivalent, if $\kappa $ is a nonzero cardinality, then conditions $(1)_{\kappa ,{\rm Con}(A)},\ldots ,(5)_{\kappa ,{\rm Con}(A)}$ are equivalent, and, if $\kappa $ is finite, then conditions $(1)_{\kappa ,{\rm Con}(A)},\ldots ,(5)_{\kappa ,{\rm Con}(A)},(1)_{<\infty ,{\rm Con}(A)},\ldots ,(5)_{<\infty ,{\rm Con}(A)}$ are equivalent. An example of a semi--degenerate congruence--modular variety which is not congruence--distributive is the variety of commutative unitary rings \cite{kap}.

Throughout the rest of this section, unless there is danger of confusion, all annihilators of elements or subsets of ${\rm Con}(A)$, respectively ${\rm Con}(A)/_{\textstyle \equiv _A}$,  shall be in the lattice ${\rm Con}(A)$, respectively ${\rm Con}(A)/_{\textstyle \equiv _A}$, and the same shall go for generated ideals.

\begin{remark} By $(4^{\circ })$, for all $\theta \in {\rm Con}(A)$ and all $\Omega \subseteq {\rm Con}(A)$, ${\rm Ann}(\theta )=\{\zeta \in {\rm Con}(A)\ |\ \theta \cap \zeta =\Delta _A\}=\{\zeta \in {\rm Con}(A)\ |\ [\theta ,\zeta ]_A=\Delta _A\}$ and ${\rm Ann}(\Omega )=\{\zeta \in {\rm Con}(A)\ |\ (\forall \, \omega \in \Omega )\, (\omega \cap \zeta =\Delta _A)\}=\{\zeta \in {\rm Con}(A)\ |\ (\forall \, \omega \in \Omega )\, ([\omega ,\zeta ]_A=\Delta _A)\}$.\end{remark}

\begin{remark} Any annihilator of ${\rm Con}(A)$ is an ideal of ${\rm Con}(A)$. Indeed, let $\omega \in {\rm Con}(A)$. Then $\Delta _A\cap \omega =\Delta _A$, thus $\Delta _A\in {\rm Ann}(\omega )$, so ${\rm Ann}(\omega )\neq \emptyset $. Now let $\theta ,\zeta \in {\rm Con}(A)$. If $\zeta \in {\rm Ann}(\omega )$ and $\theta \subseteq \zeta $, then $\theta \cap \omega \subseteq \zeta \cap \omega =\Delta _A$, thus $\theta \in {\rm Ann}(\omega )$. If $\theta ,\zeta \in {\rm Ann}(\omega )$, then $[\omega ,\theta \vee \zeta ]_A=[\omega ,\theta ]_A\vee [\omega ,\zeta ]_A=\Delta _A\vee \Delta _A=\Delta _A$, hence $\theta \vee \zeta \in {\rm Ann}(\omega )$. Therefore ${\rm Ann}(\omega )\in {\rm Id}({\rm Con}(A))$. Hence, for any $\Omega \subseteq {\rm Con}(A)$, $\displaystyle {\rm Ann}(\Omega )=\bigcap _{\omega \in \Omega }{\rm Ann}(\omega )\in {\rm Id}({\rm Con}(A))$.\label{2.6}\end{remark}

\begin{remark} Of course, the direct image of $\lambda _A:{\rm Con}(A)\rightarrow {\rm Con}(A)/_{\textstyle \equiv _A}$ preserves arbitrary unions of subsets of ${\rm Con}(A)$. It also preserves arbitrary intersections, because, if $(\Omega _i)_{i\in I}$ is a family of subsets of ${\rm Con}(A)$, then $\displaystyle \lambda _A(\bigcap _{i\in I}\Omega _i)=(\bigcap _{i\in I}\Omega _i)/_{\textstyle \equiv _A}=\bigcap _{i\in I}(\Omega _i/_{\textstyle \equiv _A})=\bigcap _{i\in I}\lambda _A(\Omega _i)$. Since $\lambda _A:{\rm Con}(A)\rightarrow {\rm Con}(A)/_{\textstyle \equiv _A}$ is a surjective lattice morphism, it follows that the map $I\mapsto \lambda _A(I)$ is a surjective lattice morphism from ${\rm Id}({\rm Con}(A))$ to ${\rm Id}({\rm Con}(A)/_{\textstyle \equiv _A})$ which fulfills $\lambda _A((\theta ])=(\lambda _A(\theta )]$ for all $\theta \in {\rm Con}(A)$ and preserves arbitrary joins.\label{2.7}\end{remark}

\begin{lemma}\begin{enumerate}
\item\label{annidgen1} If $L$ is a bounded distributive lattice, then ${\rm Ann}_L(U)={\rm Ann}_L((U]_L)$ for all $U\subseteq L$.
\item\label{annidgen2} For all $\Omega \subseteq {\rm Con}(A)$, ${\rm Ann}(\Omega )={\rm Ann}((\Omega ])$.\end{enumerate}\label{annidgen}\end{lemma}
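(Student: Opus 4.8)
The two parts of Lemma \ref{annidgen} are of the same shape, so the plan is to establish both by the same two-inclusion strategy, isolating the single feature of each setting that makes the nontrivial inclusion work. For part \eqref{annidgen1}, let $U\subseteq L$ with $L$ distributive. Since $U\subseteq (U]_L$, the anti-monotonicity of annihilators recalled in the preamble gives ${\rm Ann}_L((U]_L)\subseteq {\rm Ann}_L(U)$ immediately, and this inclusion uses nothing beyond $U\subseteq (U]_L$. For the reverse inclusion, take $x\in {\rm Ann}_L(U)$ and an arbitrary $y\in (U]_L$; I must show $x\wedge y=0$. The point is that every $y\in (U]_L$ satisfies $y\leq u_1\vee\cdots\vee u_n$ for finitely many $u_1,\dots,u_n\in U$, so it suffices to show $x\wedge(u_1\vee\cdots\vee u_n)=0$. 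Here I would invoke the distributive identity $x\wedge(u_1\vee\cdots\vee u_n)=(x\wedge u_1)\vee\cdots\vee(x\wedge u_n)$, already used in the preamble's computation of ${\rm Ann}_L(\bigvee_{i=1}^n u_i)$; since each $x\wedge u_i=0$ by $x\in{\rm Ann}_L(U)$, the join is $0$, and then $x\wedge y\leq x\wedge(u_1\vee\cdots\vee u_n)=0$ forces $x\wedge y=0$. Thus $x\in{\rm Ann}_L((U]_L)$, giving ${\rm Ann}_L(U)\subseteq{\rm Ann}_L((U]_L)$ and hence equality.

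For part \eqref{annidgen2}, the setting is $\Omega\subseteq{\rm Con}(A)$, where ${\rm Con}(A)$ need not be distributive, so the distributive expansion used above is unavailable and the commutator must replace the meet. The inclusion ${\rm Ann}((\Omega])\subseteq{\rm Ann}(\Omega)$ is again free from $\Omega\subseteq(\Omega]$. For the reverse inclusion, let $\zeta\in{\rm Ann}(\Omega)$ and $\theta\in(\Omega]$; I want $\theta\cap\zeta=\Delta_A$. As above, $\theta\subseteq\omega_1\vee\cdots\vee\omega_n$ for finitely many $\omega_1,\dots,\omega_n\in\Omega$, and by the Remark following $(4^\circ)$, membership in the annihilator is most usefully read through the commutator: $\zeta\in{\rm Ann}(\Omega)$ means $[\omega_i,\zeta]_A=\Delta_A$ for each $i$. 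The substitute for distributivity is the distributivity of the commutator over joins, recalled from \cite{fremck}, which yields $[\omega_1\vee\cdots\vee\omega_n,\zeta]_A=[\omega_1,\zeta]_A\vee\cdots\vee[\omega_n,\zeta]_A=\Delta_A$. Since the commutator is increasing in its first argument and $\theta\subseteq\omega_1\vee\cdots\vee\omega_n$, this gives $[\theta,\zeta]_A\subseteq\Delta_A$, i.e. $[\theta,\zeta]_A=\Delta_A$, whence $\theta\cap\zeta=\Delta_A$ by $(4^\circ)$. Therefore $\zeta\in{\rm Ann}((\Omega])$.

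The only genuine subtlety, and the step I expect to be the main obstacle, is precisely the passage from the meet condition to the commutator condition in part \eqref{annidgen2}: because ${\rm Con}(A)$ is only modular and not distributive, one cannot compute $\theta\cap(\omega_1\vee\cdots\vee\omega_n)$ termwise, and the naive distributive argument fails. The equivalence $(4^\circ)$, which holds by semiprimeness of $A$, is exactly what lets me trade the meet for the commutator, and the commutator's distributivity over arbitrary joins is exactly what recovers the finite-join computation that distributivity of $\wedge$ provided in part \eqref{annidgen1}. I would present part \eqref{annidgen1} first as the transparent prototype, then carry out part \eqref{annidgen2} by the analogous argument with these two replacements made explicit, taking care to note that the degenerate case $\Omega=\emptyset$ (equivalently $(\Omega]=(\Delta_A]=\{\Delta_A\}$) is handled by the conventions fixed in the preamble.
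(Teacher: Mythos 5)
Your proof is correct and follows essentially the same route as the paper's: one inclusion from $U\subseteq (U]$ (resp.\ $\Omega \subseteq (\Omega ]$), and the reverse inclusion by bounding an arbitrary element of the generated ideal by a finite join and expanding, via lattice distributivity in part (i) and via the join--distributivity of the commutator together with the $(4^{\circ })$/commutator reading of annihilators in part (ii). The paper's proof of (ii) is just a slightly more compact version of yours, working directly with $[\theta ,\omega ]_A$ throughout rather than explicitly converting back and forth at the endpoints.
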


\begin{proof} (\ref{annidgen1}) $U\subseteq (U]_L$, thus ${\rm Ann}_L((U]_L)\subseteq {\rm Ann}_L(U)$. If $x\in {\rm Ann}_L(U)$ and $u\in (U]_L$, then $u\leq u_1\vee \ldots \vee u_n$ for some $n\in \N $ and $u_1,\ldots ,u_n\in U$, thus $x\wedge u\leq x\wedge (u_1\vee \ldots \vee u_n)=(x\wedge u_1)\vee \ldots \vee (x\wedge u_n)=0\vee \ldots \vee 0=0$, thus $x\in {\rm Ann}_L((U]_L)$ since $u$ is arbitrary in $(U]_L$, so the converse inclusion holds, as well.

\noindent (\ref{annidgen2}) $\Omega \subseteq (\Omega ]$, thus ${\rm Ann}((\Omega ])\subseteq {\rm Ann}(\Omega )$. Now let $\theta \in {\rm Ann}(\Omega )$ and $\omega \in (\Omega ]$, so that $\omega \subseteq \omega _1\vee \ldots \vee \omega _n$ for some $n\in \N $ and $\omega _1,\ldots , \omega _n\in \Omega $, thus $[\theta ,\omega ]_A\subseteq [\theta ,\omega _1\vee \ldots \vee \omega _n]_A=[\theta ,\omega _1]_A\vee \ldots \vee [\theta ,\omega _n]_A=\Delta _A\vee \ldots \vee \Delta _A=\Delta _A$, thus $\theta \in {\rm Ann}((\Omega ])$ since $\omega $ is arbitrary in $(\Omega ]$, so the converse inclusion holds, as well.\end{proof}

\begin{lemma} If $L$ is a bounded distributive lattice, then:\begin{itemize}
\item for any family $(I_k)_{k\in K}\subseteq {\rm Id}(L)$, $\displaystyle \bigcap _{k\in K}{\rm Ann}_L(I_k)={\rm Ann}_L(\bigvee _{k\in K}I_k)$;
\item for any family $(a_k)_{k\in K}\subseteq L$ having a join, $\displaystyle \bigcap _{k\in K}{\rm Ann}_L(a_k)={\rm Ann}_L(\bigvee _{k\in K}a_k)$.\end{itemize}\label{semilatl}\end{lemma}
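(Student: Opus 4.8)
The plan is to prove both bullet points of Lemma \ref{semilatl} by reducing the second to the first, since a principal ideal satisfies ${\rm Ann}_L(a_k)={\rm Ann}_L((a_k]_L)$ by Lemma \ref{annidgen}(\ref{annidgen1}), and $\bigvee_{k\in K}(a_k]_L=(\bigvee_{k\in K}a_k]_L$ whenever the join exists. So once the first bullet is established for arbitrary ideals, applying it to the family $((a_k]_L)_{k\in K}$ and then using ${\rm Ann}_L(\bigvee_{k\in K}(a_k]_L)={\rm Ann}_L((\bigvee_{k\in K}a_k]_L)={\rm Ann}_L(\bigvee_{k\in K}a_k)$ yields the second bullet immediately.

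For the first bullet I would argue by double inclusion. The inclusion ${\rm Ann}_L(\bigvee_{k\in K}I_k)\subseteq\bigcap_{k\in K}{\rm Ann}_L(I_k)$ is the easy direction: each $I_k\subseteq\bigvee_{k\in K}I_k$, and since ${\rm Ann}_L$ is antitone ($U\subseteq V$ implies ${\rm Ann}_L(V)\subseteq{\rm Ann}_L(U)$, as noted in the preliminaries), we get ${\rm Ann}_L(\bigvee_{k\in K}I_k)\subseteq{\rm Ann}_L(I_k)$ for every $k$, hence the containment in the intersection. For the reverse inclusion, take $x\in\bigcap_{k\in K}{\rm Ann}_L(I_k)$ and an arbitrary $y\in\bigvee_{k\in K}I_k$. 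Since the join of ideals is computed exactly as in the ideal preliminaries, $y\leq y_1\vee\dots\vee y_n$ for some $n$ and $y_j\in I_{k_j}$. Then distributivity gives $x\wedge y\leq x\wedge(y_1\vee\dots\vee y_n)=(x\wedge y_1)\vee\dots\vee(x\wedge y_n)$, and each $x\wedge y_j=0$ because $x\in{\rm Ann}_L(I_{k_j})$ and $y_j\in I_{k_j}$; thus $x\wedge y=0$. As $y$ was arbitrary, $x\in{\rm Ann}_L(\bigvee_{k\in K}I_k)$.

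I would note that one should handle $K=\emptyset$ separately or observe it is covered by the convention $\bigvee\emptyset=\{0\}$ (the least ideal) together with $\bigcap\emptyset=L$ and ${\rm Ann}_L(\{0\})=L$, so the identity persists. The only place distributivity of $L$ is genuinely used is in the reverse inclusion, precisely the step $x\wedge(y_1\vee\dots\vee y_n)=\bigvee_j(x\wedge y_j)$; this mirrors the computation already carried out in the proof of Lemma \ref{annidgen}(\ref{annidgen1}), so the present lemma is really its natural extension from generated ideals of arbitrary subsets to joins of families of ideals.

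I do not anticipate a serious obstacle here: the argument is a routine antitonicity-plus-finite-distributivity computation, and the main subtlety is merely bookkeeping, namely remembering that an element of a join of ideals is dominated by a \emph{finite} join of elements drawn from finitely many of the $I_k$, which is exactly what lets finite distributivity suffice without any frame or infinite-distributivity hypothesis on $L$.
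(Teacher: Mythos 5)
Your proof is correct and essentially the same as the paper's: the second bullet is reduced to the first via principal ideals exactly as in the paper, and your double-inclusion argument for the first bullet is just an inlined version of the paper's one-line chain $\bigcap_k{\rm Ann}_L(I_k)={\rm Ann}_L(\bigcup_kI_k)={\rm Ann}_L((\bigcup_kI_k]_L)={\rm Ann}_L(\bigvee_kI_k)$, which cites Lemma \ref{annidgen}(\ref{annidgen1}) applied to the union instead of redoing the finite-distributivity computation. No gap; your remark about the empty family and about where distributivity is genuinely used is consistent with the paper's stated conventions.
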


\begin{proof} By Lemma \ref{annidgen}, (\ref{annidgen1}), $\displaystyle \bigcap _{k\in K}{\rm Ann}_L(I_k)={\rm Ann}_L(\bigcup _{k\in K}I_k)={\rm Ann}_L((\bigcup _{k\in K}I_k]_L)={\rm Ann}_L(\bigvee _{k\in K}I_k)$, hence, if $\displaystyle \bigvee _{k\in K}a_k$ exists, then $\displaystyle \bigcap _{k\in K}{\rm Ann}_L(a_k)=\bigcap _{k\in K}{\rm Ann}_L((a_k]_L)={\rm Ann}_L(\bigvee _{k\in K}(a_k]_L)={\rm Ann}_L((\bigvee _{k\in K}a_k]_L)={\rm Ann}_L(\bigvee _{k\in K}a_k)$.\end{proof}

\begin{lemma}\begin{itemize}
\item For any family $(I_k)_{k\in K}\subseteq {\rm Id}({\rm Con}(A))$, $\displaystyle \bigcap _{k\in K}{\rm Ann}(I_k)={\rm Ann}(\bigvee _{k\in K}I_k)$.
\item For any family $(\theta _k)_{k\in K}\subseteq {\rm Con}(A)$, $\displaystyle \bigcap _{k\in K}{\rm Ann}(\theta _k)={\rm Ann}(\bigvee _{k\in K}\theta _k)$.\end{itemize}\label{semilata}\end{lemma}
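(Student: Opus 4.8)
The plan is to treat the two bullets separately. The first reduces cleanly to Lemma \ref{annidgen}(\ref{annidgen2}) together with the description of joins in ${\rm Id}({\rm Con}(A))$, while the second rests on the distributivity of the commutator $[\cdot ,\cdot ]_A$ with respect to arbitrary joins, which here takes over the role played by distributivity in the lattice statement Lemma \ref{semilatl}.

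For the first bullet I would begin from ${\rm Ann}(I_k)=\bigcap_{\omega \in I_k}{\rm Ann}(\omega )$ and the identity ${\rm Ann}(\bigcup_{i}U_i)=\bigcap_i{\rm Ann}(U_i)$ noted earlier for any bounded lattice, obtaining $\bigcap_{k\in K}{\rm Ann}(I_k)={\rm Ann}(\bigcup_{k\in K}I_k)$. I would then apply Lemma \ref{annidgen}(\ref{annidgen2}) to pass to the generated ideal, ${\rm Ann}(\bigcup_{k\in K}I_k)={\rm Ann}((\bigcup_{k\in K}I_k])$, and finish by recognizing $(\bigcup_{k\in K}I_k]=\bigvee_{k\in K}I_k$ as the very definition of the join in ${\rm Id}({\rm Con}(A))$. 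This parallels the first bullet of Lemma \ref{semilatl}, except that it uses Lemma \ref{annidgen}(\ref{annidgen2}), valid in ${\rm Con}(A)$ by virtue of the commutator, in place of Lemma \ref{annidgen}(\ref{annidgen1}), which needs distributivity.

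For the second bullet I would argue directly, and the key is to read the annihilator through the commutator rather than through $\cap $: from $(4^{\circ })$ one has the characterization ${\rm Ann}(\theta )=\{\zeta \in {\rm Con}(A)\ |\ [\theta ,\zeta ]_A=\Delta _A\}$. Fixing $\zeta \in {\rm Con}(A)$, I would verify the equivalences: $\zeta \in {\rm Ann}(\bigvee_{k\in K}\theta_k)$ iff $[\zeta ,\bigvee_{k\in K}\theta_k]_A=\Delta _A$ iff $\bigvee_{k\in K}[\zeta ,\theta_k]_A=\Delta _A$ iff $[\zeta ,\theta_k]_A=\Delta _A$ for every $k\in K$ iff $\zeta \in \bigcap_{k\in K}{\rm Ann}(\theta_k)$. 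The middle equivalence is exactly the distributivity of $[\cdot ,\cdot ]_A$ over arbitrary joins, and the next one holds because each $[\zeta ,\theta_k]_A$ contains $\Delta _A$ while being contained in the join, so a join equal to $\Delta _A$ collapses every term to $\Delta _A$.

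The main obstacle is concentrated in the second bullet and is conceptual rather than computational: since ${\rm Con}(A)$ is only modular, $\cap $ need not distribute over arbitrary joins, so one cannot deduce $\zeta \cap \bigvee_{k\in K}\theta_k=\Delta _A$ from $\zeta \cap \theta_k=\Delta _A$ for all $k\in K$, and a naive transcription of the proof of Lemma \ref{semilatl} would stall at exactly this point. What rescues the argument is that $(4^{\circ })$ permits replacing $\cap $ by $[\cdot ,\cdot ]_A$ inside the annihilator, and $[\cdot ,\cdot ]_A$ \emph{does} distribute over arbitrary joins; this is also why I avoid ever asserting $\bigvee_{k\in K}(\theta_k]=(\bigvee_{k\in K}\theta_k]$ in ${\rm Id}({\rm Con}(A))$, an equality that can genuinely fail, and instead settle the second bullet by the direct commutator computation above.
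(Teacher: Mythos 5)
Your proof is correct. For the first bullet it is essentially the paper's own argument: the paper proves Lemma \ref{semilata} by saying it is the same as the proof of Lemma \ref{semilatl} with Lemma \ref{annidgen}, (\ref{annidgen2}), replacing (\ref{annidgen1}), and for families of ideals that is exactly your chain $\bigcap_{k}{\rm Ann}(I_k)={\rm Ann}(\bigcup_{k}I_k)={\rm Ann}((\bigcup_{k}I_k])={\rm Ann}(\bigvee_{k}I_k)$. For the second bullet you genuinely diverge, and to your advantage. The paper's template deduces the element version from the ideal version via the identity $\bigvee_{k}(\theta_k]=(\bigvee_{k}\theta_k]$ in ${\rm Id}({\rm Con}(A))$; for an infinite index set this identity fails in general, since the left-hand side contains only the congruences below \emph{finite} sub-joins of the $\theta_k$, whereas the right-hand side contains $\bigvee_{k}\theta_k$ itself. (The corresponding second bullet of Lemma \ref{semilatl} is itself delicate outside frames: in the lattice of closed subsets of $[0,1]$, the singletons $a_k=\{1/k\}$ and $x=\{0\}$ satisfy $x\wedge a_k=0$ for all $k$ but $x\wedge\bigvee_{k}a_k\neq 0$.) Your direct computation --- reading the annihilator through the commutator via $(4^{\circ})$ and using the distributivity of $[\cdot,\cdot]_A$ over arbitrary joins to get $[\zeta,\bigvee_{k}\theta_k]_A=\bigvee_{k}[\zeta,\theta_k]_A$, which equals $\Delta_A$ iff every term does --- isolates precisely the structural reason the congruence-lattice statement holds even though ${\rm Con}(A)$ is only modular, and it supplies the justification that the phrase ``same as the proof of Lemma \ref{semilatl}'' leaves implicit. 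So for the second bullet your route is not merely an alternative but the one that actually closes the argument.
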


\begin{proof} Same as the proof of Lemma \ref{semilatl}, but using (\ref{annidgen2}) from Lemma \ref{annidgen} instead of (\ref{annidgen1}).\end{proof}

\begin{lemma}\begin{itemize}
\item For all $\theta \in {\rm Con}(A)$, ${\rm Ann}(\lambda _A(\theta ))=\lambda _A({\rm Ann}(\theta ))$.
\item For all $\Omega \in {\rm Con}(A)$, ${\rm Ann}(\lambda _A(\Omega ))=\lambda _A({\rm Ann}(\Omega ))$.\end{itemize}\label{annlambda}\end{lemma}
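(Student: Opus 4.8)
The plan is to handle the two bullets in order, deriving the second from the first. Throughout I would write $\overline{L}={\rm Con}(A)/_{\textstyle \equiv _A}$ for brevity, and keep in mind that $\overline{L}$ is a frame with least element $\lambda _A(\Delta _A)$, that $\lambda _A$ is a surjective lattice morphism with $\lambda _A(\theta )\wedge \lambda _A(\zeta )=\lambda _A(\theta \cap \zeta )$, and that, by the Remark following $(4^{\circ })$, ${\rm Ann}(\theta )=\{\zeta \in {\rm Con}(A)\mid \theta \cap \zeta =\Delta _A\}$, while in $\overline{L}$ one has ${\rm Ann}(\lambda _A(\theta ))=\{y\in \overline{L}\mid \lambda _A(\theta )\wedge y=\lambda _A(\Delta _A)\}$.

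For the first bullet I would prove the two inclusions separately. For $\supseteq$, take $\zeta \in {\rm Ann}(\theta )$, so $\theta \cap \zeta =\Delta _A$; then $\lambda _A(\theta )\wedge \lambda _A(\zeta )=\lambda _A(\theta \cap \zeta )=\lambda _A(\Delta _A)$, whence $\lambda _A(\zeta )\in {\rm Ann}(\lambda _A(\theta ))$. For $\subseteq$, take any $y\in {\rm Ann}(\lambda _A(\theta ))$ and, using the surjectivity of $\lambda _A$, pick $\zeta $ with $y=\lambda _A(\zeta )$; then $\lambda _A(\theta \cap \zeta )=\lambda _A(\theta )\wedge \lambda _A(\zeta )=\lambda _A(\Delta _A)$, so by $(3^{\circ })$ we get $\theta \cap \zeta =\Delta _A$, i.e. $\zeta \in {\rm Ann}(\theta )$, and thus $y\in \lambda _A({\rm Ann}(\theta ))$. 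The only non-formal ingredient here is $(3^{\circ })$, which is exactly where semiprimeness enters: it upgrades $\lambda _A(\theta \cap \zeta )=\lambda _A(\Delta _A)$ to the genuine equality $\theta \cap \zeta =\Delta _A$, without which a chosen preimage need not land in ${\rm Ann}(\theta )$.

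For the second bullet (reading $\Omega \subseteq {\rm Con}(A)$) I would reduce to the first by decomposing both annihilators as intersections. In $\overline{L}$ we have ${\rm Ann}(\lambda _A(\Omega ))=\bigcap _{y\in \lambda _A(\Omega )}{\rm Ann}(y)=\bigcap _{\omega \in \Omega }{\rm Ann}(\lambda _A(\omega ))$, the two intersections ranging over the same family of annihilators; applying the first bullet termwise turns this into $\bigcap _{\omega \in \Omega }\lambda _A({\rm Ann}(\omega ))$. It then remains to pull $\lambda _A$ outside the intersection, i.e. to invoke Remark \ref{2.7}, by which the direct image of $\lambda _A$ preserves arbitrary intersections of subsets of ${\rm Con}(A)$; this yields $\bigcap _{\omega \in \Omega }\lambda _A({\rm Ann}(\omega ))=\lambda _A(\bigcap _{\omega \in \Omega }{\rm Ann}(\omega ))=\lambda _A({\rm Ann}(\Omega ))$, the last equality by ${\rm Ann}(\Omega )=\bigcap _{\omega \in \Omega }{\rm Ann}(\omega )$ from Remark \ref{2.6}.

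The step that most deserves care is this passage of $\lambda _A$ through the intersection, since in general the image of an intersection is only contained in the intersection of the images. Here it is a genuine equality precisely because each ${\rm Ann}(\omega )$ is saturated with respect to $\equiv _A$: if $\zeta \in {\rm Ann}(\omega )$ and $\lambda _A(\zeta )=\lambda _A(\zeta ^{\prime })$, then $\lambda _A(\omega \cap \zeta ^{\prime })=\lambda _A(\omega )\wedge \lambda _A(\zeta ^{\prime })=\lambda _A(\omega )\wedge \lambda _A(\zeta )=\lambda _A(\omega \cap \zeta )=\lambda _A(\Delta _A)$, so $(3^{\circ })$ forces $\omega \cap \zeta ^{\prime }=\Delta _A$, i.e. $\zeta ^{\prime }\in {\rm Ann}(\omega )$. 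This saturation is exactly the content secured by Remark \ref{2.7}. Apart from that single appeal to $(3^{\circ })$, both bullets are formal consequences of $\lambda _A$ being a surjective lattice morphism that commutes with meets and with intersections, so I expect no serious obstacle beyond keeping straight, at each step, in which lattice the annihilator and the bottom element are being computed.
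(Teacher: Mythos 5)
Your proposal is correct and follows essentially the same route as the paper: the first bullet is the paper's chain of set equalities (surjectivity of $\lambda _A$, the morphism property $\lambda _A(\theta )\wedge \lambda _A(\zeta )=\lambda _A(\theta \cap \zeta )$, and $(3^{\circ })$) merely rewritten as two inclusions, and the second bullet is the same reduction via ${\rm Ann}(\Omega )=\bigcap _{\omega \in \Omega }{\rm Ann}(\omega )$ and the intersection--preservation of the direct image of $\lambda _A$. Your added observation that pulling $\lambda _A$ through the intersection really rests on each ${\rm Ann}(\omega )$ being saturated w.r.t.\ $\equiv _A$ is a worthwhile refinement, since the blanket claim in Remark \ref{2.7} holds only for saturated subsets, which is exactly the case used here.
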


\begin{proof} ${\rm Ann}(\lambda _A(\theta ))=\{\lambda _A(\zeta )\ |\ \zeta \in {\rm Con}(A),\lambda _A(\theta )\wedge \lambda _A(\zeta )=\lambda _A(\Delta _A)\}=\{\lambda _A(\zeta )\ |\ \zeta \in {\rm Con}(A),\lambda _A(\theta \cap \zeta )=\lambda _A(\Delta _A)\}=\{\lambda _A(\zeta )\ |\ \zeta \in {\rm Con}(A),\theta \cap \zeta =\Delta _A\}=\lambda _A({\rm Ann}(\theta ))$, by $(3^{\circ })$. Therefore $\displaystyle \lambda _A({\rm Ann}(\Omega ))=\lambda _A(\bigcap _{\omega \in \Omega }{\rm Ann}(\omega ))=\bigcap _{\omega \in \Omega }\lambda _A({\rm Ann}(\omega ))=\bigcap _{\omega \in \Omega }{\rm Ann}(\lambda _A(\omega ))=\bigcap _{x\in \lambda _A(\Omega )}{\rm Ann}(x)={\rm Ann}(\lambda _A(\Omega ))$.\end{proof}

\begin{lemma} For all $\theta \in {\rm Con}(A)$ and $\alpha \in {\cal B}({\rm Con}(A))$, $\lambda _A(\theta )\leq \lambda _A(\alpha )$ iff $\theta \subseteq \alpha $.\label{pidbool}\end{lemma}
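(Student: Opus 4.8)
The plan is to prove the nontrivial implication, since $\theta \subseteq \alpha$ immediately yields $\lambda _A(\theta )\leq \lambda _A(\alpha )$ by the monotonicity of the lattice morphism $\lambda _A$. So assume $\lambda _A(\theta )\leq \lambda _A(\alpha )$ and fix the complement $\alpha ^{\prime }$ of $\alpha $ in the Boolean sublattice ${\cal B}({\rm Con}(A))$, so that $\alpha \cap \alpha ^{\prime }=\Delta _A$ and $\alpha \vee \alpha ^{\prime }=\nabla _A$. First I would pass to the quotient: meeting the hypothesis with $\lambda _A(\alpha ^{\prime })$ gives $\lambda _A(\theta \cap \alpha ^{\prime })=\lambda _A(\theta )\wedge \lambda _A(\alpha ^{\prime })\leq \lambda _A(\alpha )\wedge \lambda _A(\alpha ^{\prime })=\lambda _A(\alpha \cap \alpha ^{\prime })=\lambda _A(\Delta _A)$, and since $\lambda _A(\Delta _A)$ is the bottom of ${\rm Con}(A)/_{\textstyle \equiv _A}$ this forces $\lambda _A(\theta \cap \alpha ^{\prime })=\lambda _A(\Delta _A)$; by $(3^{\circ })$ I conclude $\theta \cap \alpha ^{\prime }=\Delta _A$, equivalently, by $(4^{\circ })$, $[\theta ,\alpha ^{\prime }]_A=\Delta _A$.

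Next I would exploit the distributivity of the commutator with respect to joins to ``push'' $\theta $ inside $\alpha $ up to radical. Since $[\theta ,\alpha ^{\prime }]_A=\Delta _A$ and $\alpha \vee \alpha ^{\prime }=\nabla _A$, we get $[\theta ,\nabla _A]_A=[\theta ,\alpha ]_A\vee [\theta ,\alpha ^{\prime }]_A=[\theta ,\alpha ]_A\subseteq \theta \cap \alpha \subseteq \alpha $, the penultimate step using that the commutator is smaller than the intersection. Applying the radical $\rho _A$, which is monotone, and using $\rho _A([\theta ,\nabla _A]_A)=\rho _A(\theta \cap \nabla _A)=\rho _A(\theta )$ (which holds because $\lambda _A(\phi \cap \psi )=\lambda _A([\phi ,\psi ]_A)$, i.e. $\rho _A(\phi \cap \psi )=\rho _A([\phi ,\psi ]_A)$, for all $\phi ,\psi $), I obtain $\theta \subseteq \rho _A(\theta )\subseteq \rho _A(\alpha )$.

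It then remains to show that $\rho _A(\alpha )=\alpha $, and this is where semiprimeness is essential: the desired conclusion cannot follow from modularity alone, since in a general modular lattice a complemented $\alpha $ together with $\theta \cap \alpha ^{\prime }=\Delta _A$ does \emph{not} force $\theta \subseteq \alpha $ (the diamond $M_3$ is a counterexample). First, from the observation that a prime congruence contains $\phi \cap \psi $ iff it contains $\phi $ or $\psi $ (the nontrivial direction using $[\phi ,\psi ]_A\subseteq \phi \cap \psi $ and primeness), one gets $\rho _A(\phi \cap \psi )=\rho _A(\phi )\cap \rho _A(\psi )$; hence, since $\alpha ^{\prime }\subseteq \rho _A(\alpha ^{\prime })$, $\rho _A(\alpha )\cap \alpha ^{\prime }\subseteq \rho _A(\alpha )\cap \rho _A(\alpha ^{\prime })=\rho _A(\alpha \cap \alpha ^{\prime })=\rho _A(\Delta _A)=\Delta _A$ by semiprimeness. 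Now, because $\alpha \subseteq \rho _A(\alpha )$, the modular law applied to $\alpha \leq \rho _A(\alpha )$ yields $\rho _A(\alpha )=\rho _A(\alpha )\cap (\alpha \vee \alpha ^{\prime })=\alpha \vee (\rho _A(\alpha )\cap \alpha ^{\prime })=\alpha \vee \Delta _A=\alpha $. Combining with the previous paragraph, $\theta \subseteq \rho _A(\theta )\subseteq \rho _A(\alpha )=\alpha $, as required. I expect the step $\rho _A(\alpha )=\alpha $ to be the main obstacle: one must recognize that modularity is not enough and that semiprimeness (through $\rho _A(\Delta _A)=\Delta _A$) is exactly what closes the gap, after which the comparability $\alpha \leq \rho _A(\alpha )$ makes the modular law applicable and finishes the argument.
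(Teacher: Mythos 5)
Your proof is correct, but it takes a genuinely different route from the paper's. The paper dispatches the nontrivial implication in three lines by leaning on the imported facts $(1^{\circ })$ and $(2^{\circ })$: from $\lambda _A(\theta )\leq \lambda _A(\alpha )$ one gets $\lambda _A(\theta \vee \alpha )=\lambda _A(\alpha )\in {\cal B}({\rm Con}(A)/_{\textstyle \equiv _A})$, hence $\theta \vee \alpha \in {\cal B}({\rm Con}(A))$ by $(2^{\circ })$, and then $\theta \vee \alpha =\alpha $ by the injectivity of $\lambda _A$ restricted to the Boolean center from $(1^{\circ })$. You avoid $(1^{\circ })$ and $(2^{\circ })$ entirely and argue from first principles: you reduce the hypothesis to $[\theta ,\alpha ^{\prime }]_A=\Delta _A$ via $(3^{\circ })$ and $(4^{\circ })$, use the join--distributivity of the commutator and the fact that it is below the intersection to get $[\theta ,\nabla _A]_A\subseteq \alpha $ and hence $\theta \subseteq \rho _A(\theta )\subseteq \rho _A(\alpha )$, and close the gap by proving $\rho _A(\alpha )=\alpha $ for complemented $\alpha $ from semiprimeness, the identity $\rho _A(\phi \cap \psi )=\rho _A(\phi )\cap \rho _A(\psi )$ (whose nontrivial half correctly uses $[\phi ,\psi ]_A\subseteq \phi \cap \psi $ and primeness) and the modular law applied to the comparable pair $\alpha \leq \rho _A(\alpha )$. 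All of these steps check out, and your $M_3$ remark correctly identifies that semiprimeness, not mere modularity, is what does the work. What your version buys is self--containedness: it essentially re--derives, in the special case needed here, the content that the paper packages into $(1^{\circ })$ and $(2^{\circ })$ and cites from the reticulation construction; the cost is that it is considerably longer than the paper's argument.
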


\begin{proof} Assume that $\lambda _A(\theta )\leq \lambda _A(\alpha )$, so that $\lambda _A(\theta \vee \alpha )=\lambda _A(\theta )\vee \lambda _A(\alpha )=\lambda _A(\alpha )\in \lambda _A({\cal B}({\rm Con}(A)))={\cal B}({\rm Con}(A)/_{\textstyle \equiv _A})$ by $(1^{\circ })$, hence $\theta \vee \alpha \in {\cal B}({\rm Con}(A))$ by $(2^{\circ })$, and thus $\theta \vee \alpha =\alpha $, again by $(1^{\circ })$, so $\theta \subseteq \alpha $. The fact that $\lambda _A$ is order--preserving proves the converse implication.\end{proof}

\begin{lemma} For all $\Gamma ,\Omega \subseteq {\rm Con}(A)$, $\gamma \in {\rm Con}(A)$ and $\alpha \in {\cal B}({\rm Con}(A))$:\begin{enumerate}
\item\label{annegal1} $\lambda _A(\gamma )\in \lambda _A({\rm Ann}(\Omega ))$ iff $\gamma \in {\rm Ann}(\Omega )$;
\item\label{annegal2} $\lambda _A(\Gamma )\subseteq \lambda _A({\rm Ann}(\Omega ))$ iff $\Gamma \subseteq {\rm Ann}(\Omega )$;
\item\label{annegal3} $\lambda _A({\rm Ann}(\Gamma ))=\lambda _A({\rm Ann}(\Omega ))$ iff ${\rm Ann}(\Gamma )={\rm Ann}(\Omega )$;
\item\label{annegal4} $\lambda _A((\alpha ])=\lambda _A({\rm Ann}(\Omega ))$ iff $(\alpha ]={\rm Ann}(\Omega )$.\end{enumerate}\label{annegal}\end{lemma}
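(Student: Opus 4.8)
The plan is to establish (\ref{annegal1}) first and then bootstrap the remaining three statements from it. In each case the nontrivial implication is ``left to right'', since the ``right to left'' direction is trivial for (\ref{annegal1}) and follows immediately from the fact that $\lambda _A$ is order--preserving, so that direct images respect inclusion, for (\ref{annegal2}), (\ref{annegal3}) and (\ref{annegal4}).

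For (\ref{annegal1}), I would rewrite $\lambda _A({\rm Ann}(\Omega ))$ as ${\rm Ann}(\lambda _A(\Omega ))$ using Lemma \ref{annlambda}. Then $\lambda _A(\gamma )\in {\rm Ann}(\lambda _A(\Omega ))$ unfolds, via the description of annihilators in ${\rm Con}(A)/_{\textstyle \equiv _A}$, to $\lambda _A(\gamma )\wedge \lambda _A(\omega )=\lambda _A(\Delta _A)$ for every $\omega \in \Omega $; since $\lambda _A$ is a lattice morphism this reads $\lambda _A(\gamma \cap \omega )=\lambda _A(\Delta _A)$, and $(3^{\circ })$ then upgrades it to $\gamma \cap \omega =\Delta _A$ for all $\omega \in \Omega $, that is $\gamma \in {\rm Ann}(\Omega )$. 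The content of this step is that annihilators are saturated with respect to $\equiv _A$; the possible non--injectivity of $\lambda _A$ is exactly what $(3^{\circ })$ neutralizes here.

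Statements (\ref{annegal2}) and (\ref{annegal3}) then follow formally. For (\ref{annegal2}), if $\lambda _A(\Gamma )\subseteq \lambda _A({\rm Ann}(\Omega ))$, then for each $\gamma \in \Gamma $ we have $\lambda _A(\gamma )\in \lambda _A({\rm Ann}(\Omega ))$, whence $\gamma \in {\rm Ann}(\Omega )$ by (\ref{annegal1}); as $\gamma $ is arbitrary, $\Gamma \subseteq {\rm Ann}(\Omega )$. For (\ref{annegal3}), I would apply (\ref{annegal2}) with ${\rm Ann}(\Gamma )$ in the role of $\Gamma $, which is legitimate since ${\rm Ann}(\Gamma )$ is an ideal, hence a subset of ${\rm Con}(A)$, by Remark \ref{2.6}, together with its symmetric version obtained by swapping $\Gamma $ and $\Omega $: equality of the images gives inclusions in both directions, each of which transfers to an inclusion of the annihilators, so ${\rm Ann}(\Gamma )={\rm Ann}(\Omega )$.

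The only genuinely separate argument is (\ref{annegal4}), which is where the Boolean hypothesis on $\alpha $ is used. First rewrite $\lambda _A((\alpha ])=(\lambda _A(\alpha )]$ via Remark \ref{2.7}, so the hypothesis becomes $(\lambda _A(\alpha )]=\lambda _A({\rm Ann}(\Omega ))$. To prove $(\alpha ]\subseteq {\rm Ann}(\Omega )$, take $\theta \subseteq \alpha $; then $\lambda _A(\theta )\leq \lambda _A(\alpha )$, so $\lambda _A(\theta )\in (\lambda _A(\alpha )]=\lambda _A({\rm Ann}(\Omega ))$, and (\ref{annegal1}) gives $\theta \in {\rm Ann}(\Omega )$. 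For the reverse inclusion, take $\theta \in {\rm Ann}(\Omega )$; then $\lambda _A(\theta )\in \lambda _A({\rm Ann}(\Omega ))=(\lambda _A(\alpha )]$, i.e. $\lambda _A(\theta )\leq \lambda _A(\alpha )$, and here I would invoke Lemma \ref{pidbool}, valid precisely because $\alpha \in {\cal B}({\rm Con}(A))$, to conclude $\theta \subseteq \alpha $, that is $\theta \in (\alpha ]$. The main obstacle, such as it is, lies in this last inclusion: without the Boolean--center hypothesis one cannot descend from $\lambda _A(\theta )\leq \lambda _A(\alpha )$ back to $\theta \subseteq \alpha $, and Lemma \ref{pidbool} is the only tool that licenses this descent.
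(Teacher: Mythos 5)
Your proposal is correct and follows essentially the same route as the paper: item (\ref{annegal1}) via Lemma \ref{annlambda} and $(3^{\circ })$, items (\ref{annegal2}) and (\ref{annegal3}) as formal consequences, and item (\ref{annegal4}) via Lemma \ref{pidbool} for the nontrivial inclusion ${\rm Ann}(\Omega )\subseteq (\alpha ]$. The only cosmetic difference is that for $(\alpha ]\subseteq {\rm Ann}(\Omega )$ you argue element--wise through (\ref{annegal1}) where the paper simply cites (\ref{annegal2}); these are the same argument.
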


\begin{proof} (\ref{annegal1}) Assume that $\lambda _A(\gamma )\in \lambda _A({\rm Ann}(\Omega ))={\rm Ann}(\lambda _A(\Omega ))$ according to Lemma \ref{annlambda}, so that, for all $\omega \in \Omega $, $\lambda _A(\gamma \cap \omega )=\lambda _A(\gamma )\wedge \lambda _A(\omega )=\lambda _A(\Delta _A)$, thus, by $(3^{\circ })$, $\gamma \cap \omega =\Delta _A$, hence $\gamma \in {\rm Ann}(\Omega )$. The converse implication is trivial.

\noindent (\ref{annegal2}) By (\ref{annegal1}).

\noindent (\ref{annegal3}) By (\ref{annegal2}), $\lambda _A({\rm Ann}(\Gamma ))=\lambda _A({\rm Ann}(\Omega ))$ iff $\lambda _A({\rm Ann}(\Gamma ))\subseteq \lambda _A({\rm Ann}(\Omega ))$ and $\lambda _A({\rm Ann}(\Omega ))\subseteq \lambda _A({\rm Ann}(\Gamma ))$ iff ${\rm Ann}(\Gamma )\subseteq {\rm Ann}(\Omega )$ and ${\rm Ann}(\Omega )\subseteq {\rm Ann}(\Gamma )$ iff ${\rm Ann}(\Gamma )={\rm Ann}(\Omega )$.

\noindent (\ref{annegal4}) Assume that $\lambda _A((\alpha ])=\lambda _A({\rm Ann}(\Omega ))$, which implies $(\alpha ]\subseteq {\rm Ann}(\Omega )$ by (\ref{annegal2}). Now let $\theta \in {\rm Ann}(\Omega )$, so that $\lambda _A(\theta )\in \lambda _A({\rm Ann}(\Omega ))=\lambda _A((\alpha ])=(\lambda _A(\alpha )]$, hence $\theta \in (\alpha ]$ by Lemma \ref{pidbool}, thus ${\rm Ann}(\Omega )\subseteq (\alpha ]$, therefore $(\alpha ]={\rm Ann}(\Omega )$. The converse implication is trivial.\end{proof}

\begin{proposition} For any nonzero cardinality $\kappa $, the properties $(1)_{\kappa ,{\rm Con}(A)}$ and $(1)_{\kappa ,{\rm Con}(A)/_{\scriptstyle \equiv _A}}$ are equivalent.\label{echiv(i)}\end{proposition}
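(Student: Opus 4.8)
The plan is to prove the two implications separately, exploiting the surjectivity of $\lambda _A:{\rm Con}(A)\rightarrow {\rm Con}(A)/_{\textstyle \equiv _A}$ together with the dictionary already assembled: Lemma \ref{annlambda} (which says $\lambda _A$ commutes with annihilators, ${\rm Ann}(\lambda _A(\Omega ))=\lambda _A({\rm Ann}(\Omega ))$), Remark \ref{2.7} (which says $\lambda _A((\theta ])=(\lambda _A(\theta )]$), the fact $(1^{\circ })$ that $\lambda _A$ restricts to a Boolean isomorphism ${\cal B}({\rm Con}(A))\rightarrow {\cal B}({\rm Con}(A)/_{\textstyle \equiv _A})$, and Lemma \ref{annegal}(\ref{annegal4}). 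The whole argument is a matter of transporting the witnessing central element back and forth along $\lambda _A$ while keeping the cardinality constraint $|U|\le \kappa $ under control.

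For the direction $(1)_{\kappa ,{\rm Con}(A)}\Rightarrow (1)_{\kappa ,{\rm Con}(A)/_{\textstyle \equiv _A}}$, I would start from an arbitrary $V\subseteq {\rm Con}(A)/_{\textstyle \equiv _A}$ with $|V|\le \kappa $ and, using surjectivity of $\lambda _A$, choose for each element of $V$ a single preimage, obtaining $\Omega \subseteq {\rm Con}(A)$ with $\lambda _A(\Omega )=V$ and $|\Omega |\le \kappa $. Applying $(1)_{\kappa ,{\rm Con}(A)}$ gives $\alpha \in {\cal B}({\rm Con}(A))$ with ${\rm Ann}(\Omega )=(\alpha ]$. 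Then the chain ${\rm Ann}(V)={\rm Ann}(\lambda _A(\Omega ))=\lambda _A({\rm Ann}(\Omega ))=\lambda _A((\alpha ])=(\lambda _A(\alpha )]$, combining Lemma \ref{annlambda} and Remark \ref{2.7}, exhibits the required witness, since $\lambda _A(\alpha )\in {\cal B}({\rm Con}(A)/_{\textstyle \equiv _A})$ by $(1^{\circ })$.

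For the converse, I would take an arbitrary $\Omega \subseteq {\rm Con}(A)$ with $|\Omega |\le \kappa $, note that $|\lambda _A(\Omega )|\le |\Omega |\le \kappa $ (as $\lambda _A$ is a function), and apply $(1)_{\kappa ,{\rm Con}(A)/_{\textstyle \equiv _A}}$ to get some $\overline{e}\in {\cal B}({\rm Con}(A)/_{\textstyle \equiv _A})$ with ${\rm Ann}(\lambda _A(\Omega ))=(\overline{e}]$. By $(1^{\circ })$, $\overline{e}=\lambda _A(\alpha )$ for a unique $\alpha \in {\cal B}({\rm Con}(A))$, so that $\lambda _A({\rm Ann}(\Omega ))={\rm Ann}(\lambda _A(\Omega ))=(\lambda _A(\alpha )]=\lambda _A((\alpha ])$, and Lemma \ref{annegal}(\ref{annegal4}) upgrades this equality of $\lambda _A$-images to the genuine equality $(\alpha ]={\rm Ann}(\Omega )$ in ${\rm Con}(A)$. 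The only subtle point — and the step I would flag as the real content rather than the bookkeeping — is precisely this last inference: knowing the equality only after applying the non-injective map $\lambda _A$, one must invoke Lemma \ref{annegal}(\ref{annegal4}) to descend back to ${\rm Con}(A)$, which is where the semiprimeness of $A$ (through $(3^{\circ })$ and $(1^{\circ })$, on which that lemma rests) is genuinely used.
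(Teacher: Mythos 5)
Your proof is correct and follows essentially the same route as the paper's: both directions transport the central witness along $\lambda _A$ using Lemma \ref{annlambda}, the preservation of principal ideals from Remark \ref{2.7}, the Boolean isomorphism $(1^{\circ })$, and Lemma \ref{annegal}, (\ref{annegal4}), to descend from the equality of $\lambda _A$--images back to ${\rm Con}(A)$. Your closing observation about where semiprimeness actually enters (through $(3^{\circ })$ underlying Lemma \ref{annegal}) is accurate and consistent with the paper's dependency structure.
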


\begin{proof} For the converse implication, assume that $(1)_{\kappa ,{\rm Con}(A)/_{\scriptstyle \equiv _A}}$ is satisfied and let $\emptyset \neq \Omega \subseteq {\rm Con}(A)$ having $|\Omega |\leq \kappa $. Then $|\lambda _A(\Omega )|\leq |\Omega |\leq \kappa $, hence, by $(1^{\circ })$, Lemma \ref{annlambda} and Lemma \ref{annegal}, (\ref{annegal4}), there exists an $\alpha \in {\cal B}({\rm Con}(A))$ such that $\lambda _A({\rm Ann}(\Omega ))={\rm Ann}(\lambda _A(\Omega ))=(\lambda _A(\alpha )]=\lambda _A((\alpha ])$, therefore ${\rm Ann}(\Omega )=(\alpha ]$.

For the direct implication, assume that $(1)_{\kappa ,{\rm Con}(A)}$ is satisfied and let $U\subseteq {\rm Con}(A)/_{\textstyle \equiv _A}$ with $|U|\leq \kappa $. For each $u\in U$, there exists an $\omega _u\in {\rm Con}(A)$ such that $\lambda _A(\omega _u)=u$. If we denote by $\Omega =\{\omega _u\ |\ u\in U\}\subseteq {\rm Con}(A)$, then $\lambda _A(\Omega )=U$ and $|\Omega |=|U|\leq \kappa $, hence, by Lemma \ref{annlambda} and $(1^{\circ })$, ${\rm Ann}(U)={\rm Ann}(\lambda _A(\Omega ))=\lambda _A({\rm Ann}(\Omega ))=\lambda _A((\alpha ])=(\lambda _A(\alpha )]$ for some $\alpha \in {\cal B}({\rm Con}(A))$, so that $\lambda _A(\alpha )\in {\cal B}({\rm Con}(A)/_{\textstyle \equiv _A})$.\end{proof}

\begin{proposition} ${\rm Con}(A)$ is a Stone lattice iff ${\rm Con}(A)/_{\textstyle \equiv _A}$ is a Stone lattice.\label{echivstone}\end{proposition}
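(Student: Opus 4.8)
The plan is to recognize that asserting $L$ to be a Stone lattice is exactly the instance $\kappa =1$ of condition $(1)_{\kappa ,L}$: for $|U|\leq 1$ the only nontrivial case is $U=\{a\}$, for which ${\rm Ann}_L(U)={\rm Ann}_L(a)$, while $U=\emptyset $ is already covered since ${\rm Ann}_L(\emptyset )=L=(1]_L$ with $1\in {\cal B}(L)$. Hence the statement is precisely the equivalence of $(1)_{1,{\rm Con}(A)}$ and $(1)_{1,{\rm Con}(A)/_{\equiv_A}}$, i.e. the case $\kappa =1$ of Proposition \ref{echiv(i)}. So the shortest route is simply to invoke Proposition \ref{echiv(i)} with $\kappa =1$; for the reader's convenience I would nonetheless spell out the direct argument, which mirrors that proof restricted to single congruences.

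For the direct implication, I would assume ${\rm Con}(A)$ is Stone and fix $u\in {\rm Con}(A)/_{\equiv_A}$. Using surjectivity of $\lambda _A$, I choose $\omega \in {\rm Con}(A)$ with $\lambda _A(\omega )=u$; the Stone property yields $\alpha \in {\cal B}({\rm Con}(A))$ with ${\rm Ann}(\omega )=(\alpha ]$. I then transport this along $\lambda _A$: by Lemma \ref{annlambda}, ${\rm Ann}(u)={\rm Ann}(\lambda _A(\omega ))=\lambda _A({\rm Ann}(\omega ))=\lambda _A((\alpha ])=(\lambda _A(\alpha )]$, where the last equality uses that $\lambda _A$ sends principal ideals to principal ideals (Remark \ref{2.7}) and $\lambda _A(\alpha )\in {\cal B}({\rm Con}(A)/_{\equiv_A})$ by $(1^{\circ})$. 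This exhibits ${\rm Ann}(u)$ as the principal ideal of a complemented element, so the quotient is Stone.

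For the converse, I would assume ${\rm Con}(A)/_{\equiv_A}$ is Stone and fix $\theta \in {\rm Con}(A)$. Then there is $e\in {\cal B}({\rm Con}(A)/_{\equiv_A})$ with ${\rm Ann}(\lambda _A(\theta ))=(e]$, and by the Boolean isomorphism $(1^{\circ})$ I may write $e=\lambda _A(\alpha )$ for some $\alpha \in {\cal B}({\rm Con}(A))$. Lemma \ref{annlambda} then gives $\lambda _A({\rm Ann}(\theta ))={\rm Ann}(\lambda _A(\theta ))=(\lambda _A(\alpha )]=\lambda _A((\alpha ])$, an equality of $\lambda _A$-images of two ideals. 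The crux is to descend this to an honest equality in ${\rm Con}(A)$, namely $(\alpha ]={\rm Ann}(\theta )$; this is exactly Lemma \ref{annegal}, part (\ref{annegal4}), applied with $\Omega =\{\theta \}$ (recall ${\rm Ann}(\theta )={\rm Ann}(\{\theta \})$). Thus ${\rm Con}(A)$ is Stone.

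The only genuine obstacle is that $\lambda _A$ is far from injective, so equalities of images do not lift automatically; the descent step in the converse is where this bites. It is resolved precisely by Lemma \ref{annegal}(\ref{annegal4}), which itself rests on $(3^{\circ})$ (that $\lambda _A(\theta )=\lambda _A(\Delta _A)$ forces $\theta =\Delta _A$, i.e. semiprimeness of $A$) together with the Boolean isomorphism $(1^{\circ})$ and Lemma \ref{pidbool}. In short, semiprimeness is exactly what makes the two Stone properties coincide.
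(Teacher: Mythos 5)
Your proof is correct and essentially identical to the paper's: the direct implication via Lemma \ref{annlambda} and $(1^{\circ})$, and the converse via $(1^{\circ})$, Lemma \ref{annlambda} and the descent step Lemma \ref{annegal}, (\ref{annegal4}), are exactly the steps the paper takes. Your preliminary observation that the statement is the $\kappa=1$ instance of Proposition \ref{echiv(i)} (with the empty set handled by ${\rm Ann}(\emptyset)=(\nabla_A]$) is also valid and non-circular, since that proposition precedes this one, so the one-line citation would already suffice.
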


\begin{proof} If ${\rm Con}(A)$ is a Stone lattice, then, for all $\theta \in {\rm Con}(A)$, there exists an $\alpha \in {\cal B}({\rm Con}(A))$ such that ${\rm Ann}(\theta )=(\alpha ]$, so that $\lambda _A(\alpha )\in {\cal B}({\rm Con}(A)/_{\textstyle \equiv _A})$ and ${\rm Ann}(\lambda _A(\theta ))=\lambda _A({\rm Ann}(\theta ))=\lambda _A((\alpha ])=(\lambda _A(\alpha )]$ by $(1^{\circ })$ and Lemma \ref{annlambda}, hence ${\rm Con}(A)/_{\textstyle \equiv _A}$ is a Stone lattice.

If ${\rm Con}(A)/_{\textstyle \equiv _A}$ is a Stone lattice, then, by $(1^{\circ })$, Lemma \ref{annlambda} and Lemma \ref{annegal}, (\ref{annegal4}), for all $\theta \in {\rm Con}(A)$, there exists an $\alpha \in {\cal B}({\rm Con}(A))$ such that $\lambda _A({\rm Ann}(\theta ))={\rm Ann}(\lambda _A(\theta ))=(\lambda _A(\alpha )]=\lambda _A((\alpha ])$, hence ${\rm Ann}(\theta )=(\alpha ]$, therefore ${\rm Con}(A)$ is a Stone lattice.\end{proof}

\begin{proposition} For any nonzero cardinality $\kappa $, the properties $(2)_{\kappa ,{\rm Con}(A)}$ and $(2)_{\kappa ,{\rm Con}(A)/_{\scriptstyle \equiv _A}}$ are equivalent.\label{echiv(ii)}\end{proposition}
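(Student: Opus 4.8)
The plan is to split condition $(2)_{\kappa ,\cdot }$ into its two conjuncts and treat them separately. The Stone part is already settled: by Proposition \ref{echivstone}, ${\rm Con}(A)$ is a Stone lattice iff ${\rm Con}(A)/_{\textstyle \equiv _A}$ is a Stone lattice. So everything reduces to showing that ${\cal B}({\rm Con}(A))$ is a $\kappa $--complete Boolean sublattice of ${\rm Con}(A)$ iff ${\cal B}({\rm Con}(A)/_{\textstyle \equiv _A})$ is a $\kappa $--complete Boolean sublattice of ${\rm Con}(A)/_{\textstyle \equiv _A}$. By $(\circ )$ and $(1^{\circ })$, the restriction $\lambda _A\mid _{{\cal B}({\rm Con}(A))}$ is a Boolean isomorphism onto ${\cal B}({\rm Con}(A)/_{\textstyle \equiv _A})$, so the two Boolean centers are abstractly isomorphic; the whole difficulty is that $\kappa $--completeness \emph{as a sublattice} refers to joins and meets computed in the ambient lattices, which $\lambda _A$ need not reflect in general. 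I would first record that $\lambda _A$ preserves arbitrary joins: since $\rho _A$ is a closure operator and $\lambda _A(\theta )\leq \lambda _A(\zeta )$ iff $\theta \subseteq \rho _A(\zeta )$, any upper bound $\lambda _A(\zeta )$ of a family $(\lambda _A(\theta _i))_i$ satisfies $\bigvee _i\theta _i\subseteq \rho _A(\zeta )$, whence $\lambda _A(\bigvee _i\theta _i)\leq \lambda _A(\zeta )$.

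For the implication $(2)_{\kappa ,{\rm Con}(A)}\Rightarrow (2)_{\kappa ,{\rm Con}(A)/_{\scriptstyle \equiv _A}}$, I take a family $(\lambda _A(e_i))_{i\in I}$ in ${\cal B}({\rm Con}(A)/_{\textstyle \equiv _A})$ with $|I|\leq \kappa $ and $e_i\in {\cal B}({\rm Con}(A))$. For joins, $e:=\bigvee _ie_i\in {\cal B}({\rm Con}(A))$ by hypothesis, and join--preservation gives $\bigvee _i\lambda _A(e_i)=\lambda _A(e)\in {\cal B}({\rm Con}(A)/_{\textstyle \equiv _A})$. For meets, $f:=\bigcap _ie_i\in {\cal B}({\rm Con}(A))$, its complement in the Boolean center is $f^{\prime }=\bigvee _i\neg e_i$, and so $\lambda _A(f^{\prime })=\bigvee _i\neg \lambda _A(e_i)$. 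Writing $\overline{t}:=\bigwedge _i\lambda _A(e_i)$ for the meet in the quotient, one has $\lambda _A(f)\leq \overline{t}$ trivially; for the reverse inequality I would use that ${\rm Con}(A)/_{\textstyle \equiv _A}$ is a frame, so $\overline{t}\wedge \lambda _A(f^{\prime })=\bigvee _i(\overline{t}\wedge \neg \lambda _A(e_i))\leq \bigvee _i(\lambda _A(e_i)\wedge \neg \lambda _A(e_i))=\lambda _A(\Delta _A)$, and, since $\lambda _A(f)$ is complemented in the quotient with complement $\lambda _A(f^{\prime })$, this forces $\overline{t}\leq \lambda _A(f)$, hence $\overline{t}=\lambda _A(f)\in {\cal B}({\rm Con}(A)/_{\textstyle \equiv _A})$.

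For the converse $(2)_{\kappa ,{\rm Con}(A)/_{\scriptstyle \equiv _A}}\Rightarrow (2)_{\kappa ,{\rm Con}(A)}$, I take $(e_i)_{i\in I}\subseteq {\cal B}({\rm Con}(A))$ with $|I|\leq \kappa $ and set $\overline{e_i}:=\lambda _A(e_i)$. The meet is the easy case: $\bigwedge _i\overline{e_i}=\lambda _A(t)$ for a unique $t\in {\cal B}({\rm Con}(A))$, and Lemma \ref{pidbool}, applied in both directions, identifies $\bigcap _ie_i$ with $t$, so $\bigcap _ie_i\in {\cal B}({\rm Con}(A))$. The join is the crux. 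Here $\bigvee _i\overline{e_i}=\lambda _A(s)$ for a unique $s\in {\cal B}({\rm Con}(A))$ with complement $s^{\prime }$; putting $e:=\bigvee _ie_i$, Lemma \ref{pidbool} gives $e_i\subseteq s$, hence $e\subseteq s$, while $e_i\subseteq e$ gives $\lambda _A(s)\leq \lambda _A(e)$, so $\lambda _A(e)=\lambda _A(s)$. Then $\lambda _A(e\cap s^{\prime })=\lambda _A(s\cap s^{\prime })=\lambda _A(\Delta _A)$ and $\lambda _A(e\vee s^{\prime })=\lambda _A(s\vee s^{\prime })=\lambda _A(\nabla _A)$, so by $(3^{\circ })$ and the reflection $\lambda _A(\theta )=\lambda _A(\nabla _A)\Leftrightarrow \theta =\nabla _A$ one gets $e\cap s^{\prime }=\Delta _A$ and $e\vee s^{\prime }=\nabla _A$. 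Finally, since $e\subseteq s$, modularity yields $s=s\cap (e\vee s^{\prime })=e\vee (s\cap s^{\prime })=e$, so $e\in {\cal B}({\rm Con}(A))$.

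The main obstacle is exactly this last join step. Because ${\rm Con}(A)$ is only modular, not distributive, I cannot reflect the join of a $\kappa $--family of complemented congruences through $\lambda _A$ by the De Morgan/frame computation available on the quotient side. Instead the argument has to route the reflection through the semiprimeness facts $(3^{\circ })$ and $\lambda _A(\theta )=\lambda _A(\nabla _A)\Leftrightarrow \theta =\nabla _A$ in order to pin down the complementary pair $e\cap s^{\prime }=\Delta _A$, $e\vee s^{\prime }=\nabla _A$, and only then invoke the modular law to collapse $e$ onto the already--complemented congruence $s$; this is the single place where semiprimeness and modularity are genuinely used, the remaining three sub--cases being routine consequences of join--preservation, the frame property, and Lemma \ref{pidbool}.
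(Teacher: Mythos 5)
Your proof is correct, but it takes a substantially more detailed route than the paper, whose entire proof is one line: the Stone half of $(2)_{\kappa ,\cdot }$ transfers by Proposition \ref{echivstone}, and the $\kappa $--completeness half transfers because, by $(1^{\circ })$, ${\cal B}({\rm Con}(A))$ and ${\cal B}({\rm Con}(A)/_{\textstyle \equiv _A})$ are isomorphic Boolean algebras. The difference is one of reading: the paper treats ``$\kappa $--complete Boolean sublattice'' as an abstract property of the Boolean algebra ${\cal B}(\cdot )$, hence invariant under Boolean isomorphism, whereas you insist on the sublattice reading --- closure under $\kappa $--ary joins and meets computed in the \emph{ambient} lattice --- which the isomorphism of Boolean centers does not by itself transfer. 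Your four sub--arguments (join--preservation of $\lambda _A$, the frame computation for meets in the quotient, Lemma \ref{pidbool} for reflecting meets, and the $(3^{\circ })$/modularity argument for reflecting joins) supply exactly the verification that ambient $\kappa $--ary suprema and infima of complemented congruences correspond under $\lambda _A$, and every step checks out; what your approach buys is a proof that is robust under the stronger interpretation of the condition, at the cost of brevity. Two minor simplifications: in the converse direction the join case is immediate, since $\bigvee _i\lambda _A(e_i)=\lambda _A(\bigvee _ie_i)$ lies in ${\cal B}({\rm Con}(A)/_{\textstyle \equiv _A})$ by hypothesis and $(2^{\circ })$ then gives $\bigvee _ie_i\in {\cal B}({\rm Con}(A))$ directly, so the detour through $s$, $s^{\prime }$ and the modular law is unnecessary (and even within your argument, once $e\cap s^{\prime }=\Delta _A$ and $e\vee s^{\prime }=\nabla _A$ are established, $e$ is already complemented, so $e\in {\cal B}({\rm Con}(A))$ without invoking modularity); and your preliminary claim that $\lambda _A$ preserves arbitrary joins, while correct as you argue it via $\rho _A$, is already implicit in the paper's assertion that ${\rm Con}(A)/_{\textstyle \equiv _A}$ is a complete lattice, indeed a frame.
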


\begin{proof} By Proposition \ref{echivstone} and $(1^{\circ })$, which ensures us that the Boolean algebras ${\cal B}({\rm Con}(A))$ and ${\cal B}({\rm Con}(A)/_{\textstyle \equiv _A})$ are isomorphic.\end{proof}

\begin{proposition} ${\rm Con}(A)$ is a strongly Stone lattice iff ${\rm Con}(A)/_{\textstyle \equiv _A}$ is a strongly Stone lattice.\label{echivtarestone}\end{proposition}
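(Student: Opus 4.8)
The plan is to follow the argument of Proposition \ref{echiv(i)} almost verbatim, only dropping the cardinality bookkeeping, since condition $(1)_L$ is exactly the strongly Stone property and differs from $(1)_{\kappa ,L}$ only in quantifying over all subsets $U\subseteq L$ rather than those with $|U|\leq \kappa $. The three ingredients I would use are unchanged: the isomorphism $(1^{\circ})$, which transports complemented elements back and forth between ${\cal B}({\rm Con}(A))$ and ${\cal B}({\rm Con}(A)/_{\textstyle \equiv _A})$; Lemma \ref{annlambda}, which gives ${\rm Ann}(\lambda _A(\Omega ))=\lambda _A({\rm Ann}(\Omega ))$ for every $\Omega \subseteq {\rm Con}(A)$; and part (\ref{annegal4}) of Lemma \ref{annegal}, which for $\alpha \in {\cal B}({\rm Con}(A))$ lets me descend an equality $\lambda _A((\alpha ])=\lambda _A({\rm Ann}(\Omega ))$ to $(\alpha ]={\rm Ann}(\Omega )$. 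Crucially, all three of these hold with no restriction on cardinality, so the passage from $\kappa $--bounded families to arbitrary families will be automatic.

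For the direct implication I would assume ${\rm Con}(A)$ strongly Stone and take an arbitrary $U\subseteq {\rm Con}(A)/_{\textstyle \equiv _A}$. Using surjectivity of $\lambda _A$, I would choose for each $u\in U$ a congruence $\omega _u$ with $\lambda _A(\omega _u)=u$ and set $\Omega =\{\omega _u\ |\ u\in U\}$, so that $\lambda _A(\Omega )=U$. By hypothesis there is an $\alpha \in {\cal B}({\rm Con}(A))$ with ${\rm Ann}(\Omega )=(\alpha ]$; then Lemma \ref{annlambda} and $(1^{\circ})$ would yield ${\rm Ann}(U)={\rm Ann}(\lambda _A(\Omega ))=\lambda _A({\rm Ann}(\Omega ))=\lambda _A((\alpha ])=(\lambda _A(\alpha )]$ with $\lambda _A(\alpha )\in {\cal B}({\rm Con}(A)/_{\textstyle \equiv _A})$, which is exactly what strong Stoneness of ${\rm Con}(A)/_{\textstyle \equiv _A}$ requires.

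For the converse I would assume ${\rm Con}(A)/_{\textstyle \equiv _A}$ strongly Stone and take an arbitrary $\Omega \subseteq {\rm Con}(A)$. Applying the hypothesis to $\lambda _A(\Omega )$ and using $(1^{\circ})$ to write the resulting complemented element as $\lambda _A(\alpha )$ with $\alpha \in {\cal B}({\rm Con}(A))$, I would obtain $\lambda _A({\rm Ann}(\Omega ))={\rm Ann}(\lambda _A(\Omega ))=(\lambda _A(\alpha )]=\lambda _A((\alpha ])$ from Lemma \ref{annlambda}; then Lemma \ref{annegal}, (\ref{annegal4}), gives ${\rm Ann}(\Omega )=(\alpha ]$, so ${\rm Con}(A)$ is strongly Stone.

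I do not expect a genuine obstacle here, precisely because every tool involved is cardinality--free; the only point to watch is the degenerate case $\Omega =\emptyset $ (equivalently $U=\emptyset $), which is harmless, since ${\rm Ann}(\emptyset )=\nabla _A=(\nabla _A]$ with $\nabla _A\in {\cal B}({\rm Con}(A))$ and $\lambda _A$ respects this. Alternatively, I could avoid the computation altogether: by Remark \ref{dualdavey}, $(1)_L$ is equivalent to $(1)_{\kappa ,L}$ holding for all nonzero cardinalities $\kappa $, so the statement would follow at once from Proposition \ref{echiv(i)} applied for every $\kappa $.
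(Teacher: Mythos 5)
Your proposal is correct and follows essentially the same route as the paper: the paper proves this proposition by declaring it ``same as the proof of Proposition \ref{echivstone}'', which is precisely the computation you carry out (via $(1^{\circ})$, Lemma \ref{annlambda} and Lemma \ref{annegal}, (\ref{annegal4})), only written for arbitrary subsets instead of single congruences. Your closing observation that the result also follows from Proposition \ref{echiv(i)} ranging over all cardinalities $\kappa $ is a valid shortcut, consistent with Remark \ref{dualdavey}.
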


\begin{proof} Same as the proof of Proposition \ref{echivstone}.\end{proof}

\begin{remark} Since ${\rm Con}(A)/_{\textstyle \equiv _A}$ is a frame, from Corollary \ref{moredavey}, we get that: the lattice ${\rm Con}(A)/_{\textstyle \equiv _A}$ is Stone iff it is strongly Stone.\end{remark}

\begin{corollary} The lattice ${\rm Con}(A)$ is Stone iff ${\rm Con}(A)$ is strongly Stone iff ${\rm Con}(A)/_{\textstyle \equiv _A}$ is Stone iff ${\rm Con}(A)/_{\textstyle \equiv _A}$ is strongly Stone.\end{corollary}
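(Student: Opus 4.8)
The plan is to assemble the final corollary entirely from equivalences already established, treating it as a logical composition rather than a new argument. The statement chains together four properties: ${\rm Con}(A)$ is Stone, ${\rm Con}(A)$ is strongly Stone, ${\rm Con}(A)/_{\textstyle \equiv _A}$ is Stone, and ${\rm Con}(A)/_{\textstyle \equiv _A}$ is strongly Stone. I already have three bridging results: Proposition \ref{echivstone} links the two Stone properties across the quotient, Proposition \ref{echivtarestone} links the two strongly Stone properties across the quotient, and the Remark immediately preceding (via Corollary \ref{moredavey} and the fact that ${\rm Con}(A)/_{\textstyle \equiv _A}$ is a frame) links Stone to strongly Stone within the quotient itself.

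First I would invoke the Remark: since ${\rm Con}(A)/_{\textstyle \equiv _A}$ is a frame, it is Stone iff it is strongly Stone. This is the crucial middle link, because it is the only place where the frame structure — and hence the full strength of Corollary \ref{moredavey}, which collapses $(1)_{\kappa}$, $(1)_{<\infty}$ and $(1)$ together in a frame — is used. Next I would apply Proposition \ref{echivstone} to transfer the Stone property between ${\rm Con}(A)$ and ${\rm Con}(A)/_{\textstyle \equiv _A}$, and Proposition \ref{echivtarestone} to transfer the strongly Stone property between the same two lattices. Composing these three biconditionals yields the full cycle: ${\rm Con}(A)$ Stone iff ${\rm Con}(A)/_{\textstyle \equiv _A}$ Stone (Proposition \ref{echivstone}) iff ${\rm Con}(A)/_{\textstyle \equiv _A}$ strongly Stone (the Remark) iff ${\rm Con}(A)$ strongly Stone (Proposition \ref{echivtarestone}).

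There is essentially no obstacle here; the corollary is a bookkeeping consequence of the preceding three results, and the only thing to verify is that the four conditions genuinely close into a single equivalence class rather than leaving a gap. The one point worth noting explicitly is \emph{why} the frame hypothesis is needed for the internal Stone/strongly-Stone equivalence on the quotient but not directly on ${\rm Con}(A)$ itself: ${\rm Con}(A)$ need not be distributive (the variety is only congruence-modular, witnessed by commutative unitary rings), so I cannot apply Corollary \ref{moredavey} to ${\rm Con}(A)$ directly. The equivalence of Stone and strongly Stone on ${\rm Con}(A)$ is therefore obtained indirectly, by routing through the frame ${\rm Con}(A)/_{\textstyle \equiv _A}$, which is exactly what the composition above accomplishes. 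Accordingly the proof is a single sentence citing Propositions \ref{echivstone} and \ref{echivtarestone} together with the preceding Remark.
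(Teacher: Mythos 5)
Your proof is correct and is exactly the argument the paper intends: the corollary is stated without an explicit proof precisely because it follows by chaining Proposition \ref{echivstone}, the preceding Remark (using that ${\rm Con}(A)/_{\textstyle \equiv _A}$ is a frame), and Proposition \ref{echivtarestone}. Your additional observation about why the frame hypothesis is needed on the quotient rather than on ${\rm Con}(A)$ itself is accurate and consistent with the paper's setup.
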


Let us consider the following conditions:

\begin{tabular}{cl}
$(pann)_L$ & ${\rm PAnn}(L)$ is a sublattice of ${\rm Id}(L)$ such that the map $a\mapsto {\rm Ann}_L(a)$ is\\
& a lattice anti--morphism from $L$ to ${\rm PAnn}(L)$;\\ 
$(p2ann)_L$ & ${\rm P2Ann}(L)$ is a sublattice of ${\rm Id}(L)$ such that the map $a\mapsto {\rm Ann}_L({\rm Ann}_L(a))$ is\\
& a lattice morphism from $L$ to ${\rm P2Ann}(L)$.\end{tabular}

\begin{remark} Concerning the following results, recall that ${\rm Con}(A)/_{\textstyle \equiv _A}$ is a frame, and thus ${\rm PAnn}({\rm Con}(A)/_{\textstyle \equiv _A})$\linebreak $={\cal A}nn({\rm Con}(A)/_{\textstyle \equiv _A})$ and ${\rm P2Ann}({\rm Con}(A)/_{\textstyle \equiv _A})={\rm 2Ann}({\rm Con}(A)/_{\textstyle \equiv _A})$.\end{remark}

\begin{lemma}\begin{enumerate}
\item\label{annbij1} The map $P\mapsto \lambda _A(P)$ from ${\rm PAnn}({\rm Con}(A))$ to ${\rm PAnn}({\rm Con}(A)/_{\textstyle \equiv _A})$ is an order isomorphism.
\item\label{annbij2} For all $\theta ,\zeta \in {\rm Con}(A)$: ${\rm Ann}(\theta \cap \zeta )={\rm Ann}(\theta )\vee {\rm Ann}(\zeta )$ iff ${\rm Ann}(\lambda _A(\theta )\wedge \lambda _A(\zeta ))={\rm Ann}(\lambda _A(\theta ))\vee {\rm Ann}(\lambda _A(\zeta ))$.
\item\label{annbij3} $(pann)_{{\rm Con}(A)}$ is equivalent to $(pann)_{{\rm Con}(A)/_{\scriptstyle \equiv _A}}$.
\item\label{annbij4} If the equivalent conditions from (\ref{annbij3}) are fulfilled, then the map from (\ref{annbij1}) is a lattice isomorphism.\end{enumerate}\label{annbij}\end{lemma}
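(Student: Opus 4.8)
The plan is to prove the four statements of Lemma \ref{annbij} in the stated order, leaning on the isomorphism $(1^{\circ})$, on Lemma \ref{annlambda} (which says $\lambda_A$ commutes with taking annihilators), and on Lemma \ref{annegal}, part (\ref{annegal3}) (which says $\lambda_A({\rm Ann}(\Gamma))=\lambda_A({\rm Ann}(\Omega))$ iff ${\rm Ann}(\Gamma)={\rm Ann}(\Omega)$), the latter being the key tool that lets me pull equalities back from the quotient to ${\rm Con}(A)$.

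For (\ref{annbij1}), I would first show the map $P\mapsto\lambda_A(P)$ is well defined and lands in ${\rm PAnn}({\rm Con}(A)/_{\equiv_A})$: any $P\in{\rm PAnn}({\rm Con}(A))$ has the form $P={\rm Ann}(\theta)$, and by Lemma \ref{annlambda} we get $\lambda_A(P)=\lambda_A({\rm Ann}(\theta))={\rm Ann}(\lambda_A(\theta))$, which is a principal annihilator in the quotient; surjectivity follows since every element of ${\rm Con}(A)/_{\equiv_A}$ is some $\lambda_A(\theta)$. For injectivity and the order-isomorphism property I would argue that ${\rm Ann}(\theta)\subseteq{\rm Ann}(\zeta)$ iff $\lambda_A({\rm Ann}(\theta))\subseteq\lambda_A({\rm Ann}(\zeta))$; one direction is monotonicity of $\lambda_A$, and the reverse is exactly the content of Lemma \ref{annegal}, part (\ref{annegal3}) (applied with the two sets being the relevant annihilators), so the map reflects and preserves order, hence is an order isomorphism.

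For (\ref{annbij2}) I would translate both sides across $\lambda_A$. Using Remark \ref{2.7} (which says $\lambda_A$ preserves arbitrary joins of ideals), Lemma \ref{annlambda}, and the fact that $\lambda_A(\theta\cap\zeta)=\lambda_A(\theta)\wedge\lambda_A(\zeta)$, I compute $\lambda_A({\rm Ann}(\theta\cap\zeta))={\rm Ann}(\lambda_A(\theta)\wedge\lambda_A(\zeta))$ and $\lambda_A({\rm Ann}(\theta)\vee{\rm Ann}(\zeta))=\lambda_A({\rm Ann}(\theta))\vee\lambda_A({\rm Ann}(\zeta))={\rm Ann}(\lambda_A(\theta))\vee{\rm Ann}(\lambda_A(\zeta))$. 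Then the equality in ${\rm Con}(A)$ holds iff its $\lambda_A$-image holds, and the forward direction is immediate while the backward direction is again furnished by Lemma \ref{annegal}, part (\ref{annegal3}); the main subtlety to check here is that the right-hand join ${\rm Ann}(\theta)\vee{\rm Ann}(\zeta)$, computed as a join of ideals, is transported correctly, which Remark \ref{2.7} guarantees.

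Statement (\ref{annbij3}) is then a direct corollary: condition $(pann)$ asserts that ${\rm PAnn}$ is a sublattice and $a\mapsto{\rm Ann}(a)$ is a lattice anti-morphism; the sublattice-closure and anti-morphism equations for meet and join correspond across the isomorphism of (\ref{annbij1}), with the nontrivial join equation being exactly the equivalence of (\ref{annbij2}) (the meet equation, ${\rm Ann}(\theta\vee\zeta)={\rm Ann}(\theta)\cap{\rm Ann}(\zeta)$, already holds in every bounded lattice via Lemma \ref{semilata}). Finally, (\ref{annbij4}) follows because, once we know from (\ref{annbij3}) that both ${\rm PAnn}$'s are sublattices closed under the lattice operations and that $P\mapsto\lambda_A(P)$ is an order isomorphism (from (\ref{annbij1})), an order isomorphism between lattices automatically preserves meets and joins and is therefore a lattice isomorphism. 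I expect the main obstacle to be bookkeeping in (\ref{annbij2}): correctly identifying the join in ${\rm Id}({\rm Con}(A))$ with the join of the image ideals and confirming $\lambda_A$ respects it, so that Lemma \ref{annegal} can be invoked cleanly; the rest is essentially a transport of structure along $(1^{\circ})$ and Lemma \ref{annlambda}.
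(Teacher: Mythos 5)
Your overall strategy is the paper's: transport everything along the surjection $\lambda _A$ using Lemma \ref{annlambda}, Lemma \ref{annegal} and Remark \ref{2.7}. Parts (\ref{annbij1}), (\ref{annbij3}) and (\ref{annbij4}) go through essentially as in the paper (one small slip: for order--reflection in (\ref{annbij1}) the relevant tool is Lemma \ref{annegal}, (\ref{annegal2}), about inclusions; part (\ref{annegal3}) only handles equalities and gives you injectivity).

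The genuine problem is the backward direction of (\ref{annbij2}). You reduce it to: $\lambda _A({\rm Ann}(\theta \cap \zeta ))=\lambda _A({\rm Ann}(\theta )\vee {\rm Ann}(\zeta ))$ implies ${\rm Ann}(\theta \cap \zeta )={\rm Ann}(\theta )\vee {\rm Ann}(\zeta )$, and you invoke Lemma \ref{annegal}, (\ref{annegal3}), for this. But that lemma cancels $\lambda _A$ only on an equality between two annihilators of subsets of ${\rm Con}(A)$, whereas ${\rm Ann}(\theta )\vee {\rm Ann}(\zeta )$ is a join computed in ${\rm Id}({\rm Con}(A))$ and is not known to be an annihilator. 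What you can actually extract from Lemma \ref{annegal}, (\ref{annegal1}), together with the distributivity of the quotient, is that every $\gamma \in {\rm Ann}(\theta \cap \zeta )$ contains an element of ${\rm Ann}(\theta )\vee {\rm Ann}(\zeta )$ with the same radical; this does not place $\gamma $ itself in that ideal, because a join of two annihilators need not be saturated under $\equiv _A$. The paper sidesteps this by proving (\ref{annbij2}) and (\ref{annbij3}) together: using (\ref{annbij1}) and Lemmas \ref{semilatl} and \ref{semilata} it first shows that ${\rm PAnn}({\rm Con}(A))$ is closed under the ideal join iff ${\rm PAnn}({\rm Con}(A)/_{\equiv _A})$ is, and it asserts the equivalence in (\ref{annbij2}) only under that closure hypothesis --- which is part of $(pann)$ and is exactly the situation needed for (\ref{annbij3}); there ${\rm Ann}(\theta )\vee {\rm Ann}(\zeta )$ is itself a principal annihilator and Lemma \ref{annegal}, (\ref{annegal3}), applies legitimately. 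You should either condition (\ref{annbij2}) in the same way or supply a separate argument that ${\rm Ann}(\theta )\vee {\rm Ann}(\zeta )$ is $\equiv _A$--saturated before cancelling $\lambda _A$.
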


\begin{proof} (\ref{annbij1}) By Lemma \ref{annlambda}, this restriction of the direct image of $\lambda _A$ takes ${\rm Ann}(\theta )$ to ${\rm Ann}(\lambda _A(\theta ))$ for all $\theta \in {\rm Con}(A)$, thus it is well defined and surjective. By Lemma \ref{annegal}, (\ref{annegal3}), it is also injective. By Lemma \ref{annegal}, (\ref{annegal2}), for all $\theta ,\zeta \in {\rm Con}(A)$, ${\rm Ann}(\theta )\subseteq {\rm Ann}(\zeta )$ iff $\lambda _A({\rm Ann}(\theta ))\subseteq \lambda _A({\rm Ann}(\zeta ))$, therefore this bijection and its inverse preserve order. So this map is an order isomorphism, which is a restriction of the lattice morphism $I\mapsto \lambda _A(I)$ from ${\rm Id}({\rm Con}(A))$ to ${\rm Id}({\rm Con}(A)/_{\textstyle \equiv _A})$.

\noindent (\ref{annbij2}) and (\ref{annbij3}) By (\ref{annbij1}), the map $P\mapsto \lambda _A(P)$ from ${\rm PAnn}({\rm Con}(A))$ to ${\rm PAnn}({\rm Con}(A)/_{\textstyle \equiv _A})$ preserves all joins and so does its inverse. Lemmas \ref{semilata} and \ref{semilatl} show that ${\rm PAnn}({\rm Con}(A))$ and ${\rm PAnn}({\rm Con}(A)/_{\textstyle \equiv _A})$ always are inferior subsemilattices of ${\rm Id}({\rm Con}(A))$, respectively ${\rm Id}({\rm Con}(A)/_{\textstyle \equiv _A})$. From (\ref{annbij1}) it follows that ${\rm PAnn}({\rm Con}(A))$ is closed with respect to the join from the lattice ${\rm Id}({\rm Con}(A))$ iff ${\rm PAnn}({\rm Con}(A)/_{\textstyle \equiv _A})$ is closed with respect to the join from the lattice ${\rm Id}({\rm Con}(A)/_{\textstyle \equiv _A})$, and, if they are closed with respect to the join, then, for all $\theta ,\zeta \in {\rm Con}(A)$, ${\rm Ann}(\theta )\vee {\rm Ann}(\zeta )={\rm Ann}(\theta \cap \zeta )$ iff $\lambda _A({\rm Ann}(\theta )\vee {\rm Ann}(\zeta ))=\lambda _A({\rm Ann}(\theta \cap \zeta ))$, which in turn is equivalent to ${\rm Ann}(\lambda _A(\theta ))\vee {\rm Ann}(\lambda _A(\zeta ))={\rm Ann}(\lambda _A(\theta \cap \zeta ))$ by Lemma \ref{annlambda} and the fact that the direct image of $\lambda _A$ preserves the joins of ideals. Therefore ${\rm PAnn}({\rm Con}(A))$ is a sublattice of ${\rm Id}({\rm Con}(A))$ iff ${\rm PAnn}({\rm Con}(A)/_{\textstyle \equiv _A})$ is a sublattice of ${\rm Id}({\rm Con}(A)/_{\textstyle \equiv _A})$, with the expressions for the joins above.

\noindent (\ref{annbij4}) By (\ref{annbij1}).\end{proof}

\begin{lemma}\begin{enumerate}
\item\label{echiv(iii)nobm1} The map $Q\mapsto \lambda _A(Q)$ from ${\rm P2Ann}({\rm Con}(A))$ to ${\rm P2Ann}({\rm Con}(A)/_{\textstyle \equiv _A})$ is an order isomorphism.
\item\label{echiv(iii)nobm2} $(p2ann)_{{\rm Con}(A)}$ is equivalent to $(p2ann)_{{\rm Con}(A)/_{\scriptstyle \equiv _A}}$.
\item\label{echiv(iii)nobm3} If the equivalent conditions from (\ref{echiv(iii)nobm2}) are fulfilled, then the map from (\ref{echiv(iii)nobm1}) is a lattice isomorphism.\end{enumerate}\label{echiv(iii)nobm}\end{lemma}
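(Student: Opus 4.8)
The plan is to follow the proof of Lemma \ref{annbij} essentially verbatim, replacing the single annihilator ${\rm Ann}(\theta )$ by the double annihilator ${\rm Ann}({\rm Ann}(\theta ))$ at every step, and to reduce the composite condition $(p2ann)$ to a pair of equational identities that can be pushed through $\lambda _A$. The engine throughout is the intertwining relation obtained by applying Lemma \ref{annlambda} twice, namely $\lambda _A({\rm Ann}({\rm Ann}(\theta )))={\rm Ann}({\rm Ann}(\lambda _A(\theta )))$ for all $\theta \in {\rm Con}(A)$, together with the injectivity of $\lambda _A$ on annihilators supplied by Lemma \ref{annegal}, (\ref{annegal3}).

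For part (\ref{echiv(iii)nobm1}), the displayed intertwining relation and the surjectivity of $\lambda _A$ show at once that $Q\mapsto \lambda _A(Q)$ is a well--defined surjection onto ${\rm P2Ann}({\rm Con}(A)/_{\textstyle \equiv _A})$. For injectivity I would invoke Lemma \ref{annegal}, (\ref{annegal3}), with $\Gamma ={\rm Ann}(\theta )$ and $\Omega ={\rm Ann}(\zeta )$, which gives $\lambda _A({\rm Ann}({\rm Ann}(\theta )))=\lambda _A({\rm Ann}({\rm Ann}(\zeta )))$ iff ${\rm Ann}({\rm Ann}(\theta ))={\rm Ann}({\rm Ann}(\zeta ))$; order preservation in both directions follows from the same substitution in Lemma \ref{annegal}, (\ref{annegal2}). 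Being a restriction of the lattice morphism $I\mapsto \lambda _A(I)$ of Remark \ref{2.7}, this order isomorphism and its inverse preserve arbitrary joins and finite meets (intersections).

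For part (\ref{echiv(iii)nobm2}), I would first observe that $(p2ann)_L$ is equivalent to the single requirement that $f_L:={\rm Ann}_L\circ {\rm Ann}_L:L\rightarrow {\rm Id}(L)$ satisfy both $f_L(\theta \cap \zeta )=f_L(\theta )\cap f_L(\zeta )$ and $f_L(\theta \vee \zeta )=f_L(\theta )\vee f_L(\zeta )$ (meet and join taken in ${\rm Id}(L)$): since every element of ${\rm P2Ann}(L)$ is of the form $f_L(\theta )$, these two identities already force ${\rm P2Ann}(L)$ to be a sublattice. The meet identity transfers both ways without effort, for both $f(\theta \cap \zeta )$ and $f(\theta )\cap f(\zeta )$ are annihilators (the latter by Lemma \ref{semilata}), while $\lambda _A$ intertwines $f$ with its frame analogue and commutes with $\cap $, so injectivity on annihilators makes the identity in ${\rm Con}(A)$ equivalent to the corresponding one in ${\rm Con}(A)/_{\textstyle \equiv _A}$; in particular, since in the frame ${\rm P2Ann}={\rm 2Ann}$ is automatically $\cap $--closed (via ${\rm Ann}\,{\rm Ann}\,{\rm Ann}={\rm Ann}$), meet--closure of ${\rm P2Ann}({\rm Con}(A))$ comes for free by pulling back. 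The forward direction of the join identity is also immediate, as $\lambda _A$ is a function preserving $\vee $ and intertwining $f$.

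The reverse direction of the join identity is where I expect the only genuine difficulty. One always has ${\rm Ann}({\rm Ann}(\theta ))\vee {\rm Ann}({\rm Ann}(\zeta ))\subseteq {\rm Ann}({\rm Ann}(\theta \vee \zeta ))$, and, assuming $(p2ann)_{{\rm Con}(A)/_{\textstyle \equiv _A}}$, the two sides have equal images under $\lambda _A$; but to apply the injectivity of Lemma \ref{annegal}, (\ref{annegal3}), one needs the ${\rm Id}$--join on the left to be \emph{itself} an annihilator, and this is not automatic, since a join of $\equiv _A$--saturated ideals need not be $\equiv _A$--saturated. This is the crux of the argument, and I would attack it exactly as the template Lemma \ref{annbij} handles the corresponding step for ${\rm PAnn}$: using that every annihilator is $\equiv _A$--saturated together with the frame distributivity of ${\rm Con}(A)/_{\textstyle \equiv _A}$, so that for $\mu $ in the right--hand side one can split $\lambda _A(\mu )=(\lambda _A(\mu )\wedge \lambda _A(p))\vee (\lambda _A(\mu )\wedge \lambda _A(q))$ with $p,q$ in the two factors and pull the decomposition back into the left--hand join. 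Once part (\ref{echiv(iii)nobm2}) is in hand, part (\ref{echiv(iii)nobm3}) is immediate from part (\ref{echiv(iii)nobm1}), because an order isomorphism between sublattices whose operations agree with the ambient ones is automatically a lattice isomorphism.
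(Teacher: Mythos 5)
Your part (\ref{echiv(iii)nobm1}) coincides with the paper's proof word for word in substance: Lemma \ref{annlambda} (applied twice) for well--definedness and surjectivity, Lemma \ref{annegal}, (\ref{annegal3}), with $\Gamma ={\rm Ann}(\theta )$, $\Omega ={\rm Ann}(\zeta )$ for injectivity, and Lemma \ref{annegal}, (\ref{annegal2}), for order preservation both ways. For parts (\ref{echiv(iii)nobm2}) and (\ref{echiv(iii)nobm3}) the paper gives no new argument at all --- it simply says they follow from (\ref{echiv(iii)nobm1}) exactly as Lemma \ref{annbij}, (\ref{annbij2})--(\ref{annbij4}), follow from Lemma \ref{annbij}, (\ref{annbij1}) --- so your reduction of $(p2ann)$ to a meet identity and a join identity for $f={\rm Ann}\circ {\rm Ann}$, and your treatment of the meet identity and of the forward join direction, are a faithful expansion of that template.

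The gap is in your resolution of the step you yourself flag as the crux. The frame--distributivity splitting produces $p\in {\rm Ann}({\rm Ann}(\theta ))$, $q\in {\rm Ann}({\rm Ann}(\zeta ))$ and $\nu =(\mu \cap p)\vee (\mu \cap q)\subseteq \mu $ with $\lambda _A(\nu )=\lambda _A(\mu )$ and $\nu \in {\rm Ann}({\rm Ann}(\theta ))\vee {\rm Ann}({\rm Ann}(\zeta ))$; but this does not put $\mu $ itself into that ideal join. From $\nu \subseteq \mu $ and $\nu \equiv _A\mu $ you cannot conclude $\nu =\mu $ (the relation $\equiv _A$ does not separate comparable congruences: $\theta \equiv _A\rho _A(\theta )$ in general), and the join of two $\equiv _A$--saturated ideals need not be $\equiv _A$--saturated, so membership does not lift from $\nu $ to $\mu $. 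The mechanism the paper actually relies on (visible in the proof of Lemma \ref{annbij}, (\ref{annbij2})--(\ref{annbij3})) is different: one first transfers the \emph{closure} of ${\rm P2Ann}$ under the ideal join through the order isomorphism of part (\ref{echiv(iii)nobm1}) (which is a restriction of the join--preserving morphism $I\mapsto \lambda _A(I)$ of Remark \ref{2.7}), so that ${\rm Ann}({\rm Ann}(\theta ))\vee {\rm Ann}({\rm Ann}(\zeta ))$ is itself known to be an annihilator, and only then invokes Lemma \ref{annegal}, (\ref{annegal3}), to upgrade the equality of $\lambda _A$--images to an equality of the annihilators themselves --- the very lemma you cite for injectivity but abandon at this point in favour of the element chase. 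Your proof is repaired by replacing the splitting argument with that closure--plus--injectivity step (equivalently, by the paper's alternative route: the set version of Lemma \ref{annbij}, (\ref{annbij2}), applied to $\Gamma ={\rm Ann}(\theta )$ and $\Omega ={\rm Ann}(\zeta )$ together with Lemmas \ref{semilatl} and \ref{semilata}).
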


\begin{proof} (\ref{echiv(iii)nobm1}) By Lemma \ref{annlambda}, this restriction of the direct image of $\lambda _A$ takes ${\rm Ann}({\rm Ann}(\theta ))$ to ${\rm Ann}({\rm Ann}(\lambda _A(\theta )))$ for all $\theta \in {\rm Con}(A)$, thus it is well defined and surjective. By Lemma \ref{annegal}, (\ref{annegal3}), for any $\theta ,\zeta \in {\rm Con}(A)$, $\lambda _A({\rm Ann}({\rm Ann}(\theta )))=\lambda _A({\rm Ann}({\rm Ann}(\zeta )))$ iff ${\rm Ann}({\rm Ann}(\theta ))={\rm Ann}({\rm Ann}(\zeta ))$, so this map is also injective. By Lemma \ref{annegal}, (\ref{annegal2}), for all $\theta ,\zeta \in {\rm Con}(A)$, ${\rm Ann}({\rm Ann}(\theta ))\subseteq {\rm Ann}({\rm Ann}(\zeta ))$ iff $\lambda _A({\rm Ann}({\rm Ann}(\theta )))\subseteq \lambda _A({\rm Ann}({\rm Ann}(\zeta )))$, hence this bijection and its inverse preserve order, so this map is an order isomorphism, which is a restriction of the lattice morphism $I\mapsto \lambda _A(I)$ from ${\rm Id}({\rm Con}(A))$ to ${\rm Id}({\rm Con}(A)/_{\textstyle \equiv _A})$.

\noindent (\ref{echiv(iii)nobm2}) and (\ref{echiv(iii)nobm3}) follow from (\ref{echiv(iii)nobm1}) in the same way in which properties (\ref{annbij2}), (\ref{annbij3}) and (\ref{annbij4}) from Lemma \ref{annbij} follow from (\ref{annbij1}). Another way to prove these facts is to notice that the equivalences in Lemma \ref{annbij}, (\ref{annbij2}), hold if we replace the congruences by sets of congruences, and also apply Lemmas \ref{semilatl} and \ref{semilata}.\end{proof}

\begin{proposition} For any nonzero cardinality $\kappa $, the properties $(3)_{\kappa ,{\rm Con}(A)}$ and $(3)_{\kappa ,{\rm Con}(A)/_{\textstyle \equiv _A}}$ are equivalent.\label{echiv(iii)}\end{proposition}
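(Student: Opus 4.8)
The plan is to separate condition $(3)_{\kappa ,L}$, for each of $L={\rm Con}(A)$ and $L={\rm Con}(A)/_{\textstyle \equiv _A}$, into its two ingredients: on the one hand the statement that ${\rm P2Ann}(L)$ is a sublattice of ${\rm Id}(L)$ on which $a\mapsto {\rm Ann}_L({\rm Ann}_L(a))$ is a lattice morphism, which is exactly $(p2ann)_L$, and on the other hand the extra demand that ${\rm P2Ann}(L)$ be a $\kappa $--complete Boolean algebra. Since a $\kappa $--complete Boolean sublattice is in particular a sublattice, $(3)_{\kappa ,L}$ entails $(p2ann)_L$ in either case; hence, by Lemma \ref{echiv(iii)nobm}, (\ref{echiv(iii)nobm2}), if $(p2ann)$ fails then it fails on both sides and both instances of $(3)_{\kappa }$ fail, so the asserted equivalence holds trivially. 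I may therefore assume that $(p2ann)_{{\rm Con}(A)}$ and $(p2ann)_{{\rm Con}(A)/_{\textstyle \equiv _A}}$ both hold.

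Under that assumption, Lemma \ref{echiv(iii)nobm}, (\ref{echiv(iii)nobm3}), promotes the order isomorphism $Q\mapsto \lambda _A(Q)$ of part (\ref{echiv(iii)nobm1}) to a lattice isomorphism between ${\rm P2Ann}({\rm Con}(A))$ and ${\rm P2Ann}({\rm Con}(A)/_{\textstyle \equiv _A})$. The remaining task is to transport the property ``$\kappa $--complete Boolean algebra'' across this isomorphism. Being a Boolean algebra and being $\kappa $--complete are intrinsic, isomorphism--invariant properties of a bounded lattice: a lattice isomorphism preserves the bounds and hence complements, and, being an order isomorphism, it carries the supremum and the infimum of any family of cardinality at most $\kappa $ to those of the image family. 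Thus ${\rm P2Ann}({\rm Con}(A))$ is a $\kappa $--complete Boolean algebra if and only if ${\rm P2Ann}({\rm Con}(A)/_{\textstyle \equiv _A})$ is one, and combining this with the equivalence of the two $(p2ann)$ parts gives $(3)_{\kappa ,{\rm Con}(A)}\Leftrightarrow (3)_{\kappa ,{\rm Con}(A)/_{\textstyle \equiv _A}}$.

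The step I expect to need the most care is checking that the completeness transported by this isomorphism is compatible with the joins and meets taken in the ideal lattices, rather than merely an abstract one. For joins I would use Remark \ref{2.7}: the isomorphism is a restriction of the lattice morphism $I\mapsto \lambda _A(I)$ from ${\rm Id}({\rm Con}(A))$ to ${\rm Id}({\rm Con}(A)/_{\textstyle \equiv _A})$, which preserves arbitrary joins, so the supremum of a family in ${\rm P2Ann}({\rm Con}(A))$ is sent to the supremum of its image and the two completeness notions match along the isomorphism. For meets I would exploit that ${\rm Con}(A)/_{\textstyle \equiv _A}$ is a frame, where ${\rm P2Ann}({\rm Con}(A)/_{\textstyle \equiv _A})={\cal A}nn({\rm Con}(A)/_{\textstyle \equiv _A})$ is closed under arbitrary intersections by Lemma \ref{semilatl}, together with the facts that a meet of double annihilators of ${\rm Con}(A)$ is again an annihilator of ${\rm Con}(A)$ by Lemma \ref{semilata} and that $\lambda _A$ is injective on annihilators by Lemma \ref{annegal}, (\ref{annegal3}); these let me pull such an intersection back from its image and conclude it lies in ${\rm P2Ann}({\rm Con}(A))$. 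Alternatively, once the Boolean structure and the $\kappa $--joins have been transferred, the $\kappa $--meets follow by De Morgan's laws.
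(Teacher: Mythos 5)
Your proof is correct and follows essentially the same route as the paper's: both rest entirely on Lemma \ref{echiv(iii)nobm}, splitting $(3)_{\kappa ,L}$ into the $(p2ann)_L$ part (transferred by part (\ref{echiv(iii)nobm2}) of that lemma) and the $\kappa $--complete Boolean part (transported along the order/lattice isomorphism of parts (\ref{echiv(iii)nobm1}) and (\ref{echiv(iii)nobm3})). You are somewhat more explicit than the paper about why the transported completeness is the one relative to the ambient ideal lattices --- via Remark \ref{2.7}, the frame property of ${\rm Con}(A)/_{\textstyle \equiv _A}$ and Lemmas \ref{semilatl}, \ref{semilata} and \ref{annegal} --- which is a legitimate filling-in of details the paper leaves implicit rather than a different argument.
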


\begin{proof} By Lemma \ref{echiv(iii)nobm} and the fact that the map from Lemma \ref{echiv(iii)nobm}, (\ref{echiv(iii)nobm1}), composed with the map from $(3)_{\kappa ,{\rm Con}(A)}$ equals the map from $(3)_{\kappa ,{\rm Con}(A)/_{\textstyle \equiv _A}}$ composed with the the canonical surjective lattice morphism from ${\rm Con}(A)$ to ${\rm Con}(A)/_{\textstyle \equiv _A}$.\end{proof}

\begin{proposition} Properties $(iv)_{{\rm Con}(A)}$ and $(iv)_{{\rm Con}(A)/_{\scriptstyle \equiv _A}}$ are equivalent.\label{eq(iv)}\end{proposition}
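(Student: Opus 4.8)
The plan is to reduce condition $(iv)_L$ in each case to the join--form equality that already appears in Lemma \ref{annbij}, (\ref{annbij2}), and then to apply that equivalence pointwise, using the surjectivity of $\lambda _A$ to pass the universal quantifier between the two lattices.

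First I would record that, in both lattices under consideration, every annihilator is an ideal. For ${\rm Con}(A)$ this is Remark \ref{2.6}, while for ${\rm Con}(A)/_{\textstyle \equiv _A}$ it follows from distributivity, since that lattice is a frame (recall that, for distributive $L$, each ${\rm Ann}_L(U)$ is an ideal). Consequently, for any two ideals $I,J$ of such a lattice $L$, the ideal $(I\cup J]_L$ generated by their union coincides with their join $I\vee J$ in ${\rm Id}(L)$. Thus condition $(iv)_{{\rm Con}(A)}$ is exactly the assertion that ${\rm Ann}(\theta \cap \zeta )={\rm Ann}(\theta )\vee {\rm Ann}(\zeta )$ for all $\theta ,\zeta \in {\rm Con}(A)$, and, since $\lambda _A$ is surjective so that every element of ${\rm Con}(A)/_{\textstyle \equiv _A}$ is of the form $\lambda _A(\theta )$, condition $(iv)_{{\rm Con}(A)/_{\textstyle \equiv _A}}$ is exactly the assertion that ${\rm Ann}(\lambda _A(\theta )\wedge \lambda _A(\zeta ))={\rm Ann}(\lambda _A(\theta ))\vee {\rm Ann}(\lambda _A(\zeta ))$ for all $\theta ,\zeta \in {\rm Con}(A)$.

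Next, Lemma \ref{annbij}, (\ref{annbij2}), states precisely that, for each fixed pair $\theta ,\zeta \in {\rm Con}(A)$, the equality ${\rm Ann}(\theta \cap \zeta )={\rm Ann}(\theta )\vee {\rm Ann}(\zeta )$ holds if and only if ${\rm Ann}(\lambda _A(\theta )\wedge \lambda _A(\zeta ))={\rm Ann}(\lambda _A(\theta ))\vee {\rm Ann}(\lambda _A(\zeta ))$ holds. As $(\theta ,\zeta )$ ranges over $({\rm Con}(A))^2$, surjectivity of $\lambda _A$ forces $(\lambda _A(\theta ),\lambda _A(\zeta ))$ to range over all of $({\rm Con}(A)/_{\textstyle \equiv _A})^2$; hence universally quantifying the pointwise equivalence over all such pairs yields at once that $(iv)_{{\rm Con}(A)}$ and $(iv)_{{\rm Con}(A)/_{\textstyle \equiv _A}}$ are equivalent.

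The only point requiring any care --- and it is more bookkeeping than an obstacle --- is the identification of the generated--ideal formulation $({\rm Ann}(a)\cup {\rm Ann}(b)]_L$ with the ideal join ${\rm Ann}(a)\vee {\rm Ann}(b)$, which is what links the stated form of $(iv)_L$ to the join--equality handled in Lemma \ref{annbij}. Once this is in place, the statement is an immediate consequence of that lemma together with the surjectivity of $\lambda _A$, with no further computation needed.
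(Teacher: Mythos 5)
Your proposal is correct and follows essentially the same route as the paper, whose entire proof of this proposition reads ``By Lemma \ref{annbij}, (\ref{annbij2}), and the surjectivity of the map $\lambda _A$.'' The only addition is your explicit bookkeeping identifying $({\rm Ann}(a)\cup {\rm Ann}(b)]$ with the ideal join ${\rm Ann}(a)\vee {\rm Ann}(b)$ (via Remark \ref{2.6} and distributivity of the frame ${\rm Con}(A)/_{\textstyle \equiv _A}$), which the paper leaves implicit.
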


\begin{proof} By Lemma \ref{annbij}, (\ref{annbij2}), and the surjectivity of the map $\lambda _A:{\rm Con}(A)\rightarrow {\rm Con}(A)/_{\textstyle \equiv _A}$.\end{proof}

\begin{remark} Since ${\rm Con}(A)/_{\textstyle \equiv _A}$ is a frame, ${\rm 2Ann}({\rm Con}(A)/_{\textstyle \equiv _A})\subseteq {\cal A}nn({\rm Con}(A)/_{\textstyle \equiv _A})={\rm PAnn}({\rm Con}(A)/_{\textstyle \equiv _A})$, which means that the second part of condition $(4)_{\kappa ,{\rm Con}(A)/_{\textstyle \equiv _A}}$ is fulfilled for any nonzero cardinality $\kappa $.\label{(iv)}\end{remark}

\begin{lemma} ${\cal A}nn({\rm Con}(A))={\rm PAnn}({\rm Con}(A))$.\label{panncg}\end{lemma}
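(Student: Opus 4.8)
The plan is to establish the two inclusions between ${\cal A}nn({\rm Con}(A))$ and ${\rm PAnn}({\rm Con}(A))$. The inclusion ${\rm PAnn}({\rm Con}(A))\subseteq {\cal A}nn({\rm Con}(A))$ is immediate, since every element of ${\rm PAnn}({\rm Con}(A))$ has the form ${\rm Ann}(\theta )={\rm Ann}(\{\theta \})$ and $\{\theta \}$ is a subset of ${\rm Con}(A)$. So the work lies entirely in showing ${\cal A}nn({\rm Con}(A))\subseteq {\rm PAnn}({\rm Con}(A))$, that is: every annihilator of a subset of ${\rm Con}(A)$ is actually the annihilator of a single congruence.

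First I would fix an arbitrary $\Omega \subseteq {\rm Con}(A)$ and recall the elementary identity ${\rm Ann}(\Omega )=\bigcap _{\omega \in \Omega }{\rm Ann}(\omega )$. The key step is then to apply the second bullet of Lemma \ref{semilata}, which rewrites this intersection of annihilators as a single annihilator: $\bigcap _{\omega \in \Omega }{\rm Ann}(\omega )={\rm Ann}(\bigvee _{\omega \in \Omega }\omega )$. Because ${\rm Con}(A)$ is a complete lattice, the join $\theta =\bigvee _{\omega \in \Omega }\omega $ exists and is a single congruence of $A$, whence ${\rm Ann}(\Omega )={\rm Ann}(\theta )\in {\rm PAnn}({\rm Con}(A))$, as desired. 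The only case requiring the usual conventions is $\Omega =\emptyset $: here ${\rm Ann}(\emptyset )={\rm Con}(A)$ while $\bigvee \emptyset =\Delta _A$, and indeed ${\rm Ann}(\Delta _A)=\{\zeta \in {\rm Con}(A)\ |\ \zeta \cap \Delta _A=\Delta _A\}={\rm Con}(A)$, so the identity still holds and produces the witness $\theta =\Delta _A$.

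I do not expect any real obstacle here: the whole statement is a direct consequence of Lemma \ref{semilata}, so the proof reduces to the display above plus the trivial inclusion. The substantive work was already carried out in proving that lemma, which rests on the distributivity of the commutator $[\cdot ,\cdot ]_A$ with respect to arbitrary joins together with $(4^{\circ })$; it is precisely this that lets \emph{arbitrary} (not merely finite) intersections of annihilators collapse to a single annihilator, even though ${\rm Con}(A)$ itself need not be distributive. This mirrors the frame situation for bounded distributive lattices recorded in Remark \ref{dualdavey}, where likewise ${\cal A}nn(L)={\rm PAnn}(L)$, and it is the feature that will later justify treating the second part of condition $(4)_{\kappa ,{\rm Con}(A)}$ as automatically satisfied.
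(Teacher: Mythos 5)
Your proof is correct, but it takes a genuinely different route from the paper's. You stay entirely inside ${\rm Con}(A)$: writing ${\rm Ann}(\Omega )=\bigcap _{\omega \in \Omega }{\rm Ann}(\omega )$ and invoking the second bullet of Lemma \ref{semilata} together with the completeness of ${\rm Con}(A)$, you collapse the intersection to ${\rm Ann}(\bigvee _{\omega \in \Omega }\omega )$ and thereby produce an explicit witness $\theta =\bigvee \Omega $. The paper instead runs its standard transfer argument through the quotient: by Remark \ref{(iv)} the frame ${\rm Con}(A)/_{\textstyle \equiv _A}$ already satisfies ${\cal A}nn={\rm PAnn}$, so $\lambda _A({\rm Ann}(\Omega ))={\rm Ann}(\lambda _A(\Omega ))={\rm Ann}(\lambda _A(\theta ))=\lambda _A({\rm Ann}(\theta ))$ for some $\theta $ by Lemma \ref{annlambda}, and then Lemma \ref{annegal}, (\ref{annegal3}), pulls the equality back to ${\rm Con}(A)$. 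Your version is shorter, avoids the injectivity machinery of Lemma \ref{annegal}, and makes the witness visible; the paper's version has the merit of being uniform with the surrounding propositions, all of which are proved by the same push--pull along $\lambda _A$. You are also right to flag where the real content sits: both arguments ultimately rest on the join--distributivity of the commutator and on semiprimality (via $(3^{\circ })$ and $(4^{\circ })$), whether packaged as Lemma \ref{semilata} or as the frame property of ${\rm Con}(A)/_{\textstyle \equiv _A}$. Your handling of $\Omega =\emptyset $ is a harmless extra check consistent with the paper's conventions.
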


\begin{proof} By Remark \ref{(iv)} and Lemma \ref{annlambda}, for any $\Omega \subseteq {\rm Con}(A))$, $\lambda _A({\rm Ann}(\Omega ))={\rm Ann}(\lambda _A(\Omega ))={\rm Ann}(\lambda _A(\theta ))=\lambda _A({\rm Ann}(\theta ))$ for some $\theta \in {\rm Con}(A))$, so that ${\rm Ann}(\Omega )={\rm Ann}(\theta )$ by Lemma \ref{annegal}, (\ref{annegal3}).\end{proof}

\begin{proposition} For any nonzero cardinality $\kappa $, the properties $(iv)_{{\rm Con}(A)}$, $(4)_{\kappa ,{\rm Con}(A)}$, $(4)_{<\infty ,{\rm Con}(A)}$ and $(4)_{{\rm Con}(A)}$ are equivalent.\label{echiv(iv)}\end{proposition}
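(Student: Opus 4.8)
The plan is to reduce all three ``full'' versions of condition $(4)$ to their common first part, which is exactly $(iv)_{{\rm Con}(A)}$, by showing that the \emph{second} part of $(4)$---the existence, for each admissible $U$, of an $x\in {\rm Con}(A)$ with ${\rm Ann}({\rm Ann}(U))={\rm Ann}(x)$---holds automatically in ${\rm Con}(A)$, with no dependence on any cardinality restriction on $U$. The crucial input is Lemma \ref{panncg}, which asserts ${\cal A}nn({\rm Con}(A))={\rm PAnn}({\rm Con}(A))$.

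First I would observe that, for an arbitrary $U\subseteq {\rm Con}(A)$, the double annihilator ${\rm Ann}({\rm Ann}(U))$ is by definition the annihilator of the subset ${\rm Ann}(U)\subseteq {\rm Con}(A)$, and hence lies in ${\cal A}nn({\rm Con}(A))$. By Lemma \ref{panncg} this set coincides with ${\rm PAnn}({\rm Con}(A))$, so there is a single congruence $\theta \in {\rm Con}(A)$ with ${\rm Ann}({\rm Ann}(U))={\rm Ann}(\theta )$. Since $U$ was arbitrary, the second clause of $(4)_{\kappa ,{\rm Con}(A)}$, of $(4)_{<\infty ,{\rm Con}(A)}$ and of $(4)_{{\rm Con}(A)}$ is fulfilled for \emph{every} choice of $U$, and in particular under every cardinality bound on $U$.

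Consequently each of $(4)_{\kappa ,{\rm Con}(A)}$, $(4)_{<\infty ,{\rm Con}(A)}$ and $(4)_{{\rm Con}(A)}$ is logically equivalent to its first part alone, namely to the requirement that ${\rm Ann}(\theta \cap \zeta )=({\rm Ann}(\theta )\cup {\rm Ann}(\zeta )]$ for all $\theta ,\zeta \in {\rm Con}(A)$, which is precisely $(iv)_{{\rm Con}(A)}$. This delivers the four claimed equivalences simultaneously and uniformly in $\kappa $.

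There is no genuine obstacle here once Lemma \ref{panncg} is available: the whole content of the proposition is the observation that the ``existence of $x$'' clause is vacuously guaranteed by the collapse ${\cal A}nn({\rm Con}(A))={\rm PAnn}({\rm Con}(A))$. The only point needing care is making explicit that ${\rm Ann}({\rm Ann}(U))$ is itself the annihilator of a subset, so that Lemma \ref{panncg} applies; this is the ${\rm Con}(A)$-level counterpart of the frame argument already recorded in Remark \ref{(iv)} for the quotient ${\rm Con}(A)/_{\textstyle \equiv _A}$.
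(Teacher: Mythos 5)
Your proof is correct and follows exactly the paper's own argument: both reduce each version of $(4)_{{\rm Con}(A)}$ to its first clause by noting that ${\rm 2Ann}({\rm Con}(A))\subseteq {\cal A}nn({\rm Con}(A))={\rm PAnn}({\rm Con}(A))$ (Lemma \ref{panncg}) makes the existence clause automatic for every $U$. No differences worth noting.
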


\begin{proof} By Lemma \ref{panncg}, ${\rm 2Ann}({\rm Con}(A))\subseteq {\cal A}nn({\rm Con}(A))={\rm PAnn}({\rm Con}(A))$, which means that the second condition in $(4)_{\kappa ,{\rm Con}(A)}$ is fulfilled for any nonzero cardinality $\kappa $, so that conditions $(iv)_{{\rm Con}(A)}$, $(4)_{\kappa , {\rm Con}(A)}$, $(4)_{<\infty ,{\rm Con}(A)}$ and $(4)_{{\rm Con}(A)}$ are equivalent.\end{proof}

\begin{proposition} For any nonzero cardinality $\kappa $, the properties $(5)_{\kappa ,{\rm Con}(A)}$ and $(5)_{\kappa ,{\rm Con}(A)/_{\textstyle \equiv _A}}$ are equivalent.\label{echiv(v)}\end{proposition}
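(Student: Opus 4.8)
The plan is to transfer condition $(5)_\kappa$ along the direct image of $\lambda _A$ on ideals, exactly in the spirit of the preceding propositions. First I would rewrite $(5)_{\kappa ,L}$ as an identity in the ideal lattice ${\rm Id}(L)$: since the join of two ideals is the ideal generated by their union, the requirement $({\rm Ann}_L(U)\cup {\rm Ann}_L({\rm Ann}_L(U))]_L=L$ is precisely ${\rm Ann}_L(U)\vee {\rm Ann}_L({\rm Ann}_L(U))=L$, the top ideal, computed in ${\rm Id}(L)$. This reformulation is what makes the map $I\mapsto \lambda _A(I)$, which is a join--preserving lattice morphism from ${\rm Id}({\rm Con}(A))$ to ${\rm Id}({\rm Con}(A)/_{\textstyle \equiv _A})$ by Remark \ref{2.7}, the natural transfer tool.

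Next I would record the computation that underlies both implications. For any $\Omega \subseteq {\rm Con}(A)$, applying Lemma \ref{annlambda} once gives ${\rm Ann}(\lambda _A(\Omega ))=\lambda _A({\rm Ann}(\Omega ))$, and applying it a second time, to the set ${\rm Ann}(\Omega )$, gives ${\rm Ann}({\rm Ann}(\lambda _A(\Omega )))=\lambda _A({\rm Ann}({\rm Ann}(\Omega )))$. Since the direct image preserves joins of ideals, it follows that
\[\lambda _A\big({\rm Ann}(\Omega )\vee {\rm Ann}({\rm Ann}(\Omega ))\big)={\rm Ann}(\lambda _A(\Omega ))\vee {\rm Ann}({\rm Ann}(\lambda _A(\Omega ))).\]
For the direct implication, assuming $(5)_{\kappa ,{\rm Con}(A)}$, I would take an arbitrary $U\subseteq {\rm Con}(A)/_{\textstyle \equiv _A}$ with $|U|\leq \kappa $, lift it to some $\Omega $ with $\lambda _A(\Omega )=U$ and $|\Omega |=|U|$, and then the displayed identity together with $\lambda _A({\rm Con}(A))={\rm Con}(A)/_{\textstyle \equiv _A}$ immediately yields ${\rm Ann}(U)\vee {\rm Ann}({\rm Ann}(U))={\rm Con}(A)/_{\textstyle \equiv _A}$.

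The converse is the only step that requires care, and it is the main obstacle, because the direct image on ideals is not injective in general. Assuming $(5)_{\kappa ,{\rm Con}(A)/_{\textstyle \equiv _A}}$ and taking $\Omega \subseteq {\rm Con}(A)$ with $|\Omega |\leq \kappa $, the displayed identity gives $\lambda _A\big({\rm Ann}(\Omega )\vee {\rm Ann}({\rm Ann}(\Omega ))\big)={\rm Con}(A)/_{\textstyle \equiv _A}$, and I must deduce that the ideal $J={\rm Ann}(\Omega )\vee {\rm Ann}({\rm Ann}(\Omega ))$ already equals ${\rm Con}(A)$. Here I would use that $\lambda _A(J)$ is the set--image of $J$ and that it contains the top element $\lambda _A(\nabla _A)$, so that some $\theta \in J$ satisfies $\lambda _A(\theta )=\lambda _A(\nabla _A)$; by the recalled fact that, for any $\theta \in {\rm Con}(A)$, $\lambda _A(\theta )=\lambda _A(\nabla _A)$ iff $\theta =\nabla _A$, we obtain $\nabla _A\in J$, whence $J={\rm Con}(A)$. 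This top--element argument replaces a naive injectivity claim and is precisely the point at which the semi--degeneracy hypothesis enters, through the fact that every proper congruence is contained in a prime congruence.
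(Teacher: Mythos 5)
Your proposal is correct and follows essentially the same route as the paper's proof: both directions transfer $(5)_{\kappa}$ through the join--preserving direct image of $\lambda _A$ on ideals, using Lemma \ref{annlambda} twice together with the fact that $\lambda _A(\theta )=\lambda _A(\nabla _A)$ forces $\theta =\nabla _A$. Your converse is marginally more streamlined (you pull $\nabla _A$ back directly from the set--image of the ideal ${\rm Ann}(\Omega )\vee {\rm Ann}({\rm Ann}(\Omega ))$, whereas the paper decomposes the witness as $\theta \vee \zeta $ and invokes Lemma \ref{annegal} to relocate $\theta $ and $\zeta $), but the key ingredients and the point where semi--degeneracy enters are identical.
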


\begin{proof} Assume that $(5)_{\kappa ,{\rm Con}(A)}$ is fulfilled and let $U\subseteq {\rm Con}(A)/_{\textstyle \equiv _A}$ with $|U|\leq \kappa $. For each $u\in U$, there exists an $\omega _u\in {\rm Con}(A)$ such that $\lambda _A(\omega _u)=u$. Let $\Omega =\{\omega _u\ |\ u\in U\}\subseteq {\rm Con}(A)$. Then $|\Omega |=|U|\leq \kappa $ and $\lambda _A(\Omega )=U$, thus ${\rm Ann}(\Omega )\vee {\rm Ann}({\rm Ann}(\Omega ))={\rm Con}(A)$ and hence, by Lemma \ref{annlambda}, ${\rm Ann}(U)\vee {\rm Ann}({\rm Ann}(U))={\rm Ann}(\lambda _A(\Omega ))\vee {\rm Ann}({\rm Ann}(\lambda _A(\Omega )))=\lambda _A({\rm Ann}(\Omega ))\vee \lambda _A({\rm Ann}({\rm Ann}(\Omega )))=\lambda _A({\rm Ann}(\Omega )\vee {\rm Ann}({\rm Ann}(\Omega )))=\lambda _A({\rm Con}(A))=$\linebreak ${\rm Con}(A)/_{\textstyle \equiv _A}$, so the direct implication holds.

Now assume that $(5)_{\kappa ,{\rm Con}(A)/_{\textstyle \equiv _A}}$ is fulfilled and let $\Omega \subseteq {\rm Con}(A)$ with $|\Omega |\leq \kappa $. Then $\lambda _A(\Omega )\subseteq {\rm Con}(A)/_{\textstyle \equiv _A}$ and $|\lambda _A(\Omega )|\leq |\Omega |\leq \kappa $, hence ${\rm Ann}(\lambda _A(\Omega ))\vee {\rm Ann}({\rm Ann}(\lambda _A(\Omega )))={\rm Con}(A)/_{\textstyle \equiv _A}=\lambda _A({\rm Con}(A))$, so that $\lambda _A(\nabla _A)\in {\rm Ann}(\lambda _A(\Omega ))\vee {\rm Ann}({\rm Ann}(\lambda _A(\Omega )))$, which means that $\lambda _A(\nabla _A)\leq \lambda _A(\theta )\vee \lambda _A(\zeta )$ for some $\theta ,\zeta \in {\rm Con}(A)$ such that $\lambda _A(\theta )\in {\rm Ann}(\lambda _A(\Omega ))=\lambda _A({\rm Ann}(\Omega ))$ and $\lambda _A(\zeta )\in {\rm Ann}({\rm Ann}(\lambda _A(\Omega )))=\lambda _A({\rm Ann}({\rm Ann}(\Omega )))$, so that $\theta \in {\rm Ann}(\Omega )$ and $\zeta \in {\rm Ann}({\rm Ann}(\Omega ))$, by Lemma \ref{annlambda} and Lemma \ref{annegal}, (\ref{annegal1}). Hence $\lambda _A(\nabla _A)=\lambda _A(\theta )\vee \lambda _A(\zeta )=\lambda _A(\theta \vee \zeta )$, thus $\nabla _A=\theta \vee \zeta \in {\rm Ann}(\Omega )\vee {\rm Ann}({\rm Ann}(\Omega ))$, hence ${\rm Ann}(\Omega )\vee {\rm Ann}({\rm Ann}(\Omega ))={\rm Con}(A)$, therefore the converse implication holds, as well.\end{proof}

\begin{theorem} For any nonzero cardinality $\kappa $ and any $h,i,j\in \overline{1,5}$, conditions $(iv)_{{\rm Con}(A)}$, $(h)_{\kappa ,{\rm Con}(A)}$,\linebreak $(i)_{<\infty ,{\rm Con}(A)}$ and $(j)_{{\rm Con}(A)}$ are equivalent.\label{mydavey}\end{theorem}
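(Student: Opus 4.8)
The plan is to transfer each condition individually between ${\rm Con}(A)$ and its frame quotient ${\rm Con}(A)/_{\textstyle \equiv _A}$, exploit the fact that on the frame all of the listed conditions are already known to collapse into one equivalence class, and then read the equivalences back down onto ${\rm Con}(A)$. Concretely, Corollary \ref{cordav} together with the last bullet of Corollary \ref{moredavey} (which applies precisely because ${\rm Con}(A)/_{\textstyle \equiv _A}$ is a frame) tells us that, for every nonzero cardinality $\kappa $ and all $h,i,j\in \overline{1,5}$, the conditions $(iv)_{{\rm Con}(A)/_{\textstyle \equiv _A}}$, $(h)_{\kappa ,{\rm Con}(A)/_{\textstyle \equiv _A}}$, $(i)_{<\infty ,{\rm Con}(A)/_{\textstyle \equiv _A}}$ and $(j)_{{\rm Con}(A)/_{\textstyle \equiv _A}}$ form a single equivalence class. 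Hence the whole theorem reduces to matching each condition on ${\rm Con}(A)$ with its namesake on the frame.

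First I would handle the $\kappa $-indexed conditions for $n\in \{1,2,3,5\}$: Propositions \ref{echiv(i)}, \ref{echiv(ii)}, \ref{echiv(iii)} and \ref{echiv(v)} give, for every nonzero $\kappa $, the equivalence $(n)_{\kappa ,{\rm Con}(A)}\Leftrightarrow (n)_{\kappa ,{\rm Con}(A)/_{\textstyle \equiv _A}}$. The case $n=4$ is treated separately: Proposition \ref{echiv(iv)} already collapses $(iv)_{{\rm Con}(A)}$, $(4)_{\kappa ,{\rm Con}(A)}$, $(4)_{<\infty ,{\rm Con}(A)}$ and $(4)_{{\rm Con}(A)}$ into one block (its second clause being automatic via Lemma \ref{panncg}), while Proposition \ref{eq(iv)} links $(iv)_{{\rm Con}(A)}$ to $(iv)_{{\rm Con}(A)/_{\textstyle \equiv _A}}$, which belongs to the frame class by Remark \ref{(iv)} and Corollary \ref{moredavey}. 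Thus every $\kappa $-indexed condition on ${\rm Con}(A)$ is pinned to the frame equivalence class.

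Next I would promote these per-$\kappa $ equivalences to the aggregate conditions $(i)_{<\infty ,{\rm Con}(A)}$ and $(j)_{{\rm Con}(A)}$, which have no transfer proposition of their own. Here I invoke the cardinality bookkeeping of Remark \ref{dualdavey}, valid in any bounded lattice $L$: $(n)_{<\infty ,L}$ is equivalent to ``$(n)_{\kappa ,L}$ for all finite nonzero $\kappa $'' and $(n)_L$ to ``$(n)_{\kappa ,L}$ for all nonzero $\kappa $''. Applying this to both $L={\rm Con}(A)$ and $L={\rm Con}(A)/_{\textstyle \equiv _A}$ and feeding in the per-$\kappa $ transfers of the previous step yields $(n)_{<\infty ,{\rm Con}(A)}\Leftrightarrow (n)_{<\infty ,{\rm Con}(A)/_{\textstyle \equiv _A}}$ and $(n)_{{\rm Con}(A)}\Leftrightarrow (n)_{{\rm Con}(A)/_{\textstyle \equiv _A}}$ for $n\in \{1,2,3,5\}$, the case $n=4$ being already covered. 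Chaining every condition on ${\rm Con}(A)$ to its frame counterpart and then through the single frame equivalence class of the first paragraph closes all the equivalences and proves the theorem.

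I expect the main obstacle to be precisely the quantifier bookkeeping in this last step: the propositions transfer conditions one cardinality at a time, whereas $(i)_{<\infty ,{\rm Con}(A)}$ and $(j)_{{\rm Con}(A)}$ are ``for all $\kappa $'' statements, so some care is needed to ensure the universal quantifier over $\kappa $ survives the transfer in both directions, and that condition $4$ — whose second clause is automatic on the frame but must be argued through Proposition \ref{echiv(iv)} on ${\rm Con}(A)$ — is correctly slotted into the same class rather than treated by a separate per-$\kappa $ transfer.
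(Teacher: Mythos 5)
Your proposal is correct and follows essentially the same route as the paper, whose proof is just the citation ``By Corollary \ref{cordav} and Propositions \ref{echiv(i)}, \ref{echiv(ii)}, \ref{echiv(iii)}, \ref{echiv(iv)} and \ref{echiv(v)}''; you merely spell out the quantifier bookkeeping from Remark \ref{dualdavey} that the paper leaves implicit. Your explicit use of Proposition \ref{eq(iv)} to tie the condition--$4$ block to the frame equivalence class is a welcome bit of extra care (the paper only invokes it later, in Corollary \ref{cormydavey}), but it does not change the argument.
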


\begin{proof} By Corollary \ref{cordav} and Propositions \ref{echiv(i)}, \ref{echiv(ii)}, \ref{echiv(iii)}, \ref{echiv(iv)} and \ref{echiv(v)}.\end{proof}

\begin{openproblem} Determine what kinds of bounded modular lattices are congruence lattices of semiprime algebras from semi--degenerate congruence--modular varieties. It will follow that the equivalences in the last statement in Corollary \ref{moredavey} hold for all those kinds of bounded modular lattices.\end{openproblem}

\begin{corollary} Let $\kappa $ be a nonzero cardinality. Then: the equivalent conditions $(iv)_{{\rm Con}(A)},(1)_{\kappa ,{\rm Con}(A)},\ldots ,$\linebreak $(5)_{\kappa ,{\rm Con}(A)},(1)_{<\infty ,{\rm Con}(A)},\ldots ,(5)_{<\infty ,{\rm Con}(A)},(1)_{{\rm Con}(A)},\ldots ,(5)_{{\rm Con}(A)}$ are fulfilled iff the equivalent conditions\linebreak $(iv)_{{\rm Con}(A)/_{\scriptstyle \equiv _A}},\ (1)_{\kappa ,{\rm Con}(A)/_{\scriptstyle \equiv _A}},\ldots ,(5)_{\kappa ,{\rm Con}(A)/_{\scriptstyle \equiv _A}},\ (1)_{<\infty ,{\rm Con}(A)/_{\scriptstyle \equiv _A}},\ldots ,(5)_{<\infty ,{\rm Con}(A)/_{\scriptstyle \equiv _A}},\ (1)_{{\rm Con}(A)/_{\scriptstyle \equiv _A}},\ldots ,$\linebreak $(5)_{{\rm Con}(A)/_{\scriptstyle \equiv _A}}$ are fulfilled.\label{cormydavey}\end{corollary}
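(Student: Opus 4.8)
The plan is to prove Corollary \ref{cormydavey} by reducing the biconditional between the families of conditions on ${\rm Con}(A)$ and on ${\rm Con}(A)/_{\textstyle \equiv _A}$ to the individual equivalences already established, and then invoking the two ``horizontal'' equivalence theorems that collapse each family internally. First I would observe that all the conditions appearing on the left are mutually equivalent by Theorem \ref{mydavey}, and all the conditions appearing on the right are mutually equivalent by Corollary \ref{cordav} (which applies because ${\rm Con}(A)/_{\textstyle \equiv _A}$ is a frame). Thus it suffices to exhibit a single matching pair, one condition from the left family and one from the right family, that are equivalent to each other; the biconditional between the whole families then follows formally by chaining through these two internal collapses.

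The natural bridge to use is one of the pairwise Propositions that directly relate a condition on ${\rm Con}(A)$ to its counterpart on the quotient. For instance, Proposition \ref{echiv(i)} already states that $(1)_{\kappa ,{\rm Con}(A)}$ is equivalent to $(1)_{\kappa ,{\rm Con}(A)/_{\scriptstyle \equiv _A}}$ for any nonzero cardinality $\kappa$; similarly Propositions \ref{echiv(ii)}, \ref{echiv(iii)}, \ref{echiv(v)} and \ref{eq(iv)} each establish such a bridge for the conditions indexed $2$, $3$, $5$ and $(iv)$. Any one of these gives the required link between the two families. Concretely, I would chain: an arbitrary left-family condition $\Leftrightarrow (1)_{\kappa ,{\rm Con}(A)}$ by Theorem \ref{mydavey}, then $(1)_{\kappa ,{\rm Con}(A)} \Leftrightarrow (1)_{\kappa ,{\rm Con}(A)/_{\scriptstyle \equiv _A}}$ by Proposition \ref{echiv(i)}, then $(1)_{\kappa ,{\rm Con}(A)/_{\scriptstyle \equiv _A}} \Leftrightarrow$ an arbitrary right-family condition by Corollary \ref{cordav}.

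Since every ingredient needed has already been proven, this is essentially a bookkeeping argument and there is no genuine obstacle; the only thing requiring a little care is making sure that the cited Theorem and Corollary genuinely cover \emph{all} the conditions listed on each side. On the left, Theorem \ref{mydavey} explicitly asserts the equivalence of $(iv)_{{\rm Con}(A)}$, all $(h)_{\kappa ,{\rm Con}(A)}$, all $(i)_{<\infty ,{\rm Con}(A)}$ and all $(j)_{{\rm Con}(A)}$ for $h,i,j\in\overline{1,5}$, which is exactly the left family; on the right, Corollary \ref{cordav} asserts the analogous equivalence for the frame ${\rm Con}(A)/_{\textstyle \equiv _A}$, which is exactly the right family (here $(iv)$ is subsumed since $(iv)_L$ coincides with the degree-$4$ conditions in a frame, by Remark \ref{dualdavey}). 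Hence the proof reduces to the single sentence indicated above.

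\begin{proof} By Theorem \ref{mydavey}, all the conditions in the left-hand family are equivalent to one another, and, since ${\rm Con}(A)/_{\textstyle \equiv _A}$ is a frame, Corollary \ref{cordav} ensures that all the conditions in the right-hand family are equivalent to one another. It therefore suffices to link the two families by a single matching pair, which is provided by Proposition \ref{echiv(i)}: $(1)_{\kappa ,{\rm Con}(A)}$ is equivalent to $(1)_{\kappa ,{\rm Con}(A)/_{\scriptstyle \equiv _A}}$. Chaining these three equivalences yields the stated biconditional.\end{proof}
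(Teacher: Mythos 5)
Your proposal is correct and follows essentially the same route as the paper, whose proof reads ``By any of the Propositions \ref{eq(iv)}, \ref{echiv(i)}, \ref{echiv(ii)}, \ref{echiv(iii)}, \ref{echiv(iv)} and \ref{echiv(v)}, along with Corollary \ref{cordav} and Theorem \ref{mydavey}'' --- i.e.\ collapse each family internally and bridge them by any one of the pairwise propositions, exactly as you do with Proposition \ref{echiv(i)}. Your extra remark that $(iv)_{{\rm Con}(A)/_{\scriptstyle \equiv _A}}$ is subsumed into the right-hand family because the quotient is a frame is a valid (and slightly more careful) observation than the paper makes explicit.
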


\begin{proof} By any of the Propositions \ref{eq(iv)}, \ref{echiv(i)}, \ref{echiv(ii)}, \ref{echiv(iii)}, \ref{echiv(iv)} and \ref{echiv(v)}, along with Corollary \ref{cordav} and Theorem \ref{mydavey}.\end{proof}

In \cite{retic}, we have constructed the {\em reticulation} of $A$, ${\cal L}(A)$, which, by definition, is a bounded distributive lattice whose prime spectrum of ideals (or filters, but our construction in \cite{retic} fulfills this property for ideals) is homeomorphic to the prime spectrum of congruences of $A$, with respect to the Stone topologies. It is well known that, if two bounded distributive lattices have homeomorphic prime spectra of ideals, then they are isomorphic lattices, therefore the reticulation of $A$ is unique up to a lattice isomorphism (or dual isomorphism, if we also consider the variant of the reticulation with the property above for filters). This is our construction of the reticulation of $A$ from \cite{retic}: ${\cal L}(A)={\cal K}(A)/_{\textstyle \equiv _A}$, where ${\cal K}(A)$ is the set of the compact elements of the lattice ${\rm Con}(A)$, thus ${\cal L}(A)={\rm Con}(A)/_{\textstyle \equiv _A}$ if the lattice ${\rm Con}(A)$ is compact, in particular if ${\rm Con}(A)$ is a finite lattice, in particular if $A$ is finite. Following \cite{retic}, I am denoting the restriction $\lambda _A\mid _{{\cal K}(A)}:{\cal K}(A)\rightarrow {\cal L}(A)$ of the canonical surjective lattice morphism $\lambda _A:{\rm Con}(A)\rightarrow {\rm Con}(A)/_{\textstyle \equiv _A}$ by $\lambda _A$, as well. ${\cal L}(A)$ is a bounded sublattice of ${\rm Con}(A)/_{\textstyle \equiv _A}$, thus a bounded distributive lattice, hence, from Corollaries \ref{moredavey} and \ref{cordav} and the obvious fact that a complete sublattice of a frame is a frame, we obtain:

\begin{corollary}\begin{itemize}
\item Conditions $(1)_{{\cal L}(A)},\ldots ,(5)_{{\cal L}(A)}$ are equivalent. For any nonzero cardinality $\kappa $, $(1)_{\kappa ,{\cal L}(A)},$\linebreak $\ldots ,(5)_{\kappa ,{\cal L}(A)}$ are equivalent. For any finite nonzero cardinality $\kappa $, $(1)_{\kappa ,{\cal L}(A)},\ldots ,(5)_{\kappa ,{\cal L}(A)},(1)_{<\infty ,{\cal L}(A)},\ldots ,$\linebreak $(5)_{<\infty ,{\cal L}(A)}$ are equivalent.
\item If ${\cal L}(A)$ is a frame, in particular if ${\cal L}(A)$ is a complete sublattice of ${\rm Con}(A)/_{\textstyle \equiv _A}$, in particular if ${\cal L}(A)={\rm Con}(A)/_{\textstyle \equiv _A}$, in particular if ${\rm Con}(A)$ is a compact lattice, then, for any nonzero cardinality $\kappa $ and any $h,i,j\in \overline{1,5}$, $(iv)_{{\cal L}(A)}$, $(h)_{\kappa ,{\cal L}(A)}$, $(i)_{<\infty ,{\cal L}(A)}$ and $(j)_{{\cal L}(A)}$ are equivalent, so, in particular, $(1)_{{\cal L}(A)}$ is equivalent to $(2)_{<\infty ,{\cal L}(A)}$, which means that: ${\cal L}(A)$ is a Stone lattice iff ${\cal L}(A)$ is a strongly Stone lattice.\end{itemize}\label{cormoredavey}\end{corollary}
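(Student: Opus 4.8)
The plan is to derive both bullet points directly from Corollary \ref{moredavey}, applied to the lattice ${\cal L}(A)$, using the structural facts about ${\cal L}(A)$ recorded immediately before the statement. The only real work is bookkeeping: identifying which hypothesis on ${\cal L}(A)$ is needed for which conclusion, and verifying the chain of ``in particular'' reductions in the second bullet.

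For the first bullet, I would observe that ${\cal L}(A)={\cal K}(A)/_{\textstyle \equiv _A}$ is a bounded sublattice of ${\rm Con}(A)/_{\textstyle \equiv _A}$, which is distributive; hence ${\cal L}(A)$ is itself a bounded distributive lattice. The three asserted equivalences---the equivalence of $(1)_{{\cal L}(A)},\ldots ,(5)_{{\cal L}(A)}$; the equivalence, for each nonzero cardinality $\kappa $, of $(1)_{\kappa ,{\cal L}(A)},\ldots ,(5)_{\kappa ,{\cal L}(A)}$; and the equivalence, for finite nonzero $\kappa $, of the finite--index and $<\infty $ versions---are then exactly the first three bullets of Corollary \ref{moredavey} specialized to $L={\cal L}(A)$, so this bullet is immediate.

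For the second bullet, I would first reduce every listed hypothesis to the single assumption that ${\cal L}(A)$ is a frame. Reading the ``in particular'' clauses from the innermost outward: if ${\rm Con}(A)$ is a compact lattice, then every element of ${\rm Con}(A)$ is compact, so ${\cal K}(A)={\rm Con}(A)$ and therefore ${\cal L}(A)={\cal K}(A)/_{\textstyle \equiv _A}={\rm Con}(A)/_{\textstyle \equiv _A}$; if ${\cal L}(A)={\rm Con}(A)/_{\textstyle \equiv _A}$, then ${\cal L}(A)$ is trivially a complete sublattice of ${\rm Con}(A)/_{\textstyle \equiv _A}$; and if ${\cal L}(A)$ is a complete sublattice of the frame ${\rm Con}(A)/_{\textstyle \equiv _A}$, then ${\cal L}(A)$ is a frame, by the elementary fact that a complete sublattice of a frame is again a frame. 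Thus in each listed case ${\cal L}(A)$ is a frame. I would then invoke the last bullet of Corollary \ref{moredavey} with $L={\cal L}(A)$ (in the case ${\cal L}(A)={\rm Con}(A)/_{\textstyle \equiv _A}$ one may equally cite Corollary \ref{cordav} directly): since ${\cal L}(A)$ is both a bounded distributive lattice and a frame, that bullet yields, for any nonzero cardinality $\kappa $ and any $h,i,j\in \overline{1,5}$, the equivalence of $(iv)_{{\cal L}(A)}$, $(h)_{\kappa ,{\cal L}(A)}$, $(i)_{<\infty ,{\cal L}(A)}$ and $(j)_{{\cal L}(A)}$, and in particular the equivalence of $(1)_{{\cal L}(A)}$ with $(2)_{<\infty ,{\cal L}(A)}$, i.e.\ that ${\cal L}(A)$ is Stone iff it is strongly Stone.

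There is no substantive obstacle here: the statement is a packaging of Corollary \ref{moredavey}, and the content lies entirely in recognizing that ${\cal L}(A)$ inherits distributivity from ${\rm Con}(A)/_{\textstyle \equiv _A}$ and that the various sufficient conditions all force ${\cal L}(A)$ to be a frame. The one point deserving explicit care is the reduction ``${\rm Con}(A)$ compact $\Rightarrow {\cal L}(A)={\rm Con}(A)/_{\textstyle \equiv _A}$'', which rests on the definition of a compact lattice (all elements compact) together with the construction ${\cal L}(A)={\cal K}(A)/_{\textstyle \equiv _A}$; once this and the ``complete sublattice of a frame is a frame'' fact are in hand, both bullets are formal consequences of Corollary \ref{moredavey}.
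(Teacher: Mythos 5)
Your proposal is correct and follows exactly the paper's route: the paper derives this corollary from Corollary \ref{moredavey} (and Corollary \ref{cordav}) together with the observations that ${\cal L}(A)$ is a bounded sublattice of the distributive frame ${\rm Con}(A)/_{\textstyle \equiv _A}$, hence bounded distributive, and that a complete sublattice of a frame is a frame. Your explicit unpacking of the chain of ``in particular'' reductions (compactness of ${\rm Con}(A)$ giving ${\cal K}(A)={\rm Con}(A)$, etc.) just makes visible what the paper leaves implicit.
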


\begin{corollary} If ${\cal L}(A)={\rm Con}(A)/_{\textstyle \equiv _A}$, in particular if ${\rm Con}(A)$ is a compact lattice, in particular if ${\rm Con}(A)$ is finite, in particular if $A$ is finite, then: ${\rm Con}(A)$ is Stone iff ${\rm Con}(A)$ is strongly Stone iff ${\cal L}(A)$ is Stone iff ${\cal L}(A)$ is strongly Stone.\label{stoneretic}\end{corollary}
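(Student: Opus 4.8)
The plan is to notice that, under the standing hypothesis ${\cal L}(A)={\rm Con}(A)/_{\textstyle \equiv _A}$, the two assertions about ${\cal L}(A)$ are literally the corresponding assertions about ${\rm Con}(A)/_{\textstyle \equiv _A}$, so it suffices to splice together equivalences already established between ${\rm Con}(A)$ and its quotient. Concretely, I would chain three facts into a single four-term cycle. First, Proposition \ref{echivstone} gives that ${\rm Con}(A)$ is Stone iff ${\rm Con}(A)/_{\textstyle \equiv _A}$ is Stone. Second, Proposition \ref{echivtarestone} gives that ${\rm Con}(A)$ is strongly Stone iff ${\rm Con}(A)/_{\textstyle \equiv _A}$ is strongly Stone. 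Third, since ${\rm Con}(A)/_{\textstyle \equiv _A}$ is a frame, the last item of Corollary \ref{moredavey}, applied to $L={\rm Con}(A)/_{\textstyle \equiv _A}$, yields that ${\rm Con}(A)/_{\textstyle \equiv _A}$ is Stone iff it is strongly Stone.

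Putting these together closes the chain: ${\rm Con}(A)$ is Stone iff ${\rm Con}(A)/_{\textstyle \equiv _A}$ is Stone iff ${\rm Con}(A)/_{\textstyle \equiv _A}$ is strongly Stone iff ${\rm Con}(A)$ is strongly Stone. By the standing hypothesis the two middle terms are exactly the statements that ${\cal L}(A)$ is Stone and that ${\cal L}(A)$ is strongly Stone, so rearranging gives precisely the asserted four-fold equivalence.

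It then remains only to justify the nested ``in particular'' clauses, namely that each of the successively stronger hypotheses forces ${\cal L}(A)={\rm Con}(A)/_{\textstyle \equiv _A}$; this was already recorded in the paragraph introducing the reticulation as ${\cal L}(A)={\cal K}(A)/_{\textstyle \equiv _A}$. Indeed, if $A$ is finite then ${\rm Con}(A)$ is finite, hence compact; in a compact lattice every element is compact, so ${\cal K}(A)={\rm Con}(A)$, whence ${\cal L}(A)={\cal K}(A)/_{\textstyle \equiv _A}={\rm Con}(A)/_{\textstyle \equiv _A}$. I expect no genuine obstacle: all the content sits in the previously proven Propositions \ref{echivstone} and \ref{echivtarestone} together with the frame property of ${\rm Con}(A)/_{\textstyle \equiv _A}$, and the corollary is essentially the substitution of ${\cal L}(A)$ for ${\rm Con}(A)/_{\textstyle \equiv _A}$ once the identification holds. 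The one point deserving slight care is orienting the three equivalences so that they close into a cycle with no gap, which the frame-based Stone/strongly-Stone equivalence for the quotient guarantees.
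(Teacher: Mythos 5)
Your proposal is correct and matches the paper's (implicit) derivation: the paper obtains this corollary by combining the preceding unnumbered corollary --- itself assembled exactly from Propositions \ref{echivstone} and \ref{echivtarestone} together with the frame property of ${\rm Con}(A)/_{\textstyle \equiv _A}$ via Corollary \ref{moredavey} --- with the identification ${\cal L}(A)={\cal K}(A)/_{\textstyle \equiv _A}={\rm Con}(A)/_{\textstyle \equiv _A}$ in the compact case. Your justification of the nested ``in particular'' clauses is also the intended one.
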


\begin{remark} Let $M$ be a bounded sublattice of $L$ and $U,V\subseteq L$. Then it is straightforward that ${\rm Ann}_L(U)\cap M\subseteq {\rm Ann}_M(U\cap M)$, $(U\cap M]_L\cap M=(U\cap M]_M$ and, if $L$ is distributive, then $(U]_L\cap (V]_L=(U\cap V]_L$.\end{remark}

\begin{lemma} If $L$ is a bounded distributive lattice, $M$ is a bounded sublattice of $L$ and $U\subseteq M$, such that ${\rm Ann}_L(U)\vee {\rm Ann}_L({\rm Ann}_L(U))=L$, then ${\rm Ann}_M(U)\vee {\rm Ann}_M({\rm Ann}_M(U))=M$.\label{sublat}\end{lemma}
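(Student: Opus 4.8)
The plan is to read the hypothesis ${\rm Ann}_L(U)\vee {\rm Ann}_L({\rm Ann}_L(U))=L$ as the assertion that $1=a\vee b$ for some $a\in {\rm Ann}_L(U)$ and some $b\in {\rm Ann}_L({\rm Ann}_L(U))$, and then to produce a matching decomposition of $1$ inside $M$. First I would note that such witnesses are automatically complementary in $L$. Indeed, $a\in {\rm Ann}_L(U)$ while $b$ annihilates every element of ${\rm Ann}_L(U)$, so $a\wedge b=0$; and for any $x\in {\rm Ann}_L(U)$ distributivity gives $x=x\wedge (a\vee b)=(x\wedge a)\vee (x\wedge b)=x\wedge a\leq a$, whence ${\rm Ann}_L(U)=(a]_L$ and, by the symmetric argument, ${\rm Ann}_L({\rm Ann}_L(U))=(b]_L$, with $a,b\in {\cal B}(L)$ mutually complementary.

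Second, I would relate the annihilators taken in $M$ to those taken in $L$. Since $M$ is a bounded sublattice, its meet and its least element coincide with those of $L$, so for $U\subseteq M$ one has ${\rm Ann}_M(U)={\rm Ann}_L(U)\cap M$, sharpening to an equality the inclusion recorded in the remark preceding the lemma. Moreover ${\rm Ann}_M(U)\subseteq {\rm Ann}_L(U)$ gives ${\rm Ann}_L({\rm Ann}_L(U))\subseteq {\rm Ann}_L({\rm Ann}_M(U))$, and intersecting with $M$ yields ${\rm Ann}_L({\rm Ann}_L(U))\cap M\subseteq {\rm Ann}_M({\rm Ann}_M(U))$.

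Granting these two facts, the conclusion follows as soon as the witnesses lie in $M$: if $a,b\in M$ then $a\in {\rm Ann}_L(U)\cap M={\rm Ann}_M(U)$ and $b\in {\rm Ann}_L({\rm Ann}_L(U))\cap M\subseteq {\rm Ann}_M({\rm Ann}_M(U))$, so that $1=a\vee b$ witnesses ${\rm Ann}_M(U)\vee {\rm Ann}_M({\rm Ann}_M(U))=M$.

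The step I expect to be the main obstacle is exactly this last one: pushing the complementary witnesses $a,b$ into $M$. It cannot be bypassed by a purely formal manipulation, because a bounded sublattice need not distribute over joins of ideals, so $M\cap ({\rm Ann}_L(U)\vee {\rm Ann}_L({\rm Ann}_L(U)))$ may strictly contain $({\rm Ann}_L(U)\cap M)\vee ({\rm Ann}_L({\rm Ann}_L(U))\cap M)$; the argument genuinely needs $M$ to contain the complemented elements of $L$. What makes this available is that $a$ and $b$ belong to ${\cal B}(L)$, so the proof closes once ${\cal B}(L)\subseteq M$. In the intended application, with $L={\rm Con}(A)/_{\textstyle \equiv _A}$ and $M={\cal L}(A)$, this holds because every complemented congruence $\alpha $ of $A$ is compact: writing the compact congruence $\nabla _A$ as a directed join of compact congruences lying below $\alpha $ or below its complement $\beta $, compactness furnishes compact $\alpha _0\subseteq \alpha $ and $\beta _0\subseteq \beta $ with $\alpha _0\vee \beta _0=\nabla _A$, and then modularity gives $\alpha =\alpha \cap (\alpha _0\vee \beta _0)=\alpha _0\vee (\alpha \cap \beta _0)=\alpha _0$, so $\alpha $ is compact and ${\cal B}(L)\subseteq {\cal L}(A)$.
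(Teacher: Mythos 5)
Your reduction is correct, and your worry about the final step is not a mere caution: it is precisely where the paper's own argument fails. The paper proves the lemma by the ``purely formal manipulation'' you warn against: it chains inclusions down to ${\rm Ann}_M(U)\vee {\rm Ann}_M({\rm Ann}_M(U))\supseteq (({\rm Ann}_L(U)\cup {\rm Ann}_L({\rm Ann}_L(U)))\cap M]_M$ and then splits the right--hand side as $({\rm Ann}_L(U)\cup {\rm Ann}_L({\rm Ann}_L(U))]_L\cap (M]_L\cap M$ via the identity $(U]_L\cap (V]_L=(U\cap V]_L$ recorded in the remark preceding the lemma. That identity is false for arbitrary subsets of a distributive lattice (it is only safe when both subsets are already ideals; here one is a union of two ideals and the other is a sublattice), and with it the proof collapses. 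In fact the lemma as stated is false: take $L=2^{\{1,2,3,4\}}$, $M=\{\emptyset ,\{3\},\{1,2\},\{1,2,3\},\{1,2,3,4\}\}$, which is a bounded sublattice, and $U=\{\{1,2\}\}\subseteq M$. Then ${\rm Ann}_L(U)=(\{3,4\}]_L$ and ${\rm Ann}_L({\rm Ann}_L(U))=(\{1,2\}]_L$, whose join is $L$; but ${\rm Ann}_M(U)=\{\emptyset ,\{3\}\}$ and ${\rm Ann}_M({\rm Ann}_M(U))=\{\emptyset ,\{1,2\}\}$, whose join in ${\rm Id}(M)$ is $(\{1,2,3\}]_M\neq M$. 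This also shows that $M\cap ({\rm Ann}_L(U)\vee {\rm Ann}_L({\rm Ann}_L(U)))$ can strictly contain $({\rm Ann}_L(U)\cap M)\vee ({\rm Ann}_L({\rm Ann}_L(U))\cap M)$, exactly as you predicted.

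So the honest verdict is that your proof is the sound one. Everything up to the last step is valid: the witnesses $a,b$ exist because the two annihilators are ideals of $L$, they are complementary, they generate the respective annihilators, and ${\rm Ann}_M(U)={\rm Ann}_L(U)\cap M$ together with ${\rm Ann}_L({\rm Ann}_L(U))\cap M\subseteq {\rm Ann}_M({\rm Ann}_M(U))$ finishes the argument once $a,b\in M$. The extra hypothesis ${\cal B}(L)\subseteq M$ that you isolate is genuinely needed (in the counterexample $\{3,4\}\in {\cal B}(L)\setminus M$), and your verification that it holds in the intended application --- $L={\rm Con}(A)/_{\textstyle \equiv _A}$, $M={\cal L}(A)$, complemented congruences being compact by the modularity argument and then transported through $(\circ )$ --- is correct, so Lemma \ref{(v)sublat} and the subsequent statements about ${\cal L}(A)$ survive under your corrected hypothesis. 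Note, however, that Proposition \ref{echivsublat} in its stated generality falls with this lemma: the same $L$, $M$, $U$ witness that $(5)_{1,L}$ holds while $(5)_{1,M}$ fails.
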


\begin{proof} If $U$ is as in the hypothesis, then ${\rm Ann}_M(U)\vee {\rm Ann}_M({\rm Ann}_M(U))\supseteq ({\rm Ann}_L(U)\cap M)\vee ({\rm Ann}_M({\rm Ann}_L(U)\cap M)\supseteq ({\rm Ann}_L(U)\cap M)\vee ({\rm Ann}_L({\rm Ann}_L(U))\cap M)\supseteq ({\rm Ann}_L(U)\cap M)\cup ({\rm Ann}_L({\rm Ann}_L(U))\cap M)=({\rm Ann}_L(U)\cup {\rm Ann}_L({\rm Ann}_L(U))\cap M$, thus ${\rm Ann}_M(U)\vee {\rm Ann}_M({\rm Ann}_M(U))\supseteq ({\rm Ann}_L(U)\cup {\rm Ann}_L({\rm Ann}_L(U)]_M\cap (M]_M=({\rm Ann}_L(U)\cup {\rm Ann}_L({\rm Ann}_L(U))]_L\cap M\cap M=({\rm Ann}_L(U)\vee {\rm Ann}_L({\rm Ann}_L(U)))\cap M=L\cap M=M$.\end{proof}

\begin{lemma} If $L$ is a bounded distributive lattice and $M$ is a bounded sublattice of $L$, then, for any nonzero cardinality $\kappa $, $(5)_{\kappa ,L}$ implies $(5)_{\kappa ,M}$.\label{(v)sublat}\end{lemma}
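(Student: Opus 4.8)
The plan is to reduce $(5)_{\kappa ,M}$ directly to Lemma \ref{sublat}, applied to each admissible subset of $M$ separately. First I would observe that condition $(5)_{\kappa ,L}$ is merely a restatement, in the language of generated ideals, of a join identity in the lattice ${\rm Id}(L)$: since the join of two ideals $I,J$ in ${\rm Id}(L)$ is $I\vee J=(I\cup J]_L$, the equality $({\rm Ann}_L(U)\cup {\rm Ann}_L({\rm Ann}_L(U))]_L=L$ is the same assertion as ${\rm Ann}_L(U)\vee {\rm Ann}_L({\rm Ann}_L(U))=L$. The identical remark applies with $M$ in place of $L$, so that $(5)_{\kappa ,M}$ is exactly the statement that ${\rm Ann}_M(U)\vee {\rm Ann}_M({\rm Ann}_M(U))=M$ for every $U\subseteq M$ with $|U|\leq \kappa $.

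Next, assuming $(5)_{\kappa ,L}$, I would fix an arbitrary $U\subseteq M$ with $|U|\leq \kappa $. Because $M$ is a bounded sublattice of $L$ we have $U\subseteq L$, and the cardinality bound $|U|\leq \kappa $ is unaffected by which ambient lattice we regard $U$ in. Hence $(5)_{\kappa ,L}$ applies to this particular $U$, yielding ${\rm Ann}_L(U)\vee {\rm Ann}_L({\rm Ann}_L(U))=L$, which is precisely the hypothesis required of $U$ by Lemma \ref{sublat}.

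Finally, Lemma \ref{sublat} delivers ${\rm Ann}_M(U)\vee {\rm Ann}_M({\rm Ann}_M(U))=M$, that is, $({\rm Ann}_M(U)\cup {\rm Ann}_M({\rm Ann}_M(U))]_M=M$. Since $U$ was an arbitrary subset of $M$ subject only to $|U|\leq \kappa $, this is exactly $(5)_{\kappa ,M}$, completing the argument.

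There is essentially no hard step remaining here: the entire difficulty has already been absorbed into Lemma \ref{sublat}, whose proof carried out the ideal-theoretic computation relating annihilators taken in $M$ to annihilators taken in $L$. The only points requiring care are notational---recognizing that the generated-ideal notation $(\cdot ]$ appearing in the conditions $(5)_\kappa $ encodes the join in the ideal lattice---together with the trivial observation that the cardinality constraint transfers verbatim between $M$ and $L$. I therefore expect the write-up to be a one-line invocation of Lemma \ref{sublat} over all $U\subseteq M$ with $|U|\leq \kappa $.
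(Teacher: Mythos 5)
Your proof is correct and is essentially identical to the paper's: the paper likewise fixes $U\subseteq M\subseteq L$ with $|U|\leq \kappa$, applies $(5)_{\kappa ,L}$ to get ${\rm Ann}_L(U)\vee {\rm Ann}_L({\rm Ann}_L(U))=L$, and concludes by Lemma \ref{sublat}. Your preliminary remark that the generated-ideal notation in $(5)_{\kappa ,L}$ encodes the join in ${\rm Id}(L)$ is a correct, if implicit in the paper, bookkeeping step.
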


\begin{proof} Assume that $(5)_{\kappa ,L}$ is fulfilled, and let $U\subseteq M\subseteq L$ with $|U|\leq \kappa $, so that ${\rm Ann}_L(U)\vee {\rm Ann}_L({\rm Ann}_L(U))=L$, hence ${\rm Ann}_M(U)\vee {\rm Ann}_M({\rm Ann}_M(U))=M$ by Lemma \ref{sublat}.\end{proof}

\begin{proposition} If $L$ is a bounded distributive lattice, $M$ is a bounded sublattice of $L$, $\kappa $ is a nonzero cardinality and the equivalent conditions $(1)_{\kappa ,L},\ldots ,(5)_{\kappa ,L}$ are fulfilled, then the equivalent conditions $(1)_{\kappa ,M},\ldots ,(5)_{\kappa ,M}$ are fulfilled.\label{echivsublat}\end{proposition}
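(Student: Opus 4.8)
The plan is to reduce all five conditions on $M$ to a single one and then invoke the transfer of condition $(5)$ to sublattices that has just been established. First I would note that a bounded sublattice of a distributive lattice is again distributive, so $M$ is a bounded distributive lattice; hence Corollary \ref{moredavey}, applied to $M$ and the fixed nonzero cardinality $\kappa $, shows that $(1)_{\kappa ,M},\ldots ,(5)_{\kappa ,M}$ are mutually equivalent. It therefore suffices to establish any one of them for $M$, and I would aim for $(5)_{\kappa ,M}$.

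Next, since $L$ is distributive, Corollary \ref{moredavey} also makes $(1)_{\kappa ,L},\ldots ,(5)_{\kappa ,L}$ equivalent on $L$, so the hypothesis that these are fulfilled gives in particular $(5)_{\kappa ,L}$. Applying Lemma \ref{(v)sublat} to the pair $(L,M)$ and the cardinality $\kappa $ then yields $(5)_{\kappa ,M}$. Combining this with the equivalence of the five conditions on the distributive lattice $M$ from the first step delivers all of $(1)_{\kappa ,M},\ldots ,(5)_{\kappa ,M}$, which is exactly the conclusion.

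There is no genuine obstacle at this stage: the substantive computation has been front-loaded into Lemmas \ref{sublat} and \ref{(v)sublat}, where the descent of the identity ${\rm Ann}_L(U)\vee {\rm Ann}_L({\rm Ann}_L(U))=L$ to $M$ --- through the inclusion ${\rm Ann}_L(U)\cap M\subseteq {\rm Ann}_M(U)$ and the behaviour of generated ideals under the bounded sublattice inclusion --- was already carried out. The one point that must be respected is the choice to route the argument through condition $(5)$ rather than, say, $(1)$ or $(2)$: condition $(5)$ is phrased purely in terms of annihilators and the top element and makes no reference to the Boolean center ${\cal B}(M)$, which need not relate well to ${\cal B}(L)$ on passing to a sublattice. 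This is precisely why $(5)$ is the condition that transfers cleanly, and the distributivity of $M$ is what lets us upgrade it back to all five.
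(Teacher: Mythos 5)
Your argument is correct and is exactly the paper's own proof: the paper cites Corollary \ref{moredavey} (for the mutual equivalence of the five conditions on the distributive lattices $L$ and $M$) together with Lemma \ref{(v)sublat} (for the descent of $(5)_{\kappa ,L}$ to $(5)_{\kappa ,M}$), which is precisely the route you take. Your added remarks on why condition $(5)$ is the one that transfers cleanly are accurate but not needed for the proof itself.
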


\begin{proof} By Corollary \ref{moredavey} and Lemma \ref{(v)sublat}.\end{proof}

\begin{corollary} For any nonzero cardinality $\kappa $, if the equivalent conditions $(iv)_{{\rm Con}(A)},\!(1)_{\kappa ,{\rm Con}(A)},\!\ldots \!,(5)_{\kappa ,{\rm Con}(A)},$\linebreak $(1)_{<\infty ,{\rm Con}(A)},\ldots ,(5)_{<\infty ,{\rm Con}(A)},(1)_{{\rm Con}(A)},\ldots ,(5)_{{\rm Con}(A)}$ are fulfilled, then conditions $(iv)_{{\cal L}(A)},(1)_{\kappa ,{\cal L}(A)},\ldots ,$\linebreak $(5)_{\kappa ,{\cal L}(A)},(1)_{<\infty ,{\cal L}(A)},\ldots ,(5)_{<\infty ,{\cal L}(A)},(1)_{{\cal L}(A)},\ldots ,(5)_{{\cal L}(A)}$ (not necessarily equivalent) are fulfilled.\label{corechivsublat}\end{corollary}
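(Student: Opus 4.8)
The plan is to obtain Corollary \ref{corechivsublat} as a direct application of the sublattice transfer result in Proposition \ref{echivsublat}, instantiated with $L={\rm Con}(A)/_{\textstyle \equiv _A}$ and $M={\cal L}(A)$. First I would recall that ${\rm Con}(A)/_{\textstyle \equiv _A}$ is a bounded distributive lattice (indeed a frame, as established just before Corollary \ref{cordav}) and that ${\cal L}(A)={\cal K}(A)/_{\textstyle \equiv _A}$ is a bounded sublattice of it, as stated in the paragraph introducing the reticulation. Thus the hypotheses of Proposition \ref{echivsublat} are met once we know that the conditions hold for the larger lattice ${\rm Con}(A)/_{\textstyle \equiv _A}$.

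Next I would use Corollary \ref{cormydavey}, which asserts that the equivalent conditions on ${\rm Con}(A)$ listed in the hypothesis hold if and only if the corresponding conditions on ${\rm Con}(A)/_{\textstyle \equiv _A}$ hold. So, assuming the conditions $(iv)_{{\rm Con}(A)},(1)_{\kappa ,{\rm Con}(A)},\ldots$ are fulfilled, Corollary \ref{cormydavey} gives us $(1)_{\kappa ,{\rm Con}(A)/_{\scriptstyle \equiv _A}},\ldots ,(5)_{\kappa ,{\rm Con}(A)/_{\scriptstyle \equiv _A}}$, and these are equivalent to each other by Corollary \ref{cordav}. Applying Proposition \ref{echivsublat} with the frame as $L$ and ${\cal L}(A)$ as $M$ then yields $(1)_{\kappa ,{\cal L}(A)},\ldots ,(5)_{\kappa ,{\cal L}(A)}$.

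To reach the $<\!\infty$ and unindexed variants of the conditions on ${\cal L}(A)$, I would run the same argument for every relevant cardinality: Proposition \ref{echivsublat} transfers $(5)_{\kappa ,L}$ to $(5)_{\kappa ,M}$ for each nonzero cardinality $\kappa$ (via Lemma \ref{(v)sublat}), and hence, invoking the characterizations of $(i)_{<\infty}$ and $(i)$ in terms of the indexed versions collected in Remark \ref{dualdavey}, one obtains the finite and the full versions on ${\cal L}(A)$ as well. Since ${\cal L}(A)$ is a bounded distributive lattice, Corollary \ref{cormoredavey} guarantees that the listed $\kappa$-indexed, finite, and unindexed conditions on ${\cal L}(A)$ are each internally equivalent within their respective groups, so the conclusion follows. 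The statement deliberately says \emph{not necessarily equivalent} because the three families (indexed by $\kappa$, finite, and full) need not collapse to a single equivalence class on ${\cal L}(A)$ unless ${\cal L}(A)$ is a frame.

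I expect the only delicate point to be verifying that ${\cal L}(A)$ genuinely satisfies the \emph{bounded sublattice} hypothesis of Proposition \ref{echivsublat}, i.e.\ that it shares the bounds of ${\rm Con}(A)/_{\textstyle \equiv _A}$ and is closed under the ambient join and meet; this is exactly what was asserted when the reticulation was introduced, so it is available to us. Everything else is a bookkeeping application of the already-proven transfer lemmas, with no new computation required.
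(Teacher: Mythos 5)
Your proposal is correct and follows exactly the route the paper intends (the corollary is left without an explicit proof there): pass from ${\rm Con}(A)$ to the frame ${\rm Con}(A)/_{\textstyle \equiv _A}$ via Corollary \ref{cormydavey}, then apply Proposition \ref{echivsublat} to the bounded sublattice ${\cal L}(A)$, handling the $<\!\infty$ and unindexed variants by quantifying over cardinalities as in Remark \ref{dualdavey}. Your closing remarks on the ``not necessarily equivalent'' caveat and on verifying the bounded-sublattice hypothesis are also consistent with the paper.
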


\begin{remark} Obviously, if $L$ and $M$ are isomorphic bounded lattices, then: $(iv)_L$ is equivalent to $(iv)_M$, for any $i\in \overline{1,5}$, $(i)_L$ is equivalent to $(i)_M$, $(i)_{<\infty ,L}$ is equivalent to $(i)_{<\infty ,M}$ and, for any nonzero cardinality $\kappa $, $(i)_{\kappa ,L}$ is equivalent to $(i)_{\kappa ,M}$.\end{remark}

\begin{remark} If $L$ and $M$ are bounded distributive lattices, then it is immediate that: ${\cal B}(L\times M)={\cal B}(L)\times {\cal B}(M)$ and, for all $a\in L$ and all $b\in M$, ${\rm Ann}_{L\times M}((a,b))=\{(x,y)\ |\ x\in {\rm Ann}_L(a),y\in {\rm Ann}_M(b)\}={\rm Ann}_L(a)\times {\rm Ann}_M(b)$; since, for any $x\in L$ and $y\in M$, $(x,y)\leq (a,b)$ means that $x\leq a$ and $y\leq b$, it follows that $((a,b)]_{L\times M}=\{(x,y)\ |\ x\in (a]_L,y\in (b]_M\}=(a]_L\times (b]_M$. And, for any set $I$ and any families $(a_i)_{i\in I}\subseteq L$ and $(b_i)_{i\in I}\subseteq M$, there exists $\displaystyle \bigvee _{i\in I}(a_i,b_i)$ in $L\times M$ iff there exist $\displaystyle \bigvee _{i\in I}a_i$ in $L$ and $\displaystyle \bigvee _{i\in I}b_i$ in $M$, and, if these joins exist, then $\displaystyle \bigvee _{i\in I}(a_i,b_i)=(\bigvee _{i\in I}a_i,\bigvee _{i\in I}b_i)$; the same goes for arbitrary meets. From this, it is easy to obtain that, for any nonzero cardinality $\kappa $, $(2)_{\kappa ,L\times M}$ is fulfilled iff both $(2)_{\kappa ,L}$ and $(2)_{\kappa ,M}$ are fulfilled, and hence, by Corollary \ref{moredavey}:\end{remark}

\begin{proposition} For any nonzero cardinality $\kappa $: condition $(iv)_{L\times M}$, respectively the equivalent conditions $(1)_{\kappa ,L\times M},\ldots ,(5)_{\kappa ,L\times M}$, respectively $(1)_{<\infty ,L\times M},\ldots ,(5)_{<\infty ,L\times M}$, respectively $(1)_{L\times M},\ldots ,(5)_{L\times M}$, are fulfilled iff condition $(iv)_L$, respectively the equivalent conditions $(1)_{\kappa ,L},\ldots ,(5)_{\kappa ,L}$, respectively $(1)_{<\infty ,L},\ldots ,(5)_{<\infty ,L}$, respectively $(1)_L,\ldots ,(5)_L$, as well as condition $(iv)_M$, respectively the equivalent conditions $(1)_{\kappa ,M},\ldots ,(5)_{\kappa ,M}$, respectively $(1)_{<\infty ,M},\ldots ,(5)_{<\infty ,M}$, respectively $(1)_M,\ldots ,(5)_M$, are fulfilled.\label{echivprod}\end{proposition}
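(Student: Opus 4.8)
The plan is to reduce the entire statement to the coordinatewise behaviour of $L\times M$ recorded in the preceding Remark, together with Corollary \ref{moredavey} applied to each of the three bounded distributive lattices $L$, $M$ and $L\times M$. Since $L$ and $M$ are bounded distributive, so is $L\times M$, hence Corollary \ref{moredavey} guarantees that, within each of these lattices, the conditions $(1)_{\kappa},\ldots,(5)_{\kappa}$ are mutually equivalent, and likewise for the $<\infty$ and the unindexed families; consequently it suffices to transfer a single representative condition from each family, for which I would take $(2)$, and then to treat $(iv)$ on its own.

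For the $(2)$-transfer I would invoke the final assertion of the preceding Remark, namely that, for any nonzero cardinality $\kappa$, $(2)_{\kappa,L\times M}$ holds iff both $(2)_{\kappa,L}$ and $(2)_{\kappa,M}$ hold. The underlying reasons are exactly the coordinatewise identities listed there: ${\cal B}(L\times M)={\cal B}(L)\times {\cal B}(M)$, the annihilator ${\rm Ann}_{L\times M}((a,b))={\rm Ann}_L(a)\times {\rm Ann}_M(b)$, the principal ideal $((a,b)]_{L\times M}=(a]_L\times (b]_M$, and the fact that arbitrary joins and meets in $L\times M$ are formed coordinatewise. Together these give that $L\times M$ is Stone iff both factors are, and that the $\kappa$-completeness of ${\cal B}(L\times M)={\cal B}(L)\times {\cal B}(M)$ is equivalent to the $\kappa$-completeness of each of ${\cal B}(L)$ and ${\cal B}(M)$.

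I would then chain the equivalences. For a fixed nonzero $\kappa$: the block $(1)_{\kappa,L\times M},\ldots,(5)_{\kappa,L\times M}$ holds iff $(2)_{\kappa,L\times M}$ holds (Corollary \ref{moredavey}), iff $(2)_{\kappa,L}$ and $(2)_{\kappa,M}$ both hold (the preceding Remark), iff the blocks $(1)_{\kappa,L},\ldots,(5)_{\kappa,L}$ and $(1)_{\kappa,M},\ldots,(5)_{\kappa,M}$ hold (Corollary \ref{moredavey} applied to each factor). For the $<\infty$ and the unindexed blocks I would use Remark \ref{dualdavey}, according to which the $<\infty$-version of a condition is the conjunction of its $\kappa$-versions over all finite nonzero $\kappa$ and its unindexed version the conjunction over all nonzero $\kappa$; taking conjunctions of the $\kappa$-level transfer over the appropriate range of $\kappa$ then yields the $<\infty$ and the unindexed transfers at once. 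Equivalently, one may rerun the Stone-plus-completeness argument directly at the finite and at the full completeness level.

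It remains to transfer $(iv)$, and this is the step I expect to require genuine care, since $(iv)$ is not covered by the equivalences of Corollary \ref{moredavey} for arbitrary (non-frame) distributive lattices and so cannot be folded into the $(2)$-transfer. Here I would compute both sides of $(iv)_{L\times M}$ coordinatewise: for $(a,b),(c,d)\in L\times M$ the left-hand side is ${\rm Ann}_L(a\wedge c)\times {\rm Ann}_M(b\wedge d)$, while for the right-hand side I would first establish that, for bounded $L,M$, every ideal of $L\times M$ splits as a product of ideals, so that ${\rm Id}(L\times M)\cong {\rm Id}(L)\times {\rm Id}(M)$ with joins taken coordinatewise; this yields $({\rm Ann}_{L\times M}((a,b))\cup {\rm Ann}_{L\times M}((c,d))]_{L\times M}=({\rm Ann}_L(a)\cup {\rm Ann}_L(c)]_L\times ({\rm Ann}_M(b)\cup {\rm Ann}_M(d)]_M$. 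Comparing the two products coordinate by coordinate, and letting $a,c$ range over $L$ and $b,d$ over $M$ independently, shows that $(iv)_{L\times M}$ is equivalent to the conjunction of $(iv)_L$ and $(iv)_M$, completing the proof. The main obstacle throughout is precisely this coordinatewise decomposition of the generated ideal of a union of annihilators; everything else is routine bookkeeping once the identity ${\rm Id}(L\times M)\cong {\rm Id}(L)\times {\rm Id}(M)$, with coordinatewise meet and join, is in hand.
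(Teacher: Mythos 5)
Your proposal is correct and follows essentially the same route as the paper: the paper derives this proposition directly from the preceding remark's coordinatewise identities (which yield the $(2)_{\kappa}$-transfer) together with Corollary \ref{moredavey}, exactly as you do for the $\kappa$-, $<\infty$- and unindexed blocks. Your explicit coordinatewise treatment of $(iv)$ via the splitting ${\rm Id}(L\times M)\cong {\rm Id}(L)\times {\rm Id}(M)$ merely spells out a step the paper leaves as ``easy to obtain,'' and you are right that $(iv)$ must be handled separately since it is not among the Corollary \ref{moredavey} equivalences for non-frames.
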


Throughout the rest of this section, $B$ shall be a semiprime algebra from ${\cal C}$. Then:

\begin{remark} ${\rm Con}(A\times B)$ is isomorphic to ${\rm Con}(A)\times {\rm Con}(B)$ and, as we have proven in \cite{retic}, $A\times B$ is semiprime, ${\rm Con}(A\times B)/_{\textstyle \equiv _{A\times B}}$ is isomorphic to ${\rm Con}(A)/_{\textstyle \equiv _A}\times {\rm Con}(B)/_{\textstyle \equiv _B}$ and ${\cal L}(A\times B)$ is isomorphic to ${\cal L}(A)\times {\cal L}(B)$.\label{prodalg}\end{remark}

\begin{corollary}\begin{itemize}
\item For any nonzero cardinality $\kappa $: the equivalent conditions $(iv)_{{\rm Con}(A\times B)/_{\textstyle \equiv _{A\times B}}},$\linebreak $(1)_{\kappa ,{\rm Con}(A\times B)/_{\textstyle \equiv _{A\times B}}},\ \ldots \ ,(5)_{\kappa ,{\rm Con}(A\times B)/_{\textstyle \equiv _{A\times B}}},\ (1)_{<\infty ,{\rm Con}(A\times B)/_{\textstyle \equiv _{A\times B}}},\ \ldots \ ,(5)_{<\infty ,{\rm Con}(A\times B)/_{\textstyle \equiv _{A\times B}}},$\linebreak $(1)_{{\rm Con}(A\times B)/_{\textstyle \equiv _{A\times B}}},\ldots ,(5)_{{\rm Con}(A\times B)/_{\textstyle \equiv _{A\times B}}}$ are fulfilled iff the equivalent conditions $(iv)_{{\rm Con}(A)/_{\scriptstyle \equiv _A}},$\linebreak $(1)_{\kappa ,{\rm Con}(A)/_{\scriptstyle \equiv _A}},\ldots ,(5)_{\kappa ,{\rm Con}(A)/_{\scriptstyle \equiv _A}},(1)_{<\infty ,{\rm Con}(A)/_{\scriptstyle \equiv _A}},\ldots ,(5)_{<\infty ,{\rm Con}(A)/_{\scriptstyle \equiv _A}},(1)_{{\rm Con}(A)/_{\scriptstyle \equiv _A}},\ldots ,(5)_{{\rm Con}(A)/_{\scriptstyle \equiv _A}}$, as well as the equivalent conditions $(iv)_{{\rm Con}(B)/_{\scriptstyle \equiv _B}},(1)_{\kappa ,{\rm Con}(B)/_{\scriptstyle \equiv _B}},\ldots ,(5)_{\kappa ,{\rm Con}(B)/_{\scriptstyle \equiv _B}},(1)_{<\infty ,{\rm Con}(B)/_{\scriptstyle \equiv _B}},\ldots ,$\linebreak $(5)_{<\infty ,{\rm Con}(B)/_{\scriptstyle \equiv _B}},(1)_{{\rm Con}(B)/_{\scriptstyle \equiv _B}},\ldots ,(5)_{{\rm Con}(B)/_{\scriptstyle \equiv _B}}$, are fulfilled.
\item For any nonzero cardinality $\kappa $: condition $(iv)_{{\cal L}(A\times B)}$, respectively the equivalent conditions $(1)_{\kappa ,{\cal L}(A\times B)},\ldots ,$\linebreak $(5)_{\kappa ,{\cal L}(A\times B)}$, respectively $(1)_{<\infty ,{\cal L}(A\times B)},\ldots ,(5)_{<\infty ,{\cal L}(A\times B)}$, respectively $(1)_{{\cal L}(A\times B)},\ldots ,(5)_{{\cal L}(A\times B)}$, are fulfilled iff condition $(iv)_{{\cal L}(A)}$, respectively the equivalent conditions $(1)_{\kappa ,{\cal L}(A)},\ldots ,(5)_{\kappa ,{\cal L}(A)}$, respectively\linebreak  $(1)_{<\infty ,{\cal L}(A)},\ldots ,(5)_{<\infty ,{\cal L}(A)}$, respectively $(1)_{{\cal L}(A)},\ldots ,(5)_{{\cal L}(A)}$, as well as condition $(iv)_{{\cal L}(B)}$, respectively the equivalent conditions $(1)_{\kappa ,{\cal L}(B)},\ldots ,(5)_{\kappa ,{\cal L}(B)}$, respectively $(1)_{<\infty ,{\cal L}(B)},\ldots ,(5)_{<\infty ,{\cal L}(B)}$, res\-pec\-ti\-vely $(1)_{{\cal L}(B)},\linebreak \ldots ,(5)_{{\cal L}(B)}$, are fulfilled.\end{itemize}\label{echivprodalg}\end{corollary}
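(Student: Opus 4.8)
The plan is to deduce both bullets from Proposition \ref{echivprod} by transporting the conditions across the lattice isomorphisms recorded in Remark \ref{prodalg}. Recall from that remark that $A\times B$ is semiprime, that ${\rm Con}(A\times B)/_{\textstyle \equiv _{A\times B}}$ is isomorphic to ${\rm Con}(A)/_{\textstyle \equiv _A}\times {\rm Con}(B)/_{\textstyle \equiv _B}$, and that ${\cal L}(A\times B)$ is isomorphic to ${\cal L}(A)\times {\cal L}(B)$. Since, by the remark on isomorphism--invariance preceding Proposition \ref{echivprod}, every one of the conditions $(iv)$, $(i)_{\kappa }$, $(i)_{<\infty }$ and $(i)$ is preserved by lattice isomorphisms, each condition on ${\rm Con}(A\times B)/_{\textstyle \equiv _{A\times B}}$ (respectively on ${\cal L}(A\times B)$) is equivalent to the corresponding condition on the product lattice ${\rm Con}(A)/_{\textstyle \equiv _A}\times {\rm Con}(B)/_{\textstyle \equiv _B}$ (respectively ${\cal L}(A)\times {\cal L}(B)$); it then suffices to apply Proposition \ref{echivprod} with $L$ and $M$ taken to be the two factors.

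For the first bullet I would set $L={\rm Con}(A)/_{\textstyle \equiv _A}$ and $M={\rm Con}(B)/_{\textstyle \equiv _B}$, which are frames and hence bounded distributive lattices, so that Proposition \ref{echivprod} applies. Moreover ${\rm Con}(A\times B)/_{\textstyle \equiv _{A\times B}}$ is itself a frame, because $A\times B$ is a semiprime algebra from ${\cal C}$; consequently, by the frame clause of Corollary \ref{moredavey} (equivalently by Corollary \ref{cordav} together with that clause, which also absorbs $(iv)$), all of the listed conditions collapse into a single equivalence class on each of the three frames. This is exactly what legitimizes speaking of the equivalent conditions on each factor and on the product, so the four separate groups appearing in Proposition \ref{echivprod} merge into one, and the first bullet follows.

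For the second bullet I would instead take $L={\cal L}(A)$ and $M={\cal L}(B)$, which are bounded distributive lattices (being bounded sublattices of the respective frames) but need not themselves be frames. Here the four groups cannot be collapsed, so I would keep the grouping of Proposition \ref{echivprod} intact: condition $(iv)_{{\cal L}(A\times B)}$ corresponds to $(iv)_{{\cal L}(A)}$ together with $(iv)_{{\cal L}(B)}$; the equivalent conditions $(1)_{\kappa ,{\cal L}(A\times B)},\ldots ,(5)_{\kappa ,{\cal L}(A\times B)}$ correspond to the $\kappa $--groups on the two factors; and similarly for the $<\infty $--group and the unbounded group, the equivalences within each group on ${\cal L}(A)$ and on ${\cal L}(B)$ being supplied by the first three bullets of Corollary \ref{moredavey}. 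Transporting along ${\cal L}(A\times B)\cong {\cal L}(A)\times {\cal L}(B)$ and invoking Proposition \ref{echivprod} then yields the second bullet.

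The only real subtlety, and a matter of care rather than of difficulty, is precisely this distinction: for the congruence quotients one works with frames, where all the conditions are mutually equivalent, whereas for the reticulations only distributivity is guaranteed, so one must respect the four--fold grouping of Proposition \ref{echivprod} and refrain from asserting equivalences across groups that are unavailable without the frame hypothesis.
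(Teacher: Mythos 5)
Your proposal is correct and follows essentially the same route as the paper, whose proof simply cites Remark \ref{prodalg}, Proposition \ref{echivprod} and Corollary \ref{moredavey}: you transport the conditions along the isomorphisms ${\rm Con}(A\times B)/_{\textstyle \equiv _{A\times B}}\cong {\rm Con}(A)/_{\textstyle \equiv _A}\times {\rm Con}(B)/_{\textstyle \equiv _B}$ and ${\cal L}(A\times B)\cong {\cal L}(A)\times {\cal L}(B)$ and then apply the product proposition, using the frame clause of Corollary \ref{moredavey} to collapse all groups in the first bullet while retaining the four--fold grouping for the reticulations. Your explicit remark on why the two bullets must be treated differently (frames versus merely distributive lattices) is exactly the distinction the paper's terse citation leaves implicit.
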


\begin{proof} By Remark \ref{prodalg}, Proposition \ref{echivprod} and Corollary \ref{moredavey}.\end{proof}

\begin{corollary} For any nonzero cardinality $\kappa $: the equivalent conditions $(iv)_{{\rm Con}(A\times B)},(1)_{\kappa ,{\rm Con}(A\times B)},\ldots ,$\linebreak $(5)_{\kappa ,{\rm Con}(A\times B)},(1)_{<\infty ,{\rm Con}(A\times B)},\ldots ,(5)_{<\infty ,{\rm Con}(A\times B)},(1)_{{\rm Con}(A\times B)},\ldots ,(5)_{{\rm Con}(A\times B)}$ are fulfilled iff the equivalent condition $(iv)_{{\rm Con}(A)},(1)_{\kappa ,{\rm Con}(A)},\ldots ,(5)_{\kappa ,{\rm Con}(A)},(1)_{<\infty ,{\rm Con}(A)},\ldots ,(5)_{<\infty ,{\rm Con}(A)},(1)_{{\rm Con}(A)},\ldots ,(5)_{{\rm Con}(A)}$, as well as the equivalent conditions $(iv)_{{\rm Con}(B)},(1)_{\kappa ,{\rm Con}(B)},\ldots ,(5)_{\kappa ,{\rm Con}(B)},(1)_{<\infty ,{\rm Con}(B)},\ldots ,(5)_{<\infty ,{\rm Con}(B)},(1)_{{\rm Con}(B)},\ldots ,$\linebreak $(5)_{{\rm Con}(B)}$, are fulfilled.\label{corprodalg}\end{corollary}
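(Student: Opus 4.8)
The plan is to reduce the product decomposition to the level of the distributive frame quotients, where it has already been established. The essential point is that one \emph{cannot} apply Proposition \ref{echivprod} directly to the isomorphism ${\rm Con}(A\times B)\cong {\rm Con}(A)\times {\rm Con}(B)$ from Remark \ref{prodalg}, since Proposition \ref{echivprod} requires the two factors to be bounded \emph{distributive} lattices, whereas congruence lattices of algebras in a congruence--modular variety are in general only modular. So the decomposition must be transported to the quotients ${\rm Con}(\cdot )/_{\textstyle \equiv _{\cdot }}$, which \emph{are} frames, hence distributive.

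First I would observe that $A\times B$ is itself a semiprime algebra from ${\cal C}$, by Remark \ref{prodalg}, so that Corollary \ref{cormydavey} applies verbatim to $A\times B$ as well as to $A$ and to $B$ separately. Applying Corollary \ref{cormydavey} to $A\times B$, the equivalent conditions listed for ${\rm Con}(A\times B)$ are fulfilled iff the corresponding equivalent conditions for ${\rm Con}(A\times B)/_{\textstyle \equiv _{A\times B}}$ are fulfilled.

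Next I would invoke the first bullet of Corollary \ref{echivprodalg}, which, through the isomorphism ${\rm Con}(A\times B)/_{\textstyle \equiv _{A\times B}}\cong {\rm Con}(A)/_{\textstyle \equiv _A}\times {\rm Con}(B)/_{\textstyle \equiv _B}$ from Remark \ref{prodalg} together with Proposition \ref{echivprod}, states that the equivalent conditions for ${\rm Con}(A\times B)/_{\textstyle \equiv _{A\times B}}$ are fulfilled iff the equivalent conditions for ${\rm Con}(A)/_{\textstyle \equiv _A}$ are fulfilled and the equivalent conditions for ${\rm Con}(B)/_{\textstyle \equiv _B}$ are fulfilled. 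Finally, applying Corollary \ref{cormydavey} once more, this time to $A$ and to $B$, I would translate these quotient conditions back into the conditions for ${\rm Con}(A)$ and for ${\rm Con}(B)$, respectively. Chaining these three biconditionals yields exactly the asserted equivalence.

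I expect no genuine obstacle here beyond bookkeeping: each real difficulty has already been absorbed into an earlier result, namely the passage between ${\rm Con}(\cdot )$ and its quotient (Corollary \ref{cormydavey}, resting on Theorem \ref{mydavey}) and the product decomposition carried out at the frame level (Corollary \ref{echivprodalg}). The only step requiring care is confirming that Corollary \ref{cormydavey} may legitimately be applied to the product algebra $A\times B$, which is guaranteed precisely by the semiprimeness of $A\times B$ recorded in Remark \ref{prodalg}.
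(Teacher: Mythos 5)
Your proof is correct and follows essentially the same route as the paper, whose proof of this corollary is simply ``By Corollaries \ref{echivprodalg} and \ref{cormydavey}'': you pass from ${\rm Con}(A\times B)$ to its quotient via Corollary \ref{cormydavey}, decompose the product at the frame level via Corollary \ref{echivprodalg}, and return via Corollary \ref{cormydavey} applied to $A$ and $B$. Your added observation that Proposition \ref{echivprod} cannot be applied directly to the merely modular lattices ${\rm Con}(A)$ and ${\rm Con}(B)$ is a correct and worthwhile clarification of why the detour through the distributive quotients is needed.
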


\begin{proof} By Corollaries \ref{echivprodalg} and \ref{cormydavey}.\end{proof}

\section{Transferring Davey`s Theorem to Commutative Unitary Rings}
\label{resultselem}

Let $(T,\vee ,\wedge ,\odot ,\rightarrow ,0,1)$ be a {\em residuated lattice}, which means that $(T,\vee ,\wedge ,0,1)$ is a bounded lattice, that we shall denote by $S$, $(T,\odot ,1)$ is a commutative monoid and $\rightarrow $ is a binary operation on $T$ which fulfills the {\em law of residuation}: for all $a,b,c\in T$, $a\leq b\rightarrow c$ iff $a\odot b\leq c$. Let us denote by $S^{\prime }$ the dual of the bounded lattice $S$. See more about residuated lattices in \cite{gal}, \cite{ior}, \cite{pic}.

Residuated lattices form a semi--degenerate congruence--distributive variety, hence they are semiprime and thus their congruence lattices fulfill Theorem \ref{davey} and even Theorem \ref{mydavey}. But they also fulfill a theorem of this form for elements, which can be expressed in the following way, since we notice that the bounded lattice of the filters of $T$ is a bounded sublattice of that of the filters of $S$, for each $e\in {\cal B}(S)$, the filter of $T$ generated by $e$ coincides to the filter of $S$ generated by $e$ and the co--annihilators in $T$ coincide to the co--annihilators in $S$:

\begin{theorem}{\rm \cite[Theorem $5.2.6$]{eu},\cite[Theorem $3.13$]{eu7}} Conditions $(1)_{m,S^{\prime }}$, $(2)_{m,S^{\prime }}$, $(3)_{m,S^{\prime }}$, $(4)_{m,S^{\prime }}$ and $(5)_{m,S^{\prime }}$ are equivalent.\label{elemreslat}\end{theorem}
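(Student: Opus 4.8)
The plan is to carry out, for the elements of the residuated lattice $T$, the same transfer of Davey's Theorem that Section \ref{resultscg} performs for congruences, with the reticulation ${\cal L}(T)$ of $T$ playing the role that ${\rm Con}(A)/_{\textstyle \equiv _A}$ plays there. First I would observe that $(1)_{m,S^{\prime }},\ldots ,(5)_{m,S^{\prime }}$ are, by definition, Davey's five conditions for the dual bounded lattice $S^{\prime }$, and that, by the correspondences recalled just above the statement --- the filters of $T$ form a bounded sublattice of the filters of $S$, the filter of $T$ generated by any $e\in {\cal B}(S)$ coincides with the filter of $S$ generated by $e$, and the co--annihilators in $T$ coincide with those in $S$ --- each of these conditions can be read equivalently off the residuated structure of $T$ or off the lattice $S^{\prime }$, the Boolean center ${\cal B}(S)={\cal B}(S^{\prime })$ being self--dual. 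Hence the whole task reduces to proving the equivalence of the lattice conditions $(1)_{m,S^{\prime }},\ldots ,(5)_{m,S^{\prime }}$; note that $S^{\prime }$ need not be distributive, so Davey's Theorem cannot be applied to it directly.

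Since residuated lattices form a semi--degenerate congruence--distributive variety, $T$ is semiprime, so the reticulation ${\cal L}(T)$ is available as a bounded distributive lattice, together with a surjective reticulation morphism $\lambda $ relating the (co--)annihilator and filter structure of $T$ --- equivalently of $S^{\prime }$ --- to the annihilator and ideal structure of ${\cal L}(T)$. Being distributive, ${\cal L}(T)$ satisfies Davey's Theorem \ref{davey}, so $(1)_{m,{\cal L}(T)},\ldots ,(5)_{m,{\cal L}(T)}$ are equivalent (this is also immediate from Corollary \ref{moredavey} with $\kappa =m$). The core of the proof is then to show, for each $i\in \overline{1,5}$, that $(i)_{m,S^{\prime }}$ is equivalent to $(i)_{m,{\cal L}(T)}$, mirroring Propositions \ref{echiv(i)}, \ref{echiv(ii)}, \ref{echiv(iii)}, \ref{eq(iv)}, \ref{echiv(iv)} and \ref{echiv(v)}. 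For this I would first prove the element--level analogues of the auxiliary results of Section \ref{resultscg}: that $\lambda $ commutes with annihilators, the analogue of Lemma \ref{annlambda}; that $\lambda $ restricts to a Boolean isomorphism between the Boolean center of $S^{\prime }$ and that of ${\cal L}(T)$, the analogue of property $(1^{\circ })$; and that annihilator inclusions, annihilator equalities, and principal ideals generated by Boolean elements are reflected by $\lambda $, the analogues of Lemmas \ref{pidbool} and \ref{annegal}. With these in hand, the five equivalences $(i)_{m,S^{\prime }}\Leftrightarrow (i)_{m,{\cal L}(T)}$ are obtained exactly as in Section \ref{resultscg}.

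Combining the five equivalences with the Davey equivalences on ${\cal L}(T)$ yields the equivalence of $(1)_{m,S^{\prime }},\ldots ,(5)_{m,S^{\prime }}$, which is the assertion. I would also record that in the special case where the lattice reduct $S$ happens to be distributive the argument shortcuts entirely: then $S^{\prime }$ is itself a bounded distributive lattice, and the conclusion follows at once from the dual form of Davey's Theorem noted in Remark \ref{dualdavey}. The reticulation is thus needed only to cover residuated lattices whose lattice reduct is not distributive.

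The main obstacle is exactly the package of element--level transfer lemmas, which do not follow formally from Section \ref{resultscg}: there every computation took place inside the lattice ${\rm Con}(A)$ and relied on the semiprime identities $(3^{\circ })$ and $(4^{\circ })$ governing how $\lambda _A$ interacts with meets and annihilators, whereas here one must verify the corresponding behaviour of the reticulation morphism of $T$ with respect to co--annihilators, $\odot $, and the Boolean center. Proving that $\lambda $ is injective on annihilators and restricts to an isomorphism of Boolean centers --- the residuated--lattice counterpart of $(1^{\circ })$--$(4^{\circ })$ --- is where the semiprimeness of $T$ genuinely enters and where the real work lies; once that is established, the remaining comparisons are a routine repetition of the congruence--lattice arguments.
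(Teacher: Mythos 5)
Your plan coincides with the paper's: Theorem \ref{elemreslat} is quoted here from \cite{eu,eu7}, where (as the paper states explicitly just below Theorem \ref{elemreslat}) it is proven precisely by transferring the dual of Theorem \ref{davey} from bounded distributive lattices to residuated lattices through the reticulation ${\cal L}(T)\cong {\rm PFilt}(T)$, via element--level analogues of the transfer lemmas of Section \ref{resultscg}. So the proposal is correct and takes essentially the same approach.
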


In \cite{eu,eu7}, I have proven Theorem \ref{elemreslat} by transferring the dual of Theorem \ref{davey} from bounded distributive lattices to residuated lattices through the reticulation functor for residuated lattices. With the notation for the reticulation from Section \ref{resultscg}, the construction from \cite{retic} identifies ${\cal L}(T)$, up to a lattice isomorphism, as the bounded lattice ${\rm PFilt}(T)$ of the principal filters of $T$, whose dual is a frame if $T$ is complete. Since, for any nonzero cardinality $\kappa $ and any $i\in \overline{1,5}$, $(i)_{\kappa ,S^{\prime }}$ is equivalent to $(i)_{\kappa ,D}$, where $D$ is the dual of ${\rm PFilt}(T)$, according to \cite{eu,eu7} and the above, it follows that:

\begin{theorem} If $T$ is a complete residuated lattice and $S^{\prime }$ is the dual of the underlying bounded lattice of $T$, then, for any nonzero cardinality $\kappa $ and any $h,i,j\in \overline{1,5}$, conditions $(iv)_{S^{\prime }}$, $(h)_{\kappa ,S^{\prime }}$, $(i)_{<\infty ,S^{\prime }}$ and $(j)_{S^{\prime }}$ are equivalent; in particular, $T$ is co--Stone iff $T$ is strongly co--Stone.\label{moreelemreslat}\end{theorem}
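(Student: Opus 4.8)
The plan is to mirror, on the element side, the transfer carried out for congruences: to push the frame case of Corollary \ref{moredavey} from the auxiliary lattice $D$ (the dual of ${\rm PFilt}(T)$, which is ${\cal L}(T)$ up to isomorphism) back to $S^{\prime }$. The whole argument rests on two ingredients already at hand. First, that $D$ is a frame, which holds precisely because $T$ is complete; this is the only place completeness is used, and it is exactly the input recorded just before the statement. Second, the equivalences $(i)_{\kappa ,S^{\prime }}\Leftrightarrow (i)_{\kappa ,D}$ for every $i\in \overline{1,5}$ and every nonzero $\kappa $, supplied by \cite{eu,eu7} together with the identification of ${\cal L}(T)$ with ${\rm PFilt}(T)$.

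First I would apply the last item of Corollary \ref{moredavey} to the frame $D$, obtaining that, for every nonzero $\kappa $ and all $h,i,j\in \overline{1,5}$, the conditions $(iv)_D$, $(h)_{\kappa ,D}$, $(i)_{<\infty ,D}$ and $(j)_D$ are mutually equivalent. Next I would upgrade the transfer $(i)_{\kappa ,S^{\prime }}\Leftrightarrow (i)_{\kappa ,D}$ from the $\kappa $--indexed form to the remaining forms by quoting Remark \ref{dualdavey}: since $(i)_{S^{\prime }}$ is equivalent to $(i)_{\kappa ,S^{\prime }}$ holding for all nonzero $\kappa $, and likewise on the $D$ side, one gets $(i)_{S^{\prime }}\Leftrightarrow (i)_D$; since $(i)_{<\infty ,S^{\prime }}$ is equivalent to $(i)_{\kappa ,S^{\prime }}$ holding for all finite nonzero $\kappa $, again likewise on $D$, one gets $(i)_{<\infty ,S^{\prime }}\Leftrightarrow (i)_{<\infty ,D}$. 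Chaining these transfers with the mutual equivalences on $D$ then makes all of $(h)_{\kappa ,S^{\prime }}$, $(i)_{<\infty ,S^{\prime }}$ and $(j)_{S^{\prime }}$ equivalent to one another.

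It remains to fold $(iv)_{S^{\prime }}$ into this block, and this is the step I expect to need the most care, since the quoted transfer is stated only for the five numbered conditions. One implication is free: any of the equivalent conditions yields $(4)_{\kappa ,S^{\prime }}$, whose first conjunct is exactly $(iv)_{S^{\prime }}$. For the converse I would invoke the element--level analogue of Proposition \ref{eq(iv)}, namely the equivalence $(iv)_{S^{\prime }}\Leftrightarrow (iv)_D$ obtained by the same reticulation and co--annihilator bookkeeping as in \cite{eu,eu7}, and then use that $(iv)_D$ already lies in the equivalence class because $D$ is a frame. For the \emph{in particular} clause, $T$ co--Stone means $S^{\prime }$ is Stone and $T$ strongly co--Stone means $S^{\prime }$ is strongly Stone, i.e.\ $(1)_{S^{\prime }}$; strongly Stone trivially implies Stone, and conversely, since ${\cal B}(S^{\prime })={\cal B}(T)$ is a Boolean sublattice, $S^{\prime }$ Stone already gives $(2)_{<\infty ,S^{\prime }}$, which the main statement equates with $(1)_{S^{\prime }}$. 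The genuine obstacle is thus confined to two points: confirming that completeness of $T$ really makes $D$ a frame, and supplying the $(iv)$--transfer that the cited equivalences do not literally cover; everything else is routine chaining through Remark \ref{dualdavey} and Corollary \ref{moredavey}.
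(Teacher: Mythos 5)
Your proposal is correct and follows essentially the same route as the paper: the paper's entire justification is the paragraph preceding the statement, namely that $D$ (the dual of ${\rm PFilt}(T)\cong {\cal L}(T)$) is a frame when $T$ is complete, that Corollary \ref{moredavey} then collapses all the conditions on $D$, and that the equivalences $(i)_{\kappa ,S^{\prime }}\Leftrightarrow (i)_{\kappa ,D}$ from \cite{eu,eu7} transfer everything back to $S^{\prime }$ via the cardinality characterizations in Remark \ref{dualdavey}. You are in fact somewhat more careful than the paper in isolating the one step the quoted transfer does not literally cover, the $(iv)$--equivalence, and your proposed fix (the element--level analogue of Proposition \ref{eq(iv)}) is the right one.
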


\begin{remark} If the commutator of $A$ is associative, as it is, for example, in any commutative unitary ring, then ${\rm Con}(A)$ is a residuated lattice, in which $[\cdot ,\cdot ]_A$ is the multiplication, according to \cite{retic} (see also \cite{cze2}), thus, from the above and the fact that ${\rm Con}(A)$ is a complete lattice, it follows that both the lattice ${\rm Con}(A)$ and its dual fulfill the equivalences in Theorem \ref{mydavey}. Note that the commutator is not always associative \cite{ggiv}.\end{remark}

Commutative unitary rings form a semi--degenerate congruence--modular equational class, thus their congruence lattices fulfill Theorem \ref{mydavey}. Let us see that, similarly to what happens in (complete) residuated lattices, they also fulfill an analogue of Theorem \ref{mydavey} for elements instead of congruences.

Commutative unitary rings form a semi--degenerate congruence--modular variety, thus their congruence lattices fulfill Theorem \ref{mydavey}. Let us see that, like residuated lattices, commutative unitary rings fulfill Davey`s Theorem for elements, too.

Let $(R,+,\cdot ,0,1)$ be a commutative unitary ring, $({\rm Id}(R),\vee =+,\cap ,\{0\},R)$ be the bounded modular lattice of the ideals of $R$, ${\rm Spec}_{\rm Id}(R)$ the set of the prime ideals of $R$ and $\iota\gamma _R:{\rm Id}(R)\rightarrow {\rm Con}(R)$ the canonical lattice isomorphism: for all $I\in {\rm Id}(R)$, $\iota\gamma _R(I)=\{(x,y)\in I^2\ |\ x-y\in I\}$. Note that, since $\iota\gamma _R$ is an order isomorphism, it preserves arbitrary intersections. Recall that ${\rm Spec}_{\rm Id}(R)=\{P\in {\rm Id}(R)\setminus \{R\}\ |\ (\forall \, I,J\in {\rm Id}(R))\, (I\cdot J\subseteq P\Rightarrow I\subseteq P\mbox{ or }J\subseteq P)\}$ and $[\iota\gamma _R(I),\iota\gamma _R(J)]_R=\iota\gamma _R(I\cdot J)$ for all $I,J\in {\rm Id}(R)$, from which it is easy to deduce that $\iota\gamma _R({\rm Spec}_{\rm Id}(R))={\rm Spec}(R)$. For every $U\subseteq R$, $\langle U\rangle _R$ shall be the ideal of $R$ generated by $U$, so, for each $x\in R$, $\langle \{x\}\rangle _R=xR$. Let ${\rm PId}(R)$ and ${\rm FGId}(R)$ be the set of the principal ideals of $R$ and that of the finitely generated ideals of $R$, respectively. It is straightforward that, for all $x,a,b\in R$, $\iota\gamma _R(xR)=Cg_R(x,0)$ and $Cg_R(a,b)=Cg_R(a-b,0)$, hence $\iota\gamma _R({\rm PId}(R))={\rm PCon}(R)$ and thus $\iota\gamma _R({\rm FGId}(R))={\rm K}(R)$. Recall that, if we denote by $E(R)$ the set of the idempotents of $R$, then $(E(R),\vee ,\wedge =\cdot ,\neg \, ,0,1)$ is a Boolean algebra, where, for every $e,f\in E(R)$, $\neg \, e=1-e$ and $e\vee f=\neg \, (\neg \, e\wedge \neg \, f)=1-(1-e)\cdot (1-f)$.

Let ${\cal L}(R)=({\cal K}(R)/_{\textstyle \equiv _R},\vee ,\wedge ,{\bf 0}=\lambda _R(\Delta _R)=\Delta _R/_{\textstyle \equiv _R},{\bf 1}=\lambda _R(\nabla _R)=\nabla _R/_{\textstyle \equiv _R})$ be the reticulation of $R$, as constructed in \cite{retic} (see the notations in Section \ref{resultscg}). Let $R^*$ and $\mu _R:R\rightarrow R^*$ be the reticulation of $R$ and the reticulation function, respectively, as constructed in \cite{bell,bell2} (see also \cite{joy,sim}): if we denote, for each $I\in {\rm Id}(R)$, by $\sqrt{I}=\bigcap \{P\in {\rm Spec}_{\rm Id}(R)\ |\ I\subseteq P\}$ the {\em radical} of $I$, and by $\sim _R=\{(I,J)\in ({\rm Id}(R))^2\ |\ \sqrt{I}=\sqrt{J}\}$, then $\sim _R$ is a congruence of the lattice ${\rm Id}(R)$, so ${\rm Id}(R)/_{\textstyle \sim _R}$ is a bounded lattice and the canonical surjection $\nu _R:{\rm Id}(R)\rightarrow {\rm Id}(R)/_{\textstyle \sim _R}$ is a lattice morphism that fulfills: $\nu _R(I\cdot J)=\nu _R(I\cap J)$ for all $I,J\in {\rm Id}(R)$, therefore the bounded lattice $R^*=({\rm Id}(R)/_{\textstyle \sim _R},\vee ,\wedge ,\perp =\nu _R(\{0\}))=\{0\}/_{\textstyle \sim _R},\top =\nu _R(R)=R/_{\textstyle \sim _R})$ is distributive, since multiplication is distributive w.r.t. the join in ${\rm Id}(R)$. As shown in \cite{bell}, $R^*$ has the prime spectrum of ideals homeomorphic to the prime spectrum of congruences of $R$, w.r.t. the Stone topologies, hence $R^*$ is a reticulation of $R$. The {\em reticulation function} $\mu _R:R\rightarrow R^*$ is defined, in \cite{bell}, by: $\mu _R(x)=\nu _R(xR)=xR/_{\textstyle \sim _R}$ for all $x\in R$.

For any $U\subseteq R$, we denote the {\em annihilator of $U$} by ${\rm Ann}_R(U)$; so ${\rm Ann}_R(U)=\{x\in R\ |\ (\forall \, u\in U)\, (u\cdot x=0)\}\in {\rm Id}(R)$. For any $a\in R$, we also denote ${\rm Ann}_R(a)={\rm Ann}_R(\{a\})=\{x\in R\ |\ u\cdot a=0\}$. Let us denote by ${\cal A}nn(R)=\{{\rm Ann}_R(U)\ |\ U\subseteq R\}$ and by ${\rm 2Ann}(R)=\{{\rm Ann}_R({\rm Ann}_R(U))\ |\ U\subseteq R\}$.

\begin{remark} It is well known and straightforward that ${\cal A}nn(R)\subseteq {\rm Id}(R)$.\end{remark}

\begin{lemma}\begin{itemize}
\item For any $U\subseteq R$, ${\rm Ann}_R(U)={\rm Ann}_R(\langle U\rangle _R)$.
\item For any $V\subseteq {\rm Id}(R)$, $\displaystyle \bigcap _{I\in V}{\rm Ann}_R(I)={\rm Ann}_R(\bigvee _{I\in V}I)$.
\end{itemize}\label{proprinel}\end{lemma}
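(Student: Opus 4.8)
The plan is to follow the pattern of Lemmas \ref{annidgen} and \ref{semilatl}, substituting the description of $(U]_L$ as a lattice-generated ideal with the description of $\langle U\rangle _R$ as the set of all finite $R$-linear combinations of elements of $U$ (available because $R$ is unitary), and substituting the finite distributivity of $\wedge $ over $\vee $ with the distributivity of the ring multiplication over addition. For the first bullet, the inclusion $U\subseteq \langle U\rangle _R$ together with the inclusion-reversing property of annihilators immediately yields ${\rm Ann}_R(\langle U\rangle _R)\subseteq {\rm Ann}_R(U)$. For the reverse inclusion I take $x\in {\rm Ann}_R(U)$ and an arbitrary $v\in \langle U\rangle _R$, write $v=\sum _{i=1}^nr_iu_i$ with $n\in \N $, $r_1,\ldots ,r_n\in R$ and $u_1,\ldots ,u_n\in U$, and compute $v\cdot x=\sum _{i=1}^nr_i(u_i\cdot x)=\sum _{i=1}^nr_i\cdot 0=0$; since $v$ was arbitrary in $\langle U\rangle _R$, this gives $x\in {\rm Ann}_R(\langle U\rangle _R)$.

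For the second bullet I first record that ${\rm Ann}_R$ sends unions to intersections: for any family $(I_k)_{k\in K}\subseteq {\rm Id}(R)$, an element $x$ belongs to ${\rm Ann}_R(\bigcup _{k\in K}I_k)$ exactly when $u\cdot x=0$ for every $u$ lying in some $I_k$, that is, exactly when $x\in {\rm Ann}_R(I_k)$ for all $k$; hence ${\rm Ann}_R(\bigcup _{k\in K}I_k)=\bigcap _{k\in K}{\rm Ann}_R(I_k)$. Applying the first bullet to $U=\bigcup _{I\in V}I$ and using that the join in the ideal lattice is the ideal generated by the union, $\bigvee _{I\in V}I=\langle \bigcup _{I\in V}I\rangle _R$, I then obtain $\bigcap _{I\in V}{\rm Ann}_R(I)={\rm Ann}_R(\bigcup _{I\in V}I)={\rm Ann}_R(\langle \bigcup _{I\in V}I\rangle _R)={\rm Ann}_R(\bigvee _{I\in V}I)$.

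Neither step poses a genuine obstacle; the argument is routine once the correct generating description of $\langle U\rangle _R$ is in hand, the only real content being the distributivity computation $v\cdot x=0$ above. The sole points requiring care are the empty cases, which are covered by the conventions fixed at the start of Section \ref{resultscg}: for $U=\emptyset $ both sides of the first identity reduce to ${\rm Ann}_R(\{0\})=R$, and for $V=\emptyset $ both sides of the second reduce to ${\rm Ann}_R(\bigvee \emptyset )={\rm Ann}_R(\{0\})=R$.
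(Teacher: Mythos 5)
Your proof is correct and follows essentially the same route as the paper's: the same two inclusions for the first bullet (with the identical computation $v\cdot x=\sum_{i=1}^{n}r_i(u_i\cdot x)=0$ using the finite-linear-combination description of $\langle U\rangle _R$), and the same reduction of the second bullet to the first via ${\rm Ann}_R(\bigcup _{I\in V}I)=\bigcap _{I\in V}{\rm Ann}_R(I)$ and $\bigvee _{I\in V}I=\langle \bigcup _{I\in V}I\rangle _R$. The explicit treatment of the empty cases is a harmless addition the paper leaves to its standing conventions.
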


\begin{proof} Clearly, for any $U\subseteq V\subseteq R$, we have ${\rm Ann}_R(V)\subseteq {\rm Ann}_R(U)$, hence ${\rm Ann}_R(\langle U\rangle _R)\subseteq {\rm Ann}_R(U)$. But the converse inclusion holds, as well, since, given any $a\in \langle U\rangle _R$ and any $x\in {\rm Ann}_R(U)$, we have $a=a_1\cdot u_1+\ldots +a_n\cdot u_n$ for some $n\in \N ^*$, $a_1,\ldots ,a_n\in R$ and $u_1,\ldots ,u_n\in U$, so that $x\cdot u_1=\ldots =x\cdot u_n=0$, therefore $x\cdot u=0$, so $x\in {\rm Ann}_R(\langle U\rangle _R)$. Thus ${\rm Ann}_R(U)={\rm Ann}_R(\langle U\rangle _R)$. Hence $\displaystyle \bigcap _{I\in V}{\rm Ann}_R(I)=\bigcap _{I\in V}\bigcap _{a\in I}{\rm Ann}_R(a)=\bigcap _{a\in \bigcup _{I\in V}I}{\rm Ann}_R(a)={\rm Ann}_R(\bigcup _{I\in V}I)={\rm Ann}_R(\langle \bigcup _{I\in V}I\rangle _R)={\rm Ann}_R(\bigvee _{I\in V}I)$.\end{proof}

Regarding the results from \cite{bell} I am using, note that, since $R$ is commutative, it follows that $R$ is quasicommutative, thus, by \cite[Theorem $3$]{bell}, $R$ fulfills condition $(*)$ from \cite{bell}. It also follows that:

\begin{lemma}{\rm \cite[Lemma, p. 1861]{bell}} For all $I\in {\rm Id}(R)$, there exists a $K\in {\rm FGId}(R)$ such that $K\subseteq I$ and $\sqrt{K}=\sqrt{I}$.\label{hereswhy}\end{lemma}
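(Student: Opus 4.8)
The plan is to translate the equality $\sqrt{K}=\sqrt{I}$ into a statement about the prime spectrum and then isolate where a genuine finiteness input is needed. For $J\in {\rm Id}(R)$ write $V(J)=\{P\in {\rm Spec}_{\rm Id}(R)\ |\ J\subseteq P\}$, so that $\sqrt{J}=\bigcap V(J)$ and $\sqrt{K}=\sqrt{I}$ holds iff $V(K)=V(I)$. Any $K\subseteq I$ already forces $V(I)\subseteq V(K)$, hence $\sqrt{K}\subseteq \sqrt{I}$; so the whole task is to produce a finitely generated $K\subseteq I$ with $V(K)\subseteq V(I)$, i.e. such that every prime ideal containing $K$ contains $I$.

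First I would record the easy ``pointwise'' half. For each $x\in \sqrt{I}$ there is an $n\in \N ^*$ with $x^n\in I$; writing $x^n$ as a finite $R$--combination of elements of $I$, one obtains a finitely generated ideal $K_x=\langle b_1,\ldots ,b_k\rangle _R\subseteq I$ with $x^n\in K_x$, whence $x\in \sqrt{K_x}$. By Lemma \ref{proprinel} the sub-ideals in play behave well under the radical, and the family $\{\sqrt{K}\ |\ K\in {\rm FGId}(R),\ K\subseteq I\}$ is directed under inclusion, since $K_1+K_2$ is again finitely generated and contained in $I$; its union is exactly $\sqrt{I}$. Dually, the open set ${\rm Spec}_{\rm Id}(R)\setminus V(I)$ is the union, over $K\in {\rm FGId}(R)$ with $K\subseteq I$, of the basic open sets ${\rm Spec}_{\rm Id}(R)\setminus V(K)$.

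The crux is to upgrade this directed union to a single member, that is, to extract from the above cover a finite subcover ${\rm Spec}_{\rm Id}(R)\setminus V(\langle a_1,\ldots ,a_n\rangle _R)={\rm Spec}_{\rm Id}(R)\setminus V(I)$ with $a_1,\ldots ,a_n\in I$, so that $K=\langle a_1,\ldots ,a_n\rangle _R$ is the desired ideal. This is precisely a quasi-compactness statement for the open set ${\rm Spec}_{\rm Id}(R)\setminus V(I)$, and it is where the structure of $R$ must enter. It may be convenient to pass to congruences via $\iota\gamma _R$: since $\iota\gamma _R$ is an order isomorphism with $\iota\gamma _R({\rm Spec}_{\rm Id}(R))={\rm Spec}(R)$ and $\iota\gamma _R({\rm FGId}(R))={\rm K}(R)$, and since $\iota\gamma _R$ carries radicals to radicals, the lemma is equivalent to: every $\theta \in {\rm Con}(R)$ admits a compact congruence $\kappa \in {\rm K}(R)$ with $\kappa \subseteq \theta $ and $\rho _R(\kappa )=\rho _R(\theta )$. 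The required quasi-compactness I would then import from the spectral analysis of \cite{bell}: as $R$ is commutative it is quasicommutative, so by \cite[Theorem $3$]{bell} it satisfies condition $(*)$, under which \cite{bell} establishes the spectral description that yields the finite subcover; this is the substantive content of \cite[Lemma, p. 1861]{bell}.

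I expect this last step to be the only real obstacle: the pointwise half and the reduction to the spectrum are routine, but passing from the directed union $\sqrt{I}=\bigcup _{K}\sqrt{K}$ to a single finitely generated $K$ is a genuine finiteness phenomenon about ${\rm Spec}_{\rm Id}(R)$. In particular it cannot be forced by the lattice-theoretic compactness of $\nabla _R$ alone: that merely gives quasi-compactness of the full spectrum ${\rm Spec}_{\rm Id}(R)\setminus V(R)$, i.e. the trivial case $\sqrt{I}=R$, where one simply takes $K=\langle 1\rangle _R$. Accordingly, for the general case I would either invoke \cite[Lemma, p. 1861]{bell} directly or reconstruct its proof through condition $(*)$ and the consequent spectral structure, as above.
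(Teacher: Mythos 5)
The paper offers no proof of this lemma at all: it is imported verbatim from \cite[Lemma, p.~1861]{bell}, the only argument in the surrounding text being that a commutative ring is quasicommutative and hence satisfies Belluce's condition $(*)$ by \cite[Theorem~3]{bell}. Your proposal ultimately lands in the same place --- the substantive finiteness input is delegated to \cite{bell} --- so in that sense the two ``proofs'' coincide; the scaffolding you add around the citation is correct and clarifying. The equivalence of $\sqrt{K}=\sqrt{I}$ with $V(K)=V(I)$, the directed-union identity $\sqrt{I}=\bigcup \{\sqrt{K}\ |\ K\in {\rm FGId}(R),\ K\subseteq I\}$ (for a single $x\in \sqrt{I}$ one can simply take $K_x=\langle \{x^n\}\rangle _R$), the translation through $\iota\gamma _R$ into a statement about compact congruences, and the observation that compactness of $\nabla _R$ only settles the case $\sqrt{I}=R$ are all sound.

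Your suspicion that the remaining step is ``a genuine finiteness phenomenon'' not obtainable by soft arguments should be pushed further: for an arbitrary commutative unitary ring the single-$K$ statement is in fact \emph{false}. Take $R=k[x_1,x_2,\ldots ]$, a polynomial ring in countably many indeterminates over a field, and $I=\langle \{x_i\ |\ i\geq 1\}\rangle _R$; this $I$ is prime (indeed maximal), so $\sqrt{I}=I$, and $R$ is even semiprime, being a domain. Any finitely generated $K\subseteq I$ involves only finitely many indeterminates, hence lies in the prime ideal $P_n=\langle \{x_1,\ldots ,x_n\}\rangle _R$ for some $n$, so $\sqrt{K}\subseteq P_n$, which does not contain $x_{n+1}\in \sqrt{I}$. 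Consequently the quasi-compactness of ${\rm Spec}_{\rm Id}(R)\setminus V(I)$ that you propose to ``import'' is simply not available for arbitrary $I$, and neither your reduction nor the paper's bare citation can close this gap: \cite[Lemma, p.~1861]{bell} must be asserting something weaker than what is stated here (e.g.\ the directed-union identity you already established, or the claim for finitely generated $I$ only), and the lemma as written --- on which Proposition~\ref{hereshow} and hence the identification $R^*={\rm Id}(R)/_{\textstyle \sim _R}={\rm FGId}(R)/_{\textstyle \sim _R}$ depend --- needs to be revisited. Your instinct to isolate exactly this step as the only real obstacle was the right one; the honest conclusion is that the obstacle is not surmountable at the stated level of generality.
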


\begin{proposition}\begin{enumerate}
\item\label{hereshow1} $R^*={\rm Id}(R)/
_{\textstyle \sim _R}={\rm FGId}(R)/
_{\textstyle \sim _R}$;
\item\label{hereshow0} $\varphi _R:R^*\rightarrow {\rm Con}(R)/_{\textstyle \equiv _R}$, for all $I\in {\rm Id}(R)$, $\varphi _R(I/_{\textstyle \sim _R})=\iota\gamma _R(I)/_{\textstyle \equiv _R}$, is a lattice isomorphism;
\item\label{hereshow2} ${\rm Con}(R)/
_{\textstyle \equiv _R}={\cal K}(R)/
_{\textstyle \equiv _R}={\cal L}(R)$.\end{enumerate}\label{hereshow}\end{proposition}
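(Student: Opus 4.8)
The plan is to route everything through the lattice isomorphism $\iota\gamma _R:{\rm Id}(R)\rightarrow {\rm Con}(R)$, whose decisive feature for us is that it intertwines the two radical operations. I would first establish that $\iota\gamma _R(\sqrt{I})=\rho _R(\iota\gamma _R(I))$ for every $I\in {\rm Id}(R)$. This holds because $\rho _R(\iota\gamma _R(I))=\bigcap \{\phi \in {\rm Spec}(R)\mid \iota\gamma _R(I)\subseteq \phi \}$, and, since $\iota\gamma _R$ is an order isomorphism with $\iota\gamma _R({\rm Spec}_{\rm Id}(R))={\rm Spec}(R)$, the prime congruences above $\iota\gamma _R(I)$ are exactly the $\iota\gamma _R(P)$ with $P\in {\rm Spec}_{\rm Id}(R)$ and $I\subseteq P$; using that $\iota\gamma _R$ preserves arbitrary intersections turns this into $\iota\gamma _R(\bigcap \{P\mid P\in {\rm Spec}_{\rm Id}(R),I\subseteq P\})=\iota\gamma _R(\sqrt{I})$. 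Injectivity of $\iota\gamma _R$ then yields the central equivalence I shall lean on throughout: for all $I,J\in {\rm Id}(R)$, $I\sim _RJ$ iff $\iota\gamma _R(I)\equiv _R\iota\gamma _R(J)$.

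For (\ref{hereshow1}), the first equality is the definition of $R^*$; for the second, Lemma \ref{hereswhy} gives, for each $I\in {\rm Id}(R)$, a finitely generated $K\subseteq I$ with $\sqrt{K}=\sqrt{I}$, that is $\nu _R(K)=\nu _R(I)$, so $\nu _R({\rm FGId}(R))=\nu _R({\rm Id}(R))=R^*$. For (\ref{hereshow0}), the central equivalence immediately delivers both well-definedness and injectivity of $\varphi _R$, since $I\sim _RJ$ iff $\iota\gamma _R(I)\equiv _R\iota\gamma _R(J)$; surjectivity follows from the surjectivity of $\iota\gamma _R$ and $\lambda _R$. To verify that $\varphi _R$ is a lattice morphism, I would compute joins in $R^*$ as $\nu _R(I+J)$ and meets as $\nu _R(I\cap J)=\nu _R(I\cdot J)$, recall that $\iota\gamma _R$, being a lattice isomorphism, satisfies $\iota\gamma _R(I+J)=\iota\gamma _R(I)\vee \iota\gamma _R(J)$ and $\iota\gamma _R(I\cap J)=\iota\gamma _R(I)\cap \iota\gamma _R(J)$, and use that $\lambda _R$ is a lattice morphism with $\lambda _R(\theta \cap \zeta )=\lambda _R(\theta )\wedge \lambda _R(\zeta )$; chaining these through the defining formula $\varphi _R(I/_{\textstyle \sim _R})=\iota\gamma _R(I)/_{\textstyle \equiv _R}$ shows that $\varphi _R$ preserves both operations, so it is a lattice isomorphism.

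For (\ref{hereshow2}), the equality ${\cal K}(R)/_{\textstyle \equiv _R}={\cal L}(R)$ is the definition of the reticulation, so only ${\rm Con}(R)/_{\textstyle \equiv _R}={\cal K}(R)/_{\textstyle \equiv _R}$ needs argument, i.e. that every $\equiv _R$-class meets ${\cal K}(R)$. Given $\theta \in {\rm Con}(R)$, I would write $\theta =\iota\gamma _R(I)$; Lemma \ref{hereswhy} furnishes a finitely generated $K\subseteq I$ with $K\sim _RI$, whence $\iota\gamma _R(K)\in \iota\gamma _R({\rm FGId}(R))={\cal K}(R)$ and, by the central equivalence, $\iota\gamma _R(K)\equiv _R\iota\gamma _R(I)=\theta $, so that $\lambda _R({\cal K}(R))=\lambda _R({\rm Con}(R))$, giving the claim. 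The main obstacle is the radical-intertwining identity of the first paragraph: everything else is transport along $\iota\gamma _R$, $\nu _R$ and $\lambda _R$, but that identity is what lets Belluce's purely ideal-theoretic Lemma \ref{hereswhy} speak about congruences and their radicals, and it rests on the two nontrivial facts that $\iota\gamma _R$ matches prime ideals with prime congruences and commutes with arbitrary intersections.
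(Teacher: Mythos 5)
Your proposal is correct and follows essentially the same route as the paper: the core of both arguments is the chain of equivalences showing $I\sim _RJ$ iff $\iota\gamma _R(I)\equiv _R\iota\gamma _R(J)$ (via $\iota\gamma _R({\rm Spec}_{\rm Id}(R))={\rm Spec}(R)$ and preservation of arbitrary intersections), combined with Lemma \ref{hereswhy} and the identity $\iota\gamma _R({\rm FGId}(R))={\cal K}(R)$. Your packaging of the key step as the identity $\iota\gamma _R(\sqrt{I})=\rho _R(\iota\gamma _R(I))$ and your slightly more explicit verification that $\varphi _R$ preserves the operations are only presentational differences.
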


\begin{proof} (\ref{hereshow1}) Lemma \ref{hereswhy} says that, for all $I\in {\rm Id}(R)$, there exists a $K\in {\rm FGId}(R)$ such that $K\subseteq I$ and $K/_{\textstyle \sim _R}=I/_{\textstyle \sim _R}$, therefore $R^*={\rm Id}(R)/
_{\textstyle \sim _R}={\rm FGId}(R)/
_{\textstyle \sim _R}$.

\noindent (\ref{hereshow0}) The map $\varphi _R$ is defined by: $\varphi _R(\nu _R(I))=\lambda _R(\iota\gamma _R(I)))$ for all $I\in {\rm Id}(R)$, hence it makes the following diagram commutative, where the second equality in the bottom row will follow shortly:\vspace*{-10pt}

\begin{center}
\begin{picture}(325,60)(0,0)
\put(20,40){${\rm FGId}(R)\subseteq {\rm Id}(R)$}
\put(0,5){${\rm FGId}(R)/
_{\textstyle \sim _R}={\rm Id}(R)/
_{\textstyle \sim _R}=R^*$}
\put(86,25){$\nu _R$}
\put(29,25){$\nu _R\mid _{{\rm FGId}(R)}$}
\put(84,36){\vector(0,-1){21}}
\put(27,38){\vector(0,-1){23}}
\put(98,43){\vector(1,0){100}}
\put(213,38){\vector(0,-1){23}}
\put(257,36){\vector(0,-1){21}}
\put(259,25){$\lambda _R$}
\put(215,25){$\lambda _R$}
\put(137,46){$\iota\gamma _R$}
\put(152,11){$\varphi _R$}
\put(142,8){\vector(1,0){36}}
\put(200,40){${\rm Con}(R)\supseteq {\cal K}(R)$}
\put(180,5){${\rm Con}(R)/
_{\textstyle \equiv _R}={\cal K}(R)/
_{\textstyle \equiv _R}={\cal L}(R)$}
\end{picture}
\end{center}\vspace*{-3pt}

For all $I,J\in {\rm Id}(R)$, we have: $I/
_{\textstyle \sim _R}=J/
_{\textstyle \sim _R}$ iff $\sqrt{I}=\sqrt{J}$ iff $\iota\gamma _R(\sqrt{I})=\iota\gamma _R(\sqrt{J})$ iff $\iota\gamma _R(\bigcap \{P\in {\rm Spec}_{\rm Id}(R)\ |\ I\subseteq P\})=\iota\gamma _R(\bigcap \{Q\in {\rm Spec}_{\rm Id}(R)\ |\ J\subseteq Q\})$ iff $\bigcap \{\iota\gamma _R(P)\ |\ P\in {\rm Spec}_{\rm Id}(R),I\subseteq P\}=\bigcap \{\iota\gamma _R(Q)\ |\ Q\in {\rm Spec}_{\rm Id}(R),J\subseteq Q\}$ iff $\bigcap \{\iota\gamma _R(P)\ |\ P\in {\rm Spec}_{\rm Id}(R),\iota\gamma _R(I)\subseteq \iota\gamma _R(P)\}=\bigcap \{\iota\gamma _R(Q)\ |\ Q\in {\rm Spec}_{\rm Id}(R),\iota\gamma _R(J)\subseteq \iota\gamma _R(Q)\}$ iff $\bigcap \{\phi \ |\ \phi \in {\rm Spec}(R),\iota\gamma _R(I)\subseteq \phi \}=\bigcap \{\psi \ |\ \psi \in {\rm Spec}(R),\iota\gamma _R(J)\subseteq \psi \}$ iff $\rho _R(\iota\gamma _R(I))=\rho _R(\iota\gamma _R(J))$ iff $\iota\gamma _R(I)/
_{\textstyle \equiv _R}=\iota\gamma _R(J)/
_{\textstyle \equiv _R}$ iff $\varphi _R
(I/
_{\textstyle \sim _R})=\varphi _R
(J/
_{\textstyle \sim _R})$, hence $\varphi _R$ is well defined and injective. Since $\varphi _R\circ \nu _R=\lambda _R\circ \iota\gamma _R$, which is surjective, it follows that $\varphi _R$ is surjective. Clearly, $\varphi _R$ preserves the join and the meet, so it is a lattice isomorphism.

\noindent (\ref{hereshow2}) By (\ref{hereshow1}) and 
(\ref{hereshow0}), ${\rm Id}(R)/
_{\textstyle \sim _R}={\rm FGId}(R)/
_{\textstyle \sim _R}$, that is $\nu _R({\rm Id}(R))=\nu _R({\rm FGId}(R))$, so $\varphi _R(\nu _R({\rm Id}(R)))=\varphi _R(\nu _R({\rm FGId}(R)))$, which means that $\lambda _R(\iota\gamma _R({\rm Id}(R)))=\lambda _R(\iota\gamma _R({\rm FGId}(R)))$, thus $\lambda _R({\rm Con}(R))=\lambda _R({\cal K}(R))$, that is ${\rm Con}(R)/
_{\textstyle \equiv _R}={\cal K}(R)/
_{\textstyle \equiv _R}={\cal L}(R)$.\end{proof}

\begin{remark} Since ${\cal B}({\rm Con}(R))$ is a Boolean sublattice of ${\rm Con}(R)$ and ${\rm Con}(R)$ is isomorphic to ${\rm Id}(R)$, it follows that ${\cal B}({\rm Id}(R))$ is a Boolean sublattice of ${\rm Id}(R)$.\label{boolidcg}\end{remark}

\begin{corollary} $R^*$ and ${\cal L}(R)$ are frames and $\varphi _R:{\rm FGId}(R)/
_{\textstyle \sim _R}={\rm Id}(R)/
_{\textstyle \sim _R}=R^*\rightarrow {\rm Con}(R)/
_{\textstyle \equiv _R}={\cal K}(R)/
_{\textstyle \equiv _R}={\cal L}(R)$ is a lattice (and thus a frame) isomorphism, and, if we take, in the diagram above in the class of bounded lattices, the restrictions of the bounded lattice morphisms to the Boolean centers, then we get the following commutative diagram in the class of Boolean algebras, in which $\iota\gamma _R\mid _{{\cal B}({\rm Id}(R))}:{\cal B}({\rm Id}(R))\rightarrow {\cal B}({\rm Con}(R))$ and $\varphi _R\mid _{{\cal B}(R^*)}:{\cal B}(R^*)\rightarrow {\cal B}({\cal L}(R))$ are Boolean isomorphisms:\vspace*{-10pt}

\begin{center}
\begin{picture}(325,60)(0,0)
\put(63,40){${\cal B}({\rm Id}(R))$}
\put(70,5){${\cal B}(R^*)$}
\put(86,25){$\nu _R\mid _{{\cal B}({\rm Id}(R))}$}
\put(84,37){\vector(0,-1){22}}
\put(103,43){\vector(1,0){84}}
\put(213,38){\vector(0,-1){23}}
\put(215,25){$\lambda _R\mid _{{\cal B}({\rm Con}(R))}$}
\put(120,48){$\iota\gamma _R\mid _{{\cal B}({\rm Id}(R))}$}
\put(127,13){$\varphi _R\mid _{{\cal B}(R^*)}$}
\put(98,8){\vector(1,0){93}}
\put(189,40){${\cal B}({\rm Con}(R))$}
\put(193,5){${\cal B}({\cal L}(R))$}
\end{picture}
\end{center}\vspace*{-10pt}\label{izomreticulatii}\end{corollary}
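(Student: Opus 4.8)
The plan is to extract almost everything from Proposition~\ref{hereshow} and then transport structure along the two lattice isomorphisms appearing there. First I would note that Proposition~\ref{hereshow} already supplies the identifications $R^*={\rm Id}(R)/_{\textstyle \sim _R}={\rm FGId}(R)/_{\textstyle \sim _R}$ and ${\cal L}(R)={\rm Con}(R)/_{\textstyle \equiv _R}={\cal K}(R)/_{\textstyle \equiv _R}$, together with the fact that $\varphi _R$ is a lattice isomorphism between these two lattices. To see that both are frames, I would use that the lattice ${\rm Con}(A)/_{\textstyle \equiv _A}$ was shown above to be a frame for every algebra $A\in {\cal C}$; taking $A=R$ this makes ${\cal L}(R)={\rm Con}(R)/_{\textstyle \equiv _R}$ a frame. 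Since $\varphi _R$ is a lattice isomorphism, hence an order isomorphism preserving and reflecting all existing joins and meets, transporting the complete lattice structure of ${\cal L}(R)$ back along $\varphi _R^{-1}$ shows that $R^*$ is complete and that its meet distributes over arbitrary joins, so $R^*$ is a frame as well; consequently $\varphi _R$ is a frame isomorphism.

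For the diagram of Boolean centers, I would first recall that any bounded lattice morphism preserves $0$, $1$ and complements, and therefore maps complemented elements to complemented elements. Hence each of $\nu _R$, $\lambda _R$, $\iota\gamma _R$ and $\varphi _R$ restricts to a map between the corresponding Boolean centers, and the square obtained by restricting the commutative identity $\varphi _R\circ \nu _R=\lambda _R\circ \iota\gamma _R$ of Proposition~\ref{hereshow} to those Boolean centers is again commutative.

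It then remains to identify the two horizontal restrictions as Boolean isomorphisms. Since $\iota\gamma _R:{\rm Id}(R)\rightarrow {\rm Con}(R)$ is a lattice isomorphism, it restricts to an order-preserving bijection ${\cal B}({\rm Id}(R))\rightarrow {\cal B}({\rm Con}(R))$ that preserves $0$, $1$, finite meets, finite joins and complements; by Remark~\ref{boolidcg} and the recalled fact that ${\cal B}({\rm Con}(R))$ is a Boolean sublattice of ${\rm Con}(R)$, both of these Boolean centers are genuine Boolean algebras, so $\iota\gamma _R\mid _{{\cal B}({\rm Id}(R))}$ is a Boolean isomorphism. The same reasoning applies to $\varphi _R:R^*\rightarrow {\cal L}(R)$, a lattice isomorphism between the distributive lattices $R^*$ and ${\cal L}(R)$, whose Boolean centers are therefore Boolean sublattices, so that $\varphi _R\mid _{{\cal B}(R^*)}:{\cal B}(R^*)\rightarrow {\cal B}({\cal L}(R))$ is a Boolean isomorphism. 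I expect the only delicate point to be the completeness of $R^*$: this does not follow merely from ${\rm Id}(R)$ being complete, since a quotient of a complete lattice by a lattice congruence need not be complete, and the clean way around this is precisely the transport of completeness across $\varphi _R$ performed in the first paragraph.
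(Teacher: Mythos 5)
Your proposal is correct and follows essentially the same route as the paper, whose proof is just the citation list ``Remark \ref{boolidcg}, Proposition \ref{hereshow}, the fact that ${\rm Con}(R)/_{\textstyle \equiv _R}$ is a frame, and $(1^{\circ })$'' --- you unpack exactly these ingredients, transporting the frame structure of ${\cal L}(R)$ across $\varphi _R^{-1}$ and restricting the commutative square to Boolean centers. The only cosmetic difference is that where the paper invokes $(1^{\circ })$, you instead use the general fact that bounded lattice morphisms send complemented elements to complemented elements, which suffices for everything the corollary actually asserts.
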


\begin{proof} By Remark \ref{boolidcg}, Proposition \ref{hereshow} and the fact that ${\rm Con}(R)/
_{\textstyle \equiv _R}$ is a frame, along with $(1^{\circ })$.\end{proof}

So the bounded distributive lattices ${\cal L}(R)$ and $R^*$ are isomorphic, which was to be expected, since they have homeomorphic prime spectra of ideals w.r.t. the Stone topologies.

$R$ is called a {\em Baer ring} iff, for any $a\in R$, there exists an $e\in E(R)$ such that ${\rm Ann}_R(a)=eR$. By analogy to the case of bounded lattices, we shall call $R$ a {\em strongly Baer ring} iff, for any $U\subseteq R$, there exists an $e\in E(R)$ such that ${\rm Ann}_R(U)=eR$. Following \cite{bell,bell2}, we call $R$ a {\em semiprime ring} iff $\sqrt{\{0\}}=\{0\}$, that is $\displaystyle \bigcap _{P\in {\rm Spec}_{\rm Id}(R)}P=\{0\}$, which is equivalent to $\displaystyle \Delta _R=\iota\gamma _R(\{0\})=\iota\gamma _R(\bigcap _{P\in {\rm Spec}_{\rm Id}(R)}P)=\bigcap _{P\in {\rm Spec}_{\rm Id}(R)}\iota\gamma _R(P)=\bigcap _{\phi \in {\rm Spec}(R)}\phi =\rho _R(\Delta _R)$, which means that $R$ is a semiprime algebra.

\begin{remark}{\rm \cite{bell,bell2}} Let $I\in {\rm Id}(R)$. Then: $I/_{\textstyle \sim _R}=\top $ iff $\nu 
_R(I)=\nu 
_R(R)$ iff $\sqrt{I}=\sqrt{R}=R$ iff no prime ideal of $R$ includes $I$ iff $I=R$. If $R$ is semiprime, then we also have the following equivalences: $I/_{\textstyle \sim _R}=\perp $ iff $\nu 
_R(I)=\nu 
_R(\{0\})$ iff $I\subseteq \sqrt{I}=\sqrt{\{0\}}=\{0\}$ iff $I=\{0\}$.\label{rsemiprim}\end{remark}

\begin{remark} By Corollary \ref{izomreticulatii} and Remarks \ref{2.6} and \ref{2.7}, ${\cal A}nn({\rm Id}(R))\subseteq {\rm Id}({\rm Id}(R))$, and the canonical surjective lattice morphism $\nu _R:{\rm Id}(R)\rightarrow R^*={\rm Id}(R)/_{\textstyle \sim _R}$ preserves arbitrary intersections and joins and fulfills: $(I]_{{\rm Id}(R)}/_{\textstyle \sim _R}=(I/_{\textstyle \sim _R}]_{R^*}$ for any $I\in {\rm Id}(R)$.\label{idr}\end{remark}

\begin{corollary} For any nonzero cardinality $\kappa $ and any $h,i,j\in \overline{1,5}$, $(iv)_{R^*}$, $(h)_{\kappa ,R^*}$, $(i)_{<\infty ,R^*}$ and $(j)_{R^*}$ are equivalent.\label{condrstar}\end{corollary}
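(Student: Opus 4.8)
The plan is to recognize this statement as a direct instance of the final bullet of Corollary \ref{moredavey}. First I would recall that $R^*$ is a bounded distributive lattice and, by Corollary \ref{izomreticulatii}, a frame. The last item of Corollary \ref{moredavey} asserts precisely that, for any frame $L$, any nonzero cardinality $\kappa $ and any $h,i,j\in \overline{1,5}$, conditions $(iv)_L$, $(h)_{\kappa ,L}$, $(i)_{<\infty ,L}$ and $(j)_L$ are equivalent. Applying this with $L=R^*$ yields the claim at once, with no further computation required.

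An alternative route I could take, should I wish to route the argument through the congruence side rather than invoke Corollary \ref{moredavey} for $R^*$ directly, is to transfer along the frame isomorphism $\varphi _R\colon R^*\rightarrow {\rm Con}(R)/_{\textstyle \equiv _R}={\cal L}(R)$ furnished by Proposition \ref{hereshow} and Corollary \ref{izomreticulatii}. By the earlier remark that isomorphic bounded lattices satisfy the same instances of conditions $(1)$--$(5)$ and $(iv)$, and since ${\rm Con}(R)/_{\textstyle \equiv _R}$ already fulfills the stated equivalences by Corollary \ref{cordav} (taking $A=R$, a semiprime algebra from the semi--degenerate congruence--modular variety of commutative unitary rings), the equivalences carry over verbatim to $R^*$.

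There is no genuine obstacle here: the substantive work has already been discharged in building up the frame structure of $R^*$ through Proposition \ref{hereshow} and Corollary \ref{izomreticulatii}. The only point that needs checking is that the frame hypothesis of Corollary \ref{moredavey} is indeed met by $R^*$, and this is exactly what Corollary \ref{izomreticulatii} supplies; consequently the corollary follows immediately from Corollary \ref{moredavey}.
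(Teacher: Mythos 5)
Your proposal is correct and matches the paper's argument in substance: the paper proves this by citing Corollaries \ref{cormydavey} and \ref{izomreticulatii}, i.e., exactly the transfer along the frame isomorphism $\varphi _R:R^*\rightarrow {\rm Con}(R)/_{\textstyle \equiv _R}$ that you describe as your alternative route. Your primary route (applying the last bullet of Corollary \ref{moredavey} directly to the frame $R^*$, whose frame property is itself supplied by Corollary \ref{izomreticulatii}) is an equally valid and marginally more direct phrasing of the same idea.
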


\begin{proof} By Corollaries \ref{cormydavey} and \ref{izomreticulatii}.\end{proof}

Let us consider the following conditions on $R$, where $\kappa $ is an arbitrary nonzero cardinality:

\begin{tabular}{ll}
$(1)_{\kappa ,R}$ & for each $U\subseteq R$ with $|U|\leq \kappa $, there exists an $e\in E(R)$ such that ${\rm Ann}_R(U)=eR$;\\ 
$(1)_{<\infty ,R}$ & for each finite $U\subseteq R$, there exists an $e\in E(R)$ such that ${\rm Ann}_R(U)=eR$;\\ 
$(1)_R$ & $R$ is a strongly Baer ring;\end{tabular}

\begin{tabular}{ll}
$(2)_{\kappa ,R}$ & $R$ is a Baer ring and $E(R)$ is a $\kappa $--complete Boolean algebra;\\ 
$(2)_{<\infty ,R}$ & $R$ is a Baer ring and $E(R)$ is a Boolean algebra;\\ 
$(2)_R$ & $R$ is a Baer ring and $E(R)$ is a complete Boolean algebra;\end{tabular}

\begin{tabular}{ll}
$(3)_{\kappa ,R}$ & ${\rm 2Ann}(R)$ is a $\kappa $--complete Boolean sublattice of ${\rm Id}(R)$ such that\\ 
& $I\mapsto {\rm Ann}_R({\rm Ann}_R(I))$ is a lattice morphism from ${\rm Id}(R)$ to ${\rm 2Ann}(R)$;\\ 
$(3)_{<\infty ,R}$ & ${\rm 2Ann}(R)$ is a Boolean sublattice of ${\rm Id}(R)$ such that\\ 
& $I\mapsto {\rm Ann}_R({\rm Ann}_R(I))$ is a lattice morphism from ${\rm Id}(R)$ to ${\rm 2Ann}(R)$;\\ 
$(3)_R$ & ${\rm 2Ann}(R)$ is a complete Boolean sublattice of ${\rm Id}(R)$ such that\\ 
& $I\mapsto {\rm Ann}_R({\rm Ann}_R(I))$ is a lattice morphism from ${\rm Id}(R)$ to ${\rm 2Ann}(R)$;;\end{tabular}

\begin{tabular}{ll}
$(4)_{\kappa ,R}$ & for all $I,J\in {\rm Id}(R)$, ${\rm Ann}_R(I\cap J)={\rm Ann}_R(I)\vee {\rm Ann}_R(J)$, and, for each $U\subseteq R$ with $|U|\leq \kappa $,\\ 
& there exists a finite subset $S\subseteq R$ such that ${\rm Ann}_R({\rm Ann}_R(U))={\rm Ann}_R(S)$;\\ 
$(4)_{<\infty ,R}$ & for all $I,J\in {\rm Id}(R)$, ${\rm Ann}_R(I\cap J)={\rm Ann}_R(I)\vee {\rm Ann}_R(J)$, and, for each finite $U\subseteq R$,\\ 
& there exists a finite subset $S\subseteq R$ such that ${\rm Ann}_R({\rm Ann}_R(U))={\rm Ann}_R(S)$;\\ 
$(4)_R$ & for all $I,J\in {\rm Id}(R)$, ${\rm Ann}_R(I\cap J)={\rm Ann}_R(I)\vee {\rm Ann}_R(J)$, and, for each $U\subseteq R$,\\ 
& there exists a finite subset $S\subseteq R$ such that ${\rm Ann}_R({\rm Ann}_R(U))={\rm Ann}_R(S)$;\\ 
$(iv)_R$ & for all $I,J\in {\rm Id}(R)$, ${\rm Ann}_R(I\cap J)={\rm Ann}_R(I)\vee {\rm Ann}_R(J)$;\end{tabular}

\begin{tabular}{ll}
$(5)_{\kappa ,R}$ & for each $U\subseteq R$ with $|U|\leq \kappa $, ${\rm Ann}_R(U)\vee {\rm Ann}_R({\rm Ann}_R(U))=R$;\\ 
$(5)_{<\infty ,R}$ & for each finite $U\subseteq R$, ${\rm Ann}_R(U)\vee {\rm Ann}_R({\rm Ann}_R(U))=R$;\\ 
$(5)_R$ & for each $U\subseteq R$, ${\rm Ann}_R(U)\vee {\rm Ann}_R({\rm Ann}_R(U))=R$.\end{tabular}

\begin{remark} Clearly, the properties from the second paragraph of Remark \ref{dualdavey} hold for $R$ instead of $L$, too. For the motivation behind the change of the second part of conditions $(4)_{\kappa ,L}$, $(4)_{<\infty ,L}$ and $(4)_L$ into what we see above in conditions $(4)_{\kappa ,R}$, $(4)_{<\infty ,R}$ and $(4)_R$, compare Lemma \ref{panncg} above to Lemma \ref{fannring} below.\end{remark}

Throughout the rest of this paper, $R$ shall be semiprime.

\begin{lemma}\begin{enumerate}
\item\label{rsirstar1}{\rm \cite[Lemma, p. 1863]{bell}} $\mu _R\mid _{E(R)}:E(R)\rightarrow {\cal B}(R^*)$ is a Boolean isomorphism.
\item\label{rsirstar2}{\rm \cite[Theorem $2.6$]{sim}} $R$ is a Baer ring iff $R^*$ is a Stone lattice.\end{enumerate}\label{rsirstar}\end{lemma}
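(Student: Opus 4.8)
The plan is to establish (\ref{rsirstar1}) from the Boolean--center diagram of Corollary \ref{izomreticulatii}, and then to deduce (\ref{rsirstar2}) from (\ref{rsirstar1}) together with a single computation that expresses the lattice annihilators in $R^*$ in terms of the ring annihilators in $R$.

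For (\ref{rsirstar1}), I would first recall the standard ring--theoretic fact that the complemented ideals of a commutative unitary ring are exactly the idempotent--generated ones, so that $e\mapsto eR$ is a Boolean isomorphism of $E(R)$ onto ${\cal B}({\rm Id}(R))$ (it is injective since $eR=fR$ with $e,f$ idempotent forces $e=f$, and it respects $\wedge =\cdot $ and $\neg \, e=1-e$). Next, the commutative diagram of Boolean centers in Corollary \ref{izomreticulatii} gives $\nu _R\mid _{{\cal B}({\rm Id}(R))}=(\varphi _R\mid _{{\cal B}(R^*)})^{-1}\circ (\lambda _R\mid _{{\cal B}({\rm Con}(R))})\circ (\iota\gamma _R\mid _{{\cal B}({\rm Id}(R))})$, a composite of Boolean isomorphisms, hence a Boolean isomorphism onto ${\cal B}(R^*)$; here $\lambda _R\mid _{{\cal B}({\rm Con}(R))}$ is a Boolean isomorphism by $(1^{\circ })$, since $R$ is semiprime. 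As $\mu _R(e)=\nu _R(eR)$, the map $\mu _R\mid _{E(R)}$ is the composite of these two Boolean isomorphisms, which is the claim.

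The heart of (\ref{rsirstar2}) is the identity ${\rm Ann}_{R^*}(\nu _R(I))=(\nu _R({\rm Ann}_R(I))]_{R^*}$ for every $I\in {\rm Id}(R)$. To prove it I would compute, for $J\in {\rm Id}(R)$, that $\nu _R(J)\wedge \nu _R(I)=\nu _R(J\cap I)=\nu _R(J\cdot I)$ equals $\perp $ iff $J\cdot I=\{0\}$ (by Remark \ref{rsemiprim}, using semiprimeness) iff $J\subseteq {\rm Ann}_R(I)$; hence ${\rm Ann}_{R^*}(\nu _R(I))=\{\nu _R(J)\ |\ J\subseteq {\rm Ann}_R(I)\}=\nu _R(({\rm Ann}_R(I)]_{{\rm Id}(R)})=(\nu _R({\rm Ann}_R(I))]_{R^*}$ by Remark \ref{idr}. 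In particular ${\rm Ann}_{R^*}(\mu _R(a))=(\nu _R({\rm Ann}_R(a))]_{R^*}$ for all $a\in R$. For the direction ``Baer $\Rightarrow$ Stone'' I would take an arbitrary $x\in R^*$; since $R^*={\rm FGId}(R)/_{\textstyle \sim _R}$ by Proposition \ref{hereshow}, I may write $x=\nu _R(\langle a_1,\ldots ,a_n\rangle _R)=\mu _R(a_1)\vee \ldots \vee \mu _R(a_n)$. Baerness gives ${\rm Ann}_R(a_i)=e_iR$, so by the identity and (\ref{rsirstar1}), ${\rm Ann}_{R^*}(\mu _R(a_i))=(\mu _R(e_i)]_{R^*}$ with $\mu _R(e_i)\in {\cal B}(R^*)$; then, by Lemma \ref{semilatl} and the fact that a finite intersection of principal ideals is generated by the meet, ${\rm Ann}_{R^*}(x)=\bigcap _{i=1}^n{\rm Ann}_{R^*}(\mu _R(a_i))=(\bigwedge _{i=1}^n\mu _R(e_i)]_{R^*}$, where the finite meet lies in ${\cal B}(R^*)$. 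Conversely, if $R^*$ is Stone, then for $a\in R$ there is $\bar{e}\in {\cal B}(R^*)$ with ${\rm Ann}_{R^*}(\mu _R(a))=(\bar{e}]_{R^*}$; by (\ref{rsirstar1}), $\bar{e}=\mu _R(e)=\nu _R(eR)$ for some $e\in E(R)$, and comparing generators with the identity above yields $\nu _R({\rm Ann}_R(a))=\nu _R(eR)$, i.e. $\sqrt{{\rm Ann}_R(a)}=\sqrt{eR}$.

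The main obstacle is the final step of the converse: upgrading the equality of radicals to the genuine equality ${\rm Ann}_R(a)=eR$ demanded by Baerness. I would resolve this by noting that $R$ semiprime means $R$ is reduced, and that in a reduced ring both ${\rm Ann}_R(a)$ and $eR$ are radical ideals --- for ${\rm Ann}_R(a)$ because $x^na=0$ gives $(xa)^n=0$, hence $xa=0$, and for $eR$ because $(1-e)x^n=0$ gives $((1-e)x)^n=0$, hence $(1-e)x=0$ --- so that $\sqrt{{\rm Ann}_R(a)}=\sqrt{eR}$ collapses to ${\rm Ann}_R(a)=eR$. A secondary point requiring care throughout is keeping the ring annihilator ${\rm Ann}_R$ (defined via multiplication) distinct from the lattice annihilators ${\rm Ann}_{R^*}$ and ${\rm Ann}_{{\rm Id}(R)}$ (defined via the meet), the displayed identity being precisely the bridge between them; it is also what forces the passage through $R^*={\rm FGId}(R)/_{\textstyle \sim _R}$, since a general element of $R^*$ reduces to a \emph{finite} join of the $\mu _R(a_i)$, whereas an arbitrary ideal of $R$ admits no such reduction.
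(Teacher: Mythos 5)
Your proposal is correct, but it is worth noting that the paper does not prove this lemma at all: both items are imported as citations, (\ref{rsirstar1}) from Belluce's Lemma on p.~1863 of \cite{bell} and (\ref{rsirstar2}) from Simmons' Theorem $2.6$ in \cite{sim}. What you have written is therefore a genuinely different route --- a self-contained derivation from the paper's own machinery --- and it checks out. Three points deserve comment. First, you correctly sidestep a circularity trap: the paper's Lemma \ref{bidrann}, (\ref{bidrann1}), which states ${\cal B}({\rm Id}(R))=\{eR\ |\ e\in E(R)\}$, is itself \emph{derived from} Lemma \ref{rsirstar}, (\ref{rsirstar1}), so you could not have cited it; your direct verification that $e\mapsto eR$ is a Boolean isomorphism of $E(R)$ onto ${\cal B}({\rm Id}(R))$ (writing $1=i+j$ with $ij=0$ to see every complemented ideal is idempotent-generated) is the right fix, after which composing with $\nu _R\mid _{{\cal B}({\rm Id}(R))}$ (Corollary \ref{izomreticulatii} and $(1^{\circ })$, none of which depend on Lemma \ref{rsirstar}) gives (\ref{rsirstar1}). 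Second, your bridge identity ${\rm Ann}_{R^*}(\nu _R(I))=(\nu _R({\rm Ann}_R(I))]_{R^*}$ is essentially a re-derivation of part of Lemma \ref{annid}, done more directly via Remark \ref{rsemiprim}; either way it is sound, and the reduction of a general element of $R^*$ to a finite join of $\mu _R(a_i)$ via Proposition \ref{hereshow}, (\ref{hereshow1}), together with Lemma \ref{semilatl}, correctly handles the Baer $\Rightarrow $ Stone direction. Third, the closing step of Stone $\Rightarrow $ Baer --- upgrading $\sqrt{{\rm Ann}_R(a)}=\sqrt{eR}$ to ${\rm Ann}_R(a)=eR$ by observing that in a reduced ring both ${\rm Ann}_R(a)$ and $eR$ are radical ideals --- is the one piece of genuinely new content not already in the paper's lemmas, and your two nilpotency computations establishing it are correct (using, as the paper does in Lemma \ref{bidrann}, that a semiprime commutative unitary ring has no nonzero nilpotents). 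The trade-off is the usual one: the paper's citations keep the exposition short and credit the original sources, while your argument makes the lemma an internal consequence of the reticulation machinery already built, at the cost of about a page of verification.
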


\begin{proposition} For any nonzero cardinality $\kappa $, conditions $(2)_{\kappa ,R}$ and $(2)_{\kappa ,R^*}$ are equivalent.\label{2r}\end{proposition}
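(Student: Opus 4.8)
The plan is to verify the two clauses of condition $(2)$ separately, matching the ``Baer/Stone'' clause and the ``$\kappa $--completeness'' clause across $R$ and $R^*$ by means of Lemma \ref{rsirstar}. Recall that $(2)_{\kappa ,R}$ asserts that $R$ is a Baer ring and $E(R)$ is a $\kappa $--complete Boolean algebra, whereas $(2)_{\kappa ,R^*}$ asserts that $R^*$ is a Stone lattice and ${\cal B}(R^*)$ is a $\kappa $--complete Boolean sublattice of $R^*$. Since $R^*$ is a frame, hence distributive, ${\cal B}(R^*)$ is automatically a Boolean sublattice of $R^*$, so for ${\cal B}(R^*)$ being a $\kappa $--complete Boolean sublattice of $R^*$ amounts to being a $\kappa $--complete Boolean algebra in its own right.

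First I would dispose of the Baer/Stone clause: by Lemma \ref{rsirstar}, (\ref{rsirstar2}), $R$ is a Baer ring if and only if $R^*$ is a Stone lattice, so the first halves of the two conditions are equivalent outright.

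Next I would treat the completeness clause. By Lemma \ref{rsirstar}, (\ref{rsirstar1}), the map $\mu _R\mid _{E(R)}:E(R)\rightarrow {\cal B}(R^*)$ is a Boolean isomorphism; since $\kappa $--completeness of a Boolean algebra is invariant under Boolean isomorphism (the required suprema of families of size at most $\kappa $ are transported back and forth by the isomorphism and its inverse), $E(R)$ is a $\kappa $--complete Boolean algebra if and only if ${\cal B}(R^*)$ is, i.e.\ if and only if ${\cal B}(R^*)$ is a $\kappa $--complete Boolean sublattice of $R^*$. Conjoining the two clauses yields $(2)_{\kappa ,R}\Leftrightarrow (2)_{\kappa ,R^*}$. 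This argument parallels the proof of Proposition \ref{echiv(ii)} for congruence lattices, which used Proposition \ref{echivstone} together with $(1^{\circ })$.

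The only point demanding care — and hence the main obstacle — is reading the phrase ``$\kappa $--complete Boolean sublattice of $R^*$'' correctly: the $\kappa $--completeness must be understood \emph{internally} to ${\cal B}(R^*)$, with suprema computed within the Boolean algebra ${\cal B}(R^*)$, rather than as closure of ${\cal B}(R^*)$ under the possibly strictly larger joins of the frame $R^*$. Under this internal reading $\kappa $--completeness is a purely Boolean-algebraic property, transported verbatim by $\mu _R\mid _{E(R)}$, which is precisely why matching it against the abstract Boolean algebra $E(R)$ — to which no ambient-lattice structure is attached — goes through without having to compare joins taken in ${\cal B}(R^*)$ with joins taken in $R^*$.
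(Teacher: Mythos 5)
Your proof is correct and is essentially the paper's own argument: the paper proves Proposition \ref{2r} by citing exactly Lemma \ref{rsirstar}, parts (\ref{rsirstar1}) and (\ref{rsirstar2}), matching the Baer/Stone clause via (\ref{rsirstar2}) and the $\kappa$--completeness clause via the Boolean isomorphism $\mu_R\mid_{E(R)}$ from (\ref{rsirstar1}). Your additional remark on reading ``$\kappa$--complete Boolean sublattice'' internally is a reasonable gloss consistent with how the paper itself handles the analogous step in Proposition \ref{echiv(ii)}.
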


\begin{proof} By Lemma \ref{rsirstar}, (\ref{rsirstar1}) and (\ref{rsirstar2}).\end{proof}

\begin{lemma} $\nu _R\mid _{{\cal B}({\rm Id}(R))}:{\cal B}({\rm Id}(R))\rightarrow {\cal B}({\rm Id}(R))/_{\textstyle \sim _R}={\cal B}({\rm Id}(R)/_{\textstyle \sim _R})={\cal B}(R^*)$ is a Boolean isomorphism.\label{1or}\end{lemma}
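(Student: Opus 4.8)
The plan is to obtain this as an immediate consequence of the commutative diagram of Boolean algebras recorded in Corollary \ref{izomreticulatii}, together with $(1^{\circ })$, rather than to check the three defining properties (well-definedness onto ${\cal B}(R^*)$, bijectivity, and preservation of the Boolean operations) by hand. First I would identify the four edges of that square. Its top edge $\iota\gamma _R\mid _{{\cal B}({\rm Id}(R))}:{\cal B}({\rm Id}(R))\rightarrow {\cal B}({\rm Con}(R))$ and its bottom edge $\varphi _R\mid _{{\cal B}(R^*)}:{\cal B}(R^*)\rightarrow {\cal B}({\cal L}(R))$ are already asserted there to be Boolean isomorphisms. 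Its right edge $\lambda _R\mid _{{\cal B}({\rm Con}(R))}:{\cal B}({\rm Con}(R))\rightarrow {\cal B}({\rm Con}(R)/_{\textstyle \equiv _R})$ is a Boolean isomorphism by $(1^{\circ })$ applied with $A=R$ (legitimate, since $R$ is semiprime), and its codomain is exactly ${\cal B}({\cal L}(R))$ because ${\rm Con}(R)/_{\textstyle \equiv _R}={\cal L}(R)$ by Proposition \ref{hereshow}, (\ref{hereshow2}). The left edge is the very map $\nu _R\mid _{{\cal B}({\rm Id}(R))}$ under scrutiny.

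Next I would use commutativity, which reads $\varphi _R\mid _{{\cal B}(R^*)}\circ \nu _R\mid _{{\cal B}({\rm Id}(R))}=\lambda _R\mid _{{\cal B}({\rm Con}(R))}\circ \iota\gamma _R\mid _{{\cal B}({\rm Id}(R))}$. Since $\varphi _R\mid _{{\cal B}(R^*)}$ is a Boolean isomorphism, it is invertible, so I would solve for the left edge:
$$\nu _R\mid _{{\cal B}({\rm Id}(R))}=(\varphi _R\mid _{{\cal B}(R^*)})^{-1}\circ \lambda _R\mid _{{\cal B}({\rm Con}(R))}\circ \iota\gamma _R\mid _{{\cal B}({\rm Id}(R))}.$$
The right-hand side is a composite of three Boolean isomorphisms, hence itself a Boolean isomorphism from ${\cal B}({\rm Id}(R))$ onto ${\cal B}(R^*)$. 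In particular its image is all of ${\cal B}(R^*)={\cal B}({\rm Id}(R)/_{\textstyle \sim _R})$, so $\nu _R({\cal B}({\rm Id}(R)))={\cal B}({\rm Id}(R))/_{\textstyle \sim _R}$ coincides with ${\cal B}(R^*)$; this validates the chain of equalities displayed in the statement, which is the ring-ideal analogue of $(\circ )$.

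I do not expect a genuine obstacle: the lemma is essentially a corollary of Corollary \ref{izomreticulatii} and $(1^{\circ })$. The one point deserving a line of care is the set equality ${\cal B}({\rm Id}(R))/_{\textstyle \sim _R}={\cal B}(R^*)$, i.e.\ that $\nu _R$ actually maps ${\cal B}({\rm Id}(R))$ onto the whole Boolean center of $R^*$. The inclusion $\nu _R({\cal B}({\rm Id}(R)))\subseteq {\cal B}(R^*)$ is clear from $\nu _R$ being a surjective lattice morphism with $\nu _R(\{0\})=\perp $ and $\nu _R(R)=\top $, so that it sends a complemented pair $I,J\in {\rm Id}(R)$ (with $I\cap J=\{0\}$ and $I\vee J=R$) to the complemented pair $\nu _R(I),\nu _R(J)$ in $R^*$; the reverse inclusion is precisely what the factorization above furnishes through $(\varphi _R\mid _{{\cal B}(R^*)})^{-1}$. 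Should one prefer to avoid the diagram, the same conclusion follows by transporting conditions $(\circ )$ and $(1^{\circ })$ across the lattice isomorphisms $\iota\gamma _R$ and $\varphi _R$.
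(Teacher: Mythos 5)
Your proposal is correct and follows essentially the same route as the paper, whose entire proof of this lemma reads ``By $(1^{\circ })$ and Corollary \ref{izomreticulatii}''; you have simply made explicit the step of solving the commutative square of Boolean algebras for its left edge as a composite of the other three Boolean isomorphisms. Nothing in your elaboration diverges from the intended argument.
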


\begin{proof} By $(1^{\circ })$ and Corollary \ref{izomreticulatii}.\end{proof}

\begin{lemma} For any $V,W\subseteq {\rm Id}(R)$, $I\in {\rm Id}(R)$ and $D\in {\cal B}({\rm Id}(R))$:\begin{enumerate}
\item\label{annidr2} ${\rm Ann}_{R^*}(I/_{\textstyle \sim _R})={\rm Ann}_{{\rm Id}(R)}(I)/_{\textstyle \sim _R}$ and ${\rm Ann}_{R^*}(W/_{\textstyle \sim _R})={\rm Ann}_{{\rm Id}(R)}(W)/_{\textstyle \sim _R}$;
\item\label{annidr6} ${\rm Ann}_{R^*}(V/_{\textstyle \sim _R})={\rm Ann}_{R^*}(W/_{\textstyle \sim _R})$ iff ${\rm Ann}_{{\rm Id}(R)}(V)/_{\textstyle \sim _R}={\rm Ann}_{{\rm Id}(R)}(W)/_{\textstyle \sim _R}$ iff ${\rm Ann}_{{\rm Id}(R)}(V)={\rm Ann}_{{\rm Id}(R)}(W)$;
\item\label{annidr7} $(D/_{\textstyle \sim _R}]_{R^*}={\rm Ann}_{R^*}(W/_{\textstyle \sim _R})$ iff $(D]_{{\rm Id}(R)}/_{\textstyle \sim _R}={\rm Ann}_{{\rm Id}(R)}(W)/_{\textstyle \sim _R}$ iff $(D]_{{\rm Id}(R)}={\rm Ann}_{{\rm Id}(R)}(W)$.\end{enumerate}\label{annidr}\end{lemma}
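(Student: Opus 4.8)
The plan is to reduce the entire statement to the already-proved congruence-lattice Lemmas \ref{annlambda} and \ref{annegal} by transport of structure along the two lattice isomorphisms supplied by Proposition \ref{hereshow} and Corollary \ref{izomreticulatii}. Recall from the proof of Proposition \ref{hereshow}, (\ref{hereshow0}), that $\iota\gamma _R:{\rm Id}(R)\rightarrow {\rm Con}(R)$ is a lattice isomorphism carrying $\{0\}$ to $\Delta _R$ and $\sim _R$ exactly onto $\equiv _R$, that $\varphi _R:R^*\rightarrow {\rm Con}(R)/_{\textstyle \equiv _R}$ is a lattice isomorphism, and that these intertwine the two canonical surjections via $\varphi _R\circ \nu _R=\lambda _R\circ \iota\gamma _R$. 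Since a lattice isomorphism fixing the bottom element preserves every purely order-theoretic construct, $\iota\gamma _R$ satisfies $\iota\gamma _R({\rm Ann}_{{\rm Id}(R)}(X))={\rm Ann}_{{\rm Con}(R)}(\iota\gamma _R(X))$ for every $X\subseteq {\rm Id}(R)$ (for a single ideal or a subset alike, since the annihilator of a subset is the intersection of the annihilators of its members) and $\iota\gamma _R((D]_{{\rm Id}(R)})=(\iota\gamma _R(D)]_{{\rm Con}(R)}$, and it maps ${\cal B}({\rm Id}(R))$ onto ${\cal B}({\rm Con}(R))$; the same preservation holds for $\varphi _R$, and for $\lambda _R$ on Boolean centers through $(1^{\circ })$. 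Throughout, a bare ${\rm Ann}$ applied to elements or subsets of ${\rm Con}(R)/_{\textstyle \equiv _R}$ means the annihilator computed there.

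For part (\ref{annidr2}) I would apply $\varphi _R$ to both sides and push through the intertwining identity: $\varphi _R({\rm Ann}_{R^*}(I/_{\textstyle \sim _R}))={\rm Ann}(\varphi _R(\nu _R(I)))={\rm Ann}(\lambda _R(\iota\gamma _R(I)))$, which by Lemma \ref{annlambda} equals $\lambda _R({\rm Ann}(\iota\gamma _R(I)))=\lambda _R(\iota\gamma _R({\rm Ann}_{{\rm Id}(R)}(I)))=\varphi _R({\rm Ann}_{{\rm Id}(R)}(I)/_{\textstyle \sim _R})$; injectivity of $\varphi _R$ then yields the first equality, and the verbatim computation with a subset $W$ in place of $I$ yields the second. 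Parts (\ref{annidr6}) and (\ref{annidr7}) follow the same pattern: the first ``iff'' in each is immediate from part (\ref{annidr2}) (for (\ref{annidr7}), together with $(D]_{{\rm Id}(R)}/_{\textstyle \sim _R}=(D/_{\textstyle \sim _R}]_{R^*}$ from Remark \ref{idr}), while the second ``iff'' is obtained by transporting Lemma \ref{annegal}, (\ref{annegal3}) (for (\ref{annidr6})) and Lemma \ref{annegal}, (\ref{annegal4}) (for (\ref{annidr7})) across $\iota\gamma _R$ and $\varphi _R$, using the injectivity of $\iota\gamma _R$ to strip the quotient at the end. Note that in (\ref{annidr7}) one needs $\iota\gamma _R(D)\in {\cal B}({\rm Con}(R))$ so that Lemma \ref{annegal}, (\ref{annegal4}) applies, which holds since $\iota\gamma _R$ preserves Boolean centers.

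As an alternative I would give a self-contained proof inside the ring framework, mirroring the proofs of Lemmas \ref{annlambda}, \ref{pidbool} and \ref{annegal} line by line with ${\rm Id}(R),\sim _R,\nu _R,R^*$ and the ideal product playing the roles of ${\rm Con}(A),\equiv _A,\lambda _A,{\rm Con}(A)/_{\textstyle \equiv _A}$ and the commutator. The three translation ingredients are: that $\nu _R$ is a surjective lattice morphism preserving arbitrary intersections and principal ideals (Remark \ref{idr}); that $R$ being semiprime gives $K/_{\textstyle \sim _R}=\perp $ iff $K=\{0\}$ (Remark \ref{rsemiprim}), the exact analogue of $(3^{\circ })$; and Lemma \ref{1or}, the analogue of $(1^{\circ })$. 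With these, part (\ref{annidr2}) is the computation ${\rm Ann}_{R^*}(I/_{\textstyle \sim _R})=\{J/_{\textstyle \sim _R}\mid (I\cap J)/_{\textstyle \sim _R}=\perp \}=\{J/_{\textstyle \sim _R}\mid I\cap J=\{0\}\}={\rm Ann}_{{\rm Id}(R)}(I)/_{\textstyle \sim _R}$, the $W$-version following by preservation of intersections, and parts (\ref{annidr6}), (\ref{annidr7}) come from the subset analogue of Lemma \ref{annegal}, (\ref{annegal1}) together with the analogue of Lemma \ref{pidbool} for $D\in {\cal B}({\rm Id}(R))$.

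The main obstacle is not deep: it is the bookkeeping needed to certify that \emph{every} ingredient used in Lemmas \ref{annlambda}, \ref{pidbool} and \ref{annegal} has a verified counterpart here, the one genuinely load-bearing point being the analogue of Lemma \ref{pidbool} in part (\ref{annidr7}). In the transfer proof this point comes for free, since Lemma \ref{pidbool} holds for ${\rm Con}(R)$ and transports; in the direct proof it must be re-established from Lemma \ref{1or} and the analogue of $(2^{\circ })$, namely that $\nu _R(K)$ is complemented exactly when $K$ is, which itself is cleanest to read off from the preservation of Boolean centers by $\iota\gamma _R$ and $\varphi _R$. I would therefore lead with the transfer argument.
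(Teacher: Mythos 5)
Your transfer argument is correct and is essentially the paper's own proof: the paper likewise derives part (\ref{annidr2}) from Corollary \ref{izomreticulatii} and Lemma \ref{annlambda}, and parts (\ref{annidr6}) and (\ref{annidr7}) from part (\ref{annidr2}), Corollary \ref{izomreticulatii}, Lemma \ref{annegal} and Remark \ref{idr}, only stated far more tersely. Your added precision that part (\ref{annidr7}) really rests on Lemma \ref{annegal}, (\ref{annegal4}), applicable because $\iota\gamma _R(D)\in {\cal B}({\rm Con}(R))$, is correct and fills in a detail the paper leaves implicit.
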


\begin{proof} (\ref{annidr2}) By Corollary \ref{izomreticulatii} and Lemma \ref{annlambda}.

\noindent (\ref{annidr6}) By (\ref{annidr2}), Corollary \ref{izomreticulatii} and Lemma \ref{annegal}, (\ref{annegal3}).

\noindent (\ref{annidr7}) By (\ref{annidr2}), Corollary \ref{izomreticulatii}, Lemma \ref{annegal}, (\ref{annegal3}), and Remark \ref{idr}.\end{proof}

\begin{lemma} For any $U\subseteq R$ and any $I\in {\rm Id}(R)$:\begin{itemize}
\item ${\rm Ann}_{{\rm Id}(R)}(I)=({\rm Ann}_R(I)]_{{\rm Id}(R)}$ and ${\rm Ann}_{{\rm Id}(R)}(\langle U\rangle _R)=({\rm Ann}_R(U)]_{{\rm Id}(R)}$;
\item ${\rm Ann}_{R^*}(\mu _R(I))={\rm Ann}_{R^*}(I/_{\textstyle \sim _R})=({\rm Ann}_R(I)]_{{\rm Id}(R)}/_{\textstyle \sim _R}=({\rm Ann}_R(I)/_{\textstyle \sim _R}]_{R^*}$;
\item ${\rm Ann}_{R^*}(\mu _R(U))={\rm Ann}_{R^*}(\mu _R(\langle U\rangle _R))=({\rm Ann}_R(U)]_{{\rm Id}(R)}/_{\textstyle \sim _R}=({\rm Ann}_R(U)/_{\textstyle \sim _R}]_{R^*}$;
\item ${\rm Ann}_{{\rm Id}(R)}({\rm Ann}_{{\rm Id}(R)}(I))=({\rm Ann}_R({\rm Ann}_R(I))]_{{\rm Id}(R)}$ and ${\rm Ann}_{R^*}({\rm Ann}_{R^*}(\mu _R(U)))=({\rm Ann}_R({\rm Ann}_R(U))/_{\textstyle \sim _R}]_{R^*}$;
\item ${\rm Ann}_{R^*}(I/_{\textstyle \sim _R})={\rm Ann}_{{\rm Id}(R)}(I)/_{\textstyle \sim _R}$ and ${\rm Ann}_{R^*}({\rm Ann}_{R^*}(I/_{\textstyle \sim _R}))={\rm Ann}_{{\rm Id}(R)}({\rm Ann}_{{\rm Id}(R)}(I))/_{\textstyle \sim _R}$.\end{itemize}\label{annid}\end{lemma}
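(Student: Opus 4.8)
The plan is to establish the first bullet first, since it is the single place where the ring structure and semiprimeness genuinely enter; every later identity is then obtained by transporting it through $\nu_R$ into the frame $R^*$ together with purely lattice-theoretic bookkeeping. So I would first prove, for arbitrary $I,J\in{\rm Id}(R)$, the equivalence $J\cap I=\{0\}$ iff $J\subseteq{\rm Ann}_R(I)$, which is exactly the claim ${\rm Ann}_{{\rm Id}(R)}(I)=({\rm Ann}_R(I)]_{{\rm Id}(R)}$. The implication $J\cap I=\{0\}\Rightarrow J\subseteq{\rm Ann}_R(I)$ is formal: for $x\in J$ and $a\in I$ we have $xa\in J\cap I=\{0\}$. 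The converse is where I invoke that $R$ is semiprime, i.e. reduced: if $J\subseteq{\rm Ann}_R(I)$ and $x\in J\cap I$, then $x\in{\rm Ann}_R(I)$ and $x\in I$ force $x^2=0$, hence $x=0$, so $J\cap I=\{0\}$. The second identity of the first bullet, ${\rm Ann}_{{\rm Id}(R)}(\langle U\rangle_R)=({\rm Ann}_R(U)]_{{\rm Id}(R)}$, then follows by applying this to $I=\langle U\rangle_R$ and using ${\rm Ann}_R(\langle U\rangle_R)={\rm Ann}_R(U)$ (Lemma \ref{proprinel}).

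Next I would push everything into $R^*$, using that $R^*$ is a frame (Corollary \ref{izomreticulatii}), that $\nu_R$ preserves arbitrary joins and sends principal ideals to principal ideals (Remark \ref{idr}), and that in a frame the annihilator of a non-empty set equals the annihilator of its join (Remark \ref{dualdavey}). For the second bullet, writing $\mu_R(I)=\{\mu_R(a):a\in I\}$ and using $I=\sum_{a\in I}aR$, join-preservation gives $\bigvee\mu_R(I)=\nu_R(I)=I/{\sim_R}$ (with $\mu_R(I)\neq\emptyset$ since $0\in I$), whence the frame identity yields ${\rm Ann}_{R^*}(\mu_R(I))={\rm Ann}_{R^*}(I/{\sim_R})$; Lemma \ref{annidr}(\ref{annidr2}) and the first bullet rewrite this as ${\rm Ann}_{{\rm Id}(R)}(I)/{\sim_R}=({\rm Ann}_R(I)]_{{\rm Id}(R)}/{\sim_R}$, and the last form $({\rm Ann}_R(I)/{\sim_R}]_{R^*}$ comes from Remark \ref{idr}. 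The third bullet is the same idea for arbitrary $U$: both $\bigvee\mu_R(U)$ and $\bigvee\mu_R(\langle U\rangle_R)$ equal $\nu_R(\langle U\rangle_R)=\langle U\rangle_R/{\sim_R}$, so the two annihilators agree, and applying the second bullet with $I=\langle U\rangle_R$ together with Lemma \ref{proprinel} produces the stated expressions.

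For the double annihilators I would exploit the elementary identity ${\rm Ann}_L((x]_L)={\rm Ann}_L(x)$, valid in every bounded lattice $L$ (as noted before Theorem \ref{davey}), which collapses the annihilator of a principal ideal to the annihilator of its generator. For the fourth bullet, part (a): the first bullet writes ${\rm Ann}_{{\rm Id}(R)}(I)=({\rm Ann}_R(I)]_{{\rm Id}(R)}$, a principal ideal of ${\rm Id}(R)$, so the collapse gives ${\rm Ann}_{{\rm Id}(R)}({\rm Ann}_{{\rm Id}(R)}(I))={\rm Ann}_{{\rm Id}(R)}({\rm Ann}_R(I))$, and a second application of the first bullet (to the ideal ${\rm Ann}_R(I)$) turns this into $({\rm Ann}_R({\rm Ann}_R(I))]_{{\rm Id}(R)}$. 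Part (b) runs in $R^*$: the third bullet gives ${\rm Ann}_{R^*}(\mu_R(U))=({\rm Ann}_R(U)/{\sim_R}]_{R^*}$, the same collapse reduces its annihilator to ${\rm Ann}_{R^*}({\rm Ann}_R(U)/{\sim_R})$, and the second bullet with $I={\rm Ann}_R(U)$ yields $({\rm Ann}_R({\rm Ann}_R(U))/{\sim_R}]_{R^*}$. Finally, for the fifth bullet, part (a) is precisely Lemma \ref{annidr}(\ref{annidr2}), and part (b) follows by applying that lemma twice, first to $I$ and then to the subset $W={\rm Ann}_{{\rm Id}(R)}(I)$ of ${\rm Id}(R)$.

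The only genuinely non-formal step is the converse implication in the first bullet, and this is exactly where semiprimeness is indispensable: without reducedness, $J\subseteq{\rm Ann}_R(I)$ would only make $J\cap I$ nil rather than $\{0\}$, and the identification of ${\rm Ann}_{{\rm Id}(R)}$ with $({\rm Ann}_R(\,\cdot\,)]_{{\rm Id}(R)}$ would break down. I expect no further obstacle; the rest is bookkeeping resting on $\nu_R$ being a join-preserving lattice surjection onto the frame $R^*$ and on the two elementary collapses (annihilator of a join in a frame, and ${\rm Ann}_L((x]_L)={\rm Ann}_L(x)$ in any bounded lattice). A minor point to monitor is the degenerate cases $U=\emptyset$ and $I=\{0\}$, where both sides reduce to $R^*$ or to $\{0\}$-generated principal ideals and the identities hold trivially.
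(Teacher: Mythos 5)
Your proposal is correct and follows essentially the same route as the paper: you establish ${\rm Ann}_{{\rm Id}(R)}(I)=({\rm Ann}_R(I)]_{{\rm Id}(R)}$ first (using reducedness, i.e.\ semiprimeness, for the nontrivial inclusion, exactly where the paper needs $J\cdot I=\{0\}$ iff $J\cap I=\{0\}$) and then transport everything through $\nu _R$ via the frame property of $R^*$, Lemma \ref{annidr}, Remark \ref{idr}, and the collapse ${\rm Ann}_L((x]_L)={\rm Ann}_L(x)$. Your element-level nilpotency argument for the first bullet is, if anything, slightly more explicit about where semiprimeness enters than the paper's chain of equivalences.
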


\begin{proof} Let $J\in {\rm Id}(R)$. Then: $J\in ({\rm Ann}_R(I)]_{{\rm Id}(R)}$ iff $J\subseteq {\rm Ann}_R(I)$ iff $x\in {\rm Ann}_R(I)$ for all $x\in J$ iff $x\cdot y=0$ for all $x\in J$ and all $y\in I$ iff $J\cdot I=\{0\}$ iff $J\in ({\rm Ann}_R(I)]_{{\rm Id}(R)}$, hence ${\rm Ann}_{{\rm Id}(R)}(I)=({\rm Ann}_R(I)]_{{\rm Id}(R)}$, thus ${\rm Ann}_{{\rm Id}(R)}(\langle U\rangle _R)=({\rm Ann}_R(\langle U\rangle _R)]_{{\rm Id}(R)}=({\rm Ann}_R(U)]_{{\rm Id}(R)}$ by Lemma \ref{proprinel}.

By Lemma \ref{annidr}, (\ref{annidr2}), and Remark \ref{idr}, ${\rm Ann}_{R^*}(I/_{\textstyle \sim _R})=({\rm Ann}_R(I)]_{{\rm Id}(R)}/_{\textstyle \sim _R}=({\rm Ann}_R(I)/_{\textstyle \sim _R}]_{R^*}$. By Lemma \ref{proprinel} and Remark \ref{idr}, $\displaystyle {\rm Ann}_{R^*}(\mu _R(U))={\rm Ann}_{R^*}(\{\mu _R(x))\ |\ x\in U\})={\rm Ann}_{R^*}(\{\nu _R(xR))\ |\ x\in U\})=\bigcap _{x\in U}{\rm Ann}_{R^*}(\nu _R(xR))={\rm Ann}_{R^*}(\bigvee _{x\in U}\nu _R(xR))={\rm Ann}_{R^*}(\nu _R(\bigvee _{x\in U}xR))={\rm Ann}_{R^*}(\nu _R(\langle U\rangle _R))={\rm Ann}_{R^*}(\langle U\rangle _R/_{\textstyle \sim _R})$. Therefore ${\rm Ann}_{R^*}(\mu _R(I))={\rm Ann}_{R^*}(I/_{\textstyle \sim _R})$ and thus ${\rm Ann}_{R^*}(\mu _R(U))={\rm Ann}_{R^*}(\langle U\rangle _R/_{\textstyle \sim _R})={\rm Ann}_{R^*}(\mu _R(\langle U\rangle _R))$, hence, by the above, Lemma \ref{annidr}, (\ref{annidr2}), Remark \ref{idr} and Lemma \ref{proprinel}, ${\rm Ann}_{R^*}(\mu _R(U))={\rm Ann}_{R^*}(\langle U\rangle _R/_{\textstyle \sim _R})=({\rm Ann}_{{\rm Id}(R)}(\langle U\rangle _R)/_{\textstyle \sim _R}=({\rm Ann}_R(U)]_{{\rm Id}(R)}/_{\textstyle \sim _R}=({\rm Ann}_R(U)/_{\textstyle \sim _R}]_{R^*}$.

Hence ${\rm Ann}_{{\rm Id}(R)}({\rm Ann}_{{\rm Id}(R)}(I))={\rm Ann}_{{\rm Id}(R)}(({\rm Ann}_R(I)]_{{\rm Id}(R)})={\rm Ann}_{{\rm Id}(R)}({\rm Ann}_R(I))=({\rm Ann}_R({\rm Ann}_R(I))]_{{\rm Id}(R)}$ and ${\rm Ann}_{R^*}({\rm Ann}_{R^*}(\mu _R(U)))={\rm Ann}_{R^*}(({\rm Ann}_R(U)/_{\textstyle \sim _R}]_{R^*})={\rm Ann}_{R^*}({\rm Ann}_R(U)/_{\textstyle \sim _R})=({\rm Ann}_R({\rm Ann}_R(U))/_{\textstyle \sim _R}]_{R^*}$. By the above, ${\rm Ann}_{R^*}(I/_{\textstyle \sim _R})={\rm Ann}_{{\rm Id}(R)}(I)/_{\textstyle \sim _R}$, and thus ${\rm Ann}_{R^*}({\rm Ann}_{R^*}(I/_{\textstyle \sim _R}))={\rm Ann}_{R^*}({\rm Ann}_{{\rm Id}(R)}(I)/_{\textstyle \sim _R})={\rm Ann}_{{\rm Id}(R)}({\rm Ann}_{{\rm Id}(R)}(I))/_{\textstyle \sim _R}$.\end{proof}

\begin{lemma}For any $U,V\subseteq R$ and any $W\subseteq R^*$:\begin{enumerate}
\item\label{annfin1} ${\rm Ann}_{R^*}(\mu _R(U))={\rm Ann}_{R^*}(\mu _R(V))$ iff ${\rm Ann}_{R^*}(\langle U\rangle _R/_{\textstyle \sim _R})={\rm Ann}_{R^*}(\langle V\rangle _R/_{\textstyle \sim _R})$ iff ${\rm Ann}_R(U)/_{\textstyle \sim _R}={\rm Ann}_R(V)/$\linebreak $_{\textstyle \sim _R}$ iff ${\rm Ann}_R(U)={\rm Ann}_R(V)$;
\item\label{annfin2} there exists a finite subset $S\subseteq \langle U\rangle _R$ such that ${\rm Ann}_R(U)={\rm Ann}_R(S)$;
\item\label{annfin3} there exists a finite subset $S\subseteq R$ such that ${\rm Ann}_{R^*}(W)={\rm Ann}_{R^*}(\mu _R(S))$.\end{enumerate}\label{annfin}\end{lemma}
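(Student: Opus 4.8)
The plan is to prove the three parts in the order given, using part~(\ref{annfin1}) to drive parts~(\ref{annfin2}) and~(\ref{annfin3}), and relying throughout on the dictionary between annihilators in $R$, in ${\rm Id}(R)$ and in $R^*$ provided by Lemmas~\ref{annid} and~\ref{annidr}.

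For part~(\ref{annfin1}) I would read the chain of four equalities as a chain of equivalences. The first link, ${\rm Ann}_{R^*}(\mu_R(U))={\rm Ann}_{R^*}(\mu_R(V))$ iff ${\rm Ann}_{R^*}(\langle U\rangle_R/_{\sim_R})={\rm Ann}_{R^*}(\langle V\rangle_R/_{\sim_R})$, is in fact an identity of the objects compared: by Lemma~\ref{annid}, ${\rm Ann}_{R^*}(\mu_R(U))={\rm Ann}_{R^*}(\mu_R(\langle U\rangle_R))={\rm Ann}_{R^*}(\langle U\rangle_R/_{\sim_R})$, and symmetrically for $V$. For the second link I again use Lemma~\ref{annid} to rewrite ${\rm Ann}_{R^*}(\langle U\rangle_R/_{\sim_R})=({\rm Ann}_R(U)/_{\sim_R}]_{R^*}$; since two principal ideals of a poset coincide exactly when their generators do, this link reduces to ${\rm Ann}_R(U)/_{\sim_R}={\rm Ann}_R(V)/_{\sim_R}$. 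The last link, ${\rm Ann}_R(U)/_{\sim_R}={\rm Ann}_R(V)/_{\sim_R}$ iff ${\rm Ann}_R(U)={\rm Ann}_R(V)$, is the one point where semiprimeness is indispensable and is what I expect to be the \emph{main obstacle}, because $\nu_R$ identifies ideals having the same radical and is far from injective in general. The underlying reason is that in the semiprime, hence reduced, ring $R$ every annihilator is a radical ideal: if $x^n\in{\rm Ann}_R(U)$ then $(xu)^n=x^nu^n=(x^nu)u^{n-1}=0$, so $xu$ is nilpotent and therefore $xu=0$ for each $u\in U$, giving $\sqrt{{\rm Ann}_R(U)}={\rm Ann}_R(U)$, so that $\nu_R$ separates the annihilators. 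Formally, this last equivalence also drops out of Lemma~\ref{annidr},~(\ref{annidr6}), applied to the singletons $\{\langle U\rangle_R\},\{\langle V\rangle_R\}\subseteq{\rm Id}(R)$, after identifying ${\rm Ann}_{{\rm Id}(R)}(\langle U\rangle_R)=({\rm Ann}_R(U)]_{{\rm Id}(R)}$ by Lemma~\ref{annid} and using $(\cdot]_{{\rm Id}(R)}/_{\sim_R}=(\cdot/_{\sim_R}]_{R^*}$ from Remark~\ref{idr}.

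For part~(\ref{annfin2}) I would apply Lemma~\ref{hereswhy} to the ideal $\langle U\rangle_R$, obtaining a finitely generated $K\subseteq\langle U\rangle_R$ with $\sqrt K=\sqrt{\langle U\rangle_R}$, i.e. $K/_{\sim_R}=\langle U\rangle_R/_{\sim_R}$. Writing $K=\langle S\rangle_R$ for a finite $S\subseteq K\subseteq\langle U\rangle_R$, the equality $\langle S\rangle_R/_{\sim_R}=\langle U\rangle_R/_{\sim_R}$ makes ${\rm Ann}_{R^*}(\langle S\rangle_R/_{\sim_R})$ and ${\rm Ann}_{R^*}(\langle U\rangle_R/_{\sim_R})$ literally equal, and part~(\ref{annfin1}) then upgrades this to ${\rm Ann}_R(S)={\rm Ann}_R(U)$, as wanted.

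For part~(\ref{annfin3}) the idea is first to reduce the annihilator of the arbitrary subset $W\subseteq R^*$ to that of a single element and then to approximate that element by a finitely generated ideal. Since $R^*$ is a (complete) frame by Corollary~\ref{izomreticulatii}, for $W\neq\emptyset$ the join $w^{*}=\bigvee W$ exists and the frame identity of Remark~\ref{dualdavey} gives ${\rm Ann}_{R^*}(W)={\rm Ann}_{R^*}(w^{*})$. Using surjectivity of $\nu_R$ I write $w^{*}=J/_{\sim_R}$ with $J\in{\rm Id}(R)$, and Lemma~\ref{hereswhy} supplies a finite $S\subseteq R$ with $\langle S\rangle_R\subseteq J$ and $\langle S\rangle_R/_{\sim_R}=J/_{\sim_R}=w^{*}$; then Lemma~\ref{annid} yields ${\rm Ann}_{R^*}(\mu_R(S))={\rm Ann}_{R^*}(\langle S\rangle_R/_{\sim_R})={\rm Ann}_{R^*}(w^{*})={\rm Ann}_{R^*}(W)$. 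The degenerate case $W=\emptyset$ is settled by $S=\emptyset$, both sides being $R^*$. The only genuine requirements here are that $S$ be finite, which is exactly the content of Lemma~\ref{hereswhy}, and that passing from $W$ to $\bigvee W$ be legitimate, which is the completeness of the frame $R^*$.
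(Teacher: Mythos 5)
Your proposal is correct and follows essentially the same route as the paper: part (\ref{annfin1}) via the dictionary of Lemmas \ref{annid} and \ref{annidr} (the paper invokes Lemma \ref{annidr}, (\ref{annidr6}), exactly as in your ``formal'' derivation, while your supplementary direct argument that annihilators in a reduced ring are radical ideals is a valid, self-contained justification of the same key step), part (\ref{annfin2}) via Lemma \ref{hereswhy} (which is the content behind the paper's citation of Proposition \ref{hereshow}, (\ref{hereshow1})) combined with part (\ref{annfin1}), and part (\ref{annfin3}) via the frame property of $R^*$ and again Lemma \ref{hereswhy}. The only cosmetic difference is that in part (\ref{annfin3}) you lift $\bigvee W$ to a single ideal $J$ rather than lifting each element of $W$ and joining in ${\rm Id}(R)$, which changes nothing.
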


\begin{proof} (\ref{annfin1}) Let $U\subseteq R$. By Lemma \ref{annid} and Lemma \ref{annidr}, (\ref{annidr6}), ${\rm Ann}_{R^*}(\mu _R(U))={\rm Ann}_{R^*}(\mu _R(V))$ iff ${\rm Ann}_{R^*}(\langle U\rangle _R/$\linebreak $_{\textstyle \sim _R})={\rm Ann}_{R^*}(\langle V\rangle _R/_{\textstyle \sim _R})$ iff ${\rm Ann}_{{\rm Id}(R)}(\langle U\rangle _R)/_{\textstyle \sim _R}={\rm Ann}_{{\rm Id}(R)}(\langle V\rangle _R)/_{\textstyle \sim _R}$, which is equivalent both to $({\rm Ann}_R(U)/$\linebreak $_{\textstyle \sim _R}]_{R^*}=({\rm Ann}_R(V)/_{\textstyle \sim _R}]_{R^*}$ and to ${\rm Ann}_{{\rm Id}(R)}(\langle U\rangle _R)={\rm Ann}_{{\rm Id}(R)}(\langle V\rangle _R)$, which in turn are equivalent to ${\rm Ann}_R(U)/$\linebreak $_{\textstyle \sim _R}={\rm Ann}_R(V)/_{\textstyle \sim _R}$ and to $({\rm Ann}_R(U)]_{{\rm Id}(R)}=({\rm Ann}_R(V)]_{{\rm Id}(R)}$, respectively, the latter of which is equivalent to ${\rm Ann}_R(U)={\rm Ann}_R(V)$.

\noindent (\ref{annfin2}) By Proposition \ref{hereshow}, (\ref{hereshow1}), for an appropriate finite subset $S\subseteq \langle U\rangle _R$, $\langle U\rangle _R/_{\textstyle \sim _R}=\langle S\rangle _R/_{\textstyle \sim _R}$, thus ${\rm Ann}_{R^*}(\langle U\rangle _R/$\linebreak $_{\textstyle \sim _R})={\rm Ann}_{R^*}(\langle S\rangle _R/_{\textstyle \sim _R})$, hence ${\rm Ann}_R(U)={\rm Ann}_R(S)$ by (\ref{annfin1}).

\noindent (\ref{annfin3}) Let $W\subseteq R^*$, so that $W=\{I_k/_{\textstyle \sim _R}\ |\ k\in K\}$ for some $(I_k)_{k\in K}\subseteq {\rm Id}(R)$. As pointed out in Corollary \ref{izomreticulatii}, $R^*$ is a frame, thus $\displaystyle {\rm Ann}_{R^*}(W)={\rm Ann}_{R^*}(\bigvee _{k\in K}I_k/_{\textstyle \sim _R})={\rm Ann}_{R^*}((\bigvee _{k\in K}I_k)/_{\textstyle \sim _R})={\rm Ann}_{R^*}(\langle S\rangle _R/_{\textstyle \sim _R})={\rm Ann}_{R^*}(\mu _R(\langle S\rangle _R))={\rm Ann}_{R^*}(\mu _R(S))$ for some finite subset $\displaystyle S\subseteq \bigvee _{k\in K}I_k$, by Proposition \ref{hereshow}, (\ref{hereshow1}), and Lemma \ref{annid}.\end{proof}

\begin{lemma} ${\cal A}nn(R)=\{{\rm Ann}_R(S)\ |\ S\subseteq R,|S|<\infty \}$.\label{fannring}\end{lemma}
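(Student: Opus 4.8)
The plan is to prove the asserted set equality by double inclusion. The inclusion $\supseteq$ is immediate and requires no hypotheses: any finite $S\subseteq R$ is in particular a subset of $R$, so ${\rm Ann}_R(S)$ belongs to $\{{\rm Ann}_R(U)\ |\ U\subseteq R\}={\cal A}nn(R)$ by definition.

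For the reverse inclusion $\subseteq$, I would take an arbitrary member of ${\cal A}nn(R)$, that is, an annihilator ${\rm Ann}_R(U)$ for an arbitrary (possibly infinite) $U\subseteq R$, and exhibit a finite $S\subseteq R$ with ${\rm Ann}_R(U)={\rm Ann}_R(S)$. This is precisely what Lemma \ref{annfin}, (\ref{annfin2}), already provides: it furnishes a finite subset $S\subseteq \langle U\rangle _R$ with ${\rm Ann}_R(U)={\rm Ann}_R(S)$. Since $\langle U\rangle _R\subseteq R$, such an $S$ is a legitimate finite subset of $R$, so ${\rm Ann}_R(U)$ lies in $\{{\rm Ann}_R(S)\ |\ S\subseteq R,|S|<\infty \}$, which completes the inclusion and hence the proof.

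The real content behind this statement therefore resides not in the present argument but in Lemma \ref{annfin}, (\ref{annfin2}), which in turn rests on the finite-generation identity $R^*={\rm Id}(R)/_{\textstyle \sim _R}={\rm FGId}(R)/_{\textstyle \sim _R}$ from Proposition \ref{hereshow}, (\ref{hereshow1}), and ultimately on Lemma \ref{hereswhy} (every ideal contains a finitely generated subideal with the same radical). Given those results, I anticipate no obstacle here at all: the statement is a direct corollary. The only point meriting a word of care is the observation that the finite set $S$ produced by Lemma \ref{annfin}, (\ref{annfin2}), is contained in $\langle U\rangle _R$, hence in $R$, so that it qualifies as a finite subset of $R$ in the sense of the right-hand side.
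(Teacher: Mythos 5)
Your proof is correct and follows exactly the paper's own route: the paper's proof of this lemma is simply ``By Lemma \ref{annfin}, (\ref{annfin2})'', and your double-inclusion argument (with the trivial $\supseteq$ direction made explicit and the $\subseteq$ direction supplied by that lemma) is just a slightly more detailed write-up of the same reasoning.
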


\begin{proof} By Lemma \ref{annfin}, (\ref{annfin2}).\end{proof}

\begin{proposition}\begin{enumerate}
\item\label{1r1} $(1)_{1,R}$ implies $(1)_{<\infty ,R}$;
\item\label{1r2} $(1)_{<\infty ,R}$ implies $(1)_{R^*}$;
\item\label{1r3} $(1)_{R^*}$ implies $(1)_R$;
\item\label{1r4} for any nonzero cardinality $\kappa $: $(1)_{\kappa ,R}$, $(1)_{<\infty ,R}$, $(1)_R$ and $(1)_{R^*}$ are equivalent.\end{enumerate}\label{1r}\end{proposition}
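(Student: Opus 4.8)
The plan is to prove the three implications (\ref{1r1})--(\ref{1r3}) in turn and then close them into a cycle for (\ref{1r4}), using the monotonicity relations for $(1)_{\kappa ,R}$ in $\kappa $ recorded in Remark \ref{dualdavey}, which apply verbatim to $R$. Two observations are used throughout: $(1)_{1,R}$ is exactly the assertion that $R$ is a Baer ring, and ${\rm Ann}_R(U)=\bigcap _{a\in U}{\rm Ann}_R(a)$ for every $U\subseteq R$. For (\ref{1r1}) I would assume $(1)_{1,R}$ and take a finite $U=\{a_1,\ldots ,a_n\}\subseteq R$, so that ${\rm Ann}_R(U)=\bigcap _{i=1}^n{\rm Ann}_R(a_i)$ with each ${\rm Ann}_R(a_i)=e_iR$ for some $e_i\in E(R)$. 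Since idempotents of the commutative ring $R$ satisfy $e_iR\cap e_jR=e_ie_jR$, an induction gives $\bigcap _{i=1}^ne_iR=(e_1\cdots e_n)R$, and $e_1\cdots e_n$ is the meet $e_1\wedge \cdots \wedge e_n$ in the Boolean algebra $E(R)$, hence an idempotent; thus ${\rm Ann}_R(U)=(e_1\cdots e_n)R$, which is $(1)_{<\infty ,R}$. This step is purely elementary idempotent arithmetic.

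For (\ref{1r2}), let $W\subseteq R^*$ be arbitrary. The crucial point is that, by Lemma \ref{annfin}, (\ref{annfin3}), there is a \emph{finite} $S\subseteq R$ with ${\rm Ann}_{R^*}(W)={\rm Ann}_{R^*}(\mu _R(S))$ --- this is precisely where the frame structure of $R^*$, via Proposition \ref{hereshow}, (\ref{hereshow1}), tames a possibly infinite $W$. By Lemma \ref{annid}, ${\rm Ann}_{R^*}(\mu _R(S))=({\rm Ann}_R(S)/_{\textstyle \sim _R}]_{R^*}$, and since $S$ is finite, $(1)_{<\infty ,R}$ yields ${\rm Ann}_R(S)=eR$ for some $e\in E(R)$. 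Therefore ${\rm Ann}_{R^*}(W)=(eR/_{\textstyle \sim _R}]_{R^*}=(\mu _R(e)]_{R^*}$ with $\mu _R(e)\in {\cal B}(R^*)$ by Lemma \ref{rsirstar}, (\ref{rsirstar1}), proving that $R^*$ is strongly Stone, i.e. $(1)_{R^*}$.

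The heart of the argument is (\ref{1r3}), the descent from $R^*$ back to $R$, and this is the step I expect to be the main obstacle: applying $(1)_{R^*}$ to $\mu _R(U)$ only produces ${\rm Ann}_{R^*}(\mu _R(U))=(b]_{R^*}$ with $b\in {\cal B}(R^*)$, and passing to $R^*$ a priori collapses ideals of $R$ to their radicals, so equality in $R^*$ need not reflect equality of ideals in $R$. To overcome this I would write $b=\mu _R(e)=eR/_{\textstyle \sim _R}$ for some $e\in E(R)$ via Lemma \ref{rsirstar}, (\ref{rsirstar1}), note $eR\in {\cal B}({\rm Id}(R))$, and set $W=\{uR\mid u\in U\}$ so that $\mu _R(U)=W/_{\textstyle \sim _R}$. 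Then ${\rm Ann}_{R^*}(W/_{\textstyle \sim _R})=(eR/_{\textstyle \sim _R}]_{R^*}$, and Lemma \ref{annidr}, (\ref{annidr7}), upgrades this frame-level identity to the genuine ideal equality $(eR]_{{\rm Id}(R)}={\rm Ann}_{{\rm Id}(R)}(W)$. Computing the right-hand side as $\bigcap _{u\in U}{\rm Ann}_{{\rm Id}(R)}(uR)=\bigcap _{u\in U}({\rm Ann}_R(u)]_{{\rm Id}(R)}=({\rm Ann}_R(U)]_{{\rm Id}(R)}$ by Lemma \ref{annid} and Lemma \ref{proprinel}, comparison of the largest elements of these principal ideals of ${\rm Id}(R)$ gives ${\rm Ann}_R(U)=eR$ with $e\in E(R)$, which is $(1)_R$. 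A more transparent alternative that sidesteps Lemma \ref{annidr} is to note that in the semiprime, hence reduced, ring $R$ every annihilator is a radical ideal --- if $x^nu=0$ then $(xu)^n=0$, so $xu=0$ --- while $eR={\rm Ann}_R(1-e)$ is radical for the same reason; since $\mu _R$ identifies ideals with equal radicals, the frame-level equality forces $\sqrt{{\rm Ann}_R(U)}=\sqrt{eR}$ and hence ${\rm Ann}_R(U)=eR$.

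Finally, for (\ref{1r4}) I would assemble the cycle: for any nonzero cardinality $\kappa $, $(1)_R$ implies $(1)_{\kappa ,R}$ trivially, since the former quantifies over all subsets of $R$; $(1)_{\kappa ,R}$ implies $(1)_{1,R}$ because $1\leq \kappa $ (Remark \ref{dualdavey}); and then $(1)_{1,R}\Rightarrow (1)_{<\infty ,R}\Rightarrow (1)_{R^*}\Rightarrow (1)_R$ by (\ref{1r1})--(\ref{1r3}). Hence $(1)_{\kappa ,R}$, $(1)_{<\infty ,R}$, $(1)_R$ and $(1)_{R^*}$ are all equivalent.
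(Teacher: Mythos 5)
Your proof is correct and follows essentially the same route as the paper's: the same cycle $(1)_{1,R}\Rightarrow (1)_{<\infty ,R}\Rightarrow (1)_{R^*}\Rightarrow (1)_R$, the same idempotent computation for (\ref{1r1}), and the same use of Lemma \ref{annfin}, (\ref{annfin3}), Lemma \ref{annid}, Lemma \ref{rsirstar}, (\ref{rsirstar1}), and Lemma \ref{annidr}, (\ref{annidr7}), for (\ref{1r2}) and (\ref{1r3}). Your alternative ending for (\ref{1r3}) --- observing that in a reduced ring both ${\rm Ann}_R(U)$ and $eR={\rm Ann}_R(1-e)$ are radical ideals, so equality of their classes modulo $\sim _R$ forces equality of the ideals themselves --- is a valid and somewhat more transparent shortcut past Lemma \ref{annidr}, but the main argument is the paper's.
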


\begin{proof} (\ref{1r1}) If $n\in \N ^*$, $u_1,\ldots ,u_n\in R$ and, for each $i\in \overline{1,n}$, ${\rm Ann}_R(u_i)=e_iR$ for some $e_i\in E(R)$, then $\displaystyle {\rm Ann}_R(\{u_1,\ldots ,u_n\})=\bigcap _{i=1}^n{\rm Ann}_R(u_i)=\bigcap _{i=1}^ne_iR=e_1R\cdot \ldots \cdot e_nR=(e_1\cdot \ldots \cdot e_n)R$, and $e_1\cdot \ldots \cdot e_n=e_1\wedge \ldots \wedge e_n\in E(R)$.

\noindent (\ref{1r2}) Let $W\subseteq R^*$. By Lemma \ref{annfin}, (\ref{annfin3}), there exists a finite subset $S\subseteq R$ such that ${\rm Ann}_{R^*}(W)={\rm Ann}_{R^*}(\mu _R(S))$. If $(1)_{<\infty ,R}$ is fulfilled, then ${\rm Ann}_R(S)=eR$ for some $e\in E(R)$. By Lemma \ref{annid} and Lemma \ref{rsirstar}, (\ref{rsirstar1}), it follows that ${\rm Ann}_{R^*}(W)={\rm Ann}_{R^*}(\mu _R(S))=({\rm Ann}_R(S)/_{\textstyle \sim _R}]_{R^*}=(eR/_{\textstyle \sim _R}]_{R^*}=(\mu _R(e)]_{R^*}$, and $\mu _R(e)\in {\cal B}(R^*)$.

\noindent (\ref{1r3}) Let $U\subseteq R$, so that $\mu _R(U)\subseteq R^*$. If $(1)_{R^*}$ is fulfilled, then ${\rm Ann}_{R^*}(\mu _R(U))=(f]_{R^*}$ for some $f\in {\cal B}(R^*)$. By Lemma \ref{rsirstar}, (\ref{rsirstar1}), $f=\mu _R(e)=eR/_{\textstyle \sim _R}=\nu _R(eR)$ for some $e\in E(R)$, so that $eR\in {\cal B}({\rm Id}(R))$ by Lemma \ref{1or}. By Lemma \ref{annid} and Lemma \ref{annidr}, (\ref{annidr7}), ${\rm Ann}_{R^*}(\langle U\rangle _R/_{\textstyle \sim _R})={\rm Ann}_{R^*}(\mu _R(\langle U\rangle _R))=(eR/_{\textstyle \sim _R}]_{R^*}$, hence ${\rm Ann}_{{\rm Id}(R)}(\langle U\rangle _R)=(eR]_{{\rm Id}(R)}$, that is $({\rm Ann}_R(U)]_{{\rm Id}(R)}=(eR]_{{\rm Id}(R)}$, so that ${\rm Ann}_R(U)=eR$.

\noindent (\ref{1r4}) By (\ref{1r1}), (\ref{1r2}) and (\ref{1r3}).\end{proof}

\begin{remark} By Lemma \ref{proprinel}, ${\cal A}nn(R)=\{{\rm Ann}_R(I)\ |\ I\in {\rm Id}(R)\}$ and ${\rm 2Ann}(R)=\{{\rm Ann}_R({\rm Ann}_R(I))\ |\ I\in {\rm Id}(R)\}$.\label{annrgen}\end{remark}

Let us consider the following conditions on $R$:

\begin{tabular}{cl}
$(ann)_R$ & ${\cal A}nn(R)$ is a sublattice of ${\rm Id}(R)$ such that the map $I\mapsto {\rm Ann}_R(I)$ is\\
& a lattice anti--morphism from ${\rm Id}(R)$ to ${\cal A}nn(R)$;\\ 
$(2ann)_R$ & ${\rm 2Ann}(R)$ is a sublattice of ${\rm Id}(R)$ such that the map $I\mapsto {\rm Ann}_R({\rm Ann}_R(I))$ is\\
& a lattice morphism from ${\rm Id}(R)$ to ${\rm 2Ann}(R)$.\end{tabular}

\begin{lemma}\begin{enumerate}
\item\label{annrannidr1} The map ${\rm Ann}_R(I)\mapsto {\rm Ann}_{{\rm Id}(R)}(I)$ ($I\in {\rm Id}(R)$) from ${\cal A}nn(R)$ to ${\rm PAnn}({\rm Id}(R))$ is an order isomorphism.
\item\label{annrannidr2} The map ${\rm Ann}_R({\rm Ann}_R(I))\mapsto {\rm Ann}_{{\rm Id}(R)}({\rm Ann}_{{\rm Id}(R)}(I))$ ($I\in {\rm Id}(R)$) from ${\rm 2Ann}(R)$ to ${\rm P2Ann}({\rm Id}(R))$ is an order isomorphism.
\end{enumerate}\label{annrannidr}\end{lemma}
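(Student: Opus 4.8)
The plan is to realize both maps as restrictions of a single standard order embedding and then to read off every required property directly from Lemma \ref{annid} and Remark \ref{annrgen}, so that essentially no new computation is needed. Recall that for any lattice $L$ the principal-ideal assignment $x\mapsto (x]_L$ from $L$ into ${\rm Id}(L)$ is an order embedding, since $x\leq y$ iff $(x]_L\subseteq (y]_L$, the nontrivial direction using $x\in (x]_L$. I would apply this with $L={\rm Id}(R)$, so that $P\mapsto (P]_{{\rm Id}(R)}$ embeds the lattice ${\rm Id}(R)$ into ${\rm Id}({\rm Id}(R))$, sending each ideal $P$ of $R$ to the principal ideal of the lattice ${\rm Id}(R)$ that it generates.

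For (\ref{annrannidr1}) the key identity is the one recorded in Lemma \ref{annid}, namely ${\rm Ann}_{{\rm Id}(R)}(I)=({\rm Ann}_R(I)]_{{\rm Id}(R)}$ for every $I\in {\rm Id}(R)$. This says precisely that the map ${\rm Ann}_R(I)\mapsto {\rm Ann}_{{\rm Id}(R)}(I)$ coincides with the restriction of $P\mapsto (P]_{{\rm Id}(R)}$ to the subset ${\cal A}nn(R)$ of ${\rm Id}(R)$. In particular its value depends only on the ideal ${\rm Ann}_R(I)$ and not on the chosen $I$, which settles well-definedness; injectivity and preservation of order in both directions are then inherited at once from the embedding $P\mapsto (P]_{{\rm Id}(R)}$. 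Surjectivity onto ${\rm PAnn}({\rm Id}(R))$ follows because, by Remark \ref{annrgen}, every element of ${\cal A}nn(R)$ has the form ${\rm Ann}_R(I)$ with $I\in {\rm Id}(R)$, and the same identity identifies its image $({\rm Ann}_R(I)]_{{\rm Id}(R)}$ with ${\rm Ann}_{{\rm Id}(R)}(I)$, the generic element of ${\rm PAnn}({\rm Id}(R))$. Hence the map is an order isomorphism.

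For (\ref{annrannidr2}) I would argue verbatim, replacing ${\rm Ann}_R(I)$ by ${\rm Ann}_R({\rm Ann}_R(I))$ throughout and using instead the companion identity from Lemma \ref{annid}, namely ${\rm Ann}_{{\rm Id}(R)}({\rm Ann}_{{\rm Id}(R)}(I))=({\rm Ann}_R({\rm Ann}_R(I))]_{{\rm Id}(R)}$, together with the description of ${\rm 2Ann}(R)$ in Remark \ref{annrgen}. The only point that needs genuine care --- and the only place where something could go wrong --- is well-definedness, i.e. that the target depends on the annihilator ideal and not on the representative $I$ producing it; but this is exactly what the two identities from Lemma \ref{annid} deliver, so once those are in hand there is no real obstacle and the remainder is purely formal.
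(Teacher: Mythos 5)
Your proof is correct and follows essentially the same route as the paper's: both rest on the identities ${\rm Ann}_{{\rm Id}(R)}(I)=({\rm Ann}_R(I)]_{{\rm Id}(R)}$ and ${\rm Ann}_{{\rm Id}(R)}({\rm Ann}_{{\rm Id}(R)}(I))=({\rm Ann}_R({\rm Ann}_R(I))]_{{\rm Id}(R)}$ from Lemma \ref{annid}, on Lemma \ref{proprinel} (via Remark \ref{annrgen}) to see that the maps are totally defined on ${\cal A}nn(R)$ and ${\rm 2Ann}(R)$, and on the injectivity and order--reflection of the principal--ideal assignment, which you merely make more explicit by exhibiting both maps as restrictions of $P\mapsto (P]_{{\rm Id}(R)}$. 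The only divergence is cosmetic: for part (\ref{annrannidr2}) the paper detours through Lemma \ref{annidr}, (\ref{annidr2}), and the injectivity of the map from (\ref{annrannidr1}), whereas you apply the second identity of Lemma \ref{annid} directly, which is if anything cleaner.
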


\begin{proof} Clearly, these maps are surjective and order--preserving. By Lemma \ref{proprinel}, they are completely defined. By Lemma \ref{annid}, the map from (\ref{annrannidr1}) is well defined and injective. By Lemma \ref{annid}, Lemma \ref{annidr}, (\ref{annidr2}), and the injectivity of the map from (\ref{annrannidr1}), it follows that the map from (\ref{annrannidr2}) is well defined and injective. Hence these maps are bijective. Clearly, their inverses are order--preserving, as well.\end{proof}

\begin{lemma}\begin{enumerate}
\item\label{annidrannr*1} The map ${\rm Ann}_{{\rm Id}(R)}(I)\mapsto {\rm Ann}_{{\rm Id}(R)}(I)/_{\textstyle \sim _R}={\rm Ann}_{R^*}(I/_{\textstyle \sim _R})$ ($I\in {\rm Id}(R)$) from ${\rm PAnn}({\rm Id}(R))$ to ${\rm PAnn}(R^*)$ is an order isomorphism.
\item\label{annidrannr*2} For all $I,J\in {\rm Id}(R)$: ${\rm Ann}_{{\rm Id}(R)}(I\cap J)={\rm Ann}_{{\rm Id}(R)}(I)\vee {\rm Ann}_{{\rm Id}(R)}(J)$ iff ${\rm Ann}_{R^*}(I/_{\textstyle \sim _R}\wedge J/_{\textstyle \sim _R})={\rm Ann}_{R^*}(I/_{\textstyle \sim _R})$\linebreak $\vee {\rm Ann}_{R^*}(J/_{\textstyle \sim _R})$.
\item\label{annidrannr*3} $(pann)_{{\rm Id}(R)}$ is equivalent to $(pann)_{R^*}$, and, if they are fulfilled, then the map from (\ref{annidrannr*1}) is a lattice isomorphism.\end{enumerate}\label{annidrannr*}\end{lemma}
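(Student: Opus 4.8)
The statement is the ring-theoretic counterpart of Lemma \ref{annbij}, with the lattice ${\rm Id}(R)$, its reticulation $R^*$ and the canonical surjection $\nu _R$ playing the roles of ${\rm Con}(A)$, ${\rm Con}(A)/_{\textstyle \equiv _A}$ and $\lambda _A$, respectively. Accordingly, the plan is to mirror the proof of Lemma \ref{annbij}, replacing each ingredient used there by the ring analogue already established: Lemma \ref{annid} and Lemma \ref{annidr}, (\ref{annidr2}), play the role of Lemma \ref{annlambda} (the direct image of $\nu _R$ turns ${\rm Ann}_{{\rm Id}(R)}(I)$ into ${\rm Ann}_{R^*}(I/_{\textstyle \sim _R})$), Lemma \ref{annidr}, (\ref{annidr6}), plays the role of Lemma \ref{annegal}, (\ref{annegal3}), and Remark \ref{idr} supplies the fact that the induced map on ideal lattices is a surjective lattice morphism preserving arbitrary joins. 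Equivalently, and this is the shortest route, one may transport Lemma \ref{annbij} through the lattice isomorphisms $\iota\gamma _R:{\rm Id}(R)\rightarrow {\rm Con}(R)$ and $\varphi _R:R^*\rightarrow {\rm Con}(R)/_{\textstyle \equiv _R}$ of Corollary \ref{izomreticulatii}, which satisfy $\varphi _R\circ \nu _R=\lambda _R\circ \iota\gamma _R$; since a lattice isomorphism $g$ carries ${\rm Ann}(a)$ to ${\rm Ann}(g(a))$ and induces an isomorphism of ideal lattices, $\iota\gamma _R$ restricts to an order isomorphism ${\rm PAnn}({\rm Id}(R))\rightarrow {\rm PAnn}({\rm Con}(R))$ and $\varphi _R$ to one ${\rm PAnn}(R^*)\rightarrow {\rm PAnn}({\rm Con}(R)/_{\textstyle \equiv _R})$, and the commuting square carries the map of (\ref{annidrannr*1}) onto the map of Lemma \ref{annbij}, (\ref{annbij1}).

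For (\ref{annidrannr*1}) I would argue directly. By Lemma \ref{annid} (or Lemma \ref{annidr}, (\ref{annidr2})) the assignment sends ${\rm Ann}_{{\rm Id}(R)}(I)$ to ${\rm Ann}_{R^*}(I/_{\textstyle \sim _R})=\nu _R({\rm Ann}_{{\rm Id}(R)}(I))$, so it is the restriction of the direct image of $\nu _R$; it is surjective because $\nu _R$ is, and both well defined and injective by Lemma \ref{annidr}, (\ref{annidr6}), which gives ${\rm Ann}_{R^*}(I/_{\textstyle \sim _R})={\rm Ann}_{R^*}(J/_{\textstyle \sim _R})$ iff ${\rm Ann}_{{\rm Id}(R)}(I)={\rm Ann}_{{\rm Id}(R)}(J)$. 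For order in both directions I would run a single iff-chain: ${\rm Ann}_{{\rm Id}(R)}(I)\subseteq {\rm Ann}_{{\rm Id}(R)}(J)$ iff ${\rm Ann}_{{\rm Id}(R)}(I)={\rm Ann}_{{\rm Id}(R)}(I)\cap {\rm Ann}_{{\rm Id}(R)}(J)={\rm Ann}_{{\rm Id}(R)}(I\vee J)$, the last equality being Lemma \ref{semilata} transported to ${\rm Id}(R)$ through $\iota\gamma _R$; by Lemma \ref{annidr}, (\ref{annidr6}), this is equivalent to ${\rm Ann}_{R^*}(I/_{\textstyle \sim _R})={\rm Ann}_{R^*}((I\vee J)/_{\textstyle \sim _R})$, and since $\nu _R$ is a lattice morphism and $R^*$ is a frame, ${\rm Ann}_{R^*}((I\vee J)/_{\textstyle \sim _R})={\rm Ann}_{R^*}(I/_{\textstyle \sim _R}\vee J/_{\textstyle \sim _R})={\rm Ann}_{R^*}(I/_{\textstyle \sim _R})\cap {\rm Ann}_{R^*}(J/_{\textstyle \sim _R})$ by Lemma \ref{semilatl}, which finally is equivalent to ${\rm Ann}_{R^*}(I/_{\textstyle \sim _R})\subseteq {\rm Ann}_{R^*}(J/_{\textstyle \sim _R})$.

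For (\ref{annidrannr*2}) the forward implication is routine: apply the join-preserving direct image of $\nu _R$, using Lemma \ref{annidr}, (\ref{annidr2}), and $(I\cap J)/_{\textstyle \sim _R}=I/_{\textstyle \sim _R}\wedge J/_{\textstyle \sim _R}$. The converse is the main obstacle, because the induced direct image on ${\rm Id}({\rm Id}(R))$ is \emph{not} injective, so the equality of the two $R^*$-joins cannot be pulled back to ${\rm Id}(R)$ termwise. I would resolve it exactly as in Lemma \ref{annbij}, (\ref{annbij2})--(\ref{annbij3}): by Lemmas \ref{semilata} and \ref{semilatl} both ${\rm PAnn}({\rm Id}(R))$ and ${\rm PAnn}(R^*)$ are meet-subsemilattices of the respective ideal lattices, and the order isomorphism of (\ref{annidrannr*1}), being a restriction of the join-preserving direct image of $\nu _R$, preserves joins together with its inverse; hence ${\rm PAnn}({\rm Id}(R))$ is closed under the join of ${\rm Id}({\rm Id}(R))$ iff ${\rm PAnn}(R^*)$ is closed under the join of ${\rm Id}(R^*)$, and whenever they are, the equalities ${\rm Ann}_{{\rm Id}(R)}(I\cap J)={\rm Ann}_{{\rm Id}(R)}(I)\vee {\rm Ann}_{{\rm Id}(R)}(J)$ and ${\rm Ann}_{R^*}(I/_{\textstyle \sim _R}\wedge J/_{\textstyle \sim _R})={\rm Ann}_{R^*}(I/_{\textstyle \sim _R})\vee {\rm Ann}_{R^*}(J/_{\textstyle \sim _R})$ correspond through the now-injective order isomorphism. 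This simultaneously yields (\ref{annidrannr*2}), the equivalence $(pann)_{{\rm Id}(R)}\Leftrightarrow (pann)_{R^*}$ of (\ref{annidrannr*3}) (the anti-morphism clauses matching under the correspondence), and the fact that under these conditions the map of (\ref{annidrannr*1}) is a lattice isomorphism. As a safeguard against the non-injectivity pitfall, (\ref{annidrannr*2}) and (\ref{annidrannr*3}) also follow at once by transporting Lemma \ref{annbij}, (\ref{annbij2})--(\ref{annbij4}), through $\iota\gamma _R$ and $\varphi _R$, since $(pann)$ is invariant under lattice isomorphism.
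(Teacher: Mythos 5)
Your proposal is correct and, in its ``shortest route'' form, is exactly the paper's proof: the paper derives the lemma by transporting Lemma \ref{annbij} through the isomorphisms $\iota\gamma _R$ and $\varphi _R$ of Corollary \ref{izomreticulatii}, with Lemma \ref{annid} supplying the displayed equality ${\rm Ann}_{{\rm Id}(R)}(I)/_{\textstyle \sim _R}={\rm Ann}_{R^*}(I/_{\textstyle \sim _R})$. The additional direct verification you give (mirroring the proof of Lemma \ref{annbij} with $\nu _R$ in place of $\lambda _A$) is a sound but redundant elaboration of the same idea.
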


\begin{proof} By Lemma \ref{annbij} and Corollary \ref{izomreticulatii}, with the equality in (\ref{annidrannr*1}) holding by Lemma \ref{annid}.\end{proof}

\begin{lemma}\begin{enumerate}
\item\label{annrannr*1} The map ${\rm Ann}_R(I)\mapsto {\rm Ann}_{R^*}(I/_{\textstyle \sim _R})$ ($I\in {\rm Id}(R)$) from ${\cal A}nn(R)$ to ${\rm PAnn}(R^*)$ is an order isomorphism.
\item\label{annrannr*2} For all $I,J\in {\rm Id}(R)$: ${\rm Ann}_R(I\cap J)={\rm Ann}_R(I)\vee {\rm Ann}_R(J)$ iff ${\rm Ann}_{R^*}(I/_{\textstyle \sim _R}\wedge J/_{\textstyle \sim _R})={\rm Ann}_{R^*}(I/_{\textstyle \sim _R})\vee {\rm Ann}_{R^*}(J/_{\textstyle \sim _R})$.
\item\label{annrannr*3} $(ann)_R$ is equivalent to $(pann)_{R^*}$ and, if they are fulfilled, then the map from (\ref{annrannr*1}) is a lattice isomorphism.\end{enumerate}\label{annrannr*}\end{lemma}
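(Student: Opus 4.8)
The plan is to derive all three parts of Lemma~\ref{annrannr*} by threading $R$, ${\rm Id}(R)$ and $R^*$ together through the order isomorphisms already built in Lemmas~\ref{annrannidr} and~\ref{annidrannr*}, so that the statement becomes a formal consequence of those together with the principal-ideal description in Lemma~\ref{annid}.

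For part~(\ref{annrannr*1}), I would simply observe that the assignment ${\rm Ann}_R(I)\mapsto {\rm Ann}_{R^*}(I/_{\textstyle \sim _R})$ is the composite of the order isomorphism ${\rm Ann}_R(I)\mapsto {\rm Ann}_{{\rm Id}(R)}(I)$ from Lemma~\ref{annrannidr},~(\ref{annrannidr1}), with the order isomorphism ${\rm Ann}_{{\rm Id}(R)}(I)\mapsto {\rm Ann}_{{\rm Id}(R)}(I)/_{\textstyle \sim _R}={\rm Ann}_{R^*}(I/_{\textstyle \sim _R})$ from Lemma~\ref{annidrannr*},~(\ref{annidrannr*1}). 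Both components are completely defined on representatives $I\in {\rm Id}(R)$ by Lemma~\ref{proprinel}, and a composite of two order isomorphisms is an order isomorphism, so nothing further is required here.

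For part~(\ref{annrannr*2}), the key step is to move the join equation between $R$ and ${\rm Id}(R)$ through principal ideals. By Lemma~\ref{annid} we have ${\rm Ann}_{{\rm Id}(R)}(K)=({\rm Ann}_R(K)]_{{\rm Id}(R)}$ for every $K\in {\rm Id}(R)$, and since the join of two principal ideals is the principal ideal generated by the join of their generators, ${\rm Ann}_{{\rm Id}(R)}(I)\vee {\rm Ann}_{{\rm Id}(R)}(J)=({\rm Ann}_R(I)\vee {\rm Ann}_R(J)]_{{\rm Id}(R)}$. Hence ${\rm Ann}_{{\rm Id}(R)}(I\cap J)={\rm Ann}_{{\rm Id}(R)}(I)\vee {\rm Ann}_{{\rm Id}(R)}(J)$ holds iff $({\rm Ann}_R(I\cap J)]_{{\rm Id}(R)}=({\rm Ann}_R(I)\vee {\rm Ann}_R(J)]_{{\rm Id}(R)}$, i.e. iff ${\rm Ann}_R(I\cap J)={\rm Ann}_R(I)\vee {\rm Ann}_R(J)$, because principal ideals coincide exactly when their generators do. Chaining this equivalence with the one already recorded in Lemma~\ref{annidrannr*},~(\ref{annidrannr*2}), delivers the asserted equivalence between ${\rm Ann}_R(I\cap J)={\rm Ann}_R(I)\vee {\rm Ann}_R(J)$ and ${\rm Ann}_{R^*}(I/_{\textstyle \sim _R}\wedge J/_{\textstyle \sim _R})={\rm Ann}_{R^*}(I/_{\textstyle \sim _R})\vee {\rm Ann}_{R^*}(J/_{\textstyle \sim _R})$.

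Finally, part~(\ref{annrannr*3}) I would deduce from~(\ref{annrannr*1}) and~(\ref{annrannr*2}) in exactly the way Lemma~\ref{annbij},~(\ref{annbij3}), is deduced from Lemma~\ref{annbij},~(\ref{annbij1}) and~(\ref{annbij2}). The half of each anti-morphism condition that turns joins into meets, namely ${\rm Ann}_R(I\vee J)={\rm Ann}_R(I)\cap {\rm Ann}_R(J)$ and its $R^*$-analogue, is automatic by Lemma~\ref{proprinel} and the fact that $R^*$ is a frame, and these same facts make ${\cal A}nn(R)$ and ${\rm PAnn}(R^*)$ meet-subsemilattices of ${\rm Id}(R)$, respectively ${\rm Id}(R^*)$. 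Because the order isomorphism of~(\ref{annrannr*1}) and its inverse carry existing joins to existing joins, ${\cal A}nn(R)$ is closed under the join of ${\rm Id}(R)$ iff ${\rm PAnn}(R^*)$ is closed under the join of ${\rm Id}(R^*)$; and, when they are, the remaining join-into-meet half of the anti-morphism transfers term by term through part~(\ref{annrannr*2}). Thus $(ann)_R$ is equivalent to $(pann)_{R^*}$, and under this common hypothesis both families are genuine sublattices in which joins and meets agree with those of the ambient ideal lattices, so the order isomorphism of~(\ref{annrannr*1}) is automatically a lattice isomorphism. The step I expect to require the most care is this last one: one must keep the joins taken in ${\rm Id}(R)$ and ${\rm Id}(R^*)$ cleanly separated from the intrinsic joins of the subposets ${\cal A}nn(R)$ and ${\rm PAnn}(R^*)$, and invoke meet-closure before transferring the join equation, precisely as in Lemma~\ref{annbij}.
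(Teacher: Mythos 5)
Your proposal is correct and follows essentially the same route as the paper: the paper's proof is precisely that the map in (\ref{annrannr*1}) is the composition of the order isomorphisms from Lemma \ref{annrannidr}, (\ref{annrannidr1}), and Lemma \ref{annidrannr*}, (\ref{annidrannr*1}) (with Remark \ref{annrgen}, i.e.\ Lemma \ref{proprinel}, guaranteeing the map is defined on all of ${\cal A}nn(R)$), after which (\ref{annrannr*2}) and (\ref{annrannr*3}) are transferred along that composite exactly as in Lemma \ref{annbij}. Your added detail for part (\ref{annrannr*2}) --- passing through $({\rm Ann}_R(K)]_{{\rm Id}(R)}$ and using that principal ideals determine their generators --- is just an explicit rendering of what the paper leaves implicit.
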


\begin{proof} By Remark \ref{annrgen}, Lemma \ref{annrannidr}, (\ref{annrannidr1}), Lemma \ref{annidrannr*} and the fact that the map from (\ref{annrannr*1}) is the composition of the map from Lemma \ref{annidrannr*}, (\ref{annidrannr*1}), with the map from Lemma \ref{annrannidr}, (\ref{annrannidr1}).\end{proof}

\begin{lemma}\begin{enumerate}
\item\label{2annidr2annr*1} The map ${\rm Ann}_{{\rm Id}(R)}({\rm Ann}_{{\rm Id}(R)}(I))\mapsto {\rm Ann}_{{\rm Id}(R)}({\rm Ann}_{{\rm Id}(R)}(I))/_{\textstyle \sim _R}={\rm Ann}_{R^*}({\rm Ann}_{R^*}(I/_{\textstyle \sim _R}))$ ($I\in {\rm Id}(R)$) from ${\rm P2Ann}({\rm Id}(R))$ to ${\rm P2Ann}(R^*)$ is an order isomorphism.
\item\label{2annidr2annr*2} $(p2ann)_{{\rm Id}(R)}$ is equivalent to $(p2ann)_{R^*}$ and, if these conditions are fulfilled, then the map from (\ref{echiv(iii)nobm1}) is a lattice isomorphism.\end{enumerate}\label{2annidr2annr*}\end{lemma}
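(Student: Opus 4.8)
The plan is to mirror the proof of the one-level-lower Lemma \ref{annidrannr*} exactly one degree of annihilation higher: where that proof invoked the single-annihilator congruence result (Lemma \ref{annbij}), I would here invoke the double-annihilator congruence result (Lemma \ref{echiv(iii)nobm}), and transport it across the lattice isomorphisms of Corollary \ref{izomreticulatii}. Concretely, recall from that corollary that $\iota\gamma _R:{\rm Id}(R)\rightarrow {\rm Con}(R)$ and $\varphi _R:R^*\rightarrow {\rm Con}(R)/_{\textstyle \equiv _R}$ are lattice isomorphisms and that the square there commutes, so $\varphi _R\circ \nu _R=\lambda _R\circ \iota\gamma _R$. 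Since a lattice isomorphism preserves binary meets and the least element, it preserves annihilators of arbitrary subsets, hence also double annihilators; taking direct images it therefore induces order isomorphisms ${\rm P2Ann}({\rm Id}(R))\rightarrow {\rm P2Ann}({\rm Con}(R))$ (via $\iota\gamma _R$) and ${\rm P2Ann}(R^*)\rightarrow {\rm P2Ann}({\rm Con}(R)/_{\textstyle \equiv _R})$ (via $\varphi _R$).

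For (\ref{2annidr2annr*1}) I would apply Lemma \ref{echiv(iii)nobm}, (\ref{echiv(iii)nobm1}), with $A=R$, obtaining that the direct image $Q\mapsto \lambda _R(Q)$ is an order isomorphism from ${\rm P2Ann}({\rm Con}(R))$ onto ${\rm P2Ann}({\rm Con}(R)/_{\textstyle \equiv _R})$. Composing this middle isomorphism with $\iota\gamma _R$ on the left and $\varphi _R^{-1}$ on the right, and reading the commutativity $\varphi _R\circ \nu _R=\lambda _R\circ \iota\gamma _R$ off at the level of direct images, shows that the resulting map is precisely $P\mapsto \nu _R(P)=P/_{\textstyle \sim _R}$, an order isomorphism from ${\rm P2Ann}({\rm Id}(R))$ onto ${\rm P2Ann}(R^*)$. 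The identification displayed in the statement, namely that $\nu _R({\rm Ann}_{{\rm Id}(R)}({\rm Ann}_{{\rm Id}(R)}(I)))$ equals ${\rm Ann}_{R^*}({\rm Ann}_{R^*}(I/_{\textstyle \sim _R}))$, is then supplied verbatim by the last equality of Lemma \ref{annid}.

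For (\ref{2annidr2annr*2}), the point is that condition $(p2ann)$ is expressed purely through the sublattice structure of the ambient ideal lattice together with the double-annihilator map being a lattice morphism, so it is invariant under lattice isomorphisms. Hence $(p2ann)_{{\rm Id}(R)}$ is equivalent to $(p2ann)_{{\rm Con}(R)}$ through $\iota\gamma _R$, and $(p2ann)_{R^*}$ is equivalent to $(p2ann)_{{\rm Con}(R)/_{\textstyle \equiv _R}}$ through $\varphi _R$; Lemma \ref{echiv(iii)nobm}, (\ref{echiv(iii)nobm2}), links the two middle conditions, giving the asserted equivalence, and Lemma \ref{echiv(iii)nobm}, (\ref{echiv(iii)nobm3}), upgrades the middle order isomorphism to a lattice isomorphism, which transports back to the map of (\ref{2annidr2annr*1}) being a lattice isomorphism.

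The only step requiring genuine care will be the bookkeeping that the map obtained by conjugating the middle isomorphism with the direct images of $\iota\gamma _R$ and $\varphi _R$ is \emph{literally} the map written in (\ref{2annidr2annr*1}), rather than merely an abstractly isomorphic one. This reduces to checking the two preservation identities $\iota\gamma _R({\rm Ann}_{{\rm Id}(R)}(S))={\rm Ann}_{{\rm Con}(R)}(\iota\gamma _R(S))$ for subsets $S\subseteq {\rm Id}(R)$, and its double-annihilator iterate, together with the cross-identity from Lemma \ref{annid} and the fact (recorded in Remark \ref{idr}) that $\nu _R$ preserves the relevant ideal-lattice operations. All of these are routine consequences of $\iota\gamma _R$ and $\varphi _R$ being lattice isomorphisms, so no essential new difficulty arises beyond this verification.
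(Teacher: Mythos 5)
Your proposal is correct and follows essentially the same route as the paper: the paper's proof of this lemma reads, in full, ``By Lemma \ref{echiv(iii)nobm} and Corollary \ref{izomreticulatii}, with the equality in (\ref{2annidr2annr*1}) holding by Lemma \ref{annid}'', which is exactly the combination of ingredients you identify --- transporting the congruence-lattice double-annihilator isomorphism of Lemma \ref{echiv(iii)nobm} across the isomorphisms $\iota\gamma _R$ and $\varphi _R$ of Corollary \ref{izomreticulatii} and using Lemma \ref{annid} for the displayed identification. Your extra bookkeeping paragraph merely makes explicit what the paper leaves implicit, and your reading of the reference ``(\ref{echiv(iii)nobm1})'' in part (ii) as intending the map of (\ref{2annidr2annr*1}) matches the evident typo in the statement.
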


\begin{proof} By Lemma \ref{echiv(iii)nobm} and Corollary \ref{izomreticulatii}, with the equality in (\ref{2annidr2annr*1}) holding by Lemma \ref{annid}.\end{proof}

\begin{lemma}\begin{enumerate}
\item\label{2annr2annr*1} The map ${\rm Ann}_R({\rm Ann}_R(I))\mapsto {\rm Ann}_{R^*}({\rm Ann}_{R^*}(I/_{\textstyle \sim _R}))$ ($I\in {\rm Id}(R)$) from ${\rm 2Ann}(R)$ to\linebreak ${\rm P2Ann}(R^*)$ is an order isomorphism.
\item\label{2annr2annr*2} $(2ann)_R$ is equivalent to $(p2ann)_{R^*}$ and, if these conditions are fulfilled, then the map from (\ref{2annr2annr*1}) is a lattice isomorphism.\end{enumerate}\label{2annr2annr*}\end{lemma}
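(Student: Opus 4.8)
The plan is to mirror the proof of Lemma~\ref{annrannr*}, obtaining the map in (\ref{2annr2annr*1}) as a composite of two order isomorphisms already established: one relating the ring double annihilators to the double annihilators computed inside the lattice ${\rm Id}(R)$, the other relating the latter to the double annihilators in $R^*$. By Remark~\ref{annrgen}, every member of ${\rm 2Ann}(R)$ has the form ${\rm Ann}_R({\rm Ann}_R(I))$ with $I\in{\rm Id}(R)$, so both ${\rm 2Ann}(R)$ and ${\rm P2Ann}(R^*)$ are indexed by ideals of $R$, and the map under study is ${\rm Ann}_R({\rm Ann}_R(I))\mapsto{\rm Ann}_{R^*}({\rm Ann}_{R^*}(I/_{\textstyle \sim _R}))$.

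First I would prove (\ref{2annr2annr*1}). I would factor the map as the order isomorphism ${\rm Ann}_R({\rm Ann}_R(I))\mapsto{\rm Ann}_{{\rm Id}(R)}({\rm Ann}_{{\rm Id}(R)}(I))$ from Lemma~\ref{annrannidr}, (\ref{annrannidr2}), followed by the order isomorphism ${\rm Ann}_{{\rm Id}(R)}({\rm Ann}_{{\rm Id}(R)}(I))\mapsto{\rm Ann}_{R^*}({\rm Ann}_{R^*}(I/_{\textstyle \sim _R}))$ from Lemma~\ref{2annidr2annr*}, (\ref{2annidr2annr*1}). A composite of order isomorphisms is an order isomorphism, so (\ref{2annr2annr*1}) follows, provided the composite equals the asserted map; this is immediate from Lemma~\ref{annid}, which gives ${\rm Ann}_{{\rm Id}(R)}({\rm Ann}_{{\rm Id}(R)}(I))=({\rm Ann}_R({\rm Ann}_R(I))]_{{\rm Id}(R)}$ and thereby identifies the first arrow with the restriction of the principal-ideal embedding $J\mapsto(J]_{{\rm Id}(R)}$.

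For (\ref{2annr2annr*2}) I would chain $(2ann)_R\Leftrightarrow(p2ann)_{{\rm Id}(R)}\Leftrightarrow(p2ann)_{R^*}$, the second equivalence being Lemma~\ref{2annidr2annr*}, (\ref{2annidr2annr*2}). The first equivalence carries the real content. Using Lemma~\ref{annid} to identify the lattice double annihilator ${\rm Ann}_{{\rm Id}(R)}({\rm Ann}_{{\rm Id}(R)}(I))$ with the principal ideal $({\rm Ann}_R({\rm Ann}_R(I))]_{{\rm Id}(R)}$, and using that $J\mapsto(J]_{{\rm Id}(R)}$ is a \emph{lattice} embedding of ${\rm Id}(R)$ into ${\rm Id}({\rm Id}(R))$, I would check that ${\rm 2Ann}(R)$ is closed under the operations of ${\rm Id}(R)$ and $I\mapsto{\rm Ann}_R({\rm Ann}_R(I))$ is a lattice morphism precisely when the corresponding statements hold for ${\rm P2Ann}({\rm Id}(R))$ inside ${\rm Id}({\rm Id}(R))$; injectivity of the embedding reflects $\vee$ and $\wedge$, yielding both directions. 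When these conditions hold, each of the two composed order isomorphisms becomes a lattice isomorphism — the first because the embedding restricts to a lattice isomorphism of the now-sublattices ${\rm 2Ann}(R)$ and ${\rm P2Ann}({\rm Id}(R))$, the second by Lemma~\ref{2annidr2annr*}, (\ref{2annidr2annr*2}) — so their composite is a lattice isomorphism, as required.

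The hard part will be the equivalence $(2ann)_R\Leftrightarrow(p2ann)_{{\rm Id}(R)}$, which, unlike the purely formal composition used for (\ref{2annr2annr*1}), compares a condition stated with the ring annihilator ${\rm Ann}_R$ over the base lattice ${\rm Id}(R)$ against one stated with the lattice annihilator ${\rm Ann}_{{\rm Id}(R)}$ over the base lattice ${\rm Id}({\rm Id}(R))$. The point to get right is that passing to principal ideals neither creates nor destroys the sublattice and lattice-morphism properties, and this is exactly what Lemma~\ref{annid} (identifying the two double-annihilator operators) and the embedding property of $J\mapsto(J]$ (transferring and reflecting $\vee$ and $\wedge$) secure; everything else is the same formal bookkeeping as in the proof of Lemma~\ref{annrannr*}.
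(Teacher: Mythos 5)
Your proposal is correct and follows essentially the same route as the paper: the paper likewise obtains the map in (\ref{2annr2annr*1}) as the composition of the order isomorphism from Lemma \ref{annrannidr}, (\ref{annrannidr2}), with the one from Lemma \ref{2annidr2annr*}, (\ref{2annidr2annr*1}), and derives (\ref{2annr2annr*2}) from Remark \ref{annrgen} and Lemma \ref{2annidr2annr*}. You merely spell out in more detail the step $(2ann)_R\Leftrightarrow(p2ann)_{{\rm Id}(R)}$ via Lemma \ref{annid} and the principal-ideal embedding, which the paper leaves implicit.
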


\begin{proof} By Remark \ref{annrgen}, Lemma \ref{annrannidr}, (\ref{annrannidr2}), Lemma \ref{2annidr2annr*} and the fact that the map from (\ref{2annr2annr*1}) is the composition of the map from Lemma \ref{2annidr2annr*}, (\ref{2annidr2annr*1}), with the map from Lemma \ref{annrannidr}, (\ref{annrannidr2}).\end{proof}

\begin{proposition} For any nonzero cardinality $\kappa $, the properties $(3)_{\kappa ,R}$ and $(3)_{\kappa ,R^*}$ are equivalent.\label{3r}\end{proposition}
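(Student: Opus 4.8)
The plan is to imitate the proof of Proposition~\ref{echiv(iii)} verbatim, with the reticulation quotient $\nu _R:{\rm Id}(R)\rightarrow {\rm Id}(R)/_{\textstyle \sim _R}=R^*$ playing the role of $\lambda _A$ and with Lemma~\ref{2annr2annr*} replacing Lemma~\ref{echiv(iii)nobm}. First I would split condition $(3)_{\kappa ,R}$ into its two constituent clauses: the clause asserting that $I\mapsto {\rm Ann}_R({\rm Ann}_R(I))$ is a lattice morphism from ${\rm Id}(R)$ onto the sublattice ${\rm 2Ann}(R)$ of ${\rm Id}(R)$, which is exactly $(2ann)_R$, and the clause asserting that ${\rm 2Ann}(R)$ is $\kappa $-complete and Boolean. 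Symmetrically, $(3)_{\kappa ,R^*}$ is $(p2ann)_{R^*}$ together with the assertion that ${\rm P2Ann}(R^*)$ is a $\kappa $-complete Boolean sublattice of ${\rm Id}(R^*)$.

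The equivalence of the first clauses is already available: Lemma~\ref{2annr2annr*}, (\ref{2annr2annr*2}), gives $(2ann)_R\Leftrightarrow (p2ann)_{R^*}$, and, under these conditions, upgrades the order isomorphism $\Phi $ of Lemma~\ref{2annr2annr*}, (\ref{2annr2annr*1}), to a lattice isomorphism between the sublattices ${\rm 2Ann}(R)\subseteq {\rm Id}(R)$ and ${\rm P2Ann}(R^*)\subseteq {\rm Id}(R^*)$. The one fact I would make explicit is the intertwining identity: by the definition of $\Phi $ in Lemma~\ref{2annr2annr*}, (\ref{2annr2annr*1}), for every $I\in {\rm Id}(R)$ we have $\Phi ({\rm Ann}_R({\rm Ann}_R(I)))={\rm Ann}_{R^*}({\rm Ann}_{R^*}(\nu _R(I)))$, so that $\Phi $ composed with the double-annihilator map of $(3)_{\kappa ,R}$ equals the double-annihilator map of $(3)_{\kappa ,R^*}$ composed with $\nu _R$; this is the ring counterpart of the commuting square exploited in Proposition~\ref{echiv(iii)}.

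It then remains to transfer the ``$\kappa $-complete Boolean'' clause across $\Phi $. Since $\Phi $ is an order isomorphism it preserves every existing supremum and infimum, so $\kappa $-completeness passes in both directions; since, under the equivalent conditions of (\ref{2annr2annr*2}), $\Phi $ is moreover a lattice isomorphism of the two sublattices, it carries the inherited Boolean structure (top, bottom, relative complementation) of one onto that of the other. Hence ${\rm 2Ann}(R)$ is a $\kappa $-complete Boolean sublattice of ${\rm Id}(R)$ iff ${\rm P2Ann}(R^*)$ is a $\kappa $-complete Boolean sublattice of ${\rm Id}(R^*)$, and combining this with $(2ann)_R\Leftrightarrow (p2ann)_{R^*}$ yields $(3)_{\kappa ,R}\Leftrightarrow (3)_{\kappa ,R^*}$.

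The step I expect to be most delicate is making the last transfer airtight, namely ensuring that the $\kappa $-joins and $\kappa $-meets figuring in the condition are those computed in the ambient lattices ${\rm Id}(R)$ and ${\rm Id}(R^*)$ and that $\Phi $ respects precisely those. This is exactly the point where one must lean on $\Phi $ being a genuine lattice isomorphism for the inherited operations (Lemma~\ref{2annr2annr*}, (\ref{2annr2annr*2})), rather than a mere order isomorphism (Lemma~\ref{2annr2annr*}, (\ref{2annr2annr*1})), so I would be careful to invoke (\ref{2annr2annr*2}) for the sublattice and Boolean clauses and reserve the bare order-isomorphism property only for the preservation of arbitrary suprema and infima.
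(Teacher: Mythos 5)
Your proposal is correct and follows essentially the same route as the paper: the paper's proof of Proposition~\ref{3r} likewise invokes Lemma~\ref{2annr2annr*} together with the commuting square expressing that the map from Lemma~\ref{2annr2annr*}, (\ref{2annr2annr*1}), composed with the double--annihilator map of $(3)_{\kappa ,R}$ equals the double--annihilator map of $(3)_{\kappa ,R^*}$ composed with $\nu _R$. Your write--up merely makes explicit the transfer of the $\kappa $--complete Boolean clause across the (order, and under the equivalent conditions, lattice) isomorphism, which the paper leaves implicit.
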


\begin{proof} By Lemma \ref{2annr2annr*} and the fact that the map from Lemma \ref{2annr2annr*}, (\ref{2annr2annr*1}), composed with the map from condition $(3)_{\kappa ,R}$ equals the map from condition $(3)_{\kappa ,R^*}$ composed with the canonical surjective lattice morphism from ${\rm Id}(R)$ to ${\rm Id}(R)/_{\textstyle \sim _R}=R^*$.\end{proof}

\begin{proposition} For any nonzero cardinality $\kappa $, conditions $(4)_{\kappa ,R}$, $(4)_{<\infty ,R}$, $(4)_R$, $(iv)_R$ and $(iv)_{R^*}$ are equivalent.\label{4r}\end{proposition}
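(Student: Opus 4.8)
This proposition is the ring-theoretic counterpart of Proposition~\ref{echiv(iv)}, and the plan is to run the same argument, now using Lemma~\ref{fannring} in the role that Lemma~\ref{panncg} played there. First I would note that the three conditions $(4)_{\kappa ,R}$, $(4)_{<\infty ,R}$ and $(4)_R$ all share the same first clause, which is precisely $(iv)_R$, and that they differ only in their second (existential) clause, namely in the size of the subsets $U$ over which it is required. So it will suffice to prove that this second clause holds for \emph{every} $U\subseteq R$, with no cardinality restriction; granting this, the four conditions $(iv)_R$, $(4)_{\kappa ,R}$, $(4)_{<\infty ,R}$ and $(4)_R$ all collapse to $(iv)_R$ and are therefore equivalent.

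The universal validity of the second clause is exactly what Lemma~\ref{fannring} delivers. Indeed, for any $U\subseteq R$ we have ${\rm Ann}_R({\rm Ann}_R(U))\in {\cal A}nn(R)$, and Lemma~\ref{fannring} identifies ${\cal A}nn(R)$ with $\{{\rm Ann}_R(S)\ |\ S\subseteq R,\ |S|<\infty \}$; hence there is a finite $S\subseteq R$ with ${\rm Ann}_R({\rm Ann}_R(U))={\rm Ann}_R(S)$. Put differently, ${\rm 2Ann}(R)\subseteq {\cal A}nn(R)=\{{\rm Ann}_R(S)\ |\ |S|<\infty \}$, so the existential clause is automatic for arbitrary $U$, and the reduction described above goes through.

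It then remains to link $(iv)_R$ with $(iv)_{R^*}$, which I would read off from Lemma~\ref{annrannr*}, part~(\ref{annrannr*2}). That lemma gives, for each fixed $I,J\in {\rm Id}(R)$, the equivalence of ${\rm Ann}_R(I\cap J)={\rm Ann}_R(I)\vee {\rm Ann}_R(J)$ with ${\rm Ann}_{R^*}(I/_{\textstyle \sim _R}\wedge J/_{\textstyle \sim _R})={\rm Ann}_{R^*}(I/_{\textstyle \sim _R})\vee {\rm Ann}_{R^*}(J/_{\textstyle \sim _R})$. Quantifying over all $I,J\in {\rm Id}(R)$ reproduces $(iv)_R$ on the left; on the right, the surjectivity of $\nu _R$ means that every element of $R^*$ is of the form $I/_{\textstyle \sim _R}$, so the quantified statement is exactly that ${\rm Ann}_{R^*}(a\wedge b)={\rm Ann}_{R^*}(a)\vee {\rm Ann}_{R^*}(b)$ for all $a,b\in R^*$, and, since the join of two ideals of $R^*$ is the ideal they generate, this is precisely $(iv)_{R^*}$. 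Thus $(iv)_R$ and $(iv)_{R^*}$ are equivalent and the chain is complete.

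I do not expect a genuine obstacle here: the substantive work is already contained in Lemma~\ref{fannring}, the ring analogue of the fact that every annihilator is the annihilator of a finite set, and in the transfer Lemma~\ref{annrannr*}. The only point needing a little care is the final bookkeeping, matching the $\vee$-form in Lemma~\ref{annrannr*},~(\ref{annrannr*2}), with the generated-ideal form in $(iv)_{R^*}$; this is immediate once one recalls that $\nu _R$ is onto and that the ideal generated by a union of two ideals is their join.
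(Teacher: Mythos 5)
Your proposal is correct and follows essentially the same route as the paper: the paper likewise deduces the equivalence of $(iv)_R$ with $(iv)_{R^*}$ from Lemma \ref{annrannr*}, (\ref{annrannr*2}), and uses Lemma \ref{fannring} to get ${\rm 2Ann}(R)\subseteq {\cal A}nn(R)=\{{\rm Ann}_R(S)\ |\ S\subseteq R,\ |S|<\infty \}$, so that the second clause of $(4)_{\kappa ,R}$ holds automatically and the three versions of $(4)$ collapse to $(iv)_R$. Your extra bookkeeping about the surjectivity of $\nu _R$ and the identification of the join of ideals with the generated ideal is a correct elaboration of what the paper leaves implicit.
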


\begin{proof} By Lemma \ref{annrannr*}, (\ref{annrannr*2}), $(iv)_R$ is equivalent to $(iv)_{R^*}$. By Lemma \ref{fannring}, $\{{\rm Ann}_R(S)\ |\ S\subseteq R,|S|<\infty \}={\cal A}nn(R)\supseteq {\rm 2Ann}(R)$, which means that, for any nonzero cardinality $\kappa $, the second property in $(4)_{\kappa ,R}$ is fulfilled, so that conditions $(4)_{\kappa ,R}$, $(4)_{<\infty ,R}$, $(4)_R$ and $(iv)_R$ are equivalent.\end{proof}

\begin{lemma}\begin{enumerate}
\item\label{bidrann1} ${\cal B}({\rm Id}(R))=\{eR\ |\ e\in E(R)\}$.
\item\label{bidrann2} If $U\subseteq R$, then $U\cap {\rm Ann}_R(U)\subseteq \{0\}$. If $I\in {\rm Id}(R)$, then $I\cap {\rm Ann}_R(I)=\{0\}$.
\item\label{bidrann4} If $U\subseteq R$ such that ${\rm Ann}_R(U)\vee {\rm Ann}_R({\rm Ann}_R(U))=R$, then ${\rm Ann}_R(U)=eR$ for some $e\in E(R)$.\end{enumerate}\label{bidrann}\end{lemma}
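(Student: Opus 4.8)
The plan is to prove the three parts in the order stated, deriving (\ref{bidrann4}) as an immediate consequence of (\ref{bidrann1}) and (\ref{bidrann2}), so the only genuine work lies in the first two parts. Throughout, recall that $\vee $ in ${\rm Id}(R)$ is the ideal sum $+$, that every annihilator is an ideal containing $0$, and that $R$ is semiprime.

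For (\ref{bidrann1}) I would establish the familiar correspondence between idempotents and complemented ideals. For the inclusion $\{eR\mid e\in E(R)\}\subseteq {\cal B}({\rm Id}(R))$, note that $1-e\in E(R)$, that $eR+(1-e)R=R$ since $e+(1-e)=1$, and that $eR\cap (1-e)R=\{0\}$ because any $x$ in the intersection satisfies $x=ex$ while $ex=e(1-e)s=0$; thus $(1-e)R$ is a complement of $eR$. For the converse, I would take $I\in {\cal B}({\rm Id}(R))$ with complement $J$, write $1=e+f$ with $e\in I$ and $f\in J$, and observe that $ef\in I\cap J=\{0\}$, whence $e=e(e+f)=e^2$ shows $e\in E(R)$; then for any $x\in I$ one has $xf\in I\cap J=\{0\}$, so $x=x(e+f)=xe\in eR$, giving $I=eR$.

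For (\ref{bidrann2}) the key input is to translate semiprimeness into the absence of nonzero square-zero elements. If $x^2=0$, then $xR\cdot xR=x^2R=\{0\}\subseteq P$ for every $P\in {\rm Spec}_{\rm Id}(R)$, so primeness forces $xR\subseteq P$, i.e. $x\in P$; hence $x\in \bigcap _{P\in {\rm Spec}_{\rm Id}(R)}P=\sqrt{\{0\}}=\{0\}$. Now, if $x\in U\cap {\rm Ann}_R(U)$, then $x\in U$ makes $x\cdot x=0$, so $x=0$ by the preceding observation, which yields $U\cap {\rm Ann}_R(U)\subseteq \{0\}$. The second assertion is then immediate: for $I\in {\rm Id}(R)$ both $I$ and ${\rm Ann}_R(I)$ contain $0$, so applying the first assertion with $U=I$ gives $I\cap {\rm Ann}_R(I)=\{0\}$.

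Finally, for (\ref{bidrann4}) I would set $J={\rm Ann}_R(U)\in {\rm Id}(R)$, so that ${\rm Ann}_R({\rm Ann}_R(U))={\rm Ann}_R(J)$. The hypothesis reads $J+{\rm Ann}_R(J)=R$, while (\ref{bidrann2}) applied to the ideal $J$ gives $J\cap {\rm Ann}_R(J)=\{0\}$; hence ${\rm Ann}_R(J)$ is a complement of $J$ in ${\rm Id}(R)$, so $J\in {\cal B}({\rm Id}(R))$, and (\ref{bidrann1}) delivers $J=eR$ for some $e\in E(R)$. No step presents a real obstacle; the one point deserving care is the passage from the definition of semiprimeness to the nonexistence of nonzero square-zero elements, which I handle through the intersection-of-primes description of $\sqrt{\{0\}}$ together with the element-wise consequence of ideal primeness.
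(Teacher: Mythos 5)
Your proof is correct, and it follows the paper's overall decomposition (part (\ref{bidrann4}) as an immediate corollary of (\ref{bidrann1}) and (\ref{bidrann2}), with ${\rm Ann}_R({\rm Ann}_R(U))$ serving as the complement of ${\rm Ann}_R(U)$ in ${\rm Id}(R)$), but it differs in how the two ingredients are obtained. For (\ref{bidrann1}) the paper does not argue with elements at all: it composes the Boolean isomorphism $\mu _R\mid _{E(R)}:E(R)\rightarrow {\cal B}(R^*)$ (cited from Belluce) with the inverse of the Boolean isomorphism $\nu _R\mid _{{\cal B}({\rm Id}(R))}:{\cal B}({\rm Id}(R))\rightarrow {\cal B}(R^*)$ and checks that the composite sends $e$ to $eR$, so that ${\cal B}({\rm Id}(R))$ is exactly the image $\{eR\ |\ e\in E(R)\}$; your direct idempotent--complemented-ideal correspondence is more elementary and self-contained, works without semiprimeness, and avoids the reticulation machinery, whereas the paper's route reuses results it has already built and needs $R$ semiprime for both isomorphisms. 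For (\ref{bidrann2}) the paper simply cites Kist for the fact that a semiprime commutative unitary ring has no nonzero nilpotents, while you derive the absence of nonzero square-zero elements directly from $\sqrt{\{0\}}=\{0\}$ via $xR\cdot xR=x^2R\subseteq P$ and the primeness of $P$; this is a correct and welcome filling-in of the citation. Part (\ref{bidrann4}) is argued identically in both. No gaps.
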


\begin{proof} (\ref{bidrann1}) This statement may be known, but, for the sake of completeness, I am deriving it from the statements in this paper. Let $\pi _R=(\nu _R\mid _{{\cal B}({\rm Id}(R))}^{-1})\circ \mu _R\mid _{E(R)}:E(R)\rightarrow {\cal B}({\rm Id}(R))$ be the composition of the inverse of the Boolean isomorphism from Corollary \ref{1or} to the Boolean isomorphism from Lemma \ref{rsirstar}, (\ref{rsirstar1}). For any $e\in E(R)$, since $\mu _R(e)=eR/_{\textstyle \sim _R}=\nu _R(eR)$, it follows that $\pi _R(e)=eR$. Hence ${\cal B}({\rm Id}(R))=\pi _R(E(R))=\{eR\ |\ e\in E(R)\}$.

\noindent (\ref{bidrann2}) If $U\subseteq R$ and $x\in U\cap {\rm Ann}_R(U)$, then $x\cdot x=0$, so that $x=0$ since a semiprime commutative unitary ring has no nonzero nilpotents \cite[p.125,126]{kist}. Now, if $I\in {\rm Id}(R)$, then $0\in I\cap {\rm Ann}_R(I)$.

\noindent (\ref{bidrann4}) By (\ref{bidrann2}), it follows that ${\rm Ann}_R(U)\in {\cal B}({\rm Id}(R))$, having ${\rm Ann}_R({\rm Ann}_R(U))$ as a complement, so that ${\rm Ann}_R(U)=eR$ for some $e\in E(R)$ by (\ref{bidrann1}).\end{proof}

\begin{proposition} For any nonzero cardinality $\kappa $:\begin{enumerate}
\item\label{5r1} $(5)_{\kappa ,R}$ implies $(1)_{\kappa ,R}$;
\item\label{5r2} $(5)_{<\infty ,R}$ implies $(5)_{R^*}$;
\item\label{5r3} $(5)_{R^*}$ implies $(5)_R$;
\item\label{5r4} $(5)_{\kappa ,R}$, $(5)_{<\infty ,R}$, $(5)_R$ and $(5)_{R^*}$ are equivalent.\end{enumerate}\label{5r}\end{proposition}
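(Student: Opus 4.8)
The plan is to establish the first three implications as self-contained building blocks and then assemble them, together with the equivalences already available for the conditions $(1)$ and for the frame $R^*$, into part (\ref{5r4}). Part (\ref{5r1}) is the quickest: fixing $U\subseteq R$ with $|U|\leq \kappa $, the hypothesis $(5)_{\kappa ,R}$ gives ${\rm Ann}_R(U)\vee {\rm Ann}_R({\rm Ann}_R(U))=R$, so Lemma \ref{bidrann}, (\ref{bidrann4}), immediately produces an $e\in E(R)$ with ${\rm Ann}_R(U)=eR$; since $U$ is arbitrary of size at most $\kappa $, this is exactly $(1)_{\kappa ,R}$.

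For part (\ref{5r2}) the idea is to push the finite hypothesis over to $R^*$ through the annihilator identities in Lemma \ref{annid}. Given $W\subseteq R^*$, I would first invoke Lemma \ref{annfin}, (\ref{annfin3}), to obtain a finite $S\subseteq R$ with ${\rm Ann}_{R^*}(W)={\rm Ann}_{R^*}(\mu _R(S))$; the sole purpose of this reduction is to make the finite hypothesis $(5)_{<\infty ,R}$ applicable to $S$. By Lemma \ref{annid}, both ${\rm Ann}_{R^*}(\mu _R(S))$ and ${\rm Ann}_{R^*}({\rm Ann}_{R^*}(\mu _R(S)))$ are \emph{principal} ideals of $R^*$, generated respectively by $\nu _R({\rm Ann}_R(S))$ and $\nu _R({\rm Ann}_R({\rm Ann}_R(S)))$. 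Now $(5)_{<\infty ,R}$ gives ${\rm Ann}_R(S)\vee {\rm Ann}_R({\rm Ann}_R(S))=R$ in ${\rm Id}(R)$, and since $\nu _R$ is a lattice morphism onto $R^*$ (Remark \ref{idr}) the join of the two generators is $\nu _R(R)=\top $. As the join of two principal ideals is generated by the join of the generators, I conclude $({\rm Ann}_{R^*}(W)\cup {\rm Ann}_{R^*}({\rm Ann}_{R^*}(W))]_{R^*}=(\top ]_{R^*}=R^*$, which is $(5)_{R^*}$. I expect this to be the main obstacle, as it is where the finite reduction and the bookkeeping between principal ideals of $R^*$ and annihilators in $R$ must be combined carefully.

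Part (\ref{5r3}) runs the same computation in reverse. For an arbitrary $U\subseteq R$, I would apply $(5)_{R^*}$ to $W=\mu _R(U)$; exactly as above, Lemma \ref{annid} identifies the two annihilators in $R^*$ as the principal ideals generated by $\nu _R({\rm Ann}_R(U))$ and $\nu _R({\rm Ann}_R({\rm Ann}_R(U)))$, so $(5)_{R^*}$ forces their join to be $(\top ]_{R^*}=R^*$, that is, $\nu _R({\rm Ann}_R(U)\vee {\rm Ann}_R({\rm Ann}_R(U)))=\top $. Here semiprimeness enters decisively: by Remark \ref{rsemiprim}, an ideal $I$ of $R$ satisfies $I/_{\textstyle \sim _R}=\top $ iff $I=R$, whence ${\rm Ann}_R(U)\vee {\rm Ann}_R({\rm Ann}_R(U))=R$, i.e. $(5)_R$.

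Finally, for part (\ref{5r4}) I would close a cycle of implications. The implications $(5)_R\Rightarrow (5)_{\kappa ,R}$ and $(5)_R\Rightarrow (5)_{<\infty ,R}$ are trivial, since $(5)_R$ quantifies over all subsets of $R$. Part (\ref{5r2}) gives $(5)_{<\infty ,R}\Rightarrow (5)_{R^*}$ and part (\ref{5r3}) gives $(5)_{R^*}\Rightarrow (5)_R$, already making $(5)_{<\infty ,R}$, $(5)_{R^*}$ and $(5)_R$ equivalent. To fold in $(5)_{\kappa ,R}$ I would chain part (\ref{5r1}) with the equivalences already proved: $(5)_{\kappa ,R}\Rightarrow (1)_{\kappa ,R}$ by part (\ref{5r1}), then $(1)_{\kappa ,R}\Leftrightarrow (1)_{R^*}$ by Proposition \ref{1r}, (\ref{1r4}), and $(1)_{R^*}\Leftrightarrow (5)_{R^*}$ by Corollary \ref{condrstar} (using that $R^*$ is a frame); combined with $(5)_{R^*}\Rightarrow (5)_R$ from part (\ref{5r3}) and the trivial $(5)_R\Rightarrow (5)_{\kappa ,R}$, this yields $(5)_{\kappa ,R}\Leftrightarrow (5)_R$ and hence the equivalence of all four conditions.
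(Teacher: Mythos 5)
Your proposal is correct and follows essentially the same route as the paper: part (i) via Lemma \ref{bidrann}, (\ref{bidrann4}); parts (ii) and (iii) by reducing to a finite set through Lemma \ref{annfin}, (\ref{annfin3}), and transporting the join of the two annihilators across $\nu _R$ via the principal-ideal identities of Lemma \ref{annid} (with Remark \ref{rsemiprim} supplying semiprimeness for the return direction); and part (iv) by closing the cycle through $(1)_{\kappa ,R}$, Proposition \ref{1r} and Corollary \ref{condrstar}, exactly as the paper does.
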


\begin{proof} (\ref{5r1}) By Lemma \ref{bidrann}, (\ref{bidrann4}).

\noindent (\ref{5r2}) Let $W\subseteq R^*$, so that, by Lemma \ref{annfin}, (\ref{annfin3}), ${\rm Ann}_{R^*}(W)={\rm Ann}_{R^*}(\mu _R(S))$ for some finite subset $S\subseteq R$. If $(5)_{<\infty ,R}$ is fulfilled, then ${\rm Ann}_R(S)\vee {\rm Ann}_R({\rm Ann}_R(S))=R$. By Lemma \ref{annid}, it follows that ${\rm Ann}_{R^*}(W)\vee {\rm Ann}_{R^*}({\rm Ann}_{R^*}(W))={\rm Ann}_{R^*}(\mu _R(S))\vee {\rm Ann}_{R^*}({\rm Ann}_{R^*}(\mu _R(S)))=({\rm Ann}_R(S)/_{\textstyle \sim _R}]_{R^*}\vee ({\rm Ann}_R({\rm Ann}_R(S))/_{\textstyle \sim _R}]_{R^*}=(({\rm Ann}_R(S)\vee {\rm Ann}_R({\rm Ann}_R(S)))/_{\textstyle \sim _R}]_{R^*}=(R/_{\textstyle \sim _R}]_{R^*}=(\top ]_{R^*}=R^*$.

\noindent (\ref{5r3}) Let $U\subseteq R$, so that $\mu _R(S)\subseteq R^*$, thus, if $(5)_{R^*}$ is fulfilled, then, by Lemma \ref{annid}, $R^*={\rm Ann}_{R^*}(\mu _R(U))\vee {\rm Ann}_{R^*}({\rm Ann}_{R^*}(\mu _R(U)))\!=\!({\rm Ann}_R(U)/_{\textstyle \sim _R}]_{R^*}\vee ({\rm Ann}_R({\rm Ann}_R(U))/_{\textstyle \sim _R}]_{R^*}\!=\!(({\rm Ann}_R(U)\vee {\rm Ann}_R({\rm Ann}_R(U)))/_{\textstyle \sim _R}]_{R^*}$, so that $({\rm Ann}_R(U)\vee {\rm Ann}_R({\rm Ann}_R(U)))/_{\textstyle \sim _R}=\top $, therefore ${\rm Ann}_R(U)\vee {\rm Ann}_R({\rm Ann}_R(U))=R$ by Remark \ref{rsemiprim}.

\noindent (\ref{5r4}) By (\ref{5r1}), (\ref{5r2}), (\ref{5r3}), Proposition \ref{1r} and Corollary \ref{condrstar}.\end{proof}

\begin{theorem} For any nonzero cardinality $\kappa $ and any $h,i,j\in \overline{1,5}$:\begin{enumerate}
\item\label{myringsdavey1} $(iv)_R$, $(h)_{\kappa ,R}$, $(i)_{<\infty ,R}$ and $(j)_R$ are equivalent;
\item\label{myringsdavey2} $(iv)_{{\rm Id}(R)}$, $(h)_{\kappa ,{\rm Id}(R)}$, $(i)_{<\infty ,{\rm Id}(R)}$ and $(j)_{{\rm Id}(R)}$ are equivalent;
\item\label{myringsdavey3} the conditions from (\ref{myringsdavey1}) are equivalent to those from (\ref{myringsdavey2}) and to the equivalent conditions $(iv)_{R^*}$, $(h)_{\kappa ,R^*}$, $(i)_{<\infty ,R^*}$ and $(j)_{R^*}$.\end{enumerate}

In particular, $R$ is a Baer ring iff $R$ is a strongly Baer ring iff ${\rm Id}(R)$ is a Stone lattice iff ${\rm Id}(R)$ is a strongly Stone lattice iff $R^*$ is a Stone lattice iff $R^*$ is a strongly Stone lattice.\label{myringsdavey}\end{theorem}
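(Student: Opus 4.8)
The plan is to collapse every condition appearing in the statement into a single equivalence class, using the reticulation $R^*$ as the common hub. The starting point is Corollary \ref{condrstar}: since $R^*$ is a frame, all of $(iv)_{R^*}$, $(h)_{\kappa ,R^*}$, $(i)_{<\infty ,R^*}$ and $(j)_{R^*}$ (for every nonzero cardinality $\kappa$ and all $h,i,j\in \overline{1,5}$) are mutually equivalent; I shall write $(\ast )$ for this common property. The whole theorem then reduces to showing that each of the three families — the conditions on $R$, those on ${\rm Id}(R)$, and those on $R^*$ — is equivalent to $(\ast )$.

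For part (\ref{myringsdavey1}) I would dispatch the five condition-types on $R$ via the propositions already established. Propositions \ref{1r}, \ref{4r} and \ref{5r} link the type-$1$, type-$4$ and type-$5$ conditions on $R$ (in all their cardinality variants, together with $(iv)_R$) to a single $R^*$-condition, hence to $(\ast )$ through Corollary \ref{condrstar}. For types $2$ and $3$, Propositions \ref{2r} and \ref{3r} only deliver $(2)_{\kappa ,R}\Leftrightarrow (2)_{\kappa ,R^*}$ and $(3)_{\kappa ,R}\Leftrightarrow (3)_{\kappa ,R^*}$ at a fixed $\kappa$; combining with Corollary \ref{condrstar} gives $(2)_{\kappa ,R}\Leftrightarrow (\ast )$ and $(3)_{\kappa ,R}\Leftrightarrow (\ast )$ for every nonzero $\kappa$. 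Since, by the cardinality remarks of Remark \ref{dualdavey} (which hold for $R$), $(2)_{<\infty ,R}$ (resp. $(2)_R$) is equivalent to $(2)_{\nu ,R}$ holding for all finite nonzero $\nu$ (resp. all nonzero $\nu$), and each such $(2)_{\nu ,R}$ is equivalent to the cardinality-free $(\ast )$, I conclude $(2)_{<\infty ,R}\Leftrightarrow (2)_R\Leftrightarrow (\ast )$, and likewise for type $3$. Thus every condition listed in part (\ref{myringsdavey1}) is equivalent to $(\ast )$.

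For part (\ref{myringsdavey2}) I would note that $\iota\gamma _R:{\rm Id}(R)\rightarrow {\rm Con}(R)$ is a lattice isomorphism and that $R$, being a semiprime algebra in the semi-degenerate congruence-modular variety of commutative unitary rings, is covered by Theorem \ref{mydavey}. That theorem gives the desired equivalences among the ${\rm Con}(R)$-conditions, and transporting them across $\iota\gamma _R$ (isomorphic bounded lattices satisfy the same conditions) yields part (\ref{myringsdavey2}). For part (\ref{myringsdavey3}) it then remains to tie the ${\rm Id}(R)$-family to $(\ast )$: Corollary \ref{cormydavey} equates the ${\rm Con}(R)$-conditions with the ${\rm Con}(R)/_{\textstyle \equiv _R}$-conditions, Corollary \ref{izomreticulatii} supplies the isomorphism $\varphi _R:R^*\rightarrow {\rm Con}(R)/_{\textstyle \equiv _R}$ so that the ${\rm Con}(R)/_{\textstyle \equiv _R}$-conditions coincide with the $R^*$-conditions, and the latter are $(\ast )$. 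Chaining back through $\iota\gamma _R$ gives the ${\rm Id}(R)$-conditions $\Leftrightarrow (\ast )$, so all three families merge. The closing ``in particular'' is the specialization $h=i=j=1$: reading $R$ Baer as $(1)_{1,R}$, $R$ strongly Baer as $(1)_R$, and the Stone / strongly-Stone properties of ${\rm Id}(R)$ and $R^*$ as the corresponding $(1)_{1,\cdot }$ and $(1)_{\cdot }$ conditions, all of which now lie in the single class $(\ast )$.

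The argument is mostly bookkeeping, assembling the cited propositions and corollaries. The only place requiring care is the types-$2$-and-$3$ step of part (\ref{myringsdavey1}): Propositions \ref{2r} and \ref{3r} are stated at a fixed cardinality, so one must exploit the cardinality-independence of $(\ast )$ to recover the $<\infty$ and unbounded variants $(2)_{<\infty ,R},(2)_R,(3)_{<\infty ,R},(3)_R$; everything else follows directly from the hub reductions to $(\ast )$.
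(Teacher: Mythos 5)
Your proposal is correct and follows essentially the same route as the paper, whose proof is simply the citation ``By Corollaries \ref{izomreticulatii} and \ref{condrstar} and Propositions \ref{1r}, \ref{2r}, \ref{3r}, \ref{4r} and \ref{5r}'' — i.e., pulling every $R$-condition and every ${\rm Id}(R)$-condition back to the single equivalence class on the frame $R^*$. Your explicit handling of the fixed-$\kappa$ Propositions \ref{2r} and \ref{3r} via the cardinality-monotonicity remarks is exactly the step the paper leaves implicit, so nothing is missing or different in substance.
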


\begin{proof} By Corollaries \ref{izomreticulatii} and \ref{condrstar} and Propositions \ref{1r}, \ref{2r}, \ref{3r}, \ref{4r} and \ref{5r}.\end{proof}

Throughout the rest of this paper, $S$ shall be a semiprime commutative unitary ring.

\begin{remark} By Corollary \ref{izomreticulatii} and Remark \ref{prodalg}, $R\times S$ is semiprime, ${\rm Id}(R\times S)\cong {\rm Id}(R)\times {\rm Id}(S)$ and $(R\times S)^*\cong R^*\times S^*$.\end{remark}

\begin{corollary} For any nonzero cardinality $\kappa $ and any $h,i,j\in \overline{1,5}$: the equivalent conditions $(iv)_{R\times S}$, $(h)_{\kappa ,R\times S}$, $(i)_{<\infty ,R\times S}$ and $(j)_{R\times S}$ are fulfilled iff the equivalent conditions $(iv)_R$, $(h)_{\kappa ,R}$, $(i)_{<\infty ,R}$ and $(j)_R$, as well as the equivalent conditions $(iv)_S$, $(h)_{\kappa ,S}$, $(i)_{<\infty ,S}$ and $(j)_S$, are fulfilled.\end{corollary}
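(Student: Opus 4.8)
The plan is to route every ring condition through the reticulation $R^*$ and then invoke the product result already established for bounded distributive lattices. The three key inputs are Theorem \ref{myringsdavey}, which identifies the ring conditions on $R$ with the corresponding lattice conditions on $R^*$; the isomorphism $(R\times S)^*\cong R^*\times S^*$ from the preceding remark; and Proposition \ref{echivprod}, which handles products of bounded distributive lattices. So the whole argument is a chain of equivalences with no genuinely new content.

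First I would observe that $R\times S$ is a semiprime commutative unitary ring (by the preceding remark), so Theorem \ref{myringsdavey} applies to it directly: the conditions $(iv)_{R\times S}$, $(h)_{\kappa ,R\times S}$, $(i)_{<\infty ,R\times S}$ and $(j)_{R\times S}$ are mutually equivalent, and, by part (\ref{myringsdavey3}) of that theorem, they are equivalent to the corresponding conditions on $(R\times S)^*$. This reduces the left-hand side of the asserted equivalence to a statement purely about the distributive lattice $(R\times S)^*$.

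Then I would transport these to $R^*\times S^*$. Since $(R\times S)^*\cong R^*\times S^*$ and conditions $(iv)$, $(\cdot )_{\kappa }$, $(\cdot )_{<\infty }$ and $(\cdot )$ are preserved under isomorphisms of bounded lattices (by the Remark on isomorphic bounded lattices), the conditions on $(R\times S)^*$ are equivalent to the same conditions on $R^*\times S^*$. Now $R^*$ and $S^*$ are bounded distributive lattices, indeed frames by Corollary \ref{izomreticulatii}, so Proposition \ref{echivprod}, applied with $L=R^*$ and $M=S^*$, yields that $(iv)_{R^*\times S^*}$, $(h)_{\kappa ,R^*\times S^*}$, $(i)_{<\infty ,R^*\times S^*}$, $(j)_{R^*\times S^*}$ hold iff the corresponding conditions hold on both $R^*$ and $S^*$.

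Finally I would close the loop by applying Theorem \ref{myringsdavey} once more, separately to $R$ and to $S$: part (\ref{myringsdavey3}) identifies the conditions on $R^*$ with those on $R$, and likewise the conditions on $S^*$ with those on $S$. Concatenating the four equivalences gives the corollary. I do not expect a real obstacle here, since the proof is essentially bookkeeping; the only point requiring minor care is keeping the cardinality $\kappa $ and the free indices $h,i,j$ uniform across the passages through $R^*$ and $S^*$, which is precisely what Theorem \ref{myringsdavey} and Proposition \ref{echivprod} are set up to guarantee.
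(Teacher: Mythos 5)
Your proof is correct and follows exactly the route the paper intends: the corollary is stated without an explicit proof, immediately after the remark establishing that $R\times S$ is semiprime and $(R\times S)^*\cong R^*\times S^*$, and the intended argument is precisely your chain through Theorem \ref{myringsdavey}, that isomorphism, and Proposition \ref{echivprod}. No gaps.
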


Concerning the semiprimality condition, recall that it does not need to be enforced for residuated lattices or bounded distributive lattices because they are congruence--distributive and thus semiprime.

\section{Conclusions}
\label{conclusions}

It may be possible to extend Theorem \ref{mydavey} to other kinds of congruence lattices, possibly of algebras from non--modular commutator varieties. If the congruence lattices of the algebras from such varieties cover entire varieties of bounded lattices, then all lattices from those varieties fulfill \cite[Theorem $1$]{dav}. The study of such extensions of Davey`s Theorem remains a theme for future research.

Another important research theme is finding more classes of algebras in which, given an appropriate setting (regarding definitions for annihilators and a Boolean center), Davey`s Theorem holds not only for congruences, but also for elements, as in the case of bounded distributive lattices, residuated lattices and commutative unitary rings.

\end{document}